\newtheorem{theorem}{Theorem}[section]
\newtheorem{corollary}[theorem]{Corollary}
\newtheorem{lemma}[theorem]{Lemma}
\newtheorem{proposition}[theorem]{Proposition}
\theoremstyle{definition}
\newtheorem{remark}[theorem]{Remark}
\numberwithin{equation}{section}
\renewcommand{\leq}{\leqslant}
\renewcommand{\geq}{\geqslant}
\begin{document}
\title[Flag-transitive symmetric $2$-designs]{The symmetric $2$-$(v,k,\lambda )$ designs, with $k>\lambda \left(\lambda-3 \right)/2$, admitting a flag-transitive, point imprimitive automorphism group are known}

\author[]{ Alessandro Montinaro}

%\thanks{Corresponding author: Alessandro Montinaro}
%
%

\address{Alessandro Montinaro, Dipartimento di Matematica e Fisica “E. De Giorgi”, University of Salento, Lecce, Italy}
\email{alessandro.montinaro@unisalento.it}

\subjclass[MSC 2020:]{05B05; 05B25; 20B25}%
\keywords{ Symmetric design; automorphism group; flag-transitive design}
\date{\today}%
%\commby{}%

\begin{abstract}
The symmetric $2$-$(v,k,\lambda )$ designs, with $k>\lambda \left(\lambda-3 \right)/2$, admitting a flag-transitive, point-imprimitive automorphism group are completely classified: they are the known $2$-designs with parameters $(16,6,2),(45,12,3),(15,8,4)$ or $(96,20,4)$.  
\end{abstract}

\maketitle

\section{Introduction and Main Result}

A $2$-$(v,k,\lambda )$ \emph{design} $\mathcal{D}$ is a pair $(\mathcal{P},%
\mathcal{B})$ with a set $\mathcal{P}$ of $v$ points and a set $\mathcal{B}$
of blocks such that each block is a $k$-subset of $\mathcal{P}$ and each two
distinct points are contained in $\lambda $ blocks. We say $\mathcal{D}$ is 
\emph{non-trivial} if $2<k<v$, and symmetric if $v=b$. All $2$-$(v,k,\lambda
)$ designs in this paper are assumed to be non-trivial. An automorphism of $%
\mathcal{D}$ is a permutation of the point set which preserves the block
set. The set of all automorphisms of $\mathcal{D}$ with the composition of
permutations forms a group, denoted by $\mathrm{Aut(\mathcal{D})}$. For a
subgroup $G$ of $\mathrm{Aut(\mathcal{D})}$, $G$ is said to be \emph{%
point-primitive} if $G$ acts primitively on $\mathcal{P}$, and said to be 
\emph{point-imprimitive} otherwise. In this setting we also say that $%
\mathcal{D}$ is either \emph{point-primitive} or \emph{point-imprimitive}
respectively. A \emph{flag} of $\mathcal{D}$ is a pair $(x,B)$ where $x$ is
a point and $B$ is a block containing $x$. If $G\leq \mathrm{Aut(\mathcal{D})%
}$ acts transitively on the set of flags of $\mathcal{D}$, then we say that $%
G$ is \emph{flag-transitive} and that $\mathcal{D}$ is a \emph{%
flag-transitive design}.\ 

In 1987, Davies \cite{Da} proved that in a flag-transitive and
point-imprimitive $2$-$(v,k,\lambda )$ design, the block size is bounded
for a given value of the parameter $\lambda $, where $\lambda \geq 2$ by a result of Higman-McLaughlin \cite{HM} dating back to 1961. In 2005, O'Reilly
Regueiro \cite{ORR} obtained an explicit upper bound. Later that year,
Praeger and Zhou \cite{PZ} improved that upper bound and gave a complete
list of feasible parameters. In 2020, Mandi\'{c} and \v{S}ubasi\'{c} \cite{MS} classified the
flag-transitive point-imprimitive symmetric $2$-designs with $\lambda \leq
10 $ except for two possible numerical cases. Recently, Montinaro \cite{Mo2} has classified those with $k>\lambda
\left(\lambda -3 \right)/2$ and such that a block of the $2$-design
intersects a block of imprimitivity in at least $3$ points. In this paper we
complete the work started in \cite{Mo2} by classifying $\mathcal{D}$ with $%
k>\lambda \left(\lambda -3 \right)/2$ regardless the intersection size of a block of $\mathcal{D}$ with a block of
imprimitivity. More precisely, our result is the following.

\bigskip

\begin{theorem}
\label{main} Let $\mathcal{D}=\left(\mathcal{P},\mathcal{B} \right)$ be a
symmetric $2$-$(v,k,\lambda )$ design admitting a flag-transitive,
point-imprimitive automorphism group. If $k> \lambda (\lambda-3)/2$ then
one of the following holds:

\begin{enumerate}
\item $\mathcal{D}$ is isomorphic to one of the two $2$-$(16,6,2)$ designs.

\item $\mathcal{D}$ is isomorphic to the $2$-$(45,12,3)$ design.

\item $\mathcal{D}$ is isomorphic to the $2$-$(15,8,4)$ design.

\item $\mathcal{D}$ is isomorphic to one of the four $2$-$(96,20,4)$ designs.
\end{enumerate}
\end{theorem}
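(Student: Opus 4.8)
The plan is to combine the arithmetic restrictions coming from flag-transitivity and point-imprimitivity with the structural hypothesis $k>\lambda(\lambda-3)/2$, and then to reduce everything to the case already settled in \cite{Mo2}, where a block meets a block of imprimitivity in at least $3$ points. So the first step is to set up the standard framework of Praeger--Zhou \cite{PZ}: let $G$ be the flag-transitive, point-imprimitive group, let $\mathcal{C}$ be a $G$-invariant partition of $\mathcal{P}$ into $d$ classes of size $c$ (so $v=cd$ with $c,d>1$), and let $m$ be the constant intersection size $\left|B\cap C\right|$ maximized over classes $C$ meeting a fixed block $B$ in the maximal way; more precisely one works with the partition $\{B\cap C : C\in\mathcal{C}\}$ of $B$ and records the number $x$ of classes met by $B$ and the intersection sizes. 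Flag-transitivity forces $k\mid\lambda(v-1)$ and, crucially, the Praeger--Zhou relations relate $k$, $\lambda$, $v$ and the intersection pattern; combined with $k^2>\lambda v$ (symmetry, Fisher-type) and the hypothesis $k>\lambda(\lambda-3)/2$ one obtains strong inequalities bounding the intersection numbers from above in terms of $\lambda$.

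The second step is the genuinely new part of the paper relative to \cite{Mo2}: handling the situation where every block meets every block of imprimitivity it meets at all in at most $2$ points. Here the plan is a careful counting/double-counting argument. If $B\cap C$ has size $\le 2$ for all $B,C$, then the induced structure on $\mathcal{C}$ is very restricted; one counts, for a fixed class $C$, the pairs $(\{P,Q\},B)$ with $P,Q\in C$ distinct and $B\supseteq\{P,Q\}$ in two ways, using that each such pair lies in $\lambda$ blocks and each relevant block contributes at most one such pair, to get $\lambda\binom{c}{2}\le(\text{number of blocks meeting }C\text{ in }2\text{ points})$. Playing this against the flag count $vk=b k$ and the divisibility conditions should force $c$ and $d$ small, or force $\lambda$ small enough that the Mandić--Šubašić classification \cite{MS} for $\lambda\le 10$ applies and one checks the two leftover numerical cases directly. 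I expect this reduction — showing the "intersection $\le 2$" regime collapses either to the \cite{Mo2} hypothesis or to an explicitly bounded finite list — to be the main obstacle, since it is exactly the gap left open by the earlier work and presumably resists a one-line argument.

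The third step is, assuming we are now in the case where some block meets some block of imprimitivity in $\ge 3$ points, to invoke \cite{Mo2} directly: that paper's classification under precisely the hypotheses $k>\lambda(\lambda-3)/2$ and intersection $\ge 3$ yields exactly the designs $(16,6,2)$, $(45,12,3)$, $(15,8,4)$, $(96,20,4)$. The final step is bookkeeping: verify that each of these four parameter sets does admit a flag-transitive point-imprimitive group (so the list is not vacuous), identify the designs up to isomorphism (two $2$-$(16,6,2)$ designs, a unique $2$-$(45,12,3)$, a unique $2$-$(15,8,4)$, four $2$-$(96,20,4)$ designs), and confirm that no design surviving the counting analysis in step two lies outside this list. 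Assembling steps two and three then gives Theorem~\ref{main}.
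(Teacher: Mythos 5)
Your steps one, three and four match the paper: the reduction via the Praeger--Zhou framework, the invocation of \cite{Mo2} for the case where a block meets a class in at least $3$ points, and the identification of the four surviving parameter sets are all exactly what the paper does (Theorem \ref{PZM} packages the first and third of these). The gap is in your step two, which is where essentially all of the paper's work lives, and the counting argument you sketch cannot close it. When every block meets every class in $0$ or $2$ points, the Praeger--Zhou arithmetic does \emph{not} force $\lambda$ small or $c,d$ small: it yields two infinite feasible parameter families, namely $v=\lambda^2(\lambda+2)$, $k=\lambda(\lambda+1)$ with $(c,d)=(\lambda+2,\lambda^2)$, and $v=\frac{\lambda+2}{2}\cdot\frac{\lambda^2-2\lambda+2}{2}$, $k=\lambda^2/2$ with the corresponding $(c,d)$, and these satisfy every double-counting identity of the kind you propose (your count of pairs $(\{P,Q\},B)$ with $P,Q\in C$ is exactly the identity that produces these families, since each block meeting $C$ contributes exactly one such pair). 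So no purely arithmetic argument reduces to $\lambda\le 10$.

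What the paper actually does to kill these two families for $\lambda>10$ is a long group-theoretic analysis: it proves $G^{\Sigma}$ is primitive on the partition (Proposition \ref{DivanDan}), then either quasiprimitive on points or with a very restricted $G(\Sigma)$ (Theorem \ref{AQP}), then applies the O'Nan--Scott theorem for quasiprimitive groups to force $G^{\Sigma}$ almost simple (Theorem \ref{ASPQP}), shows the class stabilizer is a large subgroup of the socle (Theorem \ref{Large}), and finally runs through the CFSG-based lists of large maximal subgroups and primitive groups of odd/square degree to show no example survives (Theorems \ref{T1} and \ref{T2}). None of this is foreshadowed in your proposal, and it is not optional: the classification genuinely depends on the classification of finite simple groups at this step. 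Separately, your remark that the two leftover numerical cases of \cite{MS} can be ``checked directly'' also understates the work; the paper's Theorem \ref{MMS} eliminates them with a fixed-point argument via Lander's bounds, the list of $2$-transitive groups, and a computer check. So the overall architecture of your plan is right, but the central step is missing its actual content.
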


\bigskip

In 1945, Hussain \cite{Hu} and in 1946 Nandi \cite{Na} independently proved the existence of three symmetric $2$-$(16,6,2)$ designs. In 2005, O’Reilly Regueiro \cite{ORR} proved that exactly two of them are flag-transitive and point-imprimitive. In the same paper O’Reilly Regueiro constructed a $2$-$(15,8,4)$ design. Such $2$-design was proved to be unique by Praeger and Zhou \cite{PZ} in 2006. One year later, Praeger  \cite{P2} constructed and proved there is exactly one flag-transitive and point-imprimitive $2$-$(45,12,3)$ design. Finally, in 2009, Law, Praeger and Reichard \cite{LPR} proved there are four flag-transitive and point-imprimitive $2$-$(96,20,4)$ designs.

\bigskip

The outline of the proof is as follows. We start by strengthening the classification result obtained in \cite{MS}. Indeed, in \cite{MS} it is proven that, if $\lambda \leq 10$ then $\mathcal{D}$ is known except for two numerical values for the parameters for $\mathcal{D}$. The two exceptions are ruled out here in Theorem \ref{MMS}. Subsequently, we focus on the case $\lambda>10$. In Proposition \ref{DivanDan} it is
shown that $G^{\Sigma }$ acts primitively on $\Sigma $ by using the results
contained in \cite{Mo2}. Then, it is proven in Theorem \ref{AQP} that, either 
$G$ acts point-quasiprimitively on $\mathcal{D}$ or $G(\Sigma )\neq 1$, $%
G^{\Sigma }$ is almost simple and $\mathcal{D}$ has parameters $%
(2^{d+2}(2^{d-1}-1)^{2}, 2\left( 2^{d}-1\right) \left(
2^{d-1}-1\right) ,2(2^{d-1}-1))$ where $d\geq 4$. Afterwards, by combining
the O'Nan-Scott theorem for quasiprimitive groups achieved in \cite{P1}
with an adaptation of the techniques developed by \cite{ZZ}, in Theorem \ref{ASPQP} we
show that $G^{\Sigma }$ is almost simple also in the quasiprimitive case.
Moreover, if $L$ is the preimage in $G$ of $\mathrm{Soc}(G^{\Sigma })$ and $%
\Delta \in \Sigma $, in Proposition \ref{unapred}, Corollary \ref{QuotL} and Theorem \ref{Large}
it is proven that, either $G(\Sigma )=1$, $L_{\Delta
}$ is contained in a semilinear $1$-dimensional group and $\left\vert L \right \vert \leq 2 \left \vert L_{\Delta
}^{\Delta } \right \vert ^{2}\left\vert \mathrm{Out}(L)\right \vert^{2}$, or $G(\Sigma )=1$, $L_{\Delta
}$ is a non-solvable $2$-transitive permutation group of degree $\left\vert \Delta
\right\vert $ and $\left\vert L\right\vert \leq \left\vert L_{\Delta }\right\vert ^{2}$, or $G(\Sigma )\neq 1$ and a quotient of $L_{\Delta }^{\Sigma }
$ is isomorphic either to $SL_{d}(2)$, or to $A_{7}$ for $d=4$. In particular, in each case $L_{\Delta }^{\Sigma }$ is a large subgroup of $L^{\Sigma }$. Finally, we  use all the above mentioned constraints on $ L^{\Sigma }$ and on $ L_{\Delta }^{\Sigma
}$ together with the results contained in \cite{AB} and \cite{LS} to precisely determine the admissible pairs $( L^{\Sigma },L_{\Delta}^{\Sigma })$ and from these to prove that there are no examples of $\mathcal{D}$ for $\lambda >10$. At
this point, our classification result follows from Theorem \ref{MMS}.

\bigskip 

\section{Preliminary reductions}

\medskip It is well know that, if $\mathcal{D}$ is a symmetric $2$-$%
(v,k,\lambda )$ design, then $r=k$, $b=v$ and $k(k-1)=(v-1)\lambda $ (for instance, see \cite{Demb}). Moreover, the following fact holds:

\begin{lemma}
\label{PP}If $\mathcal{D}$ admits a flag-transitive automorphism group $G$
and $x$ is any point of $\mathcal{D}$, then $\left\vert y^{G_{x}}\right\vert
\lambda =k\left\vert B\cap y^{G_{x}}\right\vert $ for any point $y$ of $%
\mathcal{D}$, with $y\neq x$, and for any block $B$ of $\mathcal{D}$
incident with $x$.
\end{lemma}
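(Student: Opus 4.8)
The plan is to prove this by a standard double-counting argument, using flag-transitivity only through the fact that $G_x$ acts transitively on the blocks through $x$. Fix the point $x$, set $\Delta := y^{G_x}$, and fix a block $B$ incident with $x$. I would count the set
\[
S = \{(z,C) : z \in \Delta,\ C \in \mathcal{B},\ x \in C,\ z \in C\}
\]
in two ways.

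Counting $S$ by first choosing $z \in \Delta$: since $z \in \Delta$ and $x \notin \Delta$ (as $x$ is fixed by $G_x$ while every point of $\Delta$ is not, because $y \neq x$), the points $x$ and $z$ are distinct, hence lie on exactly $\lambda$ common blocks. This gives $|S| = |\Delta|\,\lambda$.

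Counting $S$ by first choosing the block $C$: the number of blocks through $x$ equals $r$, and for a symmetric design $r = k$. For each such $C$, the number of admissible $z$ is $|C \cap \Delta|$. Here is the one point where flag-transitivity enters: $G_x$ permutes the blocks through $x$ transitively and stabilises $\Delta$ setwise, so the quantity $|C \cap \Delta|$ is independent of the choice of $C$ among blocks through $x$; in particular it equals $|B \cap \Delta|$. Hence $|S| = k\,|B \cap \Delta|$. Equating the two counts yields $|y^{G_x}|\,\lambda = k\,|B \cap y^{G_x}|$, as claimed. There is no real obstacle here; the only points to state carefully are that $x \notin y^{G_x}$ and that $G_x$ is transitive on the $r=k$ blocks through $x$.
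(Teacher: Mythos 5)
Your proof is correct and is essentially the paper's argument made explicit: the paper simply cites Dembowski's observation that $(y^{G_{x}},B^{G_{x}})$ is a tactical configuration, and the content of that citation is precisely your double count of incident pairs $(z,C)$ with $z\in y^{G_{x}}$ and $C$ a block through $x$, using the constancy of $\left\vert C\cap y^{G_{x}}\right\vert$ over the $G_{x}$-orbit of blocks through $x$. You correctly isolate the two points that need care, namely $x\notin y^{G_{x}}$ and the transitivity of $G_{x}$ on the $r=k$ blocks through $x$.
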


\begin{proof}
Let $x$, $y$ be points of $\mathcal{D}$, $y\neq x$, and $B$ be any block of $%
\mathcal{D}$ incident with $x$. Since $(y^{G_{x}},B^{G_{x}})$ is a tactical
configuration by \cite[1.2.6]{Demb}, it follows that $\left\vert
y^{G_{x}}\right\vert \lambda =k\left\vert B\cap y^{G_{x}}\right\vert $.
\end{proof}

\bigskip

The following theorem, which is a summary of \cite{Mo2} and some of the results contained
in \cite{PZ}, is our starting point.

\bigskip

\begin{theorem}
\label{PZM}Let $\mathcal{D}=(\mathcal{P} ,\mathcal{B})$ be a symmetric $2$%
-design admitting a flag-transitive, point-imprimitive automorphism group $G$
that leaves invariant a non-trivial partition $\Sigma =\left\lbrace
\Delta_{1},...,\Delta_{d} \right\rbrace$ of $\mathcal{P} $ such that $%
\left\vert \Delta_{i} \right \vert =c$ for each $i=1,...,d$. Then the
following hold:

\begin{enumerate}
\item[I.] There is a constant $\ell$ such that, for each $B\in \mathcal{B}$
and $\Delta _{i}\in \Sigma $, the size $\left\vert B\cap \Delta
_{i}\right\vert $ is either $0$ or $\ell$.

\item[II.] There is a constant $\theta$ such that, for each $B\in \mathcal{B}
$ and $\Delta _{i}\in \Sigma $ with $\left\vert B\cap \Delta _{i}\right\vert
>0$, the number of blocks of $\mathcal{D}$ whose intersection set with $%
\Delta _{i}$ coincides with $B\cap \Delta _{i}$ is $\theta$.

\item[III.] If $\ell=2$ then $G_{\Delta _{i}}^{\Delta _{i}}$ acts $2$%
-transitively on $\Delta_{i}$ for each $i=1,...,d$.

\item[IV.] If $\ell\geq 3$ then $\mathcal{D}_{i}=\left( \Delta_{i}, \left(B
\cap \Delta_{i} \right)^{G_{\Delta _{i}}^{\Delta _{i}}}\right)$ is a
flag-transitive non-trivial $2$-$\left(c,\ell,\lambda/\theta\right)$ design
for each $i=1,...,d$.
\end{enumerate}

Moreover, if $k>\lambda (\lambda -3)/2$ then one of the following holds:

\begin{enumerate}
\item[V.] $\ell=2$ and one of the following holds:

\begin{enumerate}
\item[1.] $\mathcal{D}$ is a symmetric $2$-$(\lambda ^{2}(\lambda
+2),\lambda (\lambda +1),\lambda )$ design and $\left(c,d
\right)=\left(\lambda+2,\lambda^{2}\right)$.

\item[2.] $\mathcal{D}$ is a symmetric $2$-$\left( \left(\frac{\lambda+2}{2}%
\right) \left(\frac{\lambda^2-2\lambda+2}{2}\right),\frac{\lambda^2}{2}%
,\lambda \right)$ design, $\left(c,d \right)=\left(\frac{\lambda+2}{2},\frac{%
\lambda^2-2\lambda+2}{2}\right)$, and either $\lambda \equiv 0 \pmod{4}$, or 
$\lambda=2w^{2}$, where $w$ is odd, $w \geq 3$, and $2(w^2-1)$ is a square.
\end{enumerate}

\item[VI.] $\ell\geq 3$ and one of the following holds:

\begin{enumerate}
\item[1.] $\mathcal{D}$ is isomorphic to the $2$-$(45,12,3)$ design of \cite[%
Construction 4.2]{P2}.

\item[2.] $\mathcal{D}$ is isomorphic to one of the four $2$-$(96,20,4)$
designs constructed in \cite{LPR}.
\end{enumerate}
\end{enumerate}
\end{theorem}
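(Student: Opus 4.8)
My plan is to get parts I--IV from the structure theory of flag-transitive, point-imprimitive symmetric designs in \cite{PZ}, and the ``moreover'' part by combining a short parameter computation in the case $\ell=2$ with the classification of \cite{Mo2} in the case $\ell\geq 3$. For I and II I would fix a point $x$ lying in a class $\Delta\in\Sigma$. Flag-transitivity makes $G_x$ transitive on the blocks through $x$, and $G_x$ fixes $\Delta$ setwise; so if $B,B'$ are blocks through $x$ and $g\in G_x$ with $B^g=B'$, then $(B\cap\Delta)^g=B'\cap\Delta$, whence $|B\cap\Delta|$ is constant on blocks through $x$, say equal to $\ell_x$. Since $G$ is point-transitive, $\ell_x=\ell$ is independent of $x$; and as any block meeting $\Delta$ does so through one of its own points, I follows. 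For II, the map $B\mapsto B\cap\Delta$ from the blocks through $x$ to the $\ell$-subsets of $\Delta$ is $G_x$-equivariant with $G_x$ transitive on its domain, so all of its fibres share a common size $\theta$; the fibre over an $\ell$-set $S\ni x$ is exactly $\{B':B'\cap\Delta=S\}$, and independence of the class (and of the chosen point of it) follows from point-transitivity of $G$ together with transitivity of $G_\Delta$ on $\Delta$.

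For III and IV I would note that a pair $\{y,z\}\subseteq\Delta$ lies in $\lambda$ blocks of $\mathcal{D}$ which, by II, meet $\Delta$ in exactly $\lambda/\theta$ distinct $\ell$-sets; since $G_{\Delta}^{\Delta}$ is transitive on the sets $B\cap\Delta$ (block-transitivity) and each pair of $\Delta$ lies in one of them, $\mathcal{D}_i=(\Delta_i,(B\cap\Delta_i)^{G_{\Delta_i}^{\Delta_i}})$ is a $G_{\Delta_i}^{\Delta_i}$-flag-transitive $2$-$(c,\ell,\lambda/\theta)$ design. For $\ell\geq 3$ it is non-trivial because $2<\ell<c$ (the inequality $\ell<c$ being established in \cite{PZ}), which is IV; for $\ell=2$ it degenerates to the complete graph on $\Delta$ (so in particular $\theta=\lambda$), hence $G_{\Delta_i}^{\Delta_i}$ is $2$-homogeneous on $\Delta_i$, and \cite{PZ} upgrade this to $2$-transitivity, which is III.

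For the ``moreover'' part I would assume $k>\lambda(\lambda-3)/2$ and split on $\ell$. If $\ell=2$: double counting the triples $(\Delta,\{y,z\},B)$ with $y\neq z$ in $\Delta$ and $\{y,z\}\subseteq B$ yields $k=\lambda(c-1)$; substituting into $k(k-1)=\lambda(v-1)$ and requiring $d=v/c$ to be an integer forces $c\mid\lambda+2$, while non-triviality forces $c\geq 3$. The hypothesis becomes $c>(\lambda-1)/2$, and the only divisors of $\lambda+2$ that are $\geq 3$ and exceed $(\lambda-1)/2$ are $c=\lambda+2$ and, when $\lambda$ is even, $c=(\lambda+2)/2$ (small values of $\lambda$ checked directly); these give the parameter shapes in (V.1) and (V.2). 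I would then invoke Bruck--Ryser--Chowla for $\mathcal{D}$: in (V.1), $k-\lambda=\lambda^2$ is automatically a square, so nothing extra appears; in (V.2) the order $v$ is odd when $\lambda\equiv 0\pmod 4$ and imposes nothing, whereas when $\lambda\equiv 2\pmod 4$ one has $v$ even, so $k-\lambda=\lambda(\lambda-2)/2$ must be a square, and writing $\lambda=2u$ with $u$ odd and using $\gcd(u,u-1)=1$ this forces $\lambda=2w^2$ with $w$ odd, $w\geq 3$ and $2(w^2-1)$ a square. If instead $\ell\geq 3$: by I a block of $\mathcal{D}$ meets every class it meets in at least three points, so $\mathcal{D}$ satisfies the hypotheses of the main classification of \cite{Mo2}, whose conclusion for $k>\lambda(\lambda-3)/2$ is precisely (VI).

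The hard content lives entirely in the cited inputs: parts I--IV and the $2$-transitivity upgrade are the Praeger--Zhou machinery, and the $\ell\geq 3$ conclusion is the long, CFSG-based classification of \cite{Mo2}. On the side of the present statement the only delicate point is the $\ell=2$ reduction, where one must be careful with small $\lambda$ to ensure that no further parameter set survives the simultaneous constraints $k=\lambda(c-1)$, $c\mid\lambda+2$, $c>(\lambda-1)/2$, non-triviality, and the Bruck--Ryser--Chowla condition.
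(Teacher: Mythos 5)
Your proposal is correct and takes essentially the same route as the paper, which offers no independent proof of this theorem but presents it explicitly as a summary of results from \cite{PZ} (parts I--V, via the tactical-decomposition and Bruck--Ryser--Chowla arguments you reconstruct) and of the classification in \cite{Mo2} (part VI). Your reconstruction of the $\ell=2$ parameter reduction ($k=\lambda(c-1)$, $c\mid\lambda+2$, $c>(\lambda-1)/2$, then BRC for $\lambda\equiv 2\pmod 4$) matches the cited source, so nothing further is needed.
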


\bigskip

We are going to focus on case (V) of the previous theorem. \emph{Throughout the paper, a $2$%
-design as in case (V.1) or (V.2) of Theorem \ref{PZM} will be simply
called a $2$-design of type 1 or 2 respectively.}

\bigskip
Let $\Delta \in \Sigma $ and $x \in \Delta $. Since $G(\Sigma) \trianglelefteq G_{\Delta}$ and $G(\Delta) \trianglelefteq G_{x}$, it is immediate to verify that $(G^{\Sigma})_{\Delta}=(G_{\Delta})^{\Sigma}$ and that $\left( G_{\Delta}^{\Delta} \right)_{x}=(G_{x})^{\Delta}$. Hence, in the sequel $(G^{\Sigma})_{\Delta}$ and $\left( G_{\Delta}^{\Delta} \right)_{x}$ will simply be denoted by $G^{\Sigma}_{\Delta}$ and $G_{x}^{\Delta}$ respectively. Moreover, the following holds:
\begin{equation}\label{salvaMAOL}
\frac{G^{\Sigma}_{\Delta}}{G(\Delta)^{\Sigma}} \cong \frac{G_{\Delta}}{G(\Delta)G(\Sigma)} \cong \frac{G^{\Delta}_{\Delta}}{G(\Sigma)^{\Delta}}.
\end{equation}
\bigskip

A further reduction is the following theorem.

\bigskip

\begin{theorem}
\label{MMS}Let $\mathcal{D}=\left( \mathcal{P},\mathcal{B}\right) $ be a
symmetric $2$-$(v,k,\lambda )$ design admitting a flag-transitive,
point-imprimitive automorphism group $G$. If $\lambda \leq 10$ then

\begin{enumerate}
\item $\mathcal{D}$ is isomorphic to one of the two $2$-$(16,6,2)$ designs

\item $\mathcal{D}$ is isomorphic to the $2$-$(45,12,3)$ design.

\item $\mathcal{D}$ is isomorphic to the $2$-$(15,8,4)$ design.

\item $\mathcal{D}$ is isomorphic to one of the four $2$-$(96,20,4)$ designs.
\end{enumerate}
\end{theorem}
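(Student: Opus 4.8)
The plan is to combine the partial classification of \cite{MS} with the structural result of Theorem~\ref{PZM}. By \cite{MS}, a flag-transitive, point-imprimitive symmetric $2$-$(v,k,\lambda)$ design with $\lambda\leq 10$ is isomorphic to one of the designs listed in (1)--(4), with the possible exception of two explicit parameter triples $(v_{1},k_{1},\lambda_{1})$ and $(v_{2},k_{2},\lambda_{2})$, neither of which is the parameter triple of a design in (1)--(4). So the theorem reduces to the statement that no flag-transitive, point-imprimitive symmetric design has either of these triples as its parameters. Accordingly, I would fix one of the two triples, suppose for a contradiction that such a $\mathcal{D}$ exists with $G\leq\Aut(\mathcal{D})$ flag-transitive preserving a non-trivial partition $\Sigma=\{\Delta_{1},\dots,\Delta_{d}\}$ into classes of size $c$ (so $v=cd$), and aim to rule it out.

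First I would check the inequality $k>\lambda(\lambda-3)/2$ for the chosen triple. If it holds, Theorem~\ref{PZM} applies and forces $\mathcal{D}$ to be of type $1$, of type $2$, or one of the designs of cases (VI.1)--(VI.2); the latter have parameters $(45,12,3)$ and $(96,20,4)$ and are thus excluded, so $\mathcal{D}$ is of type $1$ or $2$, whence $\ell=2$, and comparing the chosen triple with the parametric forms of (V.1) and (V.2) either is outright impossible — in which case we are done immediately — or forces $\lambda$ to the single value $\lambda_{i}$, pins down $c$ (to $\lambda+2$ or $(\lambda+2)/2$) and $d$, and shows $\mathcal{D}$ to be a specific small type-$1$ or type-$2$ design. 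By part III of Theorem~\ref{PZM}, $G_{\Delta}^{\Delta}$ is then a $2$-transitive group of the small degree $c$, hence one of the explicitly known $2$-transitive groups of that degree, while $G^{\Sigma}$ is transitive of degree $d$. If instead $k\leq\lambda(\lambda-3)/2$, only the unconditional parts I--IV of Theorem~\ref{PZM} are available; here I would argue directly from $v=cd$, the divisibilities $\ell\mid k$ and $\theta\mid\lambda$, and the tactical-configuration identity of Lemma~\ref{PP} applied both to the action on $\Sigma$ and to the action on a class, which together leave only finitely many admissible quadruples $(c,d,\ell,\theta)$; and when $\ell\geq 3$ part IV produces a flag-transitive non-trivial $2$-$(c,\ell,\lambda/\theta)$ design that is itself strongly restricted.

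In either situation the decisive step is to derive a contradiction for each surviving configuration, and this I expect to be the main obstacle. The idea is to use Lemma~\ref{PP}, the constant $\theta$ of part II of Theorem~\ref{PZM}, the isomorphisms \eqref{salvaMAOL} relating $G_{\Delta}^{\Delta}$, $G(\Delta)^{\Sigma}$, $G^{\Sigma}_{\Delta}$ and $G_{\Delta}$, and the divisibility $vk\mid|G|$ forced by flag-transitivity, in order to pin down $|G_{\Delta}|$, the kernel $G(\Sigma)$ and the induced groups $G_{\Delta}^{\Delta}$ and $G^{\Sigma}$ tightly enough that no group $G$ of the required order can act: typically because no transitive group of degree $d$ has a point stabiliser of the forced order, or because the forced $2$-transitive action of $G_{\Delta}^{\Delta}$ on $\Delta$ is incompatible with the pattern of the $\theta$ blocks meeting $\Delta$ in a fixed pair. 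Carrying this out means running through the finitely many admissible $2$-transitive groups of degree $c$ together with the transitive groups of degree $d$ (or, in the $\ell\geq 3$ branch, the admissible small flag-transitive designs $\mathcal{D}_{i}$) attached to the two exceptional triples; as already in \cite{MS}, closing some of these residual subcases may be most efficiently done with a computer algebra computation. Once both triples are excluded, the classification of \cite{MS} is complete, which is exactly the assertion of the theorem.
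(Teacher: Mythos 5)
Your proposal follows essentially the same route as the paper: \cite{MS} leaves exactly the two tuples $(v,k,\lambda,c,d)=(288,42,6,8,36)$ and $(891,90,9,81,11)$, both satisfy $k>\lambda(\lambda-3)/2$, the second fails to match any case of Theorem~\ref{PZM}(V--VI) (its $(c,d)$ is incompatible with the parametric forms) and is discarded on parameters alone, and the first is a type-1 design with $c=8$, $d=36$ that the paper eliminates by precisely the combination you anticipate --- fixed-point bounds pinning down $G(\Sigma)$ and $G(\Delta)$, the list of $2$-transitive groups of degree $8$ and transitive groups of degree $36$, an orbit-length incompatibility, and a final \textsf{GAP} computation. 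The only divergence is that your contingency branch for $k\leq\lambda(\lambda-3)/2$ turns out to be vacuous.
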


\begin{proof}
This results is proven in \cite[Theorem 1]{MS} with the following possible
exceptions of $(v,k,\lambda ,c,d)=(288,42,6,8,36)$ or $(891,90,9,81,11)$.
Note that $k>\lambda (\lambda -3)/2$ in both exceptional cases. Actually,
the latter does not correspond to any case of Theorem \ref{PZM}(V--VI),
and hence it cannot occur. The former corresponds to Theorem \ref{PZM}%
(V.1) for $\lambda =6$. Also, if $\Delta \in \Sigma$ then $G_{\Delta }^{\Delta }\cong AGL_{1}(8)$, $ A\Gamma L_{1}(8)$,  $A_{8}$, $S_{8}$, 
$PSL_{2}(7)$ or $PGL_{2}(7)$ by \cite[Lists (A) and (B)]{Ka} since $G_{\Delta }^{\Delta }$ acts $2$-transitively on $\Delta$.

Assume that $u$ divides the order of $G(\Sigma )$, where $u$ is an odd prime, and let $\psi $ be a $u$-element of $G(\Sigma )$. Then $\psi $
fixes at least a point on each $\Delta \in \Sigma $ since $\left\vert \Delta \right\vert =8$. Thus $\psi $ preserves
at least two distinct blocks of $\mathcal{D}$ by \cite[Theorem 3.1]{La}, say $B_{1}$ and $B_{2}$. Actually, $\psi $ fixes $B_{1}$ and $B_{2}$ pointwise since any of these intersect each element of $\Sigma$ in $0$ or $2$ points and $%
\psi \in G(\Sigma )$. Hence, $\psi $ fixes at least $2\cdot 42-6$ points of $\mathcal{D}$, but this contradicts \cite[Corollary 3.7]{La}. Thus, the
order of $\left\vert G(\Sigma )\right\vert =2^{i}$ with $i \geq 0$.

Assume that $w$ divides the order of $G(\Delta )$, where $w$ is an odd prime, $w \geq 7$, and let $\phi $ be a $w$-element of $G(\Delta )$. Then $\phi 
$ fixes the $6$ blocks incident with any pair of distinct
points of $\Delta $. Therefore, $\phi $ fixes at least $6\cdot 
\binom{8}{2}$ points of $\mathcal{D}$ by \cite[Theorem 3.1]{La}, and we again reach a contradiction by \cite[Corollary 3.7]{La}%
. Thus, $G(\Delta )$ is a $\{2,3,5\}$-group.

Any Sylow $7$-subgroup of $G$ is of order $7$, since $\left\vert \Sigma \right\vert =36$, $G_{\Delta }^{\Delta }$ is one of the groups listed above and the order of $G(\Delta )$ is coprime to $7$. Then, by \cite[Table B.4]{DM} one of the following holds:
\begin{enumerate}
\item $A_{9} \trianglelefteq G^{\Sigma} \leq S_{9}$ and $S_{7}\trianglelefteq G^{\Sigma}_{\Delta} \leq S_{7} \times Z_{2}$;
\item $PSL_{2}(8) \trianglelefteq G^{\Sigma} \leq P \Gamma L_{2}(8)$ and $ D_{14} \trianglelefteq G^{\Sigma}_{\Delta}\leq F_{42}$;
\item $PSU_{3}(3)\trianglelefteq G^{\Sigma} \leq P \Gamma U_{3}(3) $ and $PSL_{2}(7) \trianglelefteq G^{\Sigma}_{\Delta}\leq PGL_{2}(7)$;
\item $ G^{\Sigma} \cong  Sp_{6}(2)$ and $G^{\Sigma}_{\Delta} \cong S_{8}$.
\end{enumerate}
Since $G_{\Delta }^{\Delta }$ is one of the $2$-transitive-groups listed above, (1) is immediately ruled out by (\ref{salvaMAOL}). In (2) the group $G_{\Delta}$ is solvable since $G^{\Sigma}_{\Delta}$ is solvable and $G(\Sigma )$ is a $2$-group. Thus $G_{\Delta }^{\Delta }$ is solvable and hence it is isomorphic to $AGL_{1}(8)$ or $A\Gamma L_{1}(8)$. Moreover, a quotient group of $G_{\Delta }^{\Delta }$ must contain a subgroup isomorphic to $D_{14}$ by (\ref{salvaMAOL}) since $G(\Delta )$ is a $\{2,3,5\}$-group, but this is clearly impossible. It follows that only (3) and (4) are admissible. Also, $G(\Sigma)=G(\Delta)=1$ in both cases by comparing (\ref{salvaMAOL}) with the possibilities for $G_{\Delta}^{\Delta}$ provided above.

Assume that (4) occurs. Then $G$ acts $2$-transitively on $\Sigma$. Let $x \in \Delta$ and $\Delta^{\prime} \in \Sigma \setminus \{ \Delta \}$. Then $G_{x} \cong S_{7}$ and $G_{x,\Delta^{\prime}} \leq G_{\Delta,\Delta^{\prime}}   \cong (S_{4} \times S_{4}):Z_{2}$ by \cite{At}, hence $G_{x}$ acts transitively on $\Sigma \setminus \{ \Delta \}$. On the other hand, $G_{x}=G_{B}$ for some block $B$ of $\mathcal{D}$ since $G$ has a unique conjugate class of subgroups isomorphic to $S_{7}$ by \cite{At}. Therefore, $G_{x}$ permutes transitively the $21$ elements of $\Sigma$ intersecting $B$, whereas $G_{x}$ acts transitively on the $35$ elements $\Sigma \setminus \{ \Delta \}$. So this case is excluded. Finally, (3) is ruled out with the aid of \textsf{GAP} \cite{GAP}, and the proof is thus completed.
\end{proof}

\bigskip

On the basis of Theorem \ref{MMS}, in the sequel we may assume that $\lambda
>10$.

\bigskip

\begin{lemma}
\label{Mpomoc}If $G$ preserves a further partition $\Sigma ^{\prime }$ of
the point set of $\mathcal{D}$ in $d^{\prime}$ blocks of imprimitivity of size $c^{\prime}$. Then $d^{\prime}=d$ and $c^{\prime}=c$, and one of the following holds:

\begin{enumerate}
\item $\Sigma =\Sigma ^{\prime }$;

\item $\left\vert \Delta \cap \Delta ^{\prime }\right\vert \leq 1$ for for each $\Delta \in \Sigma $, $\Delta ^{\prime }\in
\Sigma ^{\prime }$.
\end{enumerate}
\end{lemma}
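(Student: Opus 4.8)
The plan is to first show that every non-trivial $G$-invariant partition of $\mathcal{P}$ into parts of equal size has the same part size $c$, and then to analyse the common refinement of $\Sigma$ and $\Sigma'$. The basic tool is a counting identity valid for any non-trivial $G$-invariant partition $\Xi$ of $\mathcal{P}$ all of whose parts have size $s$: writing $\ell_{\Xi}$ for the constant furnished by Theorem \ref{PZM}(I) relative to $\Xi$, fix a point $x$ lying in a part $\Delta_{0}$ of $\Xi$ and double count the pairs $(B,y)$ with $x,y\in B$ and $y\in\Delta_{0}\setminus\{x\}$. Summing over the $r=k$ blocks through $x$, each of which meets $\Delta_{0}$ in exactly $\ell_{\Xi}$ points ($x$ included, by Theorem \ref{PZM}(I)), gives $k(\ell_{\Xi}-1)$; summing over the $s-1$ points of $\Delta_{0}$ other than $x$ gives $\lambda(s-1)$; hence $k(\ell_{\Xi}-1)=\lambda(s-1)$.

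Next I would argue that $\ell_{\Xi}=2$ for every such $\Xi$. Indeed $\ell_{\Xi}\neq 1$, since two distinct points of a part of $\Xi$ lie in $\lambda\geq 1$ common blocks, and $\ell_{\Xi}\leq 2$: were $\ell_{\Xi}\geq 3$, then Theorem \ref{PZM} applied to $\mathcal{D}$ with the invariant partition $\Xi$ (a legitimate input, since $G$ is flag-transitive and point-imprimitive and $\Xi$ is non-trivial with parts of equal size) together with the standing hypothesis $k>\lambda(\lambda-3)/2$ would place us in case (VI), forcing $\mathcal{D}$ to be a $2$-$(45,12,3)$ design or a $2$-$(96,20,4)$ design, whence $\lambda\leq 4$, contrary to $\lambda>10$. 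Combining $\ell_{\Xi}=2$ with the counting identity and applying it to both $\Sigma$ and $\Sigma'$ yields $c-1=k/\lambda=c'-1$, so $c'=c$ and therefore $d'=v/c'=v/c=d$.

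Finally, assuming $\Sigma\neq\Sigma'$, I would pass to $\Sigma''=\Sigma\wedge\Sigma'$, the partition of $\mathcal{P}$ into the non-empty intersections $\Delta\cap\Delta'$ with $\Delta\in\Sigma$ and $\Delta'\in\Sigma'$. It is $G$-invariant, and since $G$ is transitive on $\mathcal{P}$ it is transitive on the parts of $\Sigma''$, which therefore all have a common size $m$ dividing $c$. If $m=1$, then $|\Delta\cap\Delta'|\leq 1$ for all $\Delta\in\Sigma$ and $\Delta'\in\Sigma'$, which is alternative (2). If $m\geq 2$, then $\Sigma''$ is a non-trivial $G$-invariant partition with parts of equal size (note $1<m\leq c<v$), so by the previous paragraph $\lambda(m-1)=k$, i.e. $m-1=k/\lambda=c-1$ and $m=c$; but then each part $\Delta\cap\Delta'$ has size $c=|\Delta|=|\Delta'|$ while being contained in both $\Delta$ and $\Delta'$, so $\Delta\cap\Delta'=\Delta=\Delta'$, which forces $\Sigma=\Sigma'$, a contradiction. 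Hence $m=1$ and alternative (2) holds.

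I do not expect a real obstacle: the only points requiring care are verifying that $\Sigma'$ and $\Sigma''$ are admissible inputs to Theorem \ref{PZM} and the elementary intersection bookkeeping in the last step. The conceptual heart is simply that the intersection constant $\ell_{\Xi}$ is forced to equal $2$, which then rigidly pins down the part size via the counting identity.
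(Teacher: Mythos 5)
Your proof is correct, and while its opening move coincides with the paper's, the rest takes a genuinely different route. Both arguments begin by forcing $\left\vert B\cap \Delta ^{\prime }\right\vert =2$ for every block $B$ and every $\Delta ^{\prime }\in \Sigma ^{\prime }$, via Theorem \ref{PZM}(VI) (i.e.\ \cite[Theorem 1.1]{Mo2}) and the standing assumption $\lambda >10$. From there the paper deduces $(c^{\prime },d^{\prime })=(c,d)$ by noting that $\mathcal{D}$ must be of the same type (V.1 or V.2) with respect to $\Sigma $ and $\Sigma ^{\prime }$, because the two types impose incompatible relations among the fixed parameters; you instead obtain the part size directly from the double count $k(\ell _{\Xi }-1)=\lambda (s-1)$ (where $\ell _{\Xi }$ is the intersection constant of Theorem \ref{PZM}(I) for an invariant partition $\Xi $ with parts of size $s$), which with $\ell _{\Xi }=2$ gives $s=k/\lambda +1$ for \emph{every} admissible partition --- a cleaner, more self-contained derivation that sidesteps the type comparison. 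For the final dichotomy, the paper observes that $\Delta \cap \Delta ^{\prime }$ is a block of imprimitivity for the $2$-transitive, hence primitive, group $G_{\Delta }^{\Delta }$ supplied by Theorem \ref{PZM}(III), so size greater than $1$ forces $\Delta \cap \Delta ^{\prime }=\Delta =\Delta ^{\prime }$; you instead pass to the common refinement $\Sigma ^{\prime \prime }$ and apply your part-size formula a second time to conclude that its part size $m$ lies in $\{1,c\}$. Both are sound: the paper's primitivity argument is shorter once (III) is invoked, while yours reuses a single counting tool uniformly and needs no information about the induced action of $G_{\Delta }$ on $\Delta $, at the modest cost of verifying that $\Sigma ^{\prime \prime }$ is again a legitimate input to Theorem \ref{PZM}, which you do.
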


\begin{proof}
Suppose there is a $G$-invariant partition $\Sigma ^{\prime }$ of the point
set of $\mathcal{D}$. Let $B$ any block of $\mathcal{D}$ and let $\Delta
^{\prime }\in \Sigma ^{\prime }$. If $\left\vert B\cap \Delta ^{\prime }\right\vert \geq 3$ then
either $(v,k,\lambda )=(45,12,3)$ or $(96,20,4)$ by \cite[Theorem 1.1]{Mo2},
whereas $\lambda >10$ by our assumptions. Thus, $\left\vert B\cap \Delta ^{\prime }\right\vert =2$.

If $\mathcal{%
D}$ is of different type with respect to $\Sigma 
$ and to $\Sigma ^{\prime }$, then $%
k=\lambda +2=\lambda ^{2}/2$, which has no integer solutions. Therefore $%
\mathcal{D}$ is of the same type with respect to $\Sigma $ and to $\Sigma ^{\prime }$, and hence $d^{\prime}=d$ and $c^{\prime}=c$. 

Let $\Delta
\in \Sigma $ such that $\Delta \cap \Delta ^{\prime }\neq \varnothing $. If $%
\left\vert \Delta \cap \Delta ^{\prime }\right\vert >1$ then $\Delta =\Delta
^{\prime }$ since $G$ induces a $2$-transitive group on $\Delta $ and $%
\left\vert \Delta \right\vert =\left\vert \Delta ^{\prime }\right\vert $.
Therefore $\Sigma =\Sigma ^{\prime }$ which is (2).
\end{proof}

\bigskip

\begin{proposition}
\label{DivanDan}$G^{\Sigma }$ acts primitively on $\Sigma $. Moreover, and
one of the following holds:

\begin{enumerate}
\item $G(\Sigma )\neq 1$ and $\mathrm{Soc}(G_{\Delta }^{\Delta })\trianglelefteq
G(\Sigma )^{\Delta }$;

\item $G(\Sigma )=1$ and $G$ acts point-quasiprimitively on $\mathcal{D}$.
\end{enumerate}
\end{proposition}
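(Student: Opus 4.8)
The plan is to prove the two assertions in sequence: first that $G^{\Sigma}$ acts primitively on $\Sigma$, and then to establish the dichotomy in cases (1) and (2). For the primitivity of $G^{\Sigma}$, I would argue by contradiction: suppose $\Sigma$ admits a nontrivial $G$-invariant refinement or coarsening, i.e., there is a $G$-invariant partition $\Lambda$ of $\Sigma$ into blocks of size $m$ with $1<m<d$. Pulling this back to $\mathcal{P}$ produces a $G$-invariant partition $\Sigma^{\ast}$ of $\mathcal{P}$ into $d/m$ blocks of size $cm$, which is strictly coarser than $\Sigma$. Now I invoke Lemma \ref{Mpomoc}: any further $G$-invariant partition of $\mathcal{P}$ into blocks of imprimitivity must have the \emph{same} block size $c$ as $\Sigma$ (and moreover must either equal $\Sigma$ or meet it in singletons). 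Since $\Sigma^{\ast}$ has block size $cm>c$, this is a contradiction. Hence $\Sigma$ is not properly coarsened in a $G$-invariant way; the same argument applied to a hypothetical proper refinement of $\Sigma$ — which would also be a $G$-invariant partition of $\mathcal{P}$ into blocks of imprimitivity of size $c/m'<c$, again contradicting Lemma \ref{Mpomoc} — shows $\Sigma$ is maximal and minimal, i.e., $G^{\Sigma}$ is primitive on $\Sigma$. (I should double-check that Lemma \ref{Mpomoc} as stated covers the case where the new partition is comparable to $\Sigma$: in conclusion (2) it is forced that $|\Delta\cap\Delta'|\le 1$, which for block sizes larger than $c$ is impossible, and for the refinement case the block size equality $c'=c$ already kills it.)

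For the second part, the starting point is the standard fact that the kernel $G(\Sigma)$ of the action of $G$ on $\Sigma$ is a normal subgroup of $G$, and that $G(\Sigma)\trianglelefteq G_{\Delta}$ acts on each block $\Delta\in\Sigma$. Suppose first that $G(\Sigma)=1$. Then $G\cong G^{\Sigma}$ acts faithfully and primitively on $\Sigma$ by the first part. I want to conclude $G$ is point-quasiprimitive on $\mathcal{D}$, i.e., every nontrivial normal subgroup of $G$ is transitive on $\mathcal{P}$. Let $1\ne M\trianglelefteq G$. Since $G$ is primitive on $\Sigma$ and $M\ne 1$ maps onto a nontrivial normal subgroup of the primitive group $G^{\Sigma}=G$, $M$ is transitive on $\Sigma$. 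It then remains to show $M$ is transitive on each block $\Delta$; equivalently, that $M_{\Delta}$ is transitive on $\Delta$. Here I would use flag-transitivity: $G$ is flag-transitive, so $G_{\Delta}^{\Delta}$ is transitive on $\Delta$ (in fact $2$-transitive when $\ell=2$, which is our case (V) — see Theorem \ref{PZM}(III)). Since $M$ is normal and transitive on $\Sigma$, the image $M_{\Delta}^{\Delta}$ is a normal subgroup of $G_{\Delta}^{\Delta}$; a nontrivial normal subgroup of a $2$-transitive group is transitive, so it suffices to check $M_{\Delta}^{\Delta}\ne 1$. This is the delicate point: one must rule out $M_{\Delta}$ acting trivially on $\Delta$, i.e., $M_{\Delta}\le G(\Delta)$ — but since $G(\Sigma)=1$ by assumption and $M$ is transitive on $\Sigma$, if $M_{\Delta}\le G(\Delta)$ for one (hence every) $\Delta$ then $\bigcap_{\Delta\in\Sigma}M_{\Delta}$ would... actually $M\cap G(\Sigma)$ would be forced to be small; I expect the cleaner route is: if $M_{\Delta}^{\Delta}=1$ then $M_{\Delta}$ fixes $\Delta$ pointwise, so $M$, being transitive on $\Sigma$ with all point stabilizers inside the respective $G(\Delta)$'s, has order dividing $d\cdot|\bigcap G(\Delta)|$; but $\bigcap_{\Delta} G(\Delta) \le G(\Sigma) = 1$ forces $|M|\mid d$ and $M$ regular on $\Sigma$, and then I use a counting/flag argument together with $k>\lambda(\lambda-3)/2$ to derive a contradiction (a regular normal subgroup of the primitive group $G$ on $\Sigma$ would make $G$ affine type, and one checks this is incompatible with the design parameters in case (V)).

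Now suppose $G(\Sigma)\ne 1$. I must show $\mathrm{Soc}(G_{\Delta}^{\Delta})\trianglelefteq G(\Sigma)^{\Delta}$. Since $\ell=2$, Theorem \ref{PZM}(III) gives that $G_{\Delta}^{\Delta}$ is $2$-transitive on $\Delta$, so $\mathrm{Soc}(G_{\Delta}^{\Delta})$ is a well-defined minimal normal subgroup which is either elementary abelian regular (affine case) or nonabelian simple (almost simple case). The subgroup $G(\Sigma)^{\Delta}$ is normal in $G_{\Delta}^{\Delta}$, because $G(\Sigma)\trianglelefteq G_{\Delta}$ (as $G(\Sigma)\trianglelefteq G$). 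So $G(\Sigma)^{\Delta}$ is a normal subgroup of the $2$-transitive group $G_{\Delta}^{\Delta}$, hence either trivial or contains $\mathrm{Soc}(G_{\Delta}^{\Delta})$. It therefore suffices to rule out $G(\Sigma)^{\Delta}=1$, i.e., $G(\Sigma)\le G(\Delta)$. If this held for one $\Delta$, then by conjugating — noting that $G$ is transitive on $\Sigma$ and $G(\Sigma)$ is normal — we would get $G(\Sigma)\le\bigcap_{\Delta\in\Sigma}G(\Delta)=1$ (the intersection is the kernel of the action on $\mathcal{P}$, which is trivial since $G\le\Aut(\mathcal{D})$), contradicting $G(\Sigma)\ne 1$. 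Hence $G(\Sigma)^{\Delta}\ne 1$ and the claim follows. The main obstacle I anticipate is the argument in the $G(\Sigma)=1$ case ruling out a regular normal subgroup: one needs to show that $G$ of affine type on $\Sigma$ is inconsistent with the type-1 and type-2 parameter families of Theorem \ref{PZM}(V), which likely uses a divisibility or order estimate — $v=cd$ with $d$ a prime power, combined with $k(k-1)=(v-1)\lambda$ and $k=\lambda+2$ (type 1) or $k=\lambda^2/2$ (type 2) — to force a contradiction with $\lambda>10$. The rest reduces to routine facts about normal subgroups of $2$-transitive and primitive groups.
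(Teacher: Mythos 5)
Your treatment of the primitivity of $G^{\Sigma}$ and of case (1) matches the paper's: a proper coarsening of $\Sigma$ would be a further $G$-invariant partition with class size $cm>c$, contradicting the block-size rigidity of Lemma \ref{Mpomoc}; and in case (1) the conjugation argument forces $G(\Sigma)^{\Delta}\neq 1$, whence $G(\Sigma)^{\Delta}$, being normal in the $2$-transitive group $G_{\Delta}^{\Delta}$, contains its socle. The genuine problem sits exactly where you flag the "delicate point" in the quasiprimitive case. Having reduced to a nontrivial normal subgroup $M$ that is transitive on $\Sigma$ with $M_{\Delta}^{\Delta}=1$, you assert that $|M|$ divides $d\cdot|\bigcap_{\Delta}G(\Delta)|$, hence that $M$ is regular on $\Sigma$. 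This does not follow: $|M|=d\,|M_{\Delta}|$ with $M_{\Delta}=M(\Delta)\leq G(\Delta)$, and the triviality of $\bigcap_{\Delta}G(\Delta)$ places no bound on a single $M(\Delta)$. The fallback is also unsound on two counts: a regular normal subgroup of a primitive group need not be abelian, so it does not force $G^{\Sigma}$ to be of affine type; and even granting affine type, there is no quick incompatibility with the case (V) parameters --- $|\Sigma|=\lambda^{2}$ can perfectly well be a prime power, and the paper needs Theorem \ref{D1PqP} and a substantial Zsigmondy/Hering analysis to dispose of closely related affine configurations, so "one checks" is not available here.

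The missing step is that Lemma \ref{Mpomoc}, which you already used for primitivity, finishes this case as well. If $M_{\Delta}$ fixes $\Delta$ pointwise then $M_{x}=M_{\Delta}$ for every $x\in\Delta$, so each $M$-orbit on $\mathcal{P}$ has size $|M:M_{\Delta}|=|\Sigma|=d$; the orbits therefore form a $G$-invariant partition of $\mathcal{P}$ into $c$ classes of size $d$, and Lemma \ref{Mpomoc} forces the class size to equal $c$, i.e.\ $d=c$. This is impossible in both families of Theorem \ref{PZM}(V): for type 1, $\lambda^{2}>\lambda+2$, and for type 2, $(\lambda^{2}-2\lambda+2)/2>(\lambda+2)/2$ once $\lambda>3$. (The same orbit-partition argument shows directly that any nontrivial normal subgroup transitive on $\Sigma$ is transitive on $\mathcal{P}$, which is how the paper's terse justification of case (2) should be read.) With that replacement your argument is complete and coincides with the paper's.
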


\begin{proof}
It follows from Lemma \ref{Mpomoc} and \cite[Theorem 1.5A]{DM}
that $G^{\Sigma }$ acts primitively on $\Sigma $.

Assume that $G(\Sigma )\neq 1$. If there is $\Delta ^{\prime }\in \Sigma $
such that $G(\Sigma )\leq G(\Delta ^{\prime })$. Then $G(\Sigma )\leq
G(\Delta ^{\prime \prime })$ for each $\Delta ^{\prime \prime }\in \Sigma $,
and hence $G(\Sigma )=1$, since $G(\Sigma )\vartriangleleft G$ and $G$ acts
transitively on $\Sigma $. However, it contradicts the assumption $G(\Sigma )\neq 1$.
Thus $G(\Sigma )\nleq G(\Delta )$ for each $\Delta \in \Sigma $. Then $1\neq
G(\Sigma )^{\Delta }\trianglelefteq G_{\Delta }^{\Delta }$, and hence $%
\mathrm{Soc}(G_{\Delta }^{\Delta })\trianglelefteq G(\Sigma )^{\Delta }$ by \cite[Theorem 4.3B]{DM} since $%
G_{\Delta }^{\Delta }$ is $2$-transitive on $\Delta $.

Assume that $G(\Sigma )=1$. Let $N$ be any normal subgroup of $G$. Then $N=G$
since $G(\Sigma )=1$ and since $G^{\Sigma }$ acts primitively on $\Sigma $.
Therefore, $G$ acts point-quasiprimitively on $\mathcal{D}$.
\end{proof}

\begin{lemma}
\label{Fixpoints}Let $\Delta \in \Sigma $ and $\gamma \in G(\Delta )$, $%
\gamma \neq 1$. Then one of the following holds:

\begin{enumerate}
\item $\mathcal{D}$ is of type 1 and $\left\vert \mathrm{Fix}(\gamma
)\right\vert \leq (\lambda +2)\lambda $;

\item $\mathcal{D}$ is of type 2 and $\left\vert \mathrm{Fix}(\gamma
)\right\vert \leq \lambda ^{2}/2+\sqrt{\lambda ^{2}/2-\lambda }$
\end{enumerate}
\end{lemma}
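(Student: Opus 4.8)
The plan is to bound the number of fixed points of a nontrivial element $\gamma\in G(\Delta)$ by exploiting that $\gamma$ fixes every point outside $\Delta$ (so $\mathrm{Fix}(\gamma)\supseteq\mathcal P\setminus\Delta$ would force $\gamma$ to fix too many points, whereas in fact $\gamma$ fixes only a proper subset of $\Delta$ together with whatever constraints the design structure imposes). More precisely, I would start from the observation that since $G(\Delta)$ acts trivially on $\mathcal P\setminus\Delta$ but $G_\Delta^\Delta$ is $2$-transitive on $\Delta$ (Theorem \ref{PZM}(III), as $\ell=2$ in case (V)), the element $\gamma$ fixes some but not all of the $c$ points of $\Delta$; write $f=|\mathrm{Fix}(\gamma)\cap\Delta|$, so $0\le f\le c-2$ and $|\mathrm{Fix}(\gamma)|=(v-c)+f$ is NOT what we want — rather the point is that $\gamma$ moves at least two points of $\Delta$, and we must instead count using the block structure. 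The correct route: a nontrivial $\gamma$ fixing all of $\mathcal P\setminus\Delta$ must, by a standard fixed-block argument (an automorphism fixing many points fixes many blocks, cf.\ the use of \cite{La} in the proof of Theorem \ref{MMS}), have a controlled fixed structure, and one uses the Fisher-type inequality $|\mathrm{Fix}(\gamma)|\le$ (a function of $k,\lambda$) valid for any automorphism of a symmetric design.

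The cleanest tool is the classical bound for automorphisms of symmetric $2$-designs: if $g\in\Aut(\mathcal D)$ then the set of fixed points and the set of fixed blocks have the same cardinality, and if $g\ne1$ this common value $F$ satisfies $F\le k+\sqrt{k-\lambda}$ — this is the standard consequence of the fact that the fixed points and fixed blocks form a symmetric configuration, or equivalently it follows from eigenvalue estimates on the incidence matrix (this is essentially \cite[Corollary 3.7]{La} cited earlier). So the first step is to invoke this: $|\mathrm{Fix}(\gamma)|\le k+\sqrt{k-\lambda}$. The second step is pure arithmetic: substitute the parameters of the two types. For type 1, $k=\lambda(\lambda+1)=\lambda^2+\lambda$ and $k-\lambda=\lambda^2$, so $\sqrt{k-\lambda}=\lambda$ and $k+\sqrt{k-\lambda}=\lambda^2+2\lambda=(\lambda+2)\lambda$, which is exactly bound (1). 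For type 2, $k=\lambda^2/2$ and $k-\lambda=\lambda^2/2-\lambda$, so $k+\sqrt{k-\lambda}=\lambda^2/2+\sqrt{\lambda^2/2-\lambda}$, which is exactly bound (2). So once the general fixed-point bound is in hand, both cases drop out by direct substitution of the parameters recorded in Theorem \ref{PZM}(V.1) and (V.2).

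The hypothesis $\gamma\in G(\Delta)$, $\gamma\ne1$ is only used to guarantee $\gamma\ne1$ as an automorphism of $\mathcal D$ (so that the strict bound applies); the membership in $G(\Delta)$ itself is not needed for the inequality, though presumably the lemma is phrased this way because it is exactly the situation where it will be applied later (to bound $|G(\Delta)|$ via a counting argument over $\mathcal P\setminus\Delta$, which this lemma shows is a substantial fraction of $\mathcal P$). I would therefore write the proof as: ``Since $\gamma\ne1$, by \cite[Corollary 3.7]{La} (or the standard estimate on fixed substructures of a symmetric design) we have $|\mathrm{Fix}(\gamma)|\le k+\sqrt{k-\lambda}$; now substitute the parameters from Theorem \ref{PZM}(V).'' The only thing requiring care is to confirm that the cited result of \cite{La} gives precisely $k+\sqrt{k-\lambda}$ in the form needed, and — if it is stated instead in terms of a quantity like $\frac12(k+\lambda-1+\sqrt{\cdots})$ or similar — to do the elementary algebra reconciling it with the displayed bounds; this reconciliation is the only potential obstacle, and it is routine.
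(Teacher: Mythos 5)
Your proof is correct and is essentially the paper's: both invoke Lander's fixed-point bound for a non-trivial automorphism of a symmetric design and then substitute the type‑1/type‑2 parameters, the only difference being that the paper quotes the bound as $\left\vert \mathrm{Fix}(\gamma)\right\vert \leq \frac{\lambda}{k-\sqrt{k-\lambda}}\cdot v$, which equals your $k+\sqrt{k-\lambda}$ via $k(k-1)=(v-1)\lambda$. Your arithmetic gives $(\lambda+2)\lambda$ in case (1), which matches the statement (the paper's proof body misprints this value as $(\lambda+1)\lambda$).
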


\begin{proof}
Let $\Delta \in \Sigma $ and $\gamma \in G(\Delta )$, $\gamma \neq 1$. It
follows from by \cite[Theorem 3.1]{La} that%
\begin{equation}
\left\vert \mathrm{Fix}(\gamma )\right\vert \leq \frac{\lambda }{k-\sqrt{%
k-\lambda }}\cdot \left\vert \Delta \right\vert \cdot \left\vert \Sigma
\right\vert  \label{Fixgamma}
\end{equation}%
where $\frac{\lambda }{k-\sqrt{k-\lambda }}\cdot \left\vert \Delta
\right\vert \cdot \left\vert \Sigma \right\vert $ is either $(\lambda
+1)\lambda $ or $\lambda ^{2}/2+\sqrt{\lambda ^{2}/2-\lambda }$ according to
whether $\mathcal{D}$ is of type 1 or 2 respectively.
\end{proof}

\bigskip

\begin{corollary}
\label{CFixT}The following hold:
\begin{enumerate}
\item If $\mathcal{D}$ is of type 1, each prime divisor of $%
\left\vert G(\Delta )\right\vert $ divides $\lambda (\lambda -1)$.
\item If $\mathcal{D}$ is of type 2, each prime divisor of $%
\left\vert G(\Delta )\right\vert $ divides $\lambda (\lambda -1)(\lambda-2)(\lambda-3)$.
\end{enumerate}
\end{corollary}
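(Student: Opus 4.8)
The plan is to handle one prime divisor $p$ of $\left\vert G(\Delta)\right\vert$ at a time: by Cauchy's theorem pick a non-identity $p$-element $\gamma\in G(\Delta)$, then show $\gamma$ is forced to fix a large number of blocks of $\mathcal D$, and turn this (via the classical fact that an automorphism of a symmetric $2$-design fixes equally many points and blocks) into a lower bound for $\left\vert\mathrm{Fix}(\gamma)\right\vert$ that, when confronted with the upper bound of Lemma \ref{Fixpoints}, forces $p$ to divide the relevant product.

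Here is the chain of steps. Fix $\gamma\in G(\Delta)$ of prime order $p$; then $\gamma\neq 1$ and $\gamma$ fixes $\Delta$ pointwise. Since $\mathcal D$ is of type $1$ or $2$, we are in case (V) of Theorem \ref{PZM}, so $\ell=2$: every block of $\mathcal D$ meeting $\Delta$ meets it in exactly two points. Hence, for a pair $\{x,y\}\subseteq\Delta$, the set of the $\lambda$ blocks containing $x$ and $y$ is $\gamma$-invariant (as $\gamma$ fixes $x$ and $y$), the sets arising from distinct pairs of $\Delta$ are pairwise disjoint, and their union is exactly the family of blocks meeting $\Delta$. Because $\gamma$ has prime order $p$, its orbits on the $\lambda$ blocks through $\{x,y\}$ have size $1$ or $p$, so the number $f_{xy}$ of those fixed by $\gamma$ satisfies $f_{xy}\equiv\lambda\pmod p$, whence $f_{xy}\geq s$, where $s\in\{0,1,\dots,p-1\}$ is the least non-negative residue of $\lambda$ modulo $p$. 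Summing over the $\binom{c}{2}$ pairs contained in $\Delta$, the automorphism $\gamma$ fixes at least $\binom{c}{2}s$ blocks, hence $\left\vert\mathrm{Fix}(\gamma)\right\vert\geq\binom{c}{2}s$.

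It then remains to combine this with Lemma \ref{Fixpoints}. If $\mathcal D$ is of type $1$, then $c=\lambda+2$ and $\left\vert\mathrm{Fix}(\gamma)\right\vert\leq(\lambda+2)\lambda$, so $\tfrac{\lambda+1}{2}\,s\leq\lambda$, i.e. $s\leq\tfrac{2\lambda}{\lambda+1}<2$; thus $s\in\{0,1\}$, that is $p\mid\lambda$ or $p\mid\lambda-1$, so $p\mid\lambda(\lambda-1)$. If $\mathcal D$ is of type $2$, then $c=\tfrac{\lambda+2}{2}$, hence $\binom{c}{2}=\tfrac{\lambda(\lambda+2)}{8}$, while $\left\vert\mathrm{Fix}(\gamma)\right\vert\leq\tfrac{\lambda^{2}}{2}+\sqrt{\tfrac{\lambda^{2}}{2}-\lambda}<\tfrac{\lambda^{2}}{2}+\tfrac{\lambda}{\sqrt2}$; this gives $s<\tfrac{4\lambda+4\sqrt2}{\lambda+2}<4$ (since $4\sqrt2<8$), so $s\in\{0,1,2,3\}$ and $p$ divides one of $\lambda,\lambda-1,\lambda-2,\lambda-3$, i.e. $p\mid\lambda(\lambda-1)(\lambda-2)(\lambda-3)$. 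As $p$ was an arbitrary prime divisor of $\left\vert G(\Delta)\right\vert$, both claims follow.

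I do not anticipate a genuine obstacle. The only points that need care are: (i) the bookkeeping that, because $\ell=2$, the sets of blocks through the various pairs contained in $\Delta$ are pairwise disjoint, so that the $f_{xy}$ may be summed without overcounting; and (ii) the passage from "$\gamma$ fixes many blocks" to "$\gamma$ has many fixed points", which is precisely where symmetry of $\mathcal D$ is used and which is what makes the upper bound of Lemma \ref{Fixpoints} applicable.
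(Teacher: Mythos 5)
Your proof is correct and follows essentially the same route as the paper's: take a $p$-element of $G(\Delta)$, count the blocks it must fix through pairs of points of $\Delta$ (at least $\binom{c}{2}\,s$ with $s\equiv\lambda\pmod p$, using $\ell=2$ for disjointness and the equal count of fixed points and blocks in a symmetric design), and compare with the upper bound of Lemma \ref{Fixpoints} to force $s\leq 1$ in type 1 and $s\leq 3$ in type 2. The only cosmetic difference is that the paper first sets aside the primes dividing $\lambda$, whereas you absorb that case as $s=0$.
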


\begin{proof}
Let $\gamma $ be $w$-element of $G(\Delta )$, where $w$ is a prime such that 
$w\nmid \lambda $. Since $\gamma $ fixes $\Delta $ pointwise, $\gamma $
fixes at least $\mu $ of the $\lambda $ blocks of $\mathcal{D}$ incident
with any two distinct points of $\Delta $, where $\mu =\lambda \pmod{w}$.
Clearly, these fixed blocks are pairwise distinct. 

If $\mathcal{D}$ is of type 1 then $\gamma $ fixes $\mu \frac{%
(\lambda +2)(\lambda +1)}{2}$ blocks of $\mathcal{D}$, and hence $\left\vert \mathrm{Fix}(\gamma )\right\vert \geq \mu 
\frac{(\lambda +2)(\lambda +1)}{2}$. Therefore $\mu =1$ by Lemma \ref%
{Fixpoints}(1) and the assertion (1) follows.

If $\mathcal{D}$ is of type 2 then $\gamma $ fixes $\mu \frac{\lambda}{4}\left(\frac{\lambda}{2}+1\right)$ blocks of $\mathcal{D}$, and hence $\left\vert \mathrm{Fix}(\gamma )\right\vert \geq  
\mu \frac{\lambda}{4}\left(\frac{\lambda}{2}+1\right)$. Therefore $\mu \leq 3$ by Lemma \ref%
{Fixpoints}(2) and the assertion (2) follows.
\end{proof}

\bigskip

\begin{lemma}
\label{2S}Let $x$ be any point of $\mathcal{D}$. Then $G(\Sigma )_{x}$ lies in
a Sylow $2$-subgroup of $G(\Sigma)$.
\end{lemma}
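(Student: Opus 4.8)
The plan is to prove the equivalent statement that $G(\Sigma)_{x}$ is a $2$-group, since a $2$-subgroup of $G(\Sigma)$ automatically lies in a Sylow $2$-subgroup of $G(\Sigma)$. Suppose not: choose an odd prime $w$ dividing $\left|G(\Sigma)_{x}\right|$ and a $w$-element $\psi\in G(\Sigma)_{x}$, $\psi\neq 1$.

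I would first invoke the standard fact that a non-identity automorphism of a symmetric design fixes as many blocks as points — the permutation matrices it induces on points and on blocks are conjugate via the non-singular incidence matrix, see \cite{Demb} — which is also the content of \cite[Theorem 3.1]{La}. Since $\psi$ fixes $x$, it fixes at least one point, hence at least one block $B$ of $\mathcal{D}$. Now I would use $\psi\in G(\Sigma)$ together with $\ell=2$ (we are in case (V) of Theorem \ref{PZM}): for every $\Delta'\in\Sigma$ the set $B\cap\Delta'$ has size $0$ or $2$ and is stabilised by $\psi$, and an element of odd order fixing a set of size at most $2$ fixes it pointwise; hence $\psi$ fixes $B$ pointwise, so $\left|\mathrm{Fix}(\psi)\right|\geq k\geq 2$. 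Applying the point--block equality once more, $\psi$ fixes at least $k\geq 2$ blocks, so there is a second fixed block $B'\neq B$, which by the same argument $\psi$ also fixes pointwise. Since any two distinct blocks of a symmetric $2$-$(v,k,\lambda)$ design meet in exactly $\lambda$ points, this yields $\left|\mathrm{Fix}(\psi)\right|\geq\left|B\cup B'\right|=2k-\lambda$.

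On the other hand, \cite[Theorem 3.1]{La} — the same estimate used in Lemma \ref{Fixpoints}, valid for every non-identity automorphism of a symmetric design — gives $\left|\mathrm{Fix}(\psi)\right|\leq\tfrac{\lambda v}{k-\sqrt{k-\lambda}}$, and since $k(k-1)=(v-1)\lambda$ this right-hand side simplifies to $k+\sqrt{k-\lambda}$. Comparing the two bounds gives $2k-\lambda\leq k+\sqrt{k-\lambda}$, that is, $k-\lambda\leq\sqrt{k-\lambda}$, and hence $k-\lambda\leq 1$. But $\mathcal{D}$ is of type 1 or 2 and $\lambda>10$, so $k-\lambda$ equals $\lambda^{2}$ or $\lambda(\lambda-2)/2$ respectively, in either case far larger than $1$ — a contradiction. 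Therefore $G(\Sigma)_{x}$ is a $2$-group, as claimed.

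The argument is short; the only point needing care is that the fixed-point bound of \cite{La} must be applied to the arbitrary $w$-element $\psi$, not merely to a pointwise block stabiliser as in Lemma \ref{Fixpoints}. This causes no difficulty, since $\left|\mathrm{Fix}(g)\right|\leq k+\sqrt{k-\lambda}$ holds for any $g\neq 1$ on a symmetric design — indeed the same inequality is used in this very form in the proof of Theorem \ref{MMS}, there cited as \cite[Corollary 3.7]{La}. No appeal to the $2$-transitivity of $G_{\Delta}^{\Delta}$ or to Proposition \ref{DivanDan} is needed.
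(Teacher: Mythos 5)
Your proof is correct and follows essentially the same route as the paper's: fix a block via the equal-numbers-of-fixed-points-and-blocks fact, use $\ell=2$ together with the odd order of $\psi$ to deduce that the fixed block is fixed pointwise, obtain a second such block, and contradict the Lander fixed-point bound with $\left|\mathrm{Fix}(\psi)\right|\geq 2k-\lambda$. Your closing algebra (simplifying the bound to $k+\sqrt{k-\lambda}$ and deducing $k-\lambda\leq 1$ uniformly) is a slightly tidier version of the paper's case-by-case computation for types 1 and 2, and your remark that the bound of Lemma \ref{Fixpoints} applies to any non-identity automorphism, not just elements of $G(\Delta)$, correctly addresses the one point where the paper's citation of that lemma is looser than its actual use.
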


\begin{proof}
Let $x$ be any point of $\mathcal{D}$ and let $\varphi $ be any $w$-element of $G(\Sigma )_{x}$%
, where $w$ is an odd prime. Then $\varphi $ fixes at least a blocks $B$ of $%
\mathcal{D}$ by \cite[Theorem 3.1]{La}. Since $B$ intersects each element of 
$\Sigma $ in $0$ or $2$ points, and since $\varphi $ is a $w$-element of $%
G(\Sigma )$, it follows that $\varphi $ fixes $B$ pointwise. Therefore, $%
\varphi $ fixes at least $k$ points of $\mathcal{D}$. Then $\varphi $ fixes
at least $k$ blocks of $\mathcal{D}$ again by \cite[Theorem 3.1]{La}. Let $%
B^{\prime }$ be further block fixed by $\varphi $. We may repeat the
previous argument with $B^{\prime }$ in the role of $B$ thus obtaining $%
\varphi $ fixing $B^{\prime }$ pointwise. Then $\varphi $ fixes at least $%
2k-\lambda $ points of $\mathcal{D}$, as $\left\vert B\cap B^{\prime
}\right\vert =\lambda $. Thus $\left\vert \mathrm{Fix}(\alpha )\right\vert
\geq 2k-\lambda $ and hence $\left\vert \mathrm{Fix}(\alpha )\right\vert $
is greater than or equal to $\lambda (2\lambda +1)$ or $\lambda ^{2}-\lambda 
$ according to whether $\mathcal{D}$ is of type 1 or 2 respectively.
However, this contradicts Lemma \ref{Fixpoints}, as $\lambda >2$.$%
\allowbreak $
\end{proof}

\bigskip

\begin{lemma}
\label{strike}If $G(\Sigma )\neq 1$ and $v$ is odd, then the following hold:

\begin{enumerate}
\item $G(\Sigma )= Soc(G_{\Delta })$ is an elementary abelian $p$-group, $p$ an odd prime, acting regularly on $\Delta $;

\item $G^{\Sigma }\leq GL_{d}(p)$, with $p^{d}=\left\vert \Delta \right\vert 
$ and $\Delta \in \Sigma $.
\end{enumerate}
\end{lemma}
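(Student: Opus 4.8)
The plan is to play off the two very strong constraints available here: on the one hand Proposition~\ref{DivanDan}(1), which (since $G(\Sigma)\ne 1$) gives $\mathrm{Soc}(G_{\Delta}^{\Delta})\trianglelefteq G(\Sigma)^{\Delta}$ inside the $2$-transitive group $G_{\Delta}^{\Delta}$; on the other hand Lemma~\ref{2S}, which makes every point stabiliser of $G(\Sigma)$ a $2$-group.

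\emph{Reductions.} First I would record that $v$ odd forces $c=\vert\Delta\vert$ odd: in a $2$-design of type $1$, $c=\lambda+2$ and $v=\lambda^{2}(\lambda+2)$, so $v$ odd forces $\lambda$ odd, whence $c$ odd; in type $2$, $c=(\lambda+2)/2$, and since the subcase $\lambda=2w^{2}$ ($w$ odd) gives the even number $c=w^{2}+1$, $v$ odd forces $\lambda\equiv 0\pmod 4$, whence again $c$ odd. Since $G(\Sigma)\ne 1$, Proposition~\ref{DivanDan}(1) and Theorem~\ref{PZM}(III) give $\mathrm{Soc}(G_{\Delta}^{\Delta})\trianglelefteq G(\Sigma)^{\Delta}$ with $G_{\Delta}^{\Delta}$ $2$-transitive on $\Delta$; as the socle of a $2$-transitive group is transitive, $G(\Sigma)^{\Delta}$, hence $G(\Sigma)$, is transitive on $\Delta$, and conjugating (using $G(\Sigma)\trianglelefteq G$ and $G$ transitive on $\Sigma$) $G(\Sigma)$ is transitive on every block of $\Sigma$. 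Thus $[G(\Sigma):G(\Sigma)_{x}]=c$ is odd, so by Lemma~\ref{2S} $G(\Sigma)_{x}$ is a Sylow $2$-subgroup of $G(\Sigma)$, and each $(G(\Sigma)^{\Delta})_{x}$ is a $2$-group.

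\emph{Identifying a regular elementary abelian normal subgroup.} Let $M$ be a minimal normal subgroup of $G$ with $M\le G(\Sigma)$. The $M$-orbits form a $G$-invariant partition refining $\Sigma$; applying Lemma~\ref{Mpomoc} to it (the only alternative, orbits of size $\le 1$, would give $M=1$), $M$ is transitive on $\Delta$, so $M^{\Delta}\trianglelefteq G_{\Delta}^{\Delta}$ is nontrivial and contains $\mathrm{Soc}(G_{\Delta}^{\Delta})$. Write $M=T^{m}$ with $T$ characteristically simple. If $T$ is nonabelian simple and $G_{\Delta}^{\Delta}$ is almost simple, then $T_{0}:=\mathrm{Soc}(G_{\Delta}^{\Delta})\le M^{\Delta}$ is a $2$-transitive simple group of the odd degree $c$, and since $(M^{\Delta})_{x}$ is a $2$-group (a quotient of $M_{x}\le G(\Sigma)_{x}$), $(T_{0})_{x}$ is a Sylow $2$-subgroup of $T_{0}$ acting transitively on the remaining $c-1$ points --- but, $c\ge 7$ here, the list of $2$-transitive simple groups of odd degree contains no such $T_{0}$. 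If $G_{\Delta}^{\Delta}$ is affine, then $\mathrm{Soc}(G_{\Delta}^{\Delta})=V$ is elementary abelian and $V\trianglelefteq M^{\Delta}\cong T^{m'}$ ($m'\ge 1$), which is impossible as $T^{m'}$ has no nontrivial abelian normal subgroup. Hence $M$ is elementary abelian; being abelian and transitive on the odd-order set $\Delta$ it is regular, so $\vert M\vert=c=p^{n}$ with $p$ an odd prime, $M^{\Delta}=\mathrm{Soc}(G_{\Delta}^{\Delta})$, and (since an elementary abelian group cannot contain a nonabelian simple socle) $G_{\Delta}^{\Delta}$ is of affine type.

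\emph{From $M$ to $G(\Sigma)$, and conclusion.} Since $M$ is a normal Sylow $p$-subgroup of $G(\Sigma)$ it is characteristic there, and as $M\cap G(\Sigma)_{x}=M_{x}=1$ we get $G(\Sigma)=M\rtimes Q$ with $Q:=G(\Sigma)_{x}$ a $2$-group by Lemma~\ref{2S}. Now any involution $\tau\in Q$ normalises $M$ and fixes $x$, hence (identifying each block of $\Sigma$ carrying a $\tau$-fixed point with $M$ via the regular action and using $M\trianglelefteq G$) $\tau$ acts on it by the single automorphism $m\mapsto m^{\tau}$ of $M$, so fixes exactly $\vert C_{M}(\tau)\vert$ of its points; as $c$ is odd every block of $\Sigma$ carries a $\tau$-fixed point, whence $\vert\mathrm{Fix}(\tau)\vert=d\,\vert C_{M}(\tau)\vert$. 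If $C_{M}(\tau)\ne 1$ this is at least $3d$, exceeding the bound of Lemma~\ref{Fixpoints} (for type $1$, $3\lambda^{2}>(\lambda+2)\lambda$, and similarly for type $2$); so every involution of $Q$ acts fixed-point-freely on $M$, i.e. as $-\mathrm{id}$, forcing $Q$ to have a unique involution and so act Frobenius-freely on $M$. Then $G(\Sigma)\cap G(\Delta)\trianglelefteq G(\Sigma)$ lies in $Q$ and centralises $M$ (it meets $M$ trivially), hence lies in $C_{G(\Sigma)}(M)=M$, hence is $1$: $G(\Sigma)$ acts faithfully on $\Delta$. It remains to show $Q=1$ --- \textbf{this is the main obstacle}: the residual case is $G(\Sigma)=M\rtimes Q$ Frobenius with $Q$ a nontrivial cyclic or generalised quaternion group, and it has to be excluded by the combinatorics of $\mathcal D$ (a $\tau$-fixed block of $\mathcal D$ must meet each block of $\Sigma$ in a pair swapped by $\tau$, hence contains no $\tau$-fixed point, which together with the count of fixed points and fixed blocks of $\mathcal D$ and the primitivity of $G^{\Sigma}$ on $\Sigma$ forces $Q=1$). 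Granting this, $G(\Sigma)=M$. Moreover $G(\Delta)\le C_{G_{\Delta}}(M)$ (an element fixing $\Delta$ pointwise commutes with the faithful regular group $M$ on $\Delta$, hence with $M$), so it acts semiregularly on every block of $\Sigma$, and the resulting value of $\vert\mathrm{Fix}(\gamma)\vert$ against Lemma~\ref{Fixpoints} and the primitivity of $G^{\Sigma}$ leaves only $G(\Delta)=1$; since $M\trianglelefteq G_{\Delta}$ is irreducible and $C_{G_{\Delta}}(M)=M\times G(\Delta)=M$, we get $\mathrm{Soc}(G_{\Delta})=M=G(\Sigma)$, which is~(1). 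Finally $C_{G}(M)\trianglelefteq G$, $C_{G}(M)\cap G_{\Delta}=M$, so by primitivity of $G^{\Sigma}$ either $C_{G}(M)=M$ or $C_{G}(M)$ is transitive on $\Sigma$ and hence regular on $\mathcal P$, the latter being incompatible with the parameters/flag-transitivity as above; thus $C_{G}(M)=M$ and $G^{\Sigma}=G/G(\Sigma)=G/M$ embeds into $\mathrm{Aut}(M)=GL_{d}(p)$ with $p^{d}=\vert M\vert=\vert\Delta\vert$, which is~(2).
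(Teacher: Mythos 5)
There is a genuine gap, and it sits exactly where you flag it: the elimination of $Q=G(\Sigma)_{x}\neq 1$. Everything up to that point is sound in outline (your reduction to an elementary abelian minimal normal subgroup $M$ transitive on $\Delta$, and the observation that every involution of $Q$ must invert $M$, are fine), but the parenthetical sketch you offer to force $Q=1$ does not close the case. Your proposed count is a mod-$2$ argument: the $|\Sigma|$ fixed blocks of $\tau$ carry no fixed points, while any two of the $|\Sigma|$ fixed points lie on a number of fixed blocks congruent to $\lambda$ modulo $2$. That yields a contradiction only when $\lambda$ is odd, i.e.\ only for designs of type 1; for type 2 with $v$ odd one has $\lambda\equiv 0\pmod 4$ (as you note yourself in your reductions), so the congruence gives no information, and the raw fixed-point count $|\mathrm{Fix}(\tau)|=|\Sigma|$ does not violate Lemma \ref{Fixpoints} either. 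So the "main obstacle" is genuinely unresolved. The paper's device for precisely this case is different and you should compare it with yours: writing $S$ for a Sylow $2$-subgroup of $G(\Sigma)$ (a point stabiliser, fixing exactly one point in each $\Delta'\in\Sigma$), the translates $\{\mathrm{Fix}(S)^{\gamma}:\gamma\in G(\Sigma)\}$ are pairwise disjoint or equal (two of them sharing a point $z$ forces the corresponding Sylow subgroups into $G(\Sigma)_{z}$, hence to coincide by Lemma \ref{2S}), so they form a $G$-invariant partition of $\mathcal{P}$ into $|\Delta|$ classes of size $|\Sigma|$; Lemma \ref{Mpomoc} then forces $|\Sigma|=|\Delta|$, i.e.\ $\lambda\leq 3$, a contradiction. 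The same partition built from the translates of $\mathrm{Fix}(Q)$ would finish your argument in one stroke and make most of your structural analysis of $M\rtimes Q$ unnecessary.

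Two smaller points. First, "abelian and transitive on $\Delta$, hence regular" applies to $M^{\Delta}$, not to $M$: you still owe an argument that the kernel $M\cap G(\Delta)$ is trivial before you may identify $\Delta$ with $M$ and write $|M|=c$ (Corollary \ref{CFixT} does not immediately exclude an odd prime dividing both $|G(\Delta)|$ and $c$). Second, in the endgame the assertion that a transitive $C_{G}(M)^{\Sigma}$ leads to a regular normal subgroup "incompatible with the parameters/flag-transitivity" is not substantiated; the paper's route is cleaner: if $C_{G}(G(\Sigma))>G(\Sigma)$ then primitivity of $G^{\Sigma}$ forces $G(\Sigma)\leq Z(G)$, whence $G_{x}$ fixes $x^{G(\Sigma)}=\Delta$ pointwise, contradicting the $2$-transitivity of $G_{\Delta}^{\Delta}$.
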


\begin{proof}
Assume that $G(\Sigma )_{x}\neq 1$, where $x$ is any point of $\mathcal{D}$.
Then $G(\Sigma )_{x}$ is a Sylow $2$-subgroup of $G(\Sigma )$ by Lemma \ref%
{2S}. Denote $G(\Sigma )_{x}$ simply by $S$. Then $S$ fixes the same number $%
t$ of points in each $\Delta \in \Sigma $, where $t \geq 1$ since $v$ is odd. Therefore, $\left\vert \mathrm{Fix%
}(S)\right\vert =t\left\vert \Sigma \right\vert $. Now, if $\alpha $ is any
non-trivial element of $S$ then $\left\vert \mathrm{Fix}(\alpha )\right\vert
\geq t\left\vert \Sigma \right\vert $, and hence $t=1$ by Lemma \ref%
{Fixpoints}. Thus $S$ fixes a unique point on each $\Delta $.

Suppose that $\left\vert S\right\vert \geq 4$ and let $B$ be any block preserved by $S$
and $\Delta ^{\prime }$ be such that $\left\vert B\cap \Delta ^{\prime
}\right\vert =2$. Then there is a subgroup $S_{0}$ of $S$ of index at most $%
2 $ fixing $y$ and $y^{\prime }$ where $\left\{ y,y^{\prime }\right\} =B\cap
\Delta ^{\prime }$. Then $S_{0}\leq G(\Sigma )_{y}\cap G(\Sigma )_{y^{\prime
}}$, where $G(\Sigma )_{y}$ and $G(\Sigma )_{y^{\prime }}$ are two distinct
Sylow $2$-subgroups of $G(\Sigma )$, each of these fixing a unique point in $\Delta^{\prime} $. Suppose there is a point $z$ of $\mathcal{D}$ such that $%
z\in \mathrm{Fix}(G(\Sigma )_{y})\cap \mathrm{Fix}(G(\Sigma )_{y^{\prime }})$%
. The $\left\langle G(\Sigma )_{y},G(\Sigma )_{y^{\prime }}\right\rangle
\leq G(\Sigma )_{z}$ and hence $G(\Sigma )_{y}=G(\Sigma )_{y^{\prime
}}=G(\Sigma )_{z}$ since $G(\Sigma )_{z}$ is a Sylow $2$-subgroup of $%
G(\Sigma )$ by Lemma \ref{2S}. Then $G(\Sigma )_{y}$ fixes also $y^{\prime }$
in $\Delta ^{\prime }$ with $y^{\prime }\neq y$, and we reach a
contradiction. Thus $\mathrm{Fix}(G(\Sigma )_{y})\cap \mathrm{Fix}(G(\Sigma
)_{y^{\prime }})=\varnothing $, and hence $\left\vert \mathrm{Fix}%
(S_{0})\right\vert \geq 2\left\vert \Sigma \right\vert $ since $S_{0}\leq
G(\Sigma )_{y}\cap G(\Sigma )_{y^{\prime }}$. Now, if we use the above
argument this time with $\alpha \in S_{0}$, $\alpha \neq 1$, we reach a contradiction.
Therefore $\left\vert S\right\vert =2$. Also, $G(\Sigma )=O(G(\Sigma )).S$ by Proposition \ref{DivanDan}(1), and $%
\left\vert \mathrm{Fix}(S)\right\vert =\left\vert \Sigma \right\vert $.

Let $\Lambda =\left\{ Fix(S)^{\gamma }:\gamma \in G(\Sigma )\right\} $.
Since $S=G(\Sigma )_{x}$ is a Sylow $2$-subgroup of $G(\Sigma )$, $G(\Sigma
)\vartriangleleft G$ and $G$ acts point-transitively on $\mathcal{D}$, it
follows that $\Lambda $ is a $G$-invariant partition of the point set of $\mathcal{D}$ in $\left\vert \Delta
\right\vert $ blocks each of size $\left\vert \Sigma \right\vert $. Then $%
\left\vert \Sigma \right\vert =\left\vert \Delta \right\vert $ by
Lemma \ref{Mpomoc}(2) and hence $\lambda =2$, but this contradicts our
assumptions.

Assume that $G(\Sigma )_{x}=1$. Then $G(\Sigma )=O(G(\Sigma ))$. Moreover, $%
Soc(G_{\Delta }^{\Delta })\trianglelefteq G(\Sigma )^{\Delta }\cong G(\Sigma
)$ by Proposition \ref{DivanDan}(1). Then $G(\Sigma )= Soc(G_{\Delta
}^{\Delta })$ is an elementary abelian $p$-group for some odd prime $p$,
since $G(\Sigma )$ acts regularly on $\Delta $ and $\left\vert \Delta
\right\vert $ is odd. Hence $G(\Sigma )$ is abelian, and so $G(\Sigma
)\trianglelefteq C_{G}(G(\Sigma ))\trianglelefteq G$. If $C_{G}(G(\Sigma
))\neq G(\Sigma )$, then $G=C_{G}(G(\Sigma ))$ since $G^{\Sigma }$ is
primitive on $\Sigma $ by Proposition \ref{DivanDan}. This implies $%
G(\Sigma ) \leq Z(G)$ and hence $G_{x}\leq G(\Delta )$
for any $x\in \Delta $. This is a contradiction since $G_{\Delta }^{\Delta }$
is $2$-transitive on $\Delta $. Therefore $C_{G}(G(\Sigma ))=G(\Sigma
) = Soc(G_{\Delta }^{\Delta })$. Therefore $G^{\Sigma }\leq \mathrm{Aut}%
(G(\Sigma ))\cong GL_{d}(p)$.
\end{proof}

\bigskip

\section{Further Reductions}

The aim of this section is to prove the following reduction result:

\begin{theorem}
\label{AQP}One of the following holds:

\begin{enumerate}
\item $G(\Sigma )=1$ and $G$ acts point-quasiprimitively on $\mathcal{D}$.

\item $G(\Sigma )$ is a non-trivial self-centralizing elementary abelian $2$%
-subgroup of $G$ and the following hold:

\begin{enumerate}
\item $\mathcal{D}$ is a symmetric $2$-$(2^{d+2}(2^{d-1}-1)^{2},\allowbreak
2\left( 2^{d}-1\right) \left( 2^{d-1}-1\right) ,2(2^{d-1}-1))$ design with $%
d\geq 4$;

\item Either $G_{x}^{\Delta }\cong SL_{d}(2)$, or $G_{x}^{\Delta }\cong
A_{7} $ and $d=4$.

\item $G^{\Sigma }$ is almost simple.
\end{enumerate}
\end{enumerate}
\end{theorem}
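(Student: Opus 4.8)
The overall plan is to distinguish according to whether $G(\Sigma)$ is trivial. If $G(\Sigma)=1$, then Proposition \ref{DivanDan}(2) immediately yields case (1), so from now on I would assume $G(\Sigma)\neq 1$ and work towards case (2). Throughout one keeps in mind that, since we are in case (V) of Theorem \ref{PZM}, $\mathcal{D}$ is a $2$-design of type $1$ or $2$, $\ell=2$, every block meets every block of imprimitivity in $0$ or $2$ points, and, by Proposition \ref{DivanDan}(1), $\mathrm{Soc}(G_{\Delta}^{\Delta})\trianglelefteq G(\Sigma)^{\Delta}$ with $G_{\Delta}^{\Delta}$ $2$-transitive of degree $c=|\Delta|$.

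The first step — and the one I expect to be the main obstacle — is to rule out the possibility that $v$ is odd. In that case Lemma \ref{strike} forces $G(\Sigma)=\mathrm{Soc}(G_{\Delta})$ to be elementary abelian of odd order $p^{a}=c$ acting regularly on $\Delta$, and $G^{\Sigma}\leq GL_{a}(p)$; hence $G_{\Delta}^{\Delta}$ is an affine $2$-transitive group of odd prime-power degree $c$ with socle exactly $G(\Sigma)^{\Delta}$. Using the relation $c=\lambda+2$ or $c=(\lambda+2)/2$ (according to the type), the bound $\lambda>10$, Hering's classification of the point-stabilisers of affine $2$-transitive groups, Corollary \ref{CFixT} (which restricts the primes dividing $|G(\Delta)|$), Lemma \ref{PP}, and flag-transitivity of $G$ (so $|G|\geq vk$ while $|G|=c\,|G^{\Sigma}|\leq c\,|GL_{a}(p)|$), I would derive a contradiction. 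Already the crude estimates $|GL_{a}(p)|\geq|G^{\Sigma}|\geq|\Sigma|(c-1)$ and $|GL_{a}(p)|\geq|\Sigma|\,|G(\Delta)|\,|G_{x}^{\Delta}|$ dispose of small $a$, and of $G_{x}^{\Delta}\trianglerighteq SL_{a}(p)$ for all $a$ (which would force $|G^{\Sigma}|>|GL_{a}(p)|$); the remaining members of the Hering list are eliminated one by one, using that $\lambda$, $\lambda-1$, $\lambda-2$, $\lambda-3$ are essentially coprime to the orders occurring in $G_{x}^{\Delta}$.

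So $v$ is even. Next I would show that $G(\Sigma)$ is a $2$-group, exactly as in Lemma \ref{2S}: a non-trivial $w$-element $\varphi$ of $G(\Sigma)$ with $w$ an odd prime fixes a block $B$, fixes it pointwise (as $B$ meets every block of imprimitivity in $0$ or $2$ points and $w$ is odd), hence fixes at least $k$ points and then at least $k$ blocks; iterating with a second fixed block gives $|\mathrm{Fix}(\varphi)|\geq 2k-\lambda$, contradicting Lemma \ref{Fixpoints} since $\lambda>2$. Then $\mathrm{Soc}(G_{\Delta}^{\Delta})$ is a $2$-group, so $G_{\Delta}^{\Delta}$ is of affine type, $c=2^{d}$ for some $d$, and $G(\Sigma)^{\Delta}=\mathrm{Soc}(G_{\Delta}^{\Delta})$ is elementary abelian of order $2^{d}$ (a normal subgroup of $G_{\Delta}^{\Delta}$ properly containing the socle would meet $G_{x}^{\Delta}$, hence involve an odd prime, contradicting that $G(\Sigma)$ is a $2$-group). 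If $\mathcal{D}$ were of type $2$ this would give $\lambda=2^{d+1}-2\equiv 2\pmod{4}$ with $2^{d}-1$ not a square, contradicting Theorem \ref{PZM}(V.2); hence $\mathcal{D}$ is of type $1$, $\lambda=2^{d}-2=2(2^{d-1}-1)$, and $\lambda>10$ gives $d\geq 4$. Substituting into the type-$1$ parameters yields $v=2^{d+2}(2^{d-1}-1)^{2}$ and $k=2(2^{d}-1)(2^{d-1}-1)$, which is (2)(a).

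It remains to obtain (2)(b) and (2)(c). By \eqref{salvaMAOL}, $G_{x}^{\Delta}\cong G_{\Delta}^{\Delta}/G(\Sigma)^{\Delta}$ is a group transitive on the $2^{d}-1$ non-zero vectors of $\mathbb{F}_{2}^{d}$; by Hering's theorem it is (up to the known small exceptions, which for $2$-power degree are $A_{6}$ and $A_{7}$ at $d=4$) one of $SL_{a}(q)$, $Sp_{a}(q)$ or $G_2(q)'$ acting naturally with $q^{a}=2^{d}$, or lies in $\Gamma L_{1}(2^{d})$. Using flag-transitivity, Lemma \ref{PP} (applied to pairs of points in distinct blocks of imprimitivity) and the divisibility restrictions of Corollary \ref{CFixT} — for instance $2^{d}-1$ is coprime to $|G(\Delta)|$, so the $(2^{d}-1)$-part of $k$ divides $|G_{x}^{\Delta}|$ — one rules out all possibilities except $G_{x}^{\Delta}\cong SL_{d}(2)$ and $A_{7}$ ($d=4$), which is (2)(b). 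For (2)(c), note first that $G(\Sigma)^{\Delta}\cong(\mathbb{Z}_{2})^{d}$ has trivial Frattini subgroup, so $\Phi(G(\Sigma))^{\Delta}=1$, whence $\Phi(G(\Sigma))\leq G(\Delta)$ and, by normality in $G$ and transitivity of $G$ on $\Sigma$, $\Phi(G(\Sigma))=1$: thus $G(\Sigma)$ is elementary abelian. Since $|\Sigma|=\lambda^{2}=4(2^{d-1}-1)^{2}$ is not a prime power (as $2^{d-1}-1>1$ is odd), the primitive group $G^{\Sigma}$ has no abelian regular normal subgroup; in particular, if $C:=C_{G}(G(\Sigma))\neq G(\Sigma)$ then $C^{\Sigma}\neq 1$ is transitive on $\Sigma$, while $Z(C)\trianglelefteq G$ would produce such an abelian regular normal subgroup unless $Z(C)=G(\Sigma)$, and then a further argument using that $C$ is point-transitive against flag-transitivity of $G$ forces $C=G(\Sigma)$, so $G(\Sigma)$ is self-centralizing. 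Finally, the same observation that $|\Sigma|$ is not a prime power rules out the affine type for $G^{\Sigma}$, and the diagonal, product and twisted-wreath types are excluded because the stabiliser in $\mathrm{Soc}(G^{\Sigma})$ of a point of $\Sigma$ then fails to be large enough to admit $G_{x}^{\Delta}\cong SL_{d}(2)$ (or $A_{7}$) as a section in the way forced by $G^{\Sigma}_{\Delta}$ and flag-transitivity; hence $G^{\Sigma}$ is almost simple, giving (2)(c).
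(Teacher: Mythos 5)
Your skeleton matches the paper's (dispose of case (1) via Proposition \ref{DivanDan}(2), then split on the parity of $v$ and the type of $\mathcal{D}$), but the two hardest steps are not carried by the tools you name. The most serious gap is the $v$ odd case. There the decisive object is not $G_{x}^{\Delta}$ but $G^{\Sigma}$ viewed inside $GL_{d}(p)\cong\mathrm{Aut}(G(\Sigma))$, acting primitively on $\Sigma$ of degree $(p^{d}-2)^{2}$, which is coprime to $p$. Your crude bound $(p^{d}-1)(p^{d}-2)^{3}\mid|G^{\Sigma}|\leq|GL_{d}(p)|$ compares $p^{4d}$ with $p^{d^{2}}$ and therefore fails for all $d\geq 5$; and running through Hering's list for $G_{x}^{\Delta}$ constrains a different embedding, so the coprimality of $\lambda,\lambda-1,\lambda-2,\lambda-3$ with $|G_{x}^{\Delta}|$ yields nothing against, say, $G_{x}^{\Delta}\leq\Gamma L_{1}(p^{d})$ (one checks that the odd part of $p^{d}-1$ is coprime to $(p^{d}-2)(p^{d}-3)$, hence divides $|G_{x}^{\Delta}|$, which is perfectly consistent with $\Gamma L_{1}$). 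The paper instead uses irreducibility via $\Phi_{d}^{\ast}(p)$, the Bamberg--Penttila classification \cite[Theorem 3.1]{BP} to pin down a normal quasisimple subgroup of $G^{\Sigma}\cap GL_{d/m}(p^{m})$, Seitz's theorem to force $G_{\Delta}^{\Sigma}$ into a maximal parabolic (since $(|\Sigma|,p)=1$), and then the contradiction $\Phi_{d}^{\ast}(p)\mid|G_{\Delta}^{\Sigma}|$ against \cite[Theorem 3.5(iv)]{He}. None of this is recoverable from your toolkit as described.

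In the $v$ even branch there are three further problems. First, ``$G(\Sigma)$ is a $2$-group, exactly as in Lemma \ref{2S}'' is not what that lemma gives: it controls only $G(\Sigma)_{x}$, and a fixed-point-free odd-order element of $G(\Sigma)$ need not fix any block, so your step (and the later claim that a larger normal subgroup of $G_{\Delta}^{\Delta}$ ``would involve an odd prime'') is unsupported; the paper instead deduces that $\mathrm{Soc}(G_{\Delta}^{\Delta})_{x}$ is a $2$-group and invokes Kantor's List (A) to exclude the almost simple case. Second, for (2)(b) the divisibility $2^{d}-1\mid|G_{x}^{\Delta}|$ does not eliminate $\Gamma L_{1}(2^{d})$ or $Sp_{d/h}(2^{h})$, both of which have order divisible by $2^{d}-1$; the paper's elimination rests on showing that \emph{every} prime divisor of $2^{d-1}-1=\lambda/2$ divides $|G_{x}^{\Delta}|$, via a Frattini argument on a Sylow $u$-subgroup of $G(\Delta)$ and a fixed-point count against Lemma \ref{Fixpoints}, and only then on primitive prime divisors of $2^{d-1}-1$ and $2^{d}-1$. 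Third, for (2)(c) your reason for excluding the product-action type of O'Nan--Scott (``the stabiliser fails to be large enough to admit $SL_{d}(2)$ as a section'') is simply false: in the product action the point stabiliser of $\mathrm{Soc}(G^{\Sigma})$ can be enormous (for $d=6$ the candidate has socle $A_{62}^{2}$ with stabiliser containing $A_{61}\times A_{61}$). The paper excludes it by a rank/orbit-length analysis: a primitive prime divisor $z$ of $2^{d}-1$ must divide every non-trivial $G_{\Delta}^{\Sigma}$-orbit length, forcing $z\mid|\Theta|-1=2^{d}-3$, which is incompatible with $z\mid 2^{d}-1$ except for $d=4$, and that residual case is killed by a separate computation.
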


Its proof is structured as follows. Case (1) is an immediate consequence of Proposition \ref{DivanDan}(2), whereas the proof of (2) relies mainly on Corollary \ref{CFixT} and on O'Nan Scott theorem applied to $G^{\Sigma}$ for $v$ even, and on Corollary \ref{CFixT} and on \cite[Theorem 3.1]{BP} for $v$ odd. 

\bigskip

\subsection{Designs of type 1 and quasiprimitivity}

\begin{lemma}
\label{T1even}Let $\mathcal{D}$ be of type 1. If $G(\Sigma )\neq 1$ and $v$ is even, then the following hold:

\begin{enumerate}
\item $\mathcal{D}$ is a symmetric $2$-$(2^{d+2}(2^{d-1}-1)^{2},\allowbreak
2\left( 2^{d}-1\right) \left( 2^{d-1}-1\right) ,2(2^{d-1}-1))$ design, $d \geq 4$;

\item $G(\Sigma )$ is a non-trivial self-centralizing elementary abelian $2$%
-subgroup of $G$. Also $G_{\Delta }^{\Sigma }/G(\Delta )^{\Sigma }\cong
G_{x}^{\Delta }$ and one of the following holds:

\begin{enumerate}
\item $G_{x}^{\Delta }\cong SL_{d}(2)$;

\item $G_{x}^{\Delta }\cong A_{7}$ and $d=4$.
\end{enumerate}
\end{enumerate}
\end{lemma}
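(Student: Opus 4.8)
The plan is to first extract the parameters and the $G_\Delta$-action on $\Delta$, then to analyse $G(\Sigma)$, and finally to pin down $G_x^\Delta$ among the transitive linear groups over $\Fbb_2$.

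Since $\mathcal D$ is of type 1 it is a symmetric $2$-$(\lambda^2(\lambda+2),\lambda(\lambda+1),\lambda)$ design with $\left\vert\Delta\right\vert=c=\lambda+2$ and $\left\vert\Sigma\right\vert=\lambda^2$. As $v=\lambda^2(\lambda+2)$ is even, $\lambda$ is even, hence $c$ is even. By Proposition \ref{DivanDan}(1) we have $\mathrm{Soc}(G_\Delta^\Delta)\trianglelefteq G(\Sigma)^\Delta$, and by Lemma \ref{2S} every point-stabilizer of $G(\Sigma)^\Delta$ is a $2$-group, hence so is every point-stabilizer of its transitive subgroup $\mathrm{Soc}(G_\Delta^\Delta)$. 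Since $G_\Delta^\Delta$ is $2$-transitive on $\Delta$ by Theorem \ref{PZM}(III) and, by inspection of the $2$-transitive groups, none with non-abelian simple socle has a $2$-group point-stabilizer, $\mathrm{Soc}(G_\Delta^\Delta)$ is elementary abelian; being regular of even order $c$, it is a group $V\cong Z_2^d$ with $2^d=\lambda+2$. Thus $\lambda=2(2^{d-1}-1)$, and substituting into the type-1 parameters yields part (1), the bound $d\geq 4$ being forced by $\lambda>10$. Moreover $G_\Delta^\Delta=V\rtimes H$ with $H=G_x^\Delta\leq GL_d(2)$ transitive on $V\setminus\{0\}=\Delta\setminus\{x\}$.

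For part (2), first note that $G(\Sigma)\cap G(\Delta)$ has no element of odd prime order — such an element would fix a point of $\Delta$, against Lemma \ref{2S} — so it is a $2$-group; combined with $G(\Sigma)^\Delta$ being transitive on $\Delta$ with $2$-group point-stabilizers, this makes $G(\Sigma)$ a $2$-group. Next, $(G(\Sigma)^\Delta)_x$ is a normal $2$-subgroup of $H$, and since $H$ is transitive on the odd-sized set $V\setminus\{0\}$, $O_2(H)$ fixes a point, hence all of $V\setminus\{0\}$, so $O_2(H)=1$ and $G(\Sigma)^\Delta$ acts regularly on $\Delta$; being a regular normal subgroup of $G_\Delta^\Delta$ containing the self-centralizing minimal normal subgroup $V$, it equals $V$. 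As $G$ is transitive on $\Sigma$, $G(\Sigma)^{\Delta'}\cong Z_2^d$ for every $\Delta'\in\Sigma$, and since $G(\Sigma)$ embeds faithfully into $\prod_{\Delta'\in\Sigma}G(\Sigma)^{\Delta'}$ it is elementary abelian. Then $G(\Sigma)\leq C_G(G(\Sigma))\trianglelefteq G$, and primitivity of $G^\Sigma$ (Proposition \ref{DivanDan}) gives $C_G(G(\Sigma))\in\{G(\Sigma),G\}$; the case $C_G(G(\Sigma))=G$ forces $G(\Sigma)\leq Z(G)\cap G_\Delta\leq Z(G_\Delta)$, which maps trivially into $Z(G_\Delta^\Delta)=1$ (as $G_\Delta^\Delta$ is $2$-transitive of degree $2^d>2$), so $G(\Sigma)\leq G(\Delta)$ and hence $G(\Sigma)=1$ exactly as in the proof of Proposition \ref{DivanDan} — a contradiction. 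So $G(\Sigma)$ is self-centralizing, and by \eqref{salvaMAOL} one gets $G_\Delta^\Sigma/G(\Delta)^\Sigma\cong G_\Delta^\Delta/G(\Sigma)^\Delta=G_\Delta^\Delta/V\cong H=G_x^\Delta$.

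It remains to show $H\cong SL_d(2)$, or $H\cong A_7$ with $d=4$. Since $GL_d(2)=SL_d(2)$, and the only $2$-transitive linear groups on $\Fbb_2^d$ with $d\geq 4$ are $SL_d(2)$ and (for $d=4$) $A_7$, this is equivalent to $G_\Delta^\Delta$ being $3$-transitive on $\Delta$, i.e. to $H$ being $2$-transitive on $V\setminus\{0\}$. The plan is to assume the contrary: by Hering's classification of transitive linear groups, $H$ then (up to conjugacy) preserves a proper $\Fbb_{2^b}$-structure on $V$ with $b\geq 2$, or a symplectic or $G_2$-geometry, or lies in $\Gamma L_1(2^d)$ (or, for $d=4$, is $A_6$), and in each case $\left\vert H\right\vert$ is much smaller than $\left\vert GL_d(2)\right\vert$. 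This is played off against three constraints: Corollary \ref{CFixT}(1), which forces every prime divisor of $\left\vert G(\Delta)\right\vert$ to divide $\lambda(\lambda-1)=2(2^{d-1}-1)(2^d-3)$ and so controls the kernel $G(\Delta)$ of $G_\Delta\to G_\Delta^\Delta$; the flag-transitivity of $G$, which splits the $k$ blocks through $x$ into $2^d-1$ pencils of size $\lambda$ permuted transitively by $H$, forcing a point-stabilizer of $H$ to act with tightly restricted orbits on a pencil; and O'Nan--Scott applied to the primitive group $G^\Sigma$ of degree $\lambda^2=(2^d-2)^2$, which is not of affine type (since $2^d-2$ is not a prime power for $d\geq 4$) and whose maximal subgroup $G_\Delta^\Sigma$ surjects onto $H$. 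Combining these should eliminate every case other than $SL_d(2)$ and, for $d=4$, $A_7$. I expect this final elimination of the ``small'' transitive linear groups to be the main obstacle, as it is precisely there that the full flag-transitive structure of $(G,\mathcal D)$ together with the primitivity of $G^\Sigma$ has to be exploited; the remaining steps are essentially formal.
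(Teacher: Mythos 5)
Your handling of part (1) and of the structural claims about $G(\Sigma)$ is sound, and in two places it is actually tidier than the paper's: you obtain $G(\Sigma)^{\Delta}=\mathrm{Soc}(G_{\Delta}^{\Delta})$ directly (a normal $2$-subgroup of a group acting transitively on the $2^{d}-1$ non-zero vectors must act trivially), without first needing to know $G_{x}^{\Delta}$, and you get elementary abelianness from the faithful embedding of $G(\Sigma)$ into $\prod_{\Delta'\in\Sigma}G(\Sigma)^{\Delta'}$ rather than via the Frattini subgroup as the paper does.

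However, the core of the lemma --- that $G_{x}^{\Delta}\cong SL_{d}(2)$, or $A_{7}$ with $d=4$ --- is not actually proved: you reduce to Hering's list and then describe a ``plan'' that ``should eliminate every case'', explicitly flagging the elimination as the main remaining obstacle. That elimination is exactly where the work lies, and the constraints you list do not obviously suffice as stated. Flag-transitivity gives $2(2^{d-1}-1)(2^{d}-1)\mid\left\vert G_{x}\right\vert$, not divisibility of $\left\vert G_{x}^{\Delta}\right\vert$: a prime divisor $u$ of $2^{d-1}-1$ could a priori ``hide'' in the kernel $G(\Delta)$, and Corollary \ref{CFixT}(1) does not forbid this since $u\mid\lambda$. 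The paper's proof closes precisely this loophole: assuming $u\mid\left\vert G(\Delta)\right\vert$ and $u\nmid\left\vert G_{x}^{\Delta}\right\vert$, it takes a Sylow $u$-subgroup $U$ of $G(\Delta)$, applies the Frattini argument ($G_{x}=N_{G_{x}}(U)G(\Delta)$ and $G_{x,y}=N_{G_{x,y}}(U)G(\Delta,\Delta')$), and combines Lemma \ref{PP} with the coprimality $(\lambda+1,\left\vert G(\Delta)\right\vert)=1$ to force $\lambda+1\mid\left\vert\mathrm{Fix}(U)\right\vert-1$, which drives $\left\vert\mathrm{Fix}(U)\right\vert\geq(\lambda+2)^{2}$ and contradicts Lemma \ref{Fixpoints}(1). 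Only then does Zsigmondy (a primitive prime divisor of $2^{d-1}-1$ for $d\neq 7$, plus a direct check at $d=7$) together with \cite[Proposition 5.2.15]{KL} cut Hering's list down to $SL_{d}(2)$ and $A_{7}$. Without an argument of this kind your proof is incomplete; note also that invoking O'Nan--Scott for $G^{\Sigma}$ at this stage is of little help, since that analysis (Theorem \ref{veven}) is carried out downstream of this lemma and itself relies on already knowing $G_{x}^{\Delta}$.
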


\begin{proof}
Assume that $\mathcal{D}$ is of type 1, $G(\Sigma )\neq 1$ and $v$ is even. Since $Soc(G_{\Delta
}^{\Delta })\trianglelefteq G(\Sigma )^{\Delta }$ by Proposition \ref%
{DivanDan}(1), it follows that $Soc(G_{\Delta }^{\Delta })_{x}\trianglelefteq
G(\Sigma )_{x}^{\Delta }$, where $x\in \Delta $. Hence, $Soc(G_{\Delta
}^{\Delta })_{x}$ is either trivial or a $2$-group by Lemma \ref{2S}. If $G_{\Delta }^{\Delta
} $ is almost simple, then $Soc(G_{\Delta }^{\Delta })$ acts $2$%
-transitively on $\Delta $, with $Soc(G_{\Delta }^{\Delta })_{x}$ a $2$%
-group. However, this is impossible by \cite[List (A)]{Ka}. Therefore, $%
G_{\Delta }^{\Delta }$ is of affine type, and hence $\left\vert \Delta
\right\vert =2^{d}$ since $v=\lambda^{2}(\lambda+2)$ is even. Then $\lambda =2(2^{d-1}-1)$ and so $%
\left\vert \Sigma \right\vert =2^{2}(2^{d-1}-1)^{2}$, where $d \geq 4$ since $\lambda >10$. In particular, $%
\mathcal{D}$ is a symmetric $2$-design with parameters as in (1). Moreover,
by \cite[List (B)]{Ka} one of the following holds:

\begin{enumerate}
\item[(i)] $G_{x}^{\Delta }\leq \Gamma L_{1}(2^{d})$;

\item[(ii)] $SL_{d/h}(2^{h})\trianglelefteq G_{x}^{\Delta }\leq \Gamma
L_{d/h}(2^{h})$ with $d/h>1$;

\item[(iii)] $Sp_{d/h}(2^{h})\trianglelefteq G_{x}^{\Delta }\leq \Gamma
Sp_{d/h}(2^{h})$ with $d/h>1$ and $d/h$ even,

\item[(iv)] $G_{2}(2^{d/6})\trianglelefteq G_{x}^{\Delta }\leq
G_{2}(2^{d/6}):Z_{d/6}$ with $d\equiv 0\pmod{6}$.

\item[(v)] $G_{x}^{\Delta }\cong A_{6}$ or $A_{7}$ and $d=4$.
\end{enumerate}

Since $G_{x}$ is transitive on the $\lambda (\lambda +1)$ blocks incident
with $x$, it follows that $2\left( 2^{d-1}-1\right) \left( 2^{d}-1\right)
\mid \left\vert G_{x}\right\vert $. Suppose that there is a prime $u$
dividing $\lambda/2=2^{d-1}-1$ but not dividing of the order of $G_{x}^{\Delta }$.
Hence, $u$ divides the order of $G(\Delta )$. Let $U$ be a Sylow $u$%
-subgroup of $G(\Delta )$, then $G_{x}=N_{G_{x}}(U)G(\Delta )$ by the Frattini argument.

Let $y \in \mathrm{Fix(U)} \setminus \{x\}$ and let $\Delta ^{\prime }$ be the element of $\Sigma$ containing $y$. Then $U \leq G_{y}$, and actually $U \leq G(\Delta^{\prime})$ since $G_{x}$ and $G_{y}$ are $G$-conjugate (clearly, $\Delta=\Delta^{\prime}$ is possible). Therefore $\left\vert\mathrm{Fix}(U) \right \vert = t(\lambda+2)$, where $t \geq 2$ since $\left\vert\Sigma \right\vert = \lambda^{2}$. Also, $U$ is a Sylow $u$-subgroup of $G(\Delta ,\Delta ^{\prime })$. Thus $%
G_{x,y}=N_{G_{x,y}}(U)G(\Delta ,\Delta ^{\prime })$ again by Frattini argument since $G(\Delta ,\Delta ^{\prime
})\vartriangleleft G_{x,y}$. Then%
\begin{equation*}
\left\vert G_{x}:G_{x,y}\right\vert = \frac{\left\vert
N_{G_{x}}(U):N_{G_{x,y}}(U)\right\vert \cdot \left\vert G(\Delta
):G(\Delta ,\Delta ^{\prime })\right\vert }{\left\vert N_{G(\Delta
)}(U):N_{G(\Delta ,\Delta ^{\prime })}(U)\right\vert }
\end{equation*}%
and hence $\lambda +1\mid \left\vert
N_{G_{x}}(U):N_{G_{x,y}}(U)\right\vert $, since $\lambda +1\mid
\left\vert G_{x}:G_{x,y}\right\vert $ by Lemma \ref{PP}, and since $\left( \lambda
+1,\left\vert G(\Delta )\right\vert \right) =1$ by Corollary \ref{CFixT} being $\lambda$ even, as $v=\lambda^{2}(\lambda+2)$ is even.
Therefore $\lambda +1\mid \left\vert y^{N_{G_{x}}(U)}\right\vert $ and $y^{N_{G_{x}}(U)} \subseteq \mathrm{Fix}(U)\setminus \{x \}$. Since $\mathrm{Fix}(U)\setminus \{x \}$ is a disjoint union of $N_{G_{x}}(U)$-orbits, it follows that $\lambda +1 \mid \left\vert\mathrm{Fix}(U) \right \vert-1$. Therefore, $\lambda+1 \mid t-1$ since $\left\vert\mathrm{Fix}(U) \right \vert = t(\lambda+2)$. Therefore, $\left\vert \mathrm{Fix}(\zeta)\right\vert \geq \left\vert\mathrm{Fix}(U) \right \vert  \geq (\lambda+2)^{2}$ for any non-trivial element $\zeta \in U$ since $t \geq 2$, but
this contradicts Lemma \ref{Fixpoints}(1). Thus, each prime divisor of $%
2^{d-1}-1$ divides $\left\vert G_{x}^{\Delta }\right\vert $. Then $%
G_{x}^{\Delta }\cong SL_{7}(2)$ for $d=7$, whereas $2^{d-1}-1$ admits a
primitive prime divisor for $d\neq 7$ by \cite[Theorem II.6.2]{Lu}. A this point, it is easy to check that only $%
G_{x}^{\Delta }\cong SL_{d}(2)$, or $G_{x}^{\Delta }\cong A_{7}$ for $d \geq 4$ by using \cite[Proposition 5.2.15]{KL}.

The two possibilities for $G_{x}^{\Delta }$ together with Proposition \ref{DivanDan}(1) imply either $G(\Sigma )^{\Delta }=G_{\Delta }^{\Delta }$ or $%
G(\Sigma )^{\Delta }=Soc(G_{\Delta }^{\Delta })$. The former yields $%
G_{\Delta }=G(\Sigma )G(\Delta )$, and hence $\lambda +1\mid \left\vert G(\Sigma
)_{x}\right\vert $ since $\left( \lambda
+1,\left\vert G(\Delta )\right\vert \right) =1$ by Corollary \ref{CFixT}. However, this contradicts Lemma \ref{2S} since $\lambda +1$ odd. Thus $G(\Sigma )^{\Delta }=Soc(G_{\Delta }^{\Delta })$, and hence 
$
\frac{G_{\Delta }^{\Sigma }}{G(\Delta )^{\Sigma }}\cong \frac{G_{\Delta }^{\Delta }}{G(\Sigma )^{\Delta }}%
\cong G_{x}^{\Delta }
$
by (\ref{salvaMAOL}).
Since $\frac{G(\Sigma )}{G(\Sigma )\cap G(\Delta )}\cong Soc(G_{\Delta
}^{\Delta })$, which is an elementary abelian $2$-group, we have $\Phi (G(\Sigma ))\leq G(\Sigma )\cap G(\Delta )$ for
each $\Delta \in \Sigma $. Thus $\Phi (G(\Sigma ))$ fixes each point of $%
\mathcal{D}$, hence $\Phi (G(\Sigma ))=1$, and so $G(\Sigma )$ is an
elementary abelian $2$-group.

If $C_{G}(G(\Sigma ))\neq G(\Sigma )$, then $%
G=C_{G}(G(\Sigma ))$ since $G^{\Sigma }$ is primitive on $\Sigma $ by
Proposition \ref{DivanDan}. Thus $G(\Sigma )\leq Z(G)$ acts
transitively on each $\Delta $, and hence $G_{x}\leq G(\Delta )$ for any $%
x\in \Delta $. However, this is a contradiction since $G_{\Delta }^{\Delta }$
is $2$-transitive on $\Delta $. Therefore $C_{G}(G(\Sigma ))=G(\Sigma )$,
and we obtain (2).
\end{proof}

\bigskip

\begin{theorem}
\label{veven}If $\mathcal{D}$ is of type 1 and $v$ is even, then Theorem \ref{AQP} holds.
\end{theorem}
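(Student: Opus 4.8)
The plan is to split on whether $G(\Sigma)=1$. If $G(\Sigma)=1$, then Proposition \ref{DivanDan}(2) gives that $G$ acts point-quasiprimitively on $\mathcal{D}$, which is precisely conclusion (1) of Theorem \ref{AQP}, and we are done. So assume $G(\Sigma)\neq 1$. Since $\mathcal{D}$ is of type 1 and $v$ is even, Lemma \ref{T1even} applies and delivers conclusions (2a) and (2b) of Theorem \ref{AQP} verbatim, together with the statement that $G(\Sigma)$ is a non-trivial self-centralizing elementary abelian $2$-subgroup of $G$. Hence everything in Theorem \ref{AQP}(2) is in hand except part (2c), and it only remains to prove that $G^{\Sigma}$ is almost simple.

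For (2c) I would run the O'Nan--Scott theorem on the primitive group $G^{\Sigma}$ acting on $\Sigma$ (primitivity by Proposition \ref{DivanDan}), using $|\Sigma|=\lambda^{2}=2^{2}(2^{d-1}-1)^{2}$ with $d\geq 4$, so that $2^{d-1}-1$ is an odd number at least $7$. The affine type HA is excluded immediately because $|\Sigma|$ is not a prime power. The crucial extra ingredient for the remaining types is that, by Lemma \ref{T1even}, the point stabilizer $G_{\Delta}^{\Sigma}$ has the non-abelian simple group $S:=G_{x}^{\Delta}\cong SL_{d}(2)$ (or $S\cong A_{7}$ and $d=4$) as a quotient, whereas the complementary normal subgroup $G(\Delta)^{\Sigma}$ is a quotient of $G(\Delta)$ and so, by Corollary \ref{CFixT}(1), every prime divisor of its order divides $\lambda(\lambda-1)=2(2^{d-1}-1)(2^{d}-3)$; since $|S|$ involves primes outside this set, $S$ is a composition factor coming genuinely from $G_{\Delta}^{\Sigma}/G(\Delta)^{\Sigma}$, and $|S|>|\Sigma|$ (indeed $|SL_{d}(2)|\geq 2^{d(d-1)}$ hugely exceeds $\lambda^{2}$).

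With this I would dispatch the diagonal, compound-diagonal, holomorph and twisted-wreath types by an order comparison. In the TW type $|\Sigma|=|T|^{k}$ with $k\geq 2$ has $2$-part at least $4^{k}\geq 16>4$. In the HS type $|\Sigma|=|T|$ and $S$ must equal $T$, contradicting $|S|>|\Sigma|$. In the SD and CD types the non-abelian composition factors of $G_{\Delta}^{\Sigma}$ are the socle constituent $T$ together with composition factors of a transitive subgroup of $S_{k}$, so $S$ can occur only if $T\cong S$ or $k$ is at least the minimal faithful permutation degree of $S$ (hence $k\geq 7$); in both cases $|\Sigma|\geq|T|^{k-1}$ vastly exceeds $\lambda^{2}$. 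The HC type is handled in the same way.

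The remaining, and genuinely hard, case is the product-action type PA. Here $\mathrm{Soc}(G^{\Sigma})=T^{k}$ acts in product action on $\Gamma^{k}$ with $|\Sigma|=|\Gamma|^{k}$; matching the $2$-part $2^{2}$ of $\lambda^{2}$ forces $k=2$ and $|\Gamma|=2(2^{d-1}-1)=2^{d}-2$. Then $G_{\Delta}^{\Sigma}$ embeds in $H\wr S_{2}$ with $H$ a point stabilizer of the almost simple primitive component group, whence $S$ is a composition factor of $H$, and therefore of a subgroup $T_{\gamma}$ of index $2^{d}-2$ in a non-abelian simple group $T$. I would finish by going through the primitive groups of degree $2^{d}-2$ (equivalently, the subgroups of small index in simple groups), using Liebeck's bounds on maximal subgroups: for $T=A_{2^{d}-2}$ or $S_{2^{d}-2}$ the only non-abelian composition factor of $T_{\gamma}$ is $A_{2^{d}-3}$, which for $d\geq 4$ is neither $SL_{d}(2)$ nor $A_{7}$ (the only alternating--classical coincidence being $A_{8}\cong SL_{4}(2)$, and $2^{d}-3\neq 8$), while for every other $T$ the index $2^{d}-2$ is too small for $T_{\gamma}$ to admit a section isomorphic to $S$. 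This contradiction eliminates PA, so only the almost simple type survives, which is (2c) and completes the proof. I expect the PA step---pinning down the composition factors of point stabilizers of primitive groups of degree $2^{d}-2$---to be the main obstacle.
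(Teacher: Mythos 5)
Your overall skeleton is the same as the paper's: split on $G(\Sigma)$, quote Proposition \ref{DivanDan}(2) when $G(\Sigma)=1$, and when $G(\Sigma)\neq 1$ import (2a), (2b) from Lemma \ref{T1even} and then run O'Nan--Scott on the primitive group $G^{\Sigma}$, using the fact that the $2$-part of $|\Sigma|=2^{2}(2^{d-1}-1)^{2}$ is exactly $4$ to kill everything except the almost simple and product-action types. Your eliminations of HA, TW, HS, SD, CD are essentially the paper's case (ii) argument (a non-abelian simple quotient $S\cong SL_{d}(2)$ or $A_{7}$ of $G_{\Delta}^{\Sigma}/G(\Delta)^{\Sigma}$ with $|S|>|\Sigma|$ cannot be a composition factor of $\mathrm{Aut}(T)\times S_{k}$), modulo one slip: for SD/CD with $k\geq 7$ the phrase ``$|T|^{k-1}$ vastly exceeds $\lambda^{2}$'' is not a proof, since $\lambda^{2}$ grows with $d$; the correct reason is the one you already used for TW, namely that the $2$-part of $|T|^{k-1}$ is at least $4^{k-1}>4$.

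The genuine gap is in the product-action case, where you diverge from the paper. Your reduction to $k=2$, $|\Gamma|=2^{d}-2$, and to ``$S$ is a composition factor of $T_{\gamma}$ with $|T:T_{\gamma}|=2^{d}-2$'' is correct, but the decisive claim --- that for $T$ not alternating of degree $2^{d}-2$ the index is ``too small'' for $T_{\gamma}$ to have a section $\cong S$ --- is asserted, not proved, and the tool you name does not close it as stated: from $|T|\geq (2^{d}-2)\,|SL_{d}(2)|$ one gets $|T|>n^{1+\lfloor\log_{2}n\rfloor}$ with $n=2^{d}-2$, so generic order bounds on primitive groups leave you exactly in the exceptional range (subset actions of alternating groups and four Mathieu groups, \`a la Mar\'oti/Liebeck--Saxl), and you must then also examine the actions of $A_{m}$ on $k$-subsets with $\binom{m}{k}=2^{d}-2$, which your sketch omits (only the natural action of $A_{2^{d}-2}$ is treated). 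These residual cases do die --- e.g.\ via the observation that $T_{\gamma}$ embeds in $\mathrm{Sym}(\Gamma\setminus\{\gamma\})$ of degree $2^{d}-3$, while $P(SL_{d}(2))=2^{d}-1$ for $d\geq 5$, with $d=4$ checked by hand --- but that argument is not in your proposal. For contrast, the paper avoids this classification entirely: it takes a primitive prime divisor $z$ of $2^{d}-1=\lambda+1$, shows via Lemma \ref{PP} and Corollary \ref{CFixT} that $z$ divides every non-trivial $G_{\Delta}^{\Sigma}$-suborbit length on $\Sigma$, and compares this with the suborbit lengths $|\Theta|-1$ and $(|\Theta|-1)^{2}$ of $Q\wr Z_{2}$ on $\Theta^{2}$ to force $z\mid 2^{d}-3$, a contradiction (with $d=4,6$ handled separately because of the Zsigmondy exception and $SL_{4}(2)\cong A_{8}$). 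You should either adopt an argument of that kind or supply the missing classification of primitive groups of degree $2^{d}-2$ whose socle point stabilizer has $SL_{d}(2)$ or $A_{7}$ as a composition factor.
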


\begin{proof}
Assume that that $\mathcal{D}$ is of type 1. The assertion follows from Proposition \ref{DivanDan}(2) for $G(\Sigma )=1$.
Hence, assume that $G(\Sigma )\neq 1$. Since $G^{\Sigma }$ acts
primitively on $\Sigma $ again by Proposition \ref{DivanDan} and $\left\vert
\Sigma \right\vert =2^{2}(2^{d-1}-1)^{2}$ with $d\geq 4$ by Lemma \ref%
{T1even}, by the O'Nan-Scott theorem (e.g. see \cite[Theorem 4.1A]{DM}) one
of the following holds:

\begin{enumerate}
\item[(i)] $Soc(G^{\Sigma })$ is almost simple.

\item[(ii)] $Soc(G^{\Sigma })\cong T^{2}$, where $T$ is a non-abelian simple
group such that $\left\vert T\right\vert =2^{2}(2^{d-1}-1)^{2}$.

\item[(iii)] $Soc(G^{\Sigma })\cong T^{2}$ and there is a
non-abelian almost simple group $Q$ with socle $T$ acting primitively on a set $\Theta$ of size $2(2^{d-1}-1)$ such that $\Sigma=\Theta^{2}$ and $G^{\Sigma}\leq Q\wr Z_{2}$.
\end{enumerate}

Assume that (ii) holds. Then $T\leq G_{\Delta }^{\Sigma }\leq Aut(T)\times
Z_{2}$, and hence $G_{\Delta }^{\Sigma }/T$ is solvable. Therefore $\left\vert
G_{x}^{\Delta }\right\vert \mid \left\vert T\right\vert $ since $G_{\Delta
}^{\Sigma }/G(\Delta )^{\Sigma }\cong G_{x}^{\Delta }$ with $G_{x}^{\Delta }$ isomorphic either to $SL_{d}(2)$, or to $A_{7}$ for $d=4$ by Lemma \ref{T1even}(2).
However, this is impossible since $\left\vert G_{x}^{\Delta }\right\vert \nmid 2^{2}(2^{d-1}-1)^{2}$, and (ii) is excluded.

Assume that (iii) holds. Since $G_{\Delta }^{\Sigma }/G(\Delta )^{\Sigma
}\cong G_{x}^{\Delta }$ by Lemma \ref{T1even}(2), then $\lambda +1=2^{d}-1$ divides the order of $G_{\Delta }^{\Sigma }$. If $d=6$ then $\lambda =2\cdot 31$, $\left\vert
\Sigma \right\vert =2^{2}31^{2}$ and $G_{x}^{\Delta }\cong SL_{6}(2)$. Since 
$\lambda (\lambda +1)\mid \left \vert G_{\Delta } \right \vert $, it follows that $31^{3}\mid
\left\vert G\right\vert $. On the other hand, $G^{\Sigma }\leq Q\wr
Z_{2}$, where $\mathrm{Soc}(Q)$ is isomorphic to one of the groups $A_{62}$ or $PSL_{2}(61)$ by \cite[Table B.2]{DM}, and hence $31$ divides the order of $G(\Sigma
)$, which is not the case by Lemma \ref{T1even}(2). Thus $d\neq 6$, and hence there is a primitive prime divisor of $\lambda+1=2^{d}-1$, say $z$, by \cite[Theorem II.6.2]{Lu}. If $%
z\mid \left\vert G_{\Delta ,\Delta_{1}}\right\vert $ for some $\Delta
_{1}\in \Sigma \setminus \{\Delta\}$, it follows that there is a $u$-element $\varphi $
of $G_{\Delta ,\Delta _{1}}$ fixing a point $x$ in $\Delta $ and $%
x^{\prime }$ in $\Delta _{1}$. Therefore, it fixes at least one of the 
$\lambda =2(2^{d-1}-1)$ blocks incident with $x$ and $x^{\prime }$, say $B$.
Hence, $\varphi $ fixes a further element in $B\cap \Delta $. Thus, $%
\varphi \in G(\Delta )$ by \cite[Theorem 3.5]{He} since $G_{\Delta}^{\Delta}$ is of affine type. However, this is
impossible by Corollary \ref{CFixT}(1) since $u\mid \lambda +1$. Then $z\nmid
\left\vert G_{\Delta ,\Delta _{1}}\right\vert $ and hence $z\mid
\left\vert G_{\Delta }^{\Sigma }:G_{\Delta ,\Delta ^{\prime }}^{\Sigma
}\right\vert $ for each $\Delta
^{\prime}\in \Sigma \setminus \{\Delta\}$ since $G(\Sigma )$ is a $2$-group by Lemma \ref{T1even}(2). Actually, $z\mid
\left\vert G_{\Delta }^{\Sigma }:G(\Delta )^{\Sigma }G_{\Delta ,\Delta
^{\prime }}^{\Sigma }\right\vert $ again by Corollary \ref{CFixT}(1), hence $z\mid \left\vert G_{\Delta
}^{\Sigma }/G(\Delta )^{\Sigma }:G(\Delta )^{\Sigma }G_{\Delta ,\Delta ^{\prime }}^{\Sigma
}/G(\Delta )^{\Sigma }\right\vert $. Thus 
\begin{equation*}
P(G_{x}^{\Delta })\leq \left\vert G_{\Delta }^{\Sigma }/G(\Delta )^{\Sigma
}:G(\Delta )^{\Sigma }G_{\Delta ,\Delta ^{\prime }}^{\Sigma }/G(\Delta )^{\Sigma }\right\vert
\leq \left\vert G_{\Delta }^{\Sigma }:G_{\Delta ,\Delta ^{\prime }}^{\Sigma
}\right\vert \text{,}
\end{equation*}%
where $P(G_{x}^{\Delta })$ is the minimal primitive permutation
representation of $G_{x}^{\Delta }$, since $G_{\Delta }^{\Sigma }/G(\Delta
)^{\Sigma }\cong G_{x}^{\Delta }$ by Lemma \ref{T1even}(2). We may choose $\Delta
^{\prime}\in \Sigma \setminus \{\Delta\}$ such that 
\begin{equation*}
P(G_{x}^{\Delta })\leq \left\vert G_{\Delta }^{\Sigma }:G_{\Delta ,\Delta
^{\prime }}^{\Sigma }\right\vert \leq 2\frac{2(2^{d-1}-1)}{s-1}
\end{equation*}%
where $s$ denotes the rank of $Q$ on $\Theta$ since $G^{\Sigma} \leq Q \wr S_{2}$ and $\Sigma=\Theta^{2}$. If $d\neq 4$ then $%
P(G_{x}^{\Delta })=2^{d}-1$ by \cite[Theorem 5.2.2]{KL}, hence $s=2$. Then $%
Q $ acts $2$-transitively on the set $\Theta $, with $\left\vert \Theta
\right\vert =2(2^{d-1}-1)$, and hence $Q\wr Z_{2}$ acts as a primitive rank $3$ group on $%
\Sigma =\Theta^{2}$. Moreover, the $\left( Q\wr Z_{2}\right) _{\Delta }$-orbits on $\Sigma \setminus \left\{ \Delta \right\} $ are two of length $%
\left\vert \Theta \right\vert -1$, one of $\left( \left\vert \Theta
\right\vert -1\right) ^{2}$. Each of
these orbits is a union of $G_{\Delta }^{\Sigma }$-orbits, and since each $%
G_{\Delta }^{\Sigma }$-orbits distinct from $\left\{ \Delta \right\} $ is
divisible by $z$, then $z\mid \left\vert \Theta \right\vert -1$. So $z\mid
2^{d}-3$, whereas $z$ is a divisor of $2^{d}-1$. Therefore $d=4$, $s=2,3,4$
and $G_{x}^{\Delta }\cong A_{m}$, where $m =7,8$, since $SL_{4}(2)\cong A_{8}$.

Suppose that $s\geq 3$. Then $\left\vert G_{\Delta }^{\Sigma }:G_{\Delta
,\Delta ^{\prime }}^{\Sigma }\right\vert \leq \frac{28}{s-1}\leq 14$, and
hence $\left\vert G_{\Delta }^{\Sigma }:G_{\Delta ,\Delta ^{\prime
}}^{\Sigma }\right\vert =m $ by \cite{At}. Then $m \mid \left\vert
\Sigma \right\vert -1$ with $m=7,8$, and we reach a contradiction since $\left\vert \Sigma
\right\vert -1=\allowbreak 3\cdot 5\cdot 13$. Thus $s=2$,
and hence $Q$ acts $2$-transitively on $\Theta$. As above, $Q\wr Z_{2}$ acts as a primitive rank $3$ group on $%
\Sigma =\Theta^{2}$, and the $\left( Q\wr Z_{2}\right) _{\Delta }$-orbits on $\Sigma \setminus \{\Delta\}$ are two of length $%
13$ and one of length $13^{2}$. None of these lengths is divisible by $z=5$, and so this case is excluded. Thus only (i) occurs, which is the assertion.
\end{proof}

\bigskip
\begin{remark}
If $\mathcal{D}$ is of type 1 with $v$ even and $G(\Sigma) \neq 1$, it follows from Lemma \ref{T1even}(2) and Theorem \ref{veven} that, $G^{\Sigma}$ is an almost simple subgroup of $GL_{d+t}(2)$ , where $\left\vert\Delta\right\vert =2^{d}$ with $\Delta \in \Sigma$, $\left\vert G(\Sigma)\right\vert=2^{d+t}$ and $\left\vert G(\Sigma) \cap G(\Delta)\right\vert=2^{t}$. However, it is not easy to fully exploit the previous property because it is not easy to control the order of $G(\Sigma) \cap G(\Delta)$ in this case. This motivates our choice of an alternative proof in which the embedding of $G^{\Sigma}$ in $GL_{d+t}(2)$ is partially considered.    
\end{remark}
\bigskip

\begin{theorem}
\label{D1PqP}If $\mathcal{D}$ is of type 1 and $v$ is odd, then $G$ acts
point-quasiprimitively on $\mathcal{D}$.
\end{theorem}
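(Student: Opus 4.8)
The goal is to show that $G$ is point-quasiprimitive, and by Proposition~\ref{DivanDan}(2) this amounts to proving $G(\Sigma)=1$. So I would argue by contradiction and assume $G(\Sigma)\neq 1$. Since $v$ is odd, Lemma~\ref{strike} gives that $G(\Sigma)=\mathrm{Soc}(G_{\Delta})$ is elementary abelian of odd prime exponent $p$, acts regularly on each $\Delta\in\Sigma$ and is self-centralizing in $G$, and $G^{\Sigma}\leq\mathrm{Aut}(G(\Sigma))\cong GL_{m}(p)$, where $p^{m}=|\Delta|$. As $\mathcal{D}$ is of type~1 we have $|\Delta|=\lambda+2$, so $\lambda+2=p^{m}$, $|\Sigma|=\lambda^{2}=(p^{m}-2)^{2}$, and $p^{m}\geq 13$ because $\lambda>10$.

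Next I would pin down the structure of $G_{\Delta}$. Since $G(\Delta)$ and $G(\Sigma)$ are normal in $G_{\Delta}$ and meet trivially (regularity of $G(\Sigma)$ on $\Delta$), we get $[G(\Delta),G(\Sigma)]=1$, hence $G(\Delta)\leq C_{G}(G(\Sigma))=G(\Sigma)$ and therefore $G(\Delta)=1$. Thus $G_{\Delta}$ acts faithfully and, by Theorem~\ref{PZM}(III), $2$-transitively on $\Delta$ as an affine group $G(\Sigma)\rtimes G_{x}$; identifying $\Delta$ with the $\mathbb{F}_{p}$-space $V=G(\Sigma)$ through the regular action, the conjugation action embeds $G_{x}$ in $GL_{m}(p)=\mathrm{Aut}(V)$ as a group transitive on $V\setminus\{0\}$, and this is precisely the embedding $G_{x}\cong G^{\Sigma}_{\Delta}\leq G^{\Sigma}\leq GL_{m}(p)$. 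So $G^{\Sigma}$ contains the transitive linear group $G_{x}$, and hence $G^{\Sigma}$ is itself a transitive linear subgroup of $GL_{m}(p)$. Finally, by flag-transitivity $G_{x}$ is transitive on the $k=\lambda(\lambda+1)=(p^{m}-2)(p^{m}-1)$ blocks through $x$, so $(p^{m}-2)(p^{m}-1)\mid|G_{x}|$; combining this with $|G^{\Sigma}|=|\Sigma|\,|G^{\Sigma}_{\Delta}|=(p^{m}-2)^{2}|G_{x}|$ and with $\gcd\bigl(p^{m}-2,\,p(p^{m}-1)\bigr)=1$ yields
\[
(p^{m}-2)^{3}\mid \prod_{i=1}^{m-1}(p^{i}-1)\qquad\text{and}\qquad (p^{m}-2)^{3}(p^{m}-1)\mid \,|G^{\Sigma}|\, \mid\, |GL_{m}(p)|.
\]

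If $m=1$ then $p\geq 13$ and $|G^{\Sigma}|\leq|GL_{1}(p)|=p-1<(p-2)^{2}=|\Sigma|$, a contradiction. If $2\leq m\leq 6$, then a crude estimate shows $(p^{m}-2)^{3}>p^{3(m-1)}(p-1)^{3}>p^{m(m-1)/2}>\prod_{i=1}^{m-1}(p^{i}-1)$ for every odd prime $p$ with $p^{m}\geq 13$, contradicting the first divisibility. For $m\geq 7$ I would apply the classification of transitive linear groups (Hering's theorem, in the form of \cite[Theorem~3.1]{BP}) to $G^{\Sigma}$: since $p$ is odd and $m\geq 7$, the finitely many exceptional families all have $m\leq 6$ and do not occur, so either $G^{\Sigma}\leq\Gamma L_{1}(p^{m})$, or $SL_{a}(s)\trianglelefteq G^{\Sigma}\leq\Gamma L_{a}(s)$, or $Sp_{a}(s)\trianglelefteq G^{\Sigma}$ with $a$ even, where $s^{a}=p^{m}$. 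The first is impossible because $|G^{\Sigma}|\mid m(p^{m}-1)$ would force $(p^{m}-2)^{2}\mid m$. In the two classical cases, $(p^{m}-2)^{2}$ is far too large for $G_{x}$ to contain the socle, so $G^{\Sigma}=SL_{a}(s)G_{x}$ (respectively $Sp_{a}(s)G_{x}$), the socle has a subgroup of index $(p^{m}-2)^{2}$, and cancelling the factors of its order coprime to $p^{m}-2$ gives $(p^{m}-2)^{2}\mid\prod_{i=2}^{a-1}(s^{i}-1)$ (respectively $(p^{m}-2)^{2}\mid\prod_{i=1}^{a/2-1}(s^{2i}-1)$). For $a\leq 5$ this contradicts the same crude estimate, and for $a\geq 6$ it is eliminated by combining the minimal-index bounds for classical groups (\cite{KL}) with a primitive-prime-divisor analysis of $p^{m}-2$ (\cite{Lu}).

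The step I expect to be the real obstacle is this last one: for $m\geq 7$ and large classical dimension $a$, ruling out $SL_{a}(s)\trianglelefteq G^{\Sigma}$ and $Sp_{a}(s)\trianglelefteq G^{\Sigma}$ demands control over the arithmetic of the ``near-power'' $p^{m}-2$ against the orders of the classical groups $SL_{a}(s)$, $Sp_{a}(s)$ with $s^{a}=p^{m}$ --- in essence, showing that $p^{m}-2$ always has a prime divisor whose multiplicative order modulo that prime exceeds the relevant dimension, so that this prime cannot divide the group order --- together with the bookkeeping of the Aschbacher-type subgroup data in each family. Everything preceding it is forced by Lemma~\ref{strike} and routine counting.
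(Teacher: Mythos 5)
Your reduction is sound and runs parallel to the paper's own proof up to the decisive step: Lemma \ref{strike} gives $G(\Sigma)$ regular on $\Delta$ and $G^{\Sigma}\leq GL_{m}(p)$ with $p^{m}=\lambda+2$; the divisibility $(p^{m}-2)^{3}(p^{m}-1)\mid|G^{\Sigma}|$ and the structure $G_{\Delta}=G(\Sigma)\rtimes G_{x}$ with $G_{x}$ a transitive linear group are correct; the crude order comparison does kill $m\leq 6$; and invoking the classification of transitive linear subgroups of $GL_{m}(p)$ (the paper reaches the same list via irreducibility and \cite[Theorem 3.1]{BP}) legitimately reduces you to $\Gamma L_{1}(p^{m})$ and the classical socles $SL_{a}(s)$, $Sp_{a}(s)$ with $s^{a}=p^{m}$. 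The gap is exactly where you flag it, and it cannot be closed by the method you sketch. For large $a$ the order comparison gives nothing: $(p^{m}-2)^{3}\approx s^{3a}$ against $\prod_{i=2}^{a-1}(s^{i}-1)\approx s^{a(a-1)/2-1}$, which is far larger once $a\geq 8$. And there is no Zsigmondy-type theorem for $p^{m}-2$: unlike $p^{m}-1$, the integer $p^{m}-2$ need not possess any prime divisor of large multiplicative order modulo which $s$ has large order, and nothing in \cite{Lu} or \cite{KL} controls its factorization against $\prod_{i}(s^{i}-1)$. So the promised ``primitive-prime-divisor analysis of $p^{m}-2$'' is not an available tool, and infinitely many cases remain open in your argument.

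The paper closes this step by a different mechanism, which is the one missing idea: since $\left|\Sigma\right|=(p^{m}-2)^{2}$ is coprime to $p$, the stabilizer $G_{\Delta}^{\Sigma}$ contains a full Sylow $p$-subgroup of $G^{\Sigma}$ and is therefore contained in a maximal parabolic subgroup by \cite[Theorem 1.6]{Se}. On the other hand, a primitive prime divisor $\Phi_{m}^{\ast}(p)$ of $p^{m}-1$ exists for $m>2$ and $p$ odd, divides $|G^{\Sigma}|$, and is coprime to the index $(p^{m}-2)^{2}$, hence divides $|G_{\Delta}^{\Sigma}|$. But by \cite[Theorem 3.5(iv)]{He} a subgroup of $GL_{m}(p)$ whose order is divisible by $\Phi_{m}^{\ast}(p)$ acts irreducibly, which a parabolic subgroup does not. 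This disposes of all the classical socle cases at once, with no case analysis on $a$; you should replace your final step with this argument.
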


\begin{proof}
Assume that $\mathcal{D}$ is of type 1. Recall that $G(\Sigma )$ is an
elementary abelian $p$-group acting regularly
on $\Delta $, $p$ is and odd prime and $G^{\Sigma
}\leq GL_{d}(p)$ by Lemma \ref{strike}. Thus $\lambda+2=\left \vert \Delta \right \vert=p^{d}$, $d \geq 1$, and hence $\lambda =p^{d}-2$. Therefore, $%
(p^{d}-1)(p^{d}-2)$ divides $\left\vert G_{x}\right\vert $, and hence $%
\left\vert G\right\vert $, as $k\mid \left\vert G_{x}\right\vert $.
Also, $(p^{d}-1)(p^{d}-2)^{3}\mid \left\vert G^{\Sigma }\right\vert $ since $\left\vert\Sigma \right\vert=(p^{d}-2)^{2}$.

If $d=2$ then $(p^{2}-1)(p^{2}-2)^{3}\mid \left\vert G^{\Sigma}\right\vert $ with $%
G^{\Sigma }\leq GL_{2}(p)$, which is a contradiction. Thus $d>2$, and hence $%
\Phi _{d}^{\ast }(p)>1$ by \cite[Theorem II.6.2]{Lu} since $p$ is odd. Then $%
G^{\Sigma }$ is an irreducible subgroup of $GL_{d}(p)$ by \cite[Theorem
3.5(iv)]{He}. For each divisor $m$ of $d$ the group $\Gamma L_{d/m}(p^{m})$
has a natural irreducible action. We may choose $m$ to be minimal such that $%
G^{\Sigma }\leq \Gamma L_{d/m}(p^{m})$. Set $K^{\Sigma }=G^{\Sigma }\cap
GL_{d/m}(p^{m})$. Then $%
\Phi _{d}^{\ast }(p)\frac{(p^{d}-2)^{3}}{((p^{d}-2)^{3},d)}\mid \left\vert
K^{\Sigma }\right\vert $ by \cite[Proposition 5.2.15.(ii)]{KL}. Easy
computations show that the order of $GL_{d/m}(p^{m})$, and hence that of $%
K^{\Sigma }$, is not divisible by $p^{d}\Phi _{d}^{\ast }(p)\frac{(p^{d}-2)}{%
((p^{d}-2),d)}$ for $(d/m,p^{m})=(3,5^{2}),(4,3),(6,3),(6,5),(9,3)$. So
these cases are excluded. Thus, bearing in
mind the minimality of $m$ and the fact that $p$ is odd, \cite[Theorem 3.1]{BP} implies that $K^{\Sigma }$ contains a normal subgroup isomorphic to one of the groups $%
SL_{d/m}(p^{m}),Sp_{d/m}(p^{m}),\Omega _{d/m}^{-}(p^{m})$, or $%
SU_{d/m}(p^{m/2})$ with $d/m$ odd. Since $\left\vert
G^{\Sigma }:G_{\Delta }^{\Sigma }\right\vert =\lambda ^{2}$, and $\lambda
=p^{d}-2$ with $p$ odd, it follows that $G_{\Delta }^{\Sigma }$ is a maximal
parabolic subgroup of $G^{\Sigma }$ by Proposition \ref{DivanDan} and \cite[Theorem 1.6]{Se}. Also, $\Phi _{d}^{\ast
}(p)\mid \left\vert G_{\Delta }^{\Sigma }\right\vert $ since $\left( \Phi
_{d}^{\ast }(p),p^{d}-2\right) =1$, but this contradicts \cite[Theorem
3.5(iv)]{He} applied to $G_{\Delta }^{\Sigma }$. Thus $G(\Sigma )=1$, and
hence $G$ acts point-quasiprimitively on $\mathcal{D}$ by Proposition \ref%
{DivanDan}(2).
\end{proof}

\bigskip

\subsection{Designs of type 2 and quasiprimitivity}

\bigskip

\begin{lemma}\label{firstpart}
If $\mathcal{D}$ is of type 2 with $\lambda =2w^{2}$, where $w$ is
odd, $w\geq 3$, and $2(w^{2}-1)$ is a square, then $G$ acts point-quasiprimitively on $\mathcal{D}$.
\end{lemma}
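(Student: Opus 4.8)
The plan is to mimic the structure of the proof of Theorem~\ref{D1PqP} (the type 1, $v$ odd case), adapting it to a design of type 2 with the stated constraints on $\lambda$. Suppose for contradiction that $G$ is not point-quasiprimitive. By Proposition~\ref{DivanDan}(2) this forces $G(\Sigma)\neq 1$. Now I would split according to the parity of $v$. When $v$ is odd, Lemma~\ref{strike} applies verbatim: $G(\Sigma)=\mathrm{Soc}(G_\Delta)$ is an elementary abelian $p$-group, $p$ odd, acting regularly on $\Delta$, so $\left\vert\Delta\right\vert=p^{d}=\frac{\lambda+2}{2}$, and $G^{\Sigma}\leq GL_{d}(p)$. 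Since $\lambda=2w^{2}$ we get $\left\vert\Delta\right\vert=w^{2}+1=p^{d}$, and $\left\vert\Sigma\right\vert=\left(\frac{\lambda^{2}-2\lambda+2}{2}\right)=\frac{(2w^{2})^{2}-2(2w^{2})+2}{2}=2w^{4}-2w^{2}+1=(w^{2}-1)^{2}+w^{2}$ (I would double-check this arithmetic and re-express it cleanly as $\left\vert\Sigma\right\vert=c^{2}-2c+\dots$ in terms of $c=p^{d}$). Because $k=\lambda^{2}/2=2w^{4}$ divides $\left\vert G_{x}\right\vert$ and $\left\vert G^{\Sigma}:G_{\Delta}^{\Sigma}\right\vert=\left\vert\Sigma\right\vert$, I obtain strong divisibility constraints on $\left\vert G^{\Sigma}\right\vert$, in particular that $\left\vert\Sigma\right\vert$ (which is coprime to $p$ and has size roughly $p^{2d}$) divides $\left\vert G^{\Sigma}\right\vert \leq \left\vert GL_{d}(p)\right\vert$.

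**Next I would run the Zsygmondy/classification machine.** Handle small $d$ directly ($d=1,2$ give an immediate contradiction with $\left\vert GL_{d}(p)\right\vert$), then for $d\geq 3$ use \cite[Theorem II.6.2]{Lu} to produce a primitive prime divisor $\Phi_{d}^{*}(p)>1$ of $p^{d}-1$, which by \cite[Theorem 3.5(iv)]{He} makes $G^{\Sigma}$ irreducible; choose $m\mid d$ minimal with $G^{\Sigma}\leq\Gamma L_{d/m}(p^{m})$, set $K^{\Sigma}=G^{\Sigma}\cap GL_{d/m}(p^{m})$, bound $\left\vert\Gamma L_{d/m}(p^{m}):GL_{d/m}(p^{m})\right\vert$ and apply \cite[Proposition 5.2.15(ii)]{KL} to push most of the divisibility into $K^{\Sigma}$, discard the finitely many exceptional $(d/m,p^{m})$ by a direct order computation, and invoke \cite[Theorem 3.1]{BP} to get that $K^{\Sigma}$ contains a normal classical group $SL_{d/m}(p^{m})$, $Sp_{d/m}(p^{m})$, $\Omega_{d/m}^{-}(p^{m})$ or $SU_{d/m}(p^{m/2})$. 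Finally, since $\left\vert G^{\Sigma}:G_{\Delta}^{\Sigma}\right\vert=\left\vert\Sigma\right\vert$ and $G^{\Sigma}$ is primitive on $\Sigma$, use \cite[Theorem 1.6]{Se} to conclude $G_{\Delta}^{\Sigma}$ is a maximal parabolic, and derive a contradiction from $\Phi_{d}^{*}(p)\mid\left\vert G_{\Delta}^{\Sigma}\right\vert$ versus \cite[Theorem 3.5(iv)]{He}. This is essentially the argument of Theorem~\ref{D1PqP}, so the only real work here is re-checking the arithmetic of $\left\vert\Sigma\right\vert$ and $k$ and re-verifying that the exceptional-parameter order computations still go through with the new value of $\left\vert\Sigma\right\vert$.

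**The case $v$ even is the genuinely new part**, since $\lambda=2w^{2}$ with $w$ odd forces $v=\left(\frac{\lambda+2}{2}\right)\left(\frac{\lambda^{2}-2\lambda+2}{2}\right)=(w^{2}+1)(2w^{4}-2w^{2}+1)$; here $2w^{4}-2w^{2}+1$ is odd and $w^{2}+1$ is even (as $w$ is odd), so $v$ is even and $\left\vert\Delta\right\vert=c$ divides $v$, but $c$ need not be a power of $2$ — indeed $c=\frac{\lambda+2}{2}=w^{2}+1$. I would argue as in the opening of Lemma~\ref{T1even}: by Proposition~\ref{DivanDan}(1), $\mathrm{Soc}(G_{\Delta}^{\Delta})\trianglelefteq G(\Sigma)^{\Delta}$, and by Lemma~\ref{2S} the point stabilizer $G(\Sigma)_{x}$ is a $2$-group, so $\mathrm{Soc}(G_{\Delta}^{\Delta})_{x}$ is a $2$-group. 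Since $G_{\Delta}^{\Delta}$ acts on $\Delta$ of size $c=w^{2}+1$ (odd!), it cannot be of affine type, so $G_{\Delta}^{\Delta}$ is almost simple with $\mathrm{Soc}(G_{\Delta}^{\Delta})$ a $2$-transitive group on $w^{2}+1$ points having a $2$-group point stabilizer — but \cite[List (A)]{Ka} shows no such $2$-transitive socle exists (the $2$-transitive groups with odd degree and $2$-group stabilizer are very restricted, essentially none of degree $w^{2}+1$ with $w\geq 3$ odd). This rules out $v$ even entirely, leaving only the $v$ odd case, which was disposed of above. **The main obstacle** I anticipate is the $v$ even subcase: one must be careful that $\left\vert\Delta\right\vert=c=w^{2}+1$ is computed correctly (not a $2$-power) and then check against \cite[List (A)]{Ka} that no $2$-transitive group of that degree with $2$-group stabilizer occurs — this requires knowing that $w^{2}+1$ for odd $w\geq 3$ never hits the sporadic or Mathieu degrees with the right stabilizer structure, which I would verify case by case; a secondary nuisance is re-running the exceptional-parameter computations in the $v$ odd case with the type-2 value of $\left\vert\Sigma\right\vert$.
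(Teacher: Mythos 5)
Your case split ($v$ odd versus $v$ even) is organized around a dichotomy that does not actually occur here: since $w$ is odd, $\left\vert\Delta\right\vert=\lambda/2+1=w^{2}+1$ is \emph{even} (indeed $\equiv 2\pmod 8$), while $\left\vert\Sigma\right\vert=2w^{4}-2w^{2}+1$ is odd, so $v$ is always even and your entire $v$-odd branch (Lemma \ref{strike} plus the Theorem \ref{D1PqP} machinery) is vacuous. More seriously, in the $v$-even branch you assert that $c=w^{2}+1$ is odd and use this to exclude the affine case; this contradicts your own correct computation two sentences earlier, and oddness would not exclude affine type anyway, since an odd prime power is a perfectly admissible affine degree. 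The affine case must instead be excluded by noting that $w^{2}+1\equiv 2\pmod 8$ and $w^{2}+1\geq 10$ cannot be a prime power (equivalently, $w^{2}+1=2^{s}$ has no solution with $w\geq 3$); this is precisely the paper's opening step, via \cite[B1.1]{Rib}. That repair is essential, because in the almost simple case your contradiction comes from the point stabilizer of the socle being a non-trivial $2$-group, whereas in the affine case the socle is regular and its point stabilizer is trivially a $2$-group, so that mechanism gives nothing there.

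Once the affine exclusion is fixed, your main argument does close the case and is genuinely different from the paper's: you combine Proposition \ref{DivanDan}(1) with Lemma \ref{2S} to force $\mathrm{Soc}(G_{\Delta}^{\Delta})_{x}$ to be a $2$-group and then contradict \cite[List (A)]{Ka}, exactly as in the opening of Lemma \ref{T1even}. The paper instead identifies the admissible socles ($A_{w^{2}+1}$, $PSL_{2}(w^{2})$, $PSU_{3}(w^{2/3})$), produces an element of $G(\Sigma)$ of odd prime order dividing $w^{2}-1$ that fixes at least two points in every block of imprimitivity (which requires the sub-analysis ruling out $PSL_{d}(s)$ for $d\geq 3$ and the special treatment of $w^{2}-1=2^{t}$, i.e.\ $w=3$), and contradicts the fixed-point bound of Lemma \ref{Fixpoints}(2). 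Your route, once corrected, is shorter: Lemma \ref{2S} already packages the fixed-point count, and you never need to know which socle actually occurs, only that no $2$-transitive simple socle of degree at least $5$ has a $2$-group point stabilizer.
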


\begin{proof}
Suppose that $\mathcal{D}$ is of type 2 with $\lambda =2w^{2}$, where $u$ is
odd, $u\geq 3$, and $2(w^{2}-1)$ is a square. Then $\left\vert \Delta \right\vert=\lambda/2+1=w^{2}+1$ is even.
If $\mathrm{Soc}(G_{\Delta}^{\Delta})$ is an elementary abelian $2$-group, then $w^{2}+1=2^{s}$ for some $s\geq 1$. However, it has no integer solutions for \cite[B1.1]{Rib}. Thus $\mathrm{Soc}(G_{\Delta}^{\Delta})$ is non-abelian simple, and hence $\mathrm{Soc}(G_{\Delta}^{\Delta})$ is isomorphic to one of the groups $ A_{w^{2}+1}$, $PSL_{d}(s)$ with $w^{2}+1=\frac{s^{d}-1}{s-1}$, $d \geq 2$ and $(d,s)\neq (2,2),(2,3)$, or $PSU_{3}(w^{2/3})$ by \cite[List (A)]{Ka} since $\left\vert \Delta \right\vert=w^{2}+1$. In the second case one has $w^2=s\frac{s^{d-1}-1}{s-1}$, and so $s$ is an even power of an odd prime. Therefore $\left(\frac{w}{s^{1/2}}\right)^{2}=\frac{s^{d-1}-1}{s-1}$, and hence $d=2$ by \cite[A7.1, A8.1 and B1.1]{Rib}, and so $\mathrm{Soc}(G_{\Delta}^{\Delta}) \cong  PSL_{2}(w^{1/2})$. Also, $w^{2}-1 \neq 2^{t}$ for some $t \geq 1$. Indeed, if not so, then $t=w=3$ by \cite[B1.1]{Rib}, and hence $(\lambda,\left\vert \Delta \right \vert, \left\vert \Sigma \right \vert)=(18,10,145)$ and $\mathrm{Soc}(G_{\Delta}^{\Delta}) \cong A_{5}$ and $\mathrm{Soc}(G^{\Sigma}) \cong A_{145}$ by \cite[Table B.4]{DM}. Then $A_{144} \trianglelefteq G_{\Delta}^{\Sigma} \leq S_{144}$ but this contradicts (\ref{salvaMAOL}) since $G_{\Delta}^{\Delta} \leq S_{5}$. Thus in each case $\mathrm{Soc}(G_{\Delta}^{\Delta})$ contains elements of order an odd prime divisor of $w^{2}-1$, and each of these elements fixes at least two points on $\Delta$. 

If $G(\Sigma)\neq 1$ then $\mathrm{Soc}(G_{\Delta}^{\Delta}) \trianglelefteq G(\Sigma)^{\Delta}$ by Proposition \ref{DivanDan}(1). Now, let $\eta \in G(\Sigma)$ be an element of order an odd prime divisor of $w^{2}-1$, which exists by the previous argument. Then, for each $\Delta \in \Sigma$ either $\eta \in G(\Delta)$ or $\eta$ induces an element of $G(\Sigma)^{\Delta}$ fixing at least two points of $\Delta$. Therefore $\left\vert \mathrm{Fix}(\eta)\right\vert \geq 2\left\vert\Sigma \right\vert=\lambda^2-2\lambda+2$, but his contradicts Lemma \ref{Fixpoints}(2). Thus $G(\Sigma)= 1$, and hence $G$ acts point-quasiprimitively on $\mathcal{D}$ by
Proposition \ref{DivanDan}(2).   
\end{proof}

\begin{theorem}
\label{GSigma}If $\mathcal{D}$ is of type 2 then $G$ acts
point-quasiprimitively on $\mathcal{D}$.
\end{theorem}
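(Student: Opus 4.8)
The plan is to split along the two possibilities for $\lambda$ in Theorem \ref{PZM}(V.2). If $\lambda=2w^{2}$ with $w$ odd, $w\geq3$ and $2(w^{2}-1)$ a square, there is nothing to prove: this is precisely Lemma \ref{firstpart}. So the substantive case is $\lambda\equiv0\pmod4$, and there I would argue by contradiction. If $G$ is not point-quasiprimitive then $G(\Sigma)\neq1$ by Proposition \ref{DivanDan}(2). Writing $\lambda=4m$, both $c=|\Delta|=(\lambda+2)/2=2m+1$ and $d=|\Sigma|=(\lambda^{2}-2\lambda+2)/2=8m^{2}-4m+1$ are odd, so $v=cd$ is odd; hence Lemma \ref{strike} applies, giving that $G(\Sigma)$ is elementary abelian of odd prime-power order $|\Delta|=p^{n}$, acting regularly on $\Delta$, and $G^{\Sigma}\leq GL_{n}(p)$. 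Setting $q=p^{n}$, the identity $|\Delta|=(\lambda+2)/2$ forces $\lambda=2(q-1)$, whence $k=\lambda^{2}/2=2(q-1)^{2}$.

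The heart of the proof is then a divisibility obstruction. Since $\mathcal{D}$ is symmetric, $r=k$, and flag-transitivity makes $G_{x}$ transitive on the $k$ blocks through a point $x$, so $k\mid|G_{x}|$ and hence $k\mid|G|$; moreover $q=|G(\Sigma)|$ divides $|G|$, and $\gcd(q,k)=1$ since $q$ is odd and coprime to $q-1$, so $qk\mid|G|$ and therefore $k\mid|G|/q=|G^{\Sigma}|\leq|GL_{n}(p)|$. It remains to check that $2(q-1)^{2}\nmid|GL_{n}(p)|$. For $n=1$ this is immediate as $|GL_{1}(p)|=q-1<k$; for $n=2$ one has $|GL_{2}(p)|<p^{4}=q^{2}<2(q-1)^{2}=k$ because $q=p^{2}\geq9$; and for $n\geq3$, since $p$ is odd, $p^{n}-1$ admits a primitive prime divisor $z$ by \cite[Theorem II.6.2]{Lu}, so from $|GL_{n}(p)|=p^{\binom{n}{2}}\prod_{i=1}^{n}(p^{i}-1)$ the $z$-part of $|GL_{n}(p)|$ equals the $z$-part of $q-1$, whereas the $z$-part of $k=2(q-1)^{2}$ is its square and hence strictly larger. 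This contradiction forces $G(\Sigma)=1$, and $G$ is point-quasiprimitive by Proposition \ref{DivanDan}(2).

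I do not expect a genuine obstacle. The contrast with the type 1 case (Theorem \ref{D1PqP}) is the point: there $k=\lambda(\lambda+1)$ is coprime to a primitive prime divisor of $|\Delta|-1$, which necessitates Aschbacher's theorem, Seitz's classification of the relevant parabolic-type subgroups, and the theory of large subgroups; here the block size $k=2(q-1)^{2}$ already carries the square factor $(q-1)^{2}$, so a single primitive prime divisor of $q-1$ already collides with $|GL_{n}(p)|$. The only points needing a little care are the parity check on $v$ (to legitimately invoke Lemma \ref{strike}) and the two small exponents $n=1,2$, both cleared by crude order estimates.
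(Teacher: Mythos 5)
Your proof is correct, and the reduction is the same as the paper's: split according to Theorem \ref{PZM}(V.2), dispose of $\lambda=2w^{2}$ via Lemma \ref{firstpart}, and for $\lambda\equiv 0\pmod 4$ observe $v$ is odd, invoke Lemma \ref{strike} to get $G(\Sigma)$ elementary abelian of odd order $q=p^{n}=|\Delta|$ with $G^{\Sigma}\leq GL_{n}(p)$, and extract the divisibility $2(q-1)^{2}=k\mid |G^{\Sigma}|$ (the paper gets the same divisibility from block-transitivity of $G_{B}$ on $B$ rather than from $G_{x}$ on the blocks through $x$, which is immaterial). Where you genuinely diverge is the endgame: the paper concludes with ``we may proceed as in Theorem \ref{D1PqP}'', i.e.\ it re-runs the structural classification of $G^{\Sigma}$ inside $GL_{n}(p)$ (irreducibility via Hering \cite{He}, the overgroup theorem of Bamberg--Penttila \cite{BP}, and Seitz's theorem \cite{Se} to force $G^{\Sigma}_{\Delta}$ parabolic), whereas you obtain the contradiction by a bare valuation count: a Zsigmondy prime $z$ of $p^{n}-1$ divides $|GL_{n}(p)|$ only to the power $v_{z}(p^{n}-1)$, while it divides $k=2(q-1)^{2}$ to twice that power (with $n=1,2$ cleared by crude order bounds). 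Your route is shorter and self-contained, and it correctly isolates why the type 2 case is easier than Theorem \ref{D1PqP}: the square $(q-1)^{2}$ sitting inside $k$ already overwhelms $|GL_{n}(p)|$, so none of the subgroup-structure machinery is needed here. The only cosmetic point is that ``$|G^{\Sigma}|\leq |GL_{n}(p)|$'' should be read as ``$|G^{\Sigma}|$ divides $|GL_{n}(p)|$'' (Lagrange), which is what your divisibility conclusion actually uses.
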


\begin{proof}
Suppose that $\mathcal{D}$ is of type 2. If $\lambda =2w^{2}$, where $w$ is
odd, $w\geq 3$, and $2(w^{2}-1)$ is a square, the assertion follows from Lemma \ref{firstpart}. Hence, we may assume that $\lambda \equiv 0 \pmod{4}$ by Theorem \ref{PZM}. Hence, $v$ is odd.

Suppose that $G(\Sigma )\neq 1$.
Then $G(\Sigma )\cong Soc(G_{\Delta })$, where $\Delta \in \Sigma $, is an
elementary abelian $p$-group of order acting regularly on $\Delta $, where $%
p $ is odd, and $G^{\Sigma }\leq GL_{d}(p)$, with $p^{d}=\left\vert \Delta
\right\vert $ by Lemma \ref{strike} since $v$ is odd. Then $\lambda =2(p^{d}-1)$ and $%
k=\lambda ^{2}/2=2(p^{d}-1)^{2}$. Then $\left( p^{d}-1\right) ^{2}\mid
\left\vert G_{B}\right\vert $, where $B$ is any block of $\mathcal{D}$,
since $G_{B}$ acts transitively on $B$. Then $\left( p^{d}-1\right) ^{2}$
divides $\left\vert G^{\Sigma }\right\vert $ since $\left\vert G(\Sigma
)\right\vert =p^{d}$. Now, we may proceed as in Theorem \ref{D1PqP} and we
see that no admissible groups arise in this case as well. Thus $G(\Sigma
)=1 $, and hence $G$ acts point-quasiprimitively on $\mathcal{D}$ by
Proposition \ref{DivanDan}(2).
\end{proof}

\bigskip

\begin{proof}[Proof of Theorem \protect\ref{AQP}]
The assertion follows from Theorems \ref{veven} and \ref{D1PqP} for $\mathcal{D}$ of type 1, Theorem \ref{GSigma} for $\mathcal{D}$ of type 2.
\end{proof}

\bigskip

\section{Reduction to the almost simple case}

In this section we analyze the case where $G$ acts point-quasiprimitively on 
$\mathcal{D}$. In this case $G=G^{\Sigma }$. The main tool used to tackle this
case is the O'Nan-Scott Theorem for quasiprimitive groups proven in \cite{P1} and reported below for reader's convenience. We investigate
each of the seven possibilities for $G^{\Sigma }$ provided in the above mentioned theorem by adapting the techniques
developed in \cite{ZZ}, and we show that $\mathrm{Soc}(G^{\Sigma })$ is a
non-abelian simple group. This fact, together with the conclusions of
Theorem \ref{AQP}, yields the following result.

\begin{theorem}
\label{ASPQP}$G^{\Sigma }$ is an almost simple group acting primitively on \ $%
\Sigma $. Moreover, one of the following holds:

\begin{enumerate}
\item $G(\Sigma )=1$ and $G$ acts point-quasiprimitively on $\mathcal{D}$.

\item $G(\Sigma )$ is a self-centralizing elementary abelian $2$%
-subgroup of $G$. Also, the following hold:

\begin{enumerate}
\item $\mathcal{D}$ is a symmetric $2$-$(2^{d+2}(2^{d-1}-1)^{2},\allowbreak
2\left( 2^{d}-1\right) \left( 2^{d-1}-1\right) ,2(2^{d-1}-1))$ design with $%
d\geq 4$;

\item $G_{\Delta }^{\Sigma }/G(\Delta )^{\Sigma }\cong G_{x}^{\Delta }$ and
either $G_{x}^{\Delta }\cong SL_{d}(2)$, or $G_{x}^{\Delta }\cong A_{7}$ and 
$d=4$.
\end{enumerate}
\end{enumerate}
\end{theorem}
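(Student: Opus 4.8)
The plan is to combine Theorem \ref{AQP} with the O'Nan--Scott theorem for quasiprimitive groups from \cite{P1}: since Theorem \ref{AQP} already delivers, in its case (2), that $G^\Sigma$ is almost simple (with all the design parameters and the isomorphism type of $G_x^\Delta$), the only thing left to establish is that when $G(\Sigma)=1$ — so that $G=G^\Sigma$ acts point-quasiprimitively on $\mathcal{D}$ and hence quasiprimitively on $\Sigma$ — the socle $\mathrm{Soc}(G^\Sigma)$ is non-abelian simple. First I would recall that by Theorem \ref{PZM}(V) the design is of type 1 or type 2, so $|\Sigma|=d=\lambda^2$ (type 1) or $d=(\lambda^2-2\lambda+2)/2$ (type 2), with $\lambda>10$, and that $G^\Sigma_\Delta$ contains the $2$-transitive group $G_\Delta^\Delta$ on $\Delta$ of size $c=\lambda+2$ or $(\lambda+2)/2$; the divisibility relations $k\mid |G_x|$, $\lambda(\lambda+1)\mid |G_\Delta|$ (type 1) etc., together with Lemma \ref{PP}, give strong numerical control on $|G^\Sigma|$ and on the subdegrees of $G^\Sigma$ on $\Sigma$.

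Next I would go through the seven quasiprimitive types of \cite{P1} — affine (HA), holomorph of a simple group (HS), holomorph of a compound group (HC), almost simple (AS), simple diagonal (SD), compound diagonal (CD), product action (PA) — and eliminate all but AS, adapting the arguments of \cite{ZZ}. The affine case HA is handled by the argument already used in Theorem \ref{D1PqP}/Theorem \ref{GSigma}: $|\Sigma|=p^m$ would force $\lambda^2$ or $(\lambda^2-2\lambda+2)/2$ to be a prime power, and then the embedding $G^\Sigma\le AGL_m(p)$ with a parabolic point stabiliser of the required index, combined with Zsygmondy primes via \cite[Theorem II.6.2]{Lu} and \cite[Theorem 3.1]{BP}, \cite[Theorem 1.6]{Se}, gives a contradiction with \cite[Theorem 3.5(iv)]{He}. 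For the diagonal and product-action types SD, CD, PA one exploits the factorisation $|\Sigma|=|T|^{m-1}$ (resp.\ $|\Sigma|=n^m$ with $n$ the primitive degree of the base component) and the known structure of point stabilisers: the point stabiliser $G_\Delta^\Sigma$ has a specific large index $|\Sigma|^{1/2}$-ish subgroup coming from the $2$-transitive action of $G_\Delta^\Delta$ of degree $c\approx\lambda$, which is incompatible with the size of $\Sigma$ (since $c^2$ is far smaller than $|\Sigma|$ in SD/CD, and the subdegree structure in PA forces $T$ or the base group to act $2$-transitively of degree $\approx c$, then a square-root/Zsygmondy count on $|\Sigma|-1$ fails, exactly as in the $d=4$ sub-case of Theorem \ref{veven}). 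The holomorph cases HS and HC are ruled out because $\mathrm{Soc}(G^\Sigma)=T^m$ with $G_\Delta^\Sigma\le \mathrm{Hol}(T)$ or $\mathrm{Hol}(T^m)$ forces $|\Sigma|=|T|$ or $|T|^m$ and a severe restriction on $G_\Delta^\Sigma$ that clashes with $\lambda(\lambda+1)\mid|G_\Delta|$ and the presence of $G_\Delta^\Delta$.

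The main obstacle I expect is the product-action case PA, where $\mathrm{Soc}(G^\Sigma)=T^m$ acts on $\Sigma=\Theta^m$ and $G^\Sigma\le H\wr S_m$ with $H$ primitive almost simple on $\Theta$. Here one must show that no combination of $m\ge 2$, a base degree $|\Theta|$ with $|\Theta|^m=\lambda^2$ (type 1) or $=(\lambda^2-2\lambda+2)/2$ (type 2), and a $2$-transitive subquotient of degree $c$ inside $G_\Delta^\Sigma$ survives. The strategy is: from $|\Theta|^m=\lambda^2$ deduce that $m$ is even and $|\Theta|^{m/2}=\lambda$ (type 1), so $\lambda+2=c$ must divide $|H\wr S_m|_\Delta$ while being coprime to $\lambda$; then a primitive prime divisor $z$ of a suitable $\Phi_e^\ast(\cdot)$ dividing $c-1$ or $c$ cannot divide any subdegree of $H\wr S_m$ on $\Theta^m$ (these subdegrees are explicit polynomials in $|\Theta|$), forcing $z\mid |\Sigma|-1$ and a final numerical contradiction, treating the sporadically small cases where Zsygmondy fails by hand with \cite{At} and \textsf{GAP}. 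Once all six alternative types are excluded, $G^\Sigma$ is almost simple, it is primitive on $\Sigma$ by Proposition \ref{DivanDan}, and the statement of the theorem follows by appending the conclusions of Theorem \ref{AQP}.
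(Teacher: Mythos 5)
Your high-level architecture is the same as the paper's: Theorem \ref{AQP} already settles the $G(\Sigma)\neq 1$ branch, so the content is to show $\mathrm{Soc}(G)$ is non-abelian simple by running through the O'Nan--Scott types of \cite{P1} and adapting \cite{ZZ}. However, the execution you sketch has concrete gaps. First, your list of ``seven types'' omits the twisted wreath type (type III(c) of \cite{P1}), where $\mathrm{Soc}(G)\cong T^{h}$ is non-abelian and \emph{regular}; this is not subsumed by HS/HC and is a case the paper must treat explicitly (it is handled together with the simple diagonal case via $(\lambda+2)\lambda^{2}=\left\vert\mathcal{P}\right\vert=\left\vert T\right\vert^{j}$ and a delicate analysis of the $G_{x}$-orbits inside the semiregular orbits $x^{T_{i}}$). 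Second, the quantitative claims underlying your elimination of the diagonal and product-action types are false or unsupported: $\left\vert\Theta\right\vert^{m}=\lambda^{2}$ does not force $m$ to be even (take $\left\vert\Theta\right\vert$ a square), and in the simple diagonal case $c^{2}=(\lambda+2)^{2}$ is \emph{not} ``far smaller'' than $\left\vert\Sigma\right\vert=\lambda^{2}$ --- the two are asymptotically equal, so no crude size comparison can close these cases (indeed for $\mathrm{Soc}(G)=T^{2}$ no order bound alone suffices). The actual eliminations rest on design-theoretic divisibility via Lemma \ref{PP}: $\lambda+1$ must divide the length of every nontrivial $G_{x}$-orbit, applied to the orbits $x^{T_{i}}\setminus\{x\}$ and to the ``coordinate lines'' of $\Gamma^{h}$, yielding congruences such as $\lambda+1\mid h(\lambda^{2/h}-1)$ and the equation $\theta=((\lambda+1)t-1)/2$, which have no solutions for $\lambda>10$. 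None of this is in your proposal, and it is exactly where the work lies.

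A further difference worth noting: the paper applies \cite{P1} to the action of $G$ on $\mathcal{P}$ (where quasiprimitivity is what Proposition \ref{DivanDan} actually gives), not to the action on $\Sigma$. This choice pays off twice. The affine case dies instantly, because an affine quasiprimitive group is primitive on $\mathcal{P}$, contradicting point-imprimitivity --- no Zsygmondy or parabolic-subgroup argument is needed there. And for designs of type 2 the diagonal and twisted wreath cases die instantly because $v=\frac{\lambda+2}{2}\cdot\frac{\lambda^{2}-2\lambda+2}{2}\not\equiv 0\pmod 4$ cannot equal $\left\vert T\right\vert^{j}$. Your plan of working on $\Sigma$ is not wrong in principle (the socle is the same abstract group either way), but as written it replaces short structural arguments with numerical ones that you have not carried out and that, in the forms you state them, do not hold.
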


\bigskip

We only need to prove that $G^{\Sigma}$ is almost simple, the remaining parts of Theorem \ref{ASPQP} have been already proven in Theorem \ref{AQP}

\bigskip
In the sequel we denote $\mathrm{Soc}(G)$ simply by $L$ and let $x\in 
\mathcal{P}$. Then $L\cong T^{h}$ with $h\geq 1$, where $T$ is a simple
group. By \cite[Theorem 1]{P1}, one of the following holds:

\begin{enumerate}
\item[I.] \emph{Affine groups.} Here $T\cong Z_{p}$ for some prime $p$, and $%
L$ is the unique minimal normal subgroup of $G$ and is regular on $\mathcal{P%
}$ of size $p^{h}$. The set $\mathcal{P}$ can be identified with $L\cong Z_{p}^{h}$ so that $%
G\leq AGL_{h}(p)$ with $L$ the translation group and $G_{x}=G\cap GL_{h}(p)$
acting irreducibly on $L$. Moreover, $G$ acts primitively on $\mathcal{P}$.

\item[II.] \emph{Almost simple groups.} Here $h=1$, $T$ is a non-abelian
simple group, $T\trianglelefteq G\leq \mathrm{Aut}(T)$ and $G=TG_{x}$.\emph{%
\ }

\item[III.] In this case $L\cong T^{h}$ with $h\geq 2$ and $T$ is a non-abelian
simple group. We distinguish three types:

\item[III(a).] \emph{Simple diagonal action.} Define%
\small
\begin{equation*}
W=\left\{ (a_{1},...,a_{h})\cdot \pi :a_{i}\in \mathrm{Aut}(T),\pi \in
S_{h},a_{i}\equiv a_{j}\pmod{\mathrm{Inn}(T)}\text{ for all }i,j\right\} \text{,%
}
\end{equation*}%
\normalsize
where $\pi \in S_{h}$ just permutes the components $a_{i}$ naturally. With
the usual multiplication, $W$ is a group with socle $L\cong T^{h}$, and $%
W=L.(\mathrm{Out}(T)\times S_{h})$. The action of $W$ on $\mathcal{P}$ is
defined by setting%
\begin{equation*}
W_{x }=\left\{ (a,...,a)\cdot \pi :a\in \mathrm{Aut}(T),\pi \in
S_{h}\right\} \text{.}
\end{equation*}%
Thus $W_{x }\cong \mathrm{Aut}(T)\times S_{h}$, $L_{x }\cong T$
and $\left\vert \mathcal{P}\right\vert =\left\vert T\right\vert ^{h-1}$.

For $1\leq i\leq h$ let $T_{i}$ be the subgroup of $L$ consisting of the $h$%
-tuples with $1$ in all but the $i$-th component, so that $T_{i}\cong T$ and 
$L\cong T_{1}\times \cdots \times T_{h}$. Put $\mathcal{T}=\left\{
T_{1},...,T_{h}\right\} $, so that $W$ acts on $\mathcal{T}$. We say
that subgroup $G$ of $W$ is of type III(a) if $L\leq G$ and, letting $P$ the permutation group of $G^{%
\mathcal{T}}$, one of the following holds:

\begin{enumerate}
\item[(i)] $P$ is transitive on $\mathcal{T}$;

\item[(ii)] $h=2$ and $P=1$.
\end{enumerate}

We have $G_{x }\leq \mathrm{Aut}(T)\times P$ and $G\leq L.(\mathrm{Out}%
(T)\times P)$. Moreover, in case (i) $L$ is the unique minimal normal
subgroup of $G$ and $G$ is primitive on $\mathcal{P}$ if and only if $P$ is
primitive on $\mathcal{T}$. In case (ii) $G$ has two minimal normal
subgroups $T_{1}$ and $T_{2}$, both regular on $\mathcal{P}$, and $G$ is
primitive on $\mathcal{P}$.

\item[III(b).] \emph{Product action.} Let $H$ be a quasiprimitive permutation group on a set $%
\Gamma $ of type II or III(a). For $l>1$, let $W=H\wr S_{l}$, and take $W$
to act $\Lambda =\Gamma ^{l}$ in its natural product action. Then for $y\in
\Gamma $ and $z=(y,...,y)\in \Lambda $ we have $W_{z}=H_{y}\wr S_{l}$ and $%
\left\vert \Lambda \right\vert =\left\vert \Gamma \right\vert ^{l}$. If $K$
is the socle $H$, then the socle $L$ of $W$ is $K^{l}$.

Now $W$ acts naturally on the $l$ factors in $K^{l}$, and we say that a
subgroup $G$ of $W$ is of type III(b) if $L\leq G$, $G$ acts transitively on
these $l$ factors, and one of the following holds:

\begin{enumerate}
\item[(i)] $H$ is of type II, $K=T$, $h=l$, and $L$ is the unique minimal normal
subgroup $G$; further $\Lambda $ is a $G$ invariant partition of $\mathcal{P}
$ and, for $x$ in the part $z\in \Lambda $, $L_{z}=T_{y}^{h}<L$ an for some
non-trivial normal subgroup $R$ of $T_{y}$, $L_{x}$ is a subdirect product
of $R^{k}$, that is $L_{x}$ projects surjectively on each of the direct factors $%
R$.

\item[(ii)] $H$ is of type III(a), $\mathcal{P}=\Lambda $, $K\cong T^{h/l}$
and both $G$ and $H$ have $m$ minimal normal subgroups where $m\leq 2$ . If $%
m=2$ then each of the two minimal normal subgroups of $G$ is regular on $%
\mathcal{P}$.
\end{enumerate}

\item[III(c).] \emph{Twisted wreath action.} Here $G$ is a twisted wreath
action $T\wr _{\phi }P$ defined as follows. Let $P$ have a transitive action
on $\left\{ 1,...,h\right\} $ and let $Q$ be the stabilizer $P_{1}$ of
the point $1$ in this action. We suppose that there is an homomorphism $\phi
:Q\rightarrow \mathrm{Aut}(T)$ such that $\mathrm{core}_{P}\left( \phi
^{-1}(\mathrm{Inn}(T))\right) =\cap _{x\in P}\phi ^{-1}(\mathrm{Inn}%
(T))^{x}=\left\{ 1\right\} $. Define%
\begin{equation*}
L=\left\{ f:P\rightarrow T:f(\alpha \beta )=f(\alpha )^{\phi (\beta )}%
\text{ for all }\alpha \in P\text{, }\beta \in Q\right\} \text{.}
\end{equation*}%
Then $B$ is a group under the pointwise multiplication, and $L\cong T^{l}$.
Let $P$ act on $L$ by%
\begin{equation*}
f^{ }(\gamma )=f(\alpha \gamma )\text{ for }\alpha ,\gamma \in P\text{.%
}
\end{equation*}%
Define $T\wr _{\phi }P$ to be the semidirect product of $L$ by $P$ with this
action, and define the action on $\mathcal{P}$ by setting $G_{x}=P$. Then $%
\left\vert \mathcal{P}\right\vert =\left\vert T\right\vert ^{h}$, and $L$ is
the unique minimal normal subgroup of $G$ and acts regularly on $\mathcal{P}$%
. We say that $G$ is of type III(c).
\end{enumerate}

\bigskip

\begin{theorem}\label{ASt1}
If $\mathcal{D}$ is of type 1 then $G$ is almost simple.
\end{theorem}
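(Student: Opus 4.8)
The plan is to rule out, one by one, each of the non-almost-simple possibilities for $G=G^{\Sigma}$ coming from the O'Nan--Scott theorem for quasiprimitive groups. By Theorem \ref{AQP} we are in the quasiprimitive case with $G(\Sigma)=1$, so $G=G^{\Sigma}$ and $L=\mathrm{Soc}(G)\cong T^{h}$ falls into one of the types I, II, III(a), III(b), III(c). Recall that $\mathcal{D}$ of type 1 is a symmetric $2$-$(\lambda^{2}(\lambda+2),\lambda(\lambda+1),\lambda)$ design with $\lambda>10$, so $v=\lambda^{2}(\lambda+2)=\left\vert\Sigma\right\vert\cdot c$ with $c=\lambda+2$ and $\left\vert\Sigma\right\vert=\lambda^{2}$, and $k=\lambda(\lambda+1)$ divides $\left\vert G_{x}\right\vert$. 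We want to conclude that $T$ is non-abelian simple with $h=1$, i.e. case II.

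The key combinatorial levers are: (a) $k=\lambda(\lambda+1)\mid\left\vert G_{x}\right\vert$ and $v=\lambda^{2}(\lambda+2)=\left\vert G:G_{x}\right\vert$; (b) the partition $\Sigma$ into $d=\lambda^{2}$ parts of size $\lambda+2$ forces $v=\left\vert\Delta\right\vert\cdot\left\vert\Sigma\right\vert$ with both factors of a very constrained shape — $\lambda+2$ and $\lambda^{2}$ are coprime to within small gcd's; (c) Corollary \ref{CFixT} controls the prime divisors of $\left\vert G(\Delta)\right\vert$, and here $G(\Sigma)=1$; (d) Lemma \ref{Mpomoc} says $\Sigma$ is essentially the only $G$-invariant partition with these parameters, so $G$ cannot admit a competing invariant partition of a ``product'' shape. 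For the affine case I: $L$ regular of order $p^{h}=v=\lambda^{2}(\lambda+2)$ would force $\lambda^{2}(\lambda+2)$ to be a prime power, which combined with $\gcd$ considerations (e.g. $\lambda$ and $\lambda+2$ share only a factor of $2$) is impossible for $\lambda>10$; moreover $G\leq AGL_{h}(p)$ is primitive on $\mathcal{P}$, contradicting point-imprimitivity. For III(a) simple diagonal: $v=\left\vert T\right\vert^{h-1}$, and $\Sigma$ gives $\left\vert T\right\vert^{h-1}=(\lambda+2)\lambda^{2}$; one uses that $G_{x}\leq\mathrm{Aut}(T)\times P$ has order divisible by $\lambda(\lambda+1)$ while $k^{2}\approx\lambda^{4}$ must be small compared with $\left\vert T\right\vert$, and that the only invariant partitions in the diagonal case are the ``diagonal-type'' ones whose block sizes are powers of $\left\vert T\right\vert$ or orbit sizes of $P$ on $\mathcal{T}$ — these do not match $c=\lambda+2$, $d=\lambda^{2}$. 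For III(b) product action: $v=\left\vert\Gamma\right\vert^{l}$ with $l>1$, so $v$ is a proper power; but $v=\lambda^{2}(\lambda+2)$ and $\gcd(\lambda,\lambda+2)\leq 2$ prevent $\lambda^{2}(\lambda+2)$ from being an $l$-th power for $l\geq 2$ except in degenerate cases killed by $\lambda>10$; the induced partition structure again clashes with Lemma \ref{Mpomoc}. For III(c) twisted wreath: $L$ is regular of order $\left\vert T\right\vert^{h}$, so $v=\left\vert T\right\vert^{h}$ is a proper power of $\left\vert T\right\vert$ (as $h\geq 2$), and the same non-power obstruction applies; additionally $G_{x}=P$ has order not divisible by $\left\vert T\right\vert$, whereas $k=\lambda(\lambda+1)\mid\left\vert P\right\vert$ together with $v=\left\vert T\right\vert^{h}$ forces an incompatible arithmetic relation between $\lambda$ and $\left\vert T\right\vert$.

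The main obstacle I anticipate is the simple diagonal case III(a)(ii) with $h=2$, where $G$ has two regular minimal normal subgroups and is \emph{primitive} on $\mathcal{P}$ — this contradicts point-imprimitivity outright, so it is actually easy — but more seriously the case III(a)(i) and III(b)(i), where one must argue carefully that the $G$-invariant partition $\Sigma$ with $c=\lambda+2$ and $d=\lambda^{2}$ cannot be realized: here the block sizes of invariant partitions are dictated by subgroups lying between $L_{x}$ and $L$, and for $L\cong T^{h}$ these have sizes that are products of $\left\vert T\right\vert$'s and divisors of $h!$, which one must show never equals $\lambda+2$ when $\left\vert\Sigma\right\vert=\lambda^{2}$ and $\lambda>10$. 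I would handle this by combining the divisibility $\lambda(\lambda+1)\mid\left\vert G_{x}\right\vert$ with the bound $\left\vert G_{x}\right\vert\leq\left\vert\mathrm{Aut}(T)\times P\right\vert$ (or $\leq\left\vert H_{y}\wr S_{l}\right\vert$ in III(b)) and the fact that $\left\vert T\right\vert$ must be large relative to $\lambda$, to force $h=1$; once $h=1$ and $T$ is non-abelian simple we are in case II and the theorem follows. Finally, the bookkeeping that $\gcd(\lambda,\lambda+2)\mid 2$ and that $\lambda^{2}(\lambda+2)$ is never a proper perfect power for $\lambda>10$ (an elementary argument, or a reference to the shape of the factorization) underpins the elimination of all the ``power'' cases I, III(b), III(c) uniformly.
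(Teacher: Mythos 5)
Your plan founders on the central arithmetic claim that $\lambda^{2}(\lambda+2)$ is ``never a proper perfect power for $\lambda>10$.'' That claim is false: $\lambda^{2}(\lambda+2)$ is a perfect square precisely when $\lambda+2$ is a perfect square, which happens for infinitely many $\lambda>10$ (e.g.\ $\lambda=14,23,34,47,\dots$), and these are not degenerate cases. Consequently the ``non-power obstruction'' you invoke to dispose of cases III(a), III(b) and III(c) uniformly does nothing to exclude $v=\left\vert T\right\vert^{2}$ (diagonal or twisted wreath with $j=2$) or $v=\left\vert \Gamma\right\vert^{2}$ (product action with $l=2$); and indeed these are exactly the subcases where the paper has to work hardest. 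The paper's actual mechanism, which your sketch is missing, is the suborbit divisibility coming from Lemma \ref{PP}: for $w\neq x$ one has $\lambda+1\mid\left\vert w^{G_{x}}\right\vert$ (since $(\lambda+1,\lambda)=1$ and $k=\lambda(\lambda+1)$), and applying this to the orbits $x^{T_{i}}$ of the simple direct factors $T_{i}$ of $L$ --- which are pairwise intersecting only in $\{x\}$ and are permuted by $G_{x}$ --- yields first the bound $\lambda+1\leq\left\vert T\right\vert h$ (forcing $j\leq 3$) and then, in the surviving case $j=h=2$, the relation $\left\vert T\right\vert=\theta(\lambda+1)+1$, which combined with $\lambda^{2}(\lambda+2)=\left\vert T\right\vert^{2}$ produces a Diophantine contradiction. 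A similar orbit-counting on $\Gamma^{l}$ handles III(b). Nothing in your proposal substitutes for this step.

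A secondary but also genuine gap: in case III(a)(i) you assert that the only $G$-invariant partitions are ``diagonal-type'' ones with block sizes that are powers of $\left\vert T\right\vert$ or orbit sizes of $P$ on $\mathcal{T}$. $G$ is quasiprimitive but imprimitive here, and the invariant partitions correspond to overgroups of $G_{x}$ in $G$; there is no classification of these overgroups that you can cite, and the paper does not attempt one --- it again goes through the $T_{i}$-suborbits. Your affine case I and the primitive subcases (III(a)(ii), $h=2$ diagonal with $P$ transitive when $h=3$) are fine, since primitivity on $\mathcal{P}$ contradicts the hypothesis directly, but the rest of the argument needs the divisibility machinery you have not supplied.
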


\begin{proof}
Suppose that $\mathcal{D}$ is of type 1. Case (I) is ruled out since $G$
acts imprimitively on $\mathcal{P}$ by our assumptions. Suppose that $G$ is
of type III(a) or III(c). Then $(\lambda +2)\lambda ^{2}=\left\vert \mathcal{%
P}\right\vert =\left\vert T\right\vert ^{j}$ where $T$ is non-abelian simple and $j=h-1$ or $h$, with $%
h\geq 2$, respectively. Hence, $\lambda $ is even.

If $j=1$, then $%
h=j+1=2$ and hence $G$ acts primitively on $\mathcal{P}$ (see III(a)), which is a
contradiction. Thus $j>1$. Note that $\left\vert
x^{T_{i}}\right\vert =\left\vert T\right\vert $ since $T_{i}$ acts
semiregularly on $\mathcal{P}$, where $T_{i}$ is the subgroup of $L$ consisting of the $h$%
-tuples with $1$ in all but the $i$-th component, $T_{i}\cong T$. Moreover, $x^{T_{i_{1}}}\cap
x^{T_{i_{2}}}=\left\{ x\right\} $ for each $i_{1},i_{2}$ such that $%
i_{1}\neq i_{2}$. Let $w\in x^{T_{1}}\setminus \left\{ x\right\} $, then $w^{G_{x}}$ is the disjoint union of $w^{G_{x}}\cap x^{T_{i}}$ for $%
1\leq i\leq h$. Therefore $\left\vert w^{G_{x}}\right\vert =\left\vert
w^{G_{x}}\cap x^{T_{i}}\right\vert h$ since $G_{x}$ permutes transitively $T_{1},...,T_{h}$. On the other hand $\lambda+1 \mid \left\vert w^{G_{x}}\right\vert $ by Lemma \ref{PP}, hence $\lambda +1\leq \left\vert
w^{G_{x}}\cap x^{T_{i}}\right\vert h\leq \left\vert T\right\vert h$. Thus $(\lambda +2)\lambda ^{2}=\left\vert T\right\vert ^{j}$ implies
\begin{equation*}
\left\vert T\right\vert ^{j}\leq (\left\vert T\right\vert
h-1)^{2}(\left\vert T\right\vert h+1)\leq \left\vert T\right\vert ^{3}h^{3},
\end{equation*}%
and hence $j=2$ or $3$, or $j=4$ and $T \cong A_{5}$, as $j>1$ and $\left\vert T\right\vert \geq 60$. The latter is ruled out since it does not provide integer solutions for $(\lambda +2)\lambda ^{2}=\left\vert T\right\vert ^{4}$. If $j=3$ then $\left\vert T\right\vert
>\lambda >\left\vert T\right\vert -2$ since $(\lambda +2)\lambda
^{2}=\left\vert T\right\vert ^{3}$, and hence $\lambda =\left\vert
T\right\vert -1$, but this contradicts $\lambda$ even. Thus, $j=2$. If $h=3$ then $G$ acts primitively on $\mathcal{P}$. Indeed, $G$ is as in III(a) and $Z_{3} \leq P \leq S_{3}$. Thus $j=h=2$ and $G$ is as in III(c). Moreover, $\lambda +1\mid \left\vert
w^{G_{x}}\cap x^{T_{1}}\right\vert$ for each $w\in x^{T_{1}}\setminus \left\{ x\right\} $ since $\lambda$ is even. Then there is $\theta \geq 1$ such that $\left\vert T\right\vert =\theta
(\lambda +1)+1$ since $\left\vert x^{T_{1}}\right\vert=\left\vert T \right \vert$. Then $\lambda ^{2}(\lambda +2)=\left\vert T\right\vert ^{2}$
implies 
\begin{equation}
\left( \lambda +1\right) \left( \left( \lambda +1\right) \lambda -1\right)
=\theta ^{2}(\lambda +1)^{2}+2\theta (\lambda +1)  \label{theta}
\end{equation}%
and hence $\lambda (\lambda +1)-1=\theta ^{2}(\lambda +1)+2\theta $. Then 
\[
\theta =\frac{(\lambda +1)t-1}{2}
\]%
for some $t\geq 1$. If $t\geq 2$ then $\theta >\lambda $ and we reach a
contradiction, thus $t=1$ and hence $\theta =\lambda /2$ which substituted in
(\ref{theta}) yields a contradiction too. Therefore, $G$ is not of type III(a) or III(c).

Suppose that $G$ is of type III(b.ii). Then $G_{z}\leq W_{z}=H_{y}\wr S_{l}$
where $z=(y,y,...,y)$, and denoted by $\mu $ the number of $W_{z}$-orbits on $%
\Delta $, we see that $\mu \geq 4$ by \cite[Corollary 1.9]{DGLPP}.

Let $y_{1}\in \Delta $, $y_{1}\neq y$ such that $\left\vert y_{1}^{H}\right\vert
\leq \frac{\left\vert \Delta \right\vert -1}{\mu -1}$, and let $z_{1}=(y_{1},y,\dots,y)$. Since $\left(H_{y,y_{1}}\times H_{y}\times \cdots \times H_{y} \right):S_{l-1}\leq
W_{z,z_{1}}$, it results 
\begin{equation*}
\left\vert z_{1}^{G_{z}}\right\vert \leq \left\vert z_{1}^{W_{z}}\right\vert
\leq \frac{\left\vert H_{y}\right\vert ^{l}\left( l!\right) }{\left\vert
H_{y,y_{1}}\right\vert \left\vert H_{y}\right\vert ^{l-1}\left(
(l-1)!\right) }=l\left\vert y_{1}^{H}\right\vert \leq \frac{l\left(
\left\vert \Delta \right\vert -1\right) }{\mu -1}\text{,}
\end{equation*}%
and since $\lambda +1\mid \left\vert z_{1}^{G_{z}}\right\vert $ by Lemma \ref{PP}, it follows
that%
\begin{equation*}
\lambda +1\leq \frac{l\left( \left\vert \Delta \right\vert -1\right) }{\mu -1%
}\text{.}
\end{equation*}%
Then $\lambda $ is even since $(\lambda +2)\lambda ^{2}=\left\vert \mathcal{P}\right\vert =\left(
\left\vert T\right\vert ^{h/l-1}\right) ^{l}=\left\vert T\right\vert ^{h-l}$, $l>1$. Thus
\begin{equation*}
\left\vert T\right\vert ^{l(h/l-1)/3}-1\leq \frac{l\left( \left\vert T\right\vert
^{h/l-1}-1\right) }{\mu -1}< \frac{h\left( \left\vert T\right\vert
^{h/l-1}-1\right) }{3}\text{.}
\end{equation*}%
and hence $l=2,3$, as $l>1$. If $l=3$ then $\left\vert T\right\vert ^{h/3-1}>\lambda >\left\vert
T\right\vert ^{h/3-1}-2$ and hence $\lambda =\left\vert T\right\vert ^{h/3-1}-1$ with $h>l$, whereas $\lambda$ is even. Thus $l=2$, and since $\lambda$ is even and $ \left(\Delta\setminus \{y\} \times \{y \} \right) \cup \left( \{ y\} \times \Delta\setminus \{y\} \right)$ is union of some non-trivial $G_{z}$-orbits, being $G_{z} \leq H_{z}\wr S_{l}$, it follows from Lemma \ref{PP} that $ \lambda+1 \mid \left\vert T\right\vert ^{h/2-1}-1$. Now, we may apply the final argument used to rule out III(a) with $\left\vert T\right\vert ^{h/2-1}$ in the role of $\left\vert T\right\vert$ to exclude this case as well.

Finally, assume that $G$ is of type III(b.i). Then $h=l$ and $T^{h}$ is the
unique minimal normal subgroup of $G$. In this case $\Sigma$ can be identified with the Cartesian product $\Gamma^{h}$, hence each $\Delta \in \Sigma$ corresponds to a unique $h$-tuple of elements of $\Gamma$. Therefore, it results that $\left\vert
\Sigma\right\vert=\lambda ^{2}=\left\vert
\Gamma\right\vert ^{h}$. Let $y\in \Gamma $ and $\Delta=(y,...,y)\in \Sigma $. Then
$$ \bigcup_{i=1}^{h} \left( \{y\}\times \cdots \times \Gamma_{i}\setminus \{y\} \times \cdots \times \{y\}\right) $$
is a union of some non-trivial $G_{\Delta}$-orbits and ultimately of some non-trivial $G_{x}$-orbits, where $x \in \Delta$. Then $\lambda +1\mid (\lambda+2)h\left( \left\vert \Gamma\right\vert -1\right)$ by Lemma \ref{PP}. Thus $\lambda
+1\mid (\lambda +2)h\left( \lambda ^{2/h}-1\right) $, and hence $\lambda
+1\mid h\left( \lambda ^{2/h}-1\right) $.

If $h=2$ then $\lambda +1\mid 4$, whereas $\lambda >10$. Thus $%
h\geq 3$ and hence $\lambda ^{1/3}\leq \lambda ^{1-2/h}\leq h$. Therefore $%
5^{h/3}\leq \left\vert T:T_{\Delta }\right\vert ^{h/3}\leq h^{2}$ since $T$ is a non-abelian simple group and $\left\vert T:T_{\Delta }\right\vert=\lambda^{2}$, and so $%
h\leq 7$. Actually, $\left\vert T:T_{\Delta }\right\vert ^{h/2(1-2/h)}\leq h$
implies $h=3$, and hence $\lambda +1\mid 3\left( \lambda
^{2/3}-1\right) $ which is impossible since $\lambda>10$. This completes the proof.
\end{proof}

\bigskip

\begin{theorem}\label{ASt2}
If $\mathcal{D}$ is of type (2) then $G$ is almost simple.
\end{theorem}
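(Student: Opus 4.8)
The plan is to mirror the structure of the proof of Theorem \ref{ASt1}, going through the seven cases of the quasiprimitive O'Nan--Scott theorem one at a time, but now with the parameters of a type-2 design: $v=\left(\frac{\lambda+2}{2}\right)\left(\frac{\lambda^2-2\lambda+2}{2}\right)$, $k=\lambda^2/2$, $\left\vert\Sigma\right\vert=\frac{\lambda^2-2\lambda+2}{2}$ and $c=\frac{\lambda+2}{2}$, where by Theorem \ref{PZM}(V.2) either $\lambda\equiv 0\pmod 4$ or $\lambda=2w^2$ with $w$ odd. Case (I) (affine type) is excluded immediately because $G$ is point-imprimitive by hypothesis. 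For cases III(a) and III(c) one has $\left\vert\mathcal{P}\right\vert=\left\vert T\right\vert^{j}$ with $T$ non-abelian simple and $j=h-1$ or $j=h$; I would first rule out $j=1$ (which forces $G$ primitive), then use the fact that $T_i$ acts semiregularly on $\mathcal{P}$ together with Lemma \ref{PP} (now $\lambda/2\mid\left\vert w^{G_x}\right\vert$, since $r=k=\lambda^2/2$ and $\left\vert B\cap w^{G_x}\right\vert\le \lambda/2$, so $\lambda/2$ divides the relevant orbit length) to get $\left\vert T\right\vert^{j}\le (\left\vert T\right\vert h)^{?}$; counting the exact power of $\left\vert T\right\vert$ appearing in $v$ (here $v$ is roughly $\lambda^4/4$, so $j$ should be forced down to $3$ or $4$) and then matching $v=\left\vert T\right\vert^{j}$ against the explicit quartic-in-$\lambda$ expression for $v$ should leave only finitely many Diophantine possibilities, which I would eliminate by congruence/size arguments as in Theorem \ref{ASt1}. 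A small subtlety is that the divisor I extract from Lemma \ref{PP} is $\lambda/2$ rather than $\lambda+1$, so I must also feed in the constraint $k=\lambda^2/2\mid\left\vert G_x\right\vert$ and the divisibility of $\left\vert\Sigma\right\vert$ by the block stabilizer order to pin things down.

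Next I would handle the product-action cases III(b.i) and III(b.ii). For III(b.ii) I would use \cite[Corollary 1.9]{DGLPP} to get at least four $W_z$-orbits on $\Delta$ exactly as in Theorem \ref{ASt1}, pick a short $H$-orbit on $\Delta\setminus\{y\}$, bound $\left\vert z_1^{G_z}\right\vert\le l\left\vert y_1^H\right\vert$, and combine with the Lemma \ref{PP} divisibility to force $l=2$ or $3$; then $l=3$ dies by a parity/size argument and $l=2$ reduces, via the orbit $\left(\Delta\setminus\{y\}\times\{y\}\right)\cup\left(\{y\}\times\Delta\setminus\{y\}\right)$, to the same Diophantine analysis already used to kill III(a). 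For III(b.i) I would identify $\Sigma$ with $\Gamma^h$, so $\left\vert\Sigma\right\vert=\frac{\lambda^2-2\lambda+2}{2}=\left\vert\Gamma\right\vert^{h}$; the coordinate subproducts give non-trivial $G_\Delta$-orbits of controlled size, Lemma \ref{PP} yields a divisibility of the shape $\tfrac{\lambda}{2}\mid (\text{const})\cdot h\left(\left\vert\Gamma\right\vert-1\right)$, and since $\left\vert\Gamma\right\vert^{h}\approx\lambda^2/2$ we get $\lambda^{1/?}\le h$, bounding $h$; checking the small surviving values of $h$ against $\left\vert\Gamma\right\vert\ge 5$ should finish it. Since $G$ cannot be of type (I) or III, only case (II) remains, i.e. $G$ is almost simple.

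The main obstacle I anticipate is that in type 2 one has $\left\vert\Sigma\right\vert=\frac{\lambda^2-2\lambda+2}{2}$, which is not a "nice" polynomial value the way $\lambda^2$ was in type 1: it need not be a perfect power, and $\frac{\lambda^2-2\lambda+2}{2}=\left\vert\Gamma\right\vert^{h}$ or $\left\vert T\right\vert^{j}$ is a genuinely restrictive Diophantine condition that I will need to exploit rather than fight. In particular, in cases III(b.i) and III(b.ii) the quantity whose $h$-th (resp.\ $l$-th) root must be an integer is $\frac{\lambda^2-2\lambda+2}{2}$, and I expect the cleanest route is to observe that $\lambda^2-2\lambda+2=(\lambda-1)^2+1$, so it is a sum of two squares and in particular squarefree-ish constraints (it is never divisible by $4$, and by a prime $\equiv 3\pmod 4$ only to an even power) sharply limit which perfect powers it can equal; combined with the size bound $h\le$ small this should leave nothing. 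The rest of the argument is a faithful, if slightly more bookkeeping-heavy, transcription of Theorem \ref{ASt1}.
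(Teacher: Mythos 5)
Your proposal follows the same outer skeleton as the paper (run through the quasiprimitive O'Nan--Scott cases and eliminate everything except type II), but it misses the single observation that does almost all of the work in the paper's proof and replaces it with machinery that is both heavier and not actually carried out. For a type-2 design, Theorem \ref{PZM}(V.2) forces either $\lambda\equiv 0\pmod{4}$, in which case both factors $\frac{\lambda+2}{2}$ and $\frac{\lambda^{2}-2\lambda+2}{2}$ are odd and $v$ is odd, or $\lambda=2w^{2}$ with $w$ odd, in which case $v\equiv 2\pmod{4}$; either way $v\not\equiv 0\pmod{4}$. In cases III(a), III(c) and III(b.ii) the point degree is a positive power of $\left\vert T\right\vert$ (for III(b.ii), $\left\vert\mathcal{P}\right\vert=\left\vert\Gamma\right\vert^{l}=\left\vert T\right\vert^{h-l}$), and every non-abelian finite simple group has order divisible by $4$, so these three cases die in one line. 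You in fact hold the relevant arithmetic fact --- you observe that $\lambda^{2}-2\lambda+2=(\lambda-1)^{2}+1$ is never divisible by $4$ --- but you point it at $\left\vert\Sigma\right\vert$ instead of at $v$, which is where it is decisive.

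The concrete gap is in your treatment of III(a), III(c) and III(b.ii): you defer the elimination to ``congruence/size arguments as in Theorem \ref{ASt1}'', but the type-1 argument does not transcribe. Its engine was that $k/\lambda=\lambda+1$ divides every non-trivial $G_{x}$-orbit length, and $\lambda+1$ is odd and coprime to $\lambda$ and $\lambda+2$, which is what turned $(\lambda+2)\lambda^{2}=\left\vert T\right\vert^{j}$ into a tractable Diophantine problem. In type 2 the divisor you correctly extract from Lemma \ref{PP} is $k/\lambda=\lambda/2$, which has none of these coprimality properties (for instance $\lambda/2$ divides $\left\vert\Sigma\right\vert-1=\lambda(\lambda-2)/2$), and $v\approx\lambda^{3}/4$ rather than $\lambda^{4}/4$ as you state; so after bounding $j$ you would be left with equations such as $\frac{(\lambda+2)(\lambda^{2}-2\lambda+2)}{4}=\left\vert T\right\vert^{2}$ or $\left\vert T\right\vert^{3}$ with no mechanism exhibited for killing them --- and the cleanest such mechanism is precisely the mod-$4$ argument above. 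Only case III(b.i) genuinely requires the orbit-counting route (there $\left\vert\Sigma\right\vert=\left\vert\Gamma\right\vert^{h}$ and $\left\vert\Gamma\right\vert$ need not be even), and there your sketch agrees in spirit with the paper's.
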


\begin{proof}
If $\mathcal{D}$ is of type (2) then $v \not \equiv 0 \pmod{4}$ (see Theorem \ref{PZM}). Case (I) is ruled out since $G$ acts imprimitively on $\mathcal{P}$
by our assumptions. Also $v\neq $ $\left\vert T\right\vert ^{j}$ with $%
j\geq 1$, where $T$ is non-abelian simple, and hence $G$ is not of type
III(a) or III(c). Assume that $G$ is of type III(b.i). Arguing as in Theorem \ref{ASt1} we see that
\begin{equation*}
\frac{\lambda ^{2}-2\lambda +2}{2}=\left\vert \Sigma \right\vert
=\left\vert \Gamma \right\vert ^{h}\text{ and } \lambda +1\mid (\lambda /2+1)h\left( \left\vert
\Gamma \right\vert -1\right) 
\end{equation*}%
where $h\geq 2$. It follows that $\lambda +1\mid h\left(
\left\vert \Gamma \right\vert -1\right) $. Since $\left\vert \Gamma
\right\vert ^{l}-1=\allowbreak \frac{1}{2}\lambda \left( \lambda -2\right) $%
, it follows $\lambda +1\mid 3h$ and hence $2\cdot 60^{h}\leq 2\left\vert
\Gamma \right\vert ^{h}<9h^{2}$, which is impossible for $h\geq 2$.

Assume that $G$ is of type III(b.ii). Then 
\begin{equation}
\left( \frac{\lambda +2}{2}\right) \left( \frac{\lambda ^{2}-2\lambda +2}{2}%
\right) =\left\vert \mathcal{P} \right\vert =\left\vert \Gamma \right\vert ^{l}%
\text{,}  \label{ded}
\end{equation}%
where $\left\vert \Gamma \right\vert =\left\vert T\right\vert ^{k/l-1}$
with $k/l\geq 2$, and from (\ref{ded}) we derive that $v \equiv 0 \pmod{4}$, a
contradiction. This completes the proof.
\end{proof}

\bigskip
\begin{proof}[Proof of Theorem \ref{ASPQP}]
The assertion immediately follows from Theorems \ref{AQP}, \ref{ASt1} and \ref{ASt2}.
\end{proof}

\bigskip

\section{Reduction to the case $\lambda \leq 10$}

Let $L$ be the preimage in $G$ of $\mathrm{Soc}(G^{\Sigma })$. Hence $%
L^{\Sigma }\trianglelefteq G^{\Sigma }\leq \mathrm{Aut}(L^{\Sigma })$ with $L^{\Sigma }$ non-abelian simple by Theorem \ref{ASPQP}.
Moreover, $G=G^{\Sigma }$ and $L=L^{\Sigma }$ when $G(\Sigma)=1$. The first part of this section is devoted to prove that either $L_{\Delta
}^{\Delta }$ acts $2$-transitively on $\Delta $, where $\Delta \in \Sigma $, or $G(\Sigma)=1$, $Soc(G_{\Delta }^{\Delta })< L_{\Delta }^{\Delta } \leq G_{\Delta }^{\Delta } \leq A \Gamma L_{1}(u^{h})$, where $u^{h}=\left\vert \Delta \right\vert$. Then we prove that either $\left\vert L^{\Sigma }\right\vert \leq \left\vert L_{\Delta
}^{\Sigma }\right\vert ^{2}$ or $\left\vert L\right\vert \leq 4\left\vert L_{\Delta }^{\Delta
}\right\vert^{2} \left\vert \mathrm{Out}(L)\right\vert^{2}$ respectively.
As we will see, these constraints on $L_{\Delta }^{\Sigma }$ are combined with \cite{AB} and \cite{LS} allows us to to completely classify $\mathcal{D}$. Finally, the analysis of $2$-designs $\mathcal{D}$ of type 1 and 2 is carried out in separate subsections.

\bigskip

In the sequel the minimal primitive permutation representation of a non-abelian simple
group $\Gamma $ will be denoted by $P(\Gamma )$. It is known that $P(A_{\ell })=\ell \geq 5$, whereas $P(\Gamma )$ is determined in \cite{At}, \cite[Theorem 5.2.2]{KL}
and in \cite{Va1, Va2, Va3} according to whether $\Gamma $ is sporadic,
classical or exceptional of Lie type respectively. The following technical lemma is
useful to show that $L_{\Delta }^{\Delta }$ acts $2$-transitively on $\Delta 
$ provided that $G_{\Delta}^{\Delta}$ is not a semilinear $1$-dimensional group.

\bigskip

\begin{lemma}
\label{MPPRG}If $\Gamma $ is a non-abelian simple group non-isomorphic to 
$PSL_{2}(q)$ and such that $P(\Gamma )<2\left( \left\vert \mathrm{Out}(\Gamma
)\right\vert +1\right) \left\vert \mathrm{Out}(\Gamma )\right\vert $, then
one of the following holds:

\begin{enumerate}
\item $\Gamma \cong A_{\ell }$ with $\ell =7,8,9,10,11$;

\item $\Gamma \cong M_{11}$;

\item $\Gamma \cong PSL_{3}(q)$ with $q=4,7,8,16$;

\item $\Gamma \cong PSp_{4}(3)$;

\item $\Gamma \cong PSU_{n}(q)$ with $(n,q)=(3,5),(3,8),(4,3)$;

\item $\Gamma \cong P\Omega_{8}^{+}(3)$.
\end{enumerate}
\end{lemma}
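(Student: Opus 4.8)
The plan is to treat this as a finite check against the known classification of minimal permutation degrees of non-abelian simple groups. The key observation is that the hypothesis $P(\Gamma) < 2(|\mathrm{Out}(\Gamma)|+1)|\mathrm{Out}(\Gamma)|$ is extremely restrictive: for the overwhelming majority of simple groups $|\mathrm{Out}(\Gamma)|$ is small (at most a single-digit number, or a small power of a prime times a small number), so the right-hand side is a small constant, while $P(\Gamma)$ grows with the group. So first I would split into the natural families: alternating groups, sporadic groups, classical groups, and exceptional groups of Lie type, and in each family bound $|\mathrm{Out}(\Gamma)|$ and compare with $P(\Gamma)$.

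For alternating groups $A_\ell$ with $\ell \geq 5$, $\ell \neq 6$, we have $|\mathrm{Out}(A_\ell)| = 2$ and $P(A_\ell) = \ell$, so the inequality reads $\ell < 2 \cdot 3 \cdot 2 = 12$, giving $\ell \in \{5,7,8,9,10,11\}$; the case $\ell = 5$ is excluded because $A_5 \cong PSL_2(5)$, leaving item (1) (and $A_6 \cong PSL_2(9)$ is likewise excluded by the $PSL_2(q)$ hypothesis). For sporadic groups one reads off $|\mathrm{Out}(\Gamma)| \leq 2$ from the ATLAS \cite{At}, so one needs $P(\Gamma) < 12$; consulting the list of minimal degrees, only $M_{11}$ (with $P = 11$) survives, giving item (2). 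For exceptional groups of Lie type $\Gamma$ of characteristic $p$ over $\mathbb{F}_{q}$, $|\mathrm{Out}(\Gamma)|$ is bounded by a small multiple of $\log_p q$ (diagonal, graph, and field automorphisms), while $P(\Gamma)$ grows polynomially in $q$ of degree at least $q^{\,\text{(rank-dependent)}}$ by \cite{Va1, Va2, Va3}; a direct comparison shows no exceptional group can satisfy the bound (the smallest candidates, e.g. ${}^2B_2(8)$, $G_2(3)$, ${}^2G_2(3)' \cong PSL_2(8)$, either fail the inequality or are excluded as $PSL_2$).

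The main work, and the main obstacle, is the classical groups. Here I would use \cite[Theorem 5.2.2]{KL} for $P(\Gamma)$ together with the standard formulas for $|\mathrm{Out}(\Gamma)|$ (a product of the diagonal part $d = (\text{gcd-type factor})$, the field automorphism count $f = \log_p q$, and a graph factor $g \in \{1,2\}$ or $g = 3$ for $P\Omega_8^+$). For linear groups $PSL_n(q)$, $P(\Gamma) = (q^n-1)/(q-1)$ for most $(n,q)$ while $|\mathrm{Out}| = d f g$ with $d = (n, q-1)$, $g = 2$ (for $n \geq 3$); the inequality $(q^n-1)/(q-1) < 2(df g + 1)(dfg)$ forces $n$ small and $q$ small, and a careful case analysis yields $PSL_2(q)$ (excluded), $PSL_3(q)$ with $q \in \{4,7,8,16\}$ (item (3)), and the coincidences $PSL_4(2) \cong A_8$, $PSL_2(7) \cong PSL_3(2)$ which fold into earlier items. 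For symplectic, unitary and orthogonal groups one proceeds similarly, using the small-dimension minimal-degree formulas and the exceptional entries in \cite[Table 5.2.A]{KL} (e.g. $PSp_4(3) \cong PSU_4(2)$, $P\Omega_8^+(3)$ with its triality giving $|\mathrm{Out}| = 24$), and one extracts $PSp_4(3)$ (item (4)), $PSU_3(5), PSU_3(8), PSU_4(3)$ (item (5)), and $P\Omega_8^+(3)$ (item (6)); low-dimensional isomorphisms such as $PSp_4(2)' \cong A_6$, $PSU_4(2) \cong PSp_4(3)$, $P\Omega_6^+(q) \cong PSL_4(q)$ must be tracked to avoid double-counting and to make sure nothing escapes. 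The delicate points are (a) the exceptional small cases where $P(\Gamma)$ is \emph{smaller} than the generic formula (these are exactly the ones in \cite[Table 5.2.A]{KL} and several of them survive, e.g. $PSL_3(4)$ has $P = 21$ but $|\mathrm{Out}| = 12$ so $2 \cdot 13 \cdot 12 = 312 > 21$), and (b) groups with unusually large outer automorphism group — principally $P\Omega_8^+(q)$ with $|\mathrm{Out}| = 24 f$ and, for $q = 3$, $|\mathrm{Out}| = 24$, so one needs $P < 2 \cdot 25 \cdot 24 = 1200$, which $P(P\Omega_8^+(3)) = 1080$ just satisfies. Once all families are exhausted and the Lie-type isomorphisms reconciled, the six listed cases are precisely what remains.
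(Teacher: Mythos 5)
Your proposal is correct and takes essentially the same approach as the paper's own proof: a family-by-family comparison of $P(\Gamma)$ against $2\left(\left\vert \mathrm{Out}(\Gamma)\right\vert+1\right)\left\vert \mathrm{Out}(\Gamma)\right\vert$ using \cite[Theorem 5.2.2]{KL}, the ATLAS and Vasilyev's tables, with the low-dimensional isomorphisms ($A_5\cong PSL_2(5)$, $A_6\cong PSL_2(9)$, $PSL_4(2)\cong A_8$, $PSU_4(2)\cong PSp_4(3)$, etc.) tracked to fold coincidences into the listed cases. The delicate points you flag (the exceptional minimal degrees in \cite[Table 5.2.A]{KL} and the large outer automorphism group of $P\Omega_8^+(q)$) are exactly the ones the paper's case analysis must and does handle.
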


\begin{proof}
Assume that $\Gamma \cong A_{\ell }$ with $\ell \geq 5$. Then $\ell \neq 5,6$
since $A_{5} \cong PSL_{2}(5)$ and $A_{6} \cong PSL_{2}(9)$. Then $7 \leq \ell
=P(\Gamma )<12$ and we obtain (1).

If $\Gamma $ is sporadic then $P(\Gamma )<12$, and hence $\Gamma \cong
M_{11} $ by \cite{At}, which is (2).

If $\Gamma $ is a simple exceptional group of Lie type then $P(\Gamma)$ is provided in \cite{Va1, Va2, Va3}, and it is easy to check that no cases arise.

Finally, suppose that $\Gamma$ is a simple classical group.
Assume that $\Gamma \cong PSL_{n}(q)$, where $q=p^{f}$, $f \geq 1$, and $n \geq 3$. We may also assume that $(n,q) \neq (4,8)$ since $PSL_{4}(2)\cong
A_{8}$ was analyzed above. Then $P(\Gamma )=\frac{q^{n}-1}{q-1}$ by \cite[Theorem 5.2.2]{KL} since $n \geq 3$, and hence   
\begin{equation}
\frac{q^{n}-1}{q-1}<2(2(n,q-1)f+1)\cdot 2(n,q-1)f\text{.}  \label{prvaeq}
\end{equation}
Assume that $p^{f}\geq 2f(f+1)$. Then $\frac{q^{n}-1}{q-1}<4q(q-1)^{2}$ and hence $n\leq 4$. Actually, $(n,q)=(3,7)$ by (\ref{prvaeq}) since $n \geq 3$.

Assume that $p^{f}<2f(f+1)$. Then either $p=2$ and $2\leq f\leq 6$, or $p=3$
and $f=1,2$. Hence, $n=3$ and $q=4,8,16$ by (\ref{prvaeq}) since $n \geq 3$ and since $PSL_{3}(2) \cong PSL_{2}(7)$ cannot occur. Thus, we get (3).

We analyze the remaining classical groups by proceeding as in the $PSL_{n}(q)$-case. It is a routine exercise to show that one obtains $PSp_{4}(2)^{\prime }$, $%
PSp_{4}(3)$, $PSU_{n}(q)$ with $(n,q)=(3,5),(3,8),(4,2),(4,3)$, and $P\Omega_{8}^{+}(3)$. Since $PSp_{4}(2)^{\prime }\cong PSL_{2}(9)$ this
case is excluded, whereas the remaining ones yield (4), (5) and (6) respectively, bearing in mind that $PSU_{4}(2) \cong PSp_{4}(3)$.
\end{proof}

\bigskip

\begin{lemma}\label{njami}
Let $x \in \Delta$, if $G(\Sigma)=1$ and $\mathrm{Soc}(
G^{\Delta}_{\Delta}) \cong (Z_{u})^{h}$, where $u$ is an odd prime, then $u^{h} \neq 3^{2},5^{2},7^{2},11^{2}$, $19^{2},23^{2},29^{2},59^{2}$ and $3^{4}$ 
\end{lemma}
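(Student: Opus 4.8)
The plan is to assume for contradiction that $u^{h}$ is one of the listed square or fourth-power values, and in each case derive enough structural information about $L_{\Delta}^{\Sigma}$ and $G^{\Sigma}$ to contradict the arithmetic of a symmetric $2$-design of type 1 or type 2 with $\lambda > 10$. Since $G(\Sigma)=1$ we have $G=G^{\Sigma}$ and $L=L^{\Sigma}=\mathrm{Soc}(G)$ is non-abelian simple by Theorem \ref{ASPQP}, and $G$ acts point-quasiprimitively, hence (by Theorem \ref{AQP}(1)) primitively on $\mathcal{P}$ in the affine-free cases; in particular $L$ acts transitively on $\mathcal{P}$ and on $\Sigma$. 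The hypothesis $\mathrm{Soc}(G_{\Delta}^{\Delta})\cong (Z_u)^h$ with $u$ odd forces $G_{\Delta}^{\Delta}$ to be an affine $2$-transitive group of degree $u^h = |\Delta|$, so $G_{\Delta}^{\Delta}\leq A\Gamma L_1(u^h)$ or one of the other affine classes of Kantor's list; when $u^h$ is a square like $3^2,5^2,\dots$, the degree is tiny (at most $81$) and $G_{\Delta}^{\Delta}$ is one of an explicit short list of solvable affine groups.

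First I would record, for each candidate $u^h$, the resulting parameters: since $\mathcal{D}$ is of type 1, $c=|\Delta|=\lambda+2=u^h$, so $\lambda=u^h-2$ and $|\Sigma|=\lambda^2=(u^h-2)^2$; since $\mathcal{D}$ is of type 2, $c=\lambda/2+1=u^h$, so $\lambda=2(u^h-1)$ and $|\Sigma|=(\lambda^2-2\lambda+2)/2$. For $u^h=3^2,5^2,7^2,11^2,19^2,23^2,29^2,59^2,3^4$ this gives a finite explicit list of values of $(\lambda, v, |\Sigma|)$, all with $\lambda>10$ (one should check $u^h=3^2$ in type 1 gives $\lambda=7$, which is $\leq 10$, and is hence already excluded by Theorem \ref{MMS}, so only the type-2 reading or the remaining $u^h$ survive). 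Next, because $L$ is transitive on $\Sigma$ and $L_{\Delta}^{\Delta}\trianglelefteq G_{\Delta}^{\Delta}$ with $\mathrm{Soc}(G_{\Delta}^{\Delta})\leq L_{\Delta}^{\Delta}$ (as $L_{\Delta}^{\Delta}$ is normal of index dividing $|\mathrm{Out}|$ and contains the socle), we get $|\Sigma|=|L:L_{\Delta}|$ divides $|L|$, so $(u^h-2)^2$ (resp.\ the type-2 value) divides the order of a non-abelian simple group; combined with $\lambda k = k(k-1)/(v-1)\cdot(v-1)$-type divisibility relations and $k\mid |G_x|$, this pins $L$ down to a very short list via the classification of simple groups with a given small primitive degree (\cite{At}, \cite[Theorem 5.2.2]{KL}, \cite{Va1,Va2,Va3}).

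The decisive step is then a counting/fixed-point argument analogous to those already used in Lemma \ref{firstpart} and Corollary \ref{CFixT}. Since $|\Delta|=u^h$ is a square (or $3^4$), there is an odd prime $w \mid u^{h/2}-1$ (or, for $3^4$, dividing $3^2-1=8$ — here one instead uses a prime dividing $3^4-1$ or a $3$-element), and an element of $G_{\Delta}^{\Delta}$ of order $w$ fixes at least two points of $\Delta$ whenever $\mathrm{Soc}(G_{\Delta}^{\Delta})$ contains such an element — which it does, since the affine socle together with the $\Gamma L_1$ or $SL_{h}(u)$-type complement has order divisible by $u^{h/2}-1$. Lifting such an element to $L_{\Delta}$ and using transitivity of $L$ on $\Sigma$, one shows the corresponding element of $L$ fixes at least $2|\Sigma|$ points of $\mathcal{D}$, contradicting Lemma \ref{Fixpoints} exactly as in the proof of Lemma \ref{firstpart}. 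The main obstacle I anticipate is the bookkeeping needed to guarantee that the relevant $w$-element genuinely lies in (the lift to $L$ of) a subgroup of $G_{\Delta}$ that fixes $\geq 2$ points of each $\Delta'\in\Sigma$ — i.e.\ controlling how the point stabilizer $L_x$ sits inside $L_{\Delta}$ across different blocks of imprimitivity — together with handling the genuinely small cases ($3^2$, $5^2$, $3^4$) where primitive prime divisors are unavailable and one must argue directly from the explicit order of $G_{\Delta}^{\Delta}$ and the short list of possible $L$, possibly invoking \textsf{GAP} as in the proof of Theorem \ref{MMS}. The remaining $u^h$ ($19^2$, $23^2$, $29^2$, $59^2$) should fall quickly since $u^{h/2}-1$ then has a large odd part forcing $|\mathrm{Fix}|$ far above the bound of Lemma \ref{Fixpoints}.
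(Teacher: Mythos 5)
There is a genuine gap at what you call the decisive step. The fixed-point argument you want to import from Lemma \ref{firstpart} works there only because the element $\eta$ is taken inside $G(\Sigma)\neq 1$: since $\eta$ stabilizes \emph{every} block $\Delta'\in\Sigma$ setwise, it either lies in $G(\Delta')$ or induces a non-trivial automorphism of $\Delta'$ fixing at least two points, and summing over all of $\Sigma$ gives $|\mathrm{Fix}(\eta)|\geq 2|\Sigma|$. In Lemma \ref{njami} the hypothesis is precisely $G(\Sigma)=1$, so a $w$-element of $L_{\Delta}$ stabilizes only those blocks in its (typically small) fixed-point set on $\Sigma$; there is no reason for it to fix any points at all in the blocks it moves, and the bound $|\mathrm{Fix}|\geq 2|\Sigma|$ collapses. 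You correctly flag "controlling how $L_x$ sits inside $L_{\Delta}$ across different blocks" as the main obstacle, but this is not a bookkeeping issue that can be patched — the mechanism producing the contradiction is simply unavailable when $G(\Sigma)=1$. Your preliminary reductions also contain a slip: point-quasiprimitivity of $G$ does not give primitivity on $\mathcal{P}$ (indeed $G$ is point-imprimitive by standing assumption), though nothing essential hinges on this.

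The paper's actual proof is much shorter and purely arithmetic: for each listed value of $u^{h}$ it tabulates the resulting $(\lambda,|\Sigma|)$ in both type 1 and type 2, observes that in every admissible case $|\Sigma|$ is either a prime power or squarefree, and then invokes Guralnick's classification of simple groups with a subgroup of prime-power index \cite[Theorem 1]{Gu} and the Li--Seress classification of primitive groups of squarefree degree \cite[Theorem 1]{LiSe}, applied to the almost simple group $G=G^{\Sigma}$ acting on $\Sigma$ (using Theorem \ref{ASPQP} and the $2$-transitivity of $G_{\Delta}^{\Delta}$ on $\Delta$) to see that no group survives. If you want to salvage your approach you would need to replace the fixed-point count by this kind of degree-restriction argument on the action of $L$ on $\Sigma$; the first half of your proposal (listing the parameters and trying to constrain $L$ by its transitive degree $|\Sigma|$) is in fact pointing in the right direction, and the missing ingredient is precisely the pair of classification theorems above rather than any counting of fixed points.
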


\begin{proof}
Suppose the $u^{h} = 3^{2},5^{2},7^{2},11^{2},19^{2},23^{2},29^{2},59^{2}$ or $3^{4}$. Then $\lambda=u^{h}-2$ or $2(u^{h}-1)$ according to whether $\mathcal{D}$ is of type $i=1$ or $2$ respectively. Now, Table \ref{tav0} contains the admissible pairs  $(\lambda, \left\vert \Sigma \right\vert)_{i}$, where $i=1,2$.
\tiny
\begin{table}[h!]
\caption{{\protect\small {Admissible $(\lambda, \left\vert \Sigma \right\vert)_{i}$ for $\mathcal{D}$ of type $i=1,2$ }}}
\label{tav0}\centering
\begin{tabular}{|c|c|c|c|c|c|}
\hline
                              & $3^{2}$      & $5^{2}$       & $7^{2}$      & $11^{2}$ & $19^{2}$ \\
                              
\hline
$(\lambda, \left\vert \Sigma \right\vert)_{1}$&  $(7,7^{2})$ &  $(23,23^{2})$& $(47,47^{2})$& $(7\cdot17,7^{2}\cdot17^{2})$ & $(17,17^{2})$ \\ 
\hline
$(\lambda, \left\vert \Sigma \right\vert)_{2}$ &  $(16,113)$ &  $(48,1105)$ & $(96,4513)$& $(7\cdot 103,53\cdot4877)$ & $(240,13^{4})$\\   
\hline
\hline

                                                & $23^{2}$ & $29^{2}$ & $59^{2}$ &  $3^{4}$ & \\
\hline
 $(\lambda, \left\vert \Sigma \right\vert)_{1}$ & $(17\cdot 31,17^{2}\cdot 31^{2})$& $(3^{3},3^{6})$  & $(3 \cdot 19, 3^{2} \cdot 19^{2})$ & $(79,79^{2})$&\\
 \hline
$(\lambda, \left\vert \Sigma \right\vert)_{2}$   & $(1056,556513)$ & $(41^{2}, 17 \cdot 82913)$& $( 2^{4}\cdot 3\cdot 5 \cdot 29, 101 \cdot 149 \cdot 1609)$ & $(160,12641)$ &\\

\hline
\end{tabular}
\end{table}

\normalsize
From Table \ref{tav0} we deduce that $\left\vert \Sigma \right\vert$ is either a power of a prime or a squarefree number. It is straightforward to check that no admissible cases occur by \cite[Theorem 1]{Gu} and \cite[Theorem 1]{LiSe} respectively since $L$ is almost simple by Theorem \ref{ASPQP}, being $G(\Sigma)=1$, and since $G^{\Delta}_{\Delta}$ must act $2$-transitively on $\Delta$, where $\Delta \in \Sigma$.   
\end{proof}

\bigskip

\begin{proposition}
\label{unapred} Let $\Delta \in \Sigma$, then $Soc(G_{\Delta }^{\Delta })\trianglelefteq L_{\Delta }^{\Delta }$ and $G=G_{x}L$, where $x$ is any point of $\mathcal{D}$. Moreover, one of the following holds:
\begin{enumerate}
\item $G(\Sigma)=1$, $\mathrm{Soc}(G_{\Delta }^{\Delta })< L_{\Delta }^{\Delta } \leq G_{\Delta }^{\Delta } \leq A \Gamma L_{1}(u^{h})$, where $u^{h}=\left\vert \Delta \right\vert$, and $\left \vert G_{\Delta }^{\Delta }: L_{\Delta }^{\Delta } \right \vert  \mid \left \vert \mathrm{Out}(L) \right \vert $.
\item $L_{\Delta }^{\Delta }$ acts $2$-transitively on $\Delta $.
\end{enumerate}
\end{proposition}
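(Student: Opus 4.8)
The plan is to exploit the structure already built up in the excerpt: by Theorem~\ref{ASPQP}, $G^{\Sigma}$ is almost simple with non-abelian simple socle $L^{\Sigma}$, and by Theorem~\ref{PZM}(III)--(IV), together with the fact that $\ell=2$ in case (V), the group $G_{\Delta}^{\Delta}$ acts $2$-transitively on $\Delta$ for every $\Delta\in\Sigma$. So $Soc(G_{\Delta}^{\Delta})$ is either elementary abelian (affine $2$-transitive type) or non-abelian simple (almost simple $2$-transitive type), and in the affine case $G_{\Delta}^{\Delta}\leq A\Gamma L_{1}(u^{h})$ is one of the possibilities that must survive Lemma~\ref{njami} and Corollary~\ref{CFixT}. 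First I would establish $G=G_{x}L$: since $L^{\Sigma}\trianglelefteq G^{\Sigma}$ is the socle, $G^{\Sigma}=L^{\Sigma}(G^{\Sigma})_{\Delta}$ by transitivity of $L^{\Sigma}$ on $\Sigma$ (a non-abelian simple normal subgroup of a primitive group is transitive), and pulling this back through the kernel $G(\Sigma)$ gives $G=LG_{\Delta}$; then since $L$ is point-transitive as well (either $G(\Sigma)=1$ and $L=L^{\Sigma}$ is transitive on $\Sigma$ and on each block of imprimitivity through its action, or $G(\Sigma)\neq 1$ contains $Soc(G_{\Delta}^{\Delta})$ which is transitive on $\Delta$, so $L\supseteq \langle G(\Sigma), \text{transitive on }\Sigma\rangle$ is point-transitive), we get $G=G_{x}L$.

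Next I would show $Soc(G_{\Delta}^{\Delta})\trianglelefteq L_{\Delta}^{\Delta}$. The quotient $G_{\Delta}^{\Delta}/L_{\Delta}^{\Delta}$ embeds into $G_{\Delta}^{\Sigma}/L_{\Delta}^{\Sigma}$ modulo the kernel action, and since $G^{\Sigma}/L^{\Sigma}\leq \mathrm{Out}(L^{\Sigma})$ is ``small'' — in particular solvable, and for almost simple $2$-transitive socles of the relevant type it has order dividing a bounded quantity — the normal subgroup $L_{\Delta}^{\Delta}$ of $G_{\Delta}^{\Delta}$ has index dividing $|\mathrm{Out}(L)|$. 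Because $G_{\Delta}^{\Delta}$ is $2$-transitive on $\Delta$, a normal subgroup is either transitive (hence contains $Soc$, by the standard fact that a $2$-transitive group has a unique minimal normal subgroup which is contained in every nontrivial normal subgroup) or trivial on $\Delta$; the latter would force $L_{\Delta}^{\Delta}=1$, i.e. $G_{\Delta}^{\Delta}\cong G_{\Delta}^{\Delta}/L_{\Delta}^{\Delta}$ is a section of $\mathrm{Out}(L)$, which is far too small to be $2$-transitive on $|\Delta|\geq 3$ points given the parameter constraints. Hence $Soc(G_{\Delta}^{\Delta})\trianglelefteq L_{\Delta}^{\Delta}$.

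It remains to split into the two alternatives. If $L_{\Delta}^{\Delta}$ itself acts $2$-transitively on $\Delta$ we are in case (2) and done. Otherwise $L_{\Delta}^{\Delta}$ is a proper non-$2$-transitive subgroup of the $2$-transitive group $G_{\Delta}^{\Delta}$ containing $Soc(G_{\Delta}^{\Delta})$. When $Soc(G_{\Delta}^{\Delta})$ is non-abelian simple this is impossible, because the $2$-transitive almost simple groups have the property that every overgroup of the socle in $\mathrm{Aut}$ of it is again $2$-transitive (or one uses Lemma~\ref{MPPRG}: if $Soc(G_{\Delta}^{\Delta})$ is non-abelian simple and $L_{\Delta}^{\Delta}$ fails to be $2$-transitive, then the point stabilizer in $Soc(G_{\Delta}^{\Delta})$ has too many orbits, forcing a fixed-point count on $\mathcal{D}$ that contradicts Lemma~\ref{Fixpoints} via Corollary~\ref{CFixT}, exactly as in Lemma~\ref{firstpart}). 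Hence $Soc(G_{\Delta}^{\Delta})$ must be elementary abelian, say $(Z_{u})^{h}$ with $u$ prime and $u^{h}=|\Delta|$; then $G_{\Delta}^{\Delta}$ is an affine $2$-transitive group but $L_{\Delta}^{\Delta}$ is not, which by inspection of Hering's list forces $L_{\Delta}^{\Delta}\leq A\Gamma L_{1}(u^{h})$ with $Soc(G_{\Delta}^{\Delta})<L_{\Delta}^{\Delta}$ (the only affine $2$-transitive groups admitting a non-$2$-transitive overgroup-of-socle subgroup of index dividing $|\mathrm{Out}(L)|$ are the semilinear $1$-dimensional ones). Corollary~\ref{CFixT} forces $u$ odd here, and Lemma~\ref{njami} rules out the small exceptional $u^{h}$. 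Finally, $G(\Sigma)=1$ in this case: if $G(\Sigma)\neq 1$ then by Proposition~\ref{DivanDan}(1) $Soc(G_{\Delta}^{\Delta})\trianglelefteq G(\Sigma)^{\Delta}\leq L_{\Delta}^{\Delta}$, and since $G(\Sigma)$ is a $2$-group by Theorem~\ref{AQP} while $u$ is odd, this forces $Soc(G_{\Delta}^{\Delta})=1$, a contradiction. This gives case (1). I expect the main obstacle to be the precise bookkeeping for why $L_{\Delta}^{\Delta}$ not being $2$-transitive, combined with index dividing $|\mathrm{Out}(L)|$, pins $G_{\Delta}^{\Delta}$ down to the $1$-dimensional semilinear case rather than, say, $SL_{d/h}(u^{h})\trianglelefteq G_{x}^{\Delta}\leq \Gamma L_{d/h}(u^{h})$ with $d/h>1$ — here one needs that $SL_{d/h}(u^{h})$ is already $2$-transitive, so that any overgroup-of-socle would be too, leaving only the $\Gamma L_{1}$ column of Hering's list where the socle's $2$-transitive overgroups can be proper.
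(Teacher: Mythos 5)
Your skeleton is the same as the paper's: factorize $G=LG_{\Delta}$ from primitivity of $G^{\Sigma}$, deduce $\mathrm{Soc}(G_{\Delta }^{\Delta })\trianglelefteq L_{\Delta }^{\Delta }$ from normality inside the $2$-transitive group $G_{\Delta }^{\Delta }$, and then play Hering's list against the fact that $G_{\Delta }^{\Delta }/L_{\Delta }^{\Delta }$ is a small (solvable) section to force either $2$-transitivity of $L_{\Delta }^{\Delta }$ or the $A\Gamma L_{1}$ configuration. However, the two sub-cases you dispose of in a clause each are exactly where the paper does most of its work, and as written they are genuine gaps. You must exclude both $L_{\Delta }^{\Delta }=1$ and $L_{\Delta }^{\Delta }=\mathrm{Soc}(G_{\Delta }^{\Delta })$; note that the second is transitive but \emph{not} $2$-transitive, so it lives inside your ``otherwise'' branch and would destroy the strict inequality in conclusion (1). ``A section of $\mathrm{Out}(L)$ is far too small to be $2$-transitive'' and the parenthetical about ``inspection of Hering's list'' are not proofs: $\left\vert \mathrm{Out}(PSL_{2}(p^{f}))\right\vert =(2,p^{f}-1)f$ is unbounded, and these configurations only die after one converts the hypothesis into $\left\vert \Delta \right\vert (\left\vert \Delta \right\vert -1)\mid \left\vert \mathrm{Out}(L)\right\vert$ (respectively $\left\vert \Delta \right\vert -1\mid \left\vert \mathrm{Out}(L)\right\vert$), deduces $P(L)\leq 2\left\vert \mathrm{Out}(L)\right\vert$ (respectively $P(L)<2(\left\vert \mathrm{Out}(L)\right\vert +1)\left\vert \mathrm{Out}(L)\right\vert$), invokes Lemma \ref{MPPRG}, and then kills the surviving candidates ($PSL_{2}(q)$ for a short list of $q$, and $PSU_{4}(3)$) by comparing their actual transitive permutation degrees with $\left\vert \Sigma \right\vert$. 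That quantitative chain is the heart of the paper's proof of this proposition and cannot be replaced by ``too small''.

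The second problem is your derivation of $G(\Sigma )=1$ in case (1), which is circular where it is not simply wrong. Corollary \ref{CFixT} concerns prime divisors of $\left\vert G(\Delta )\right\vert$ and says nothing about the parity of $u=p(\left\vert \Delta \right\vert)$; and the only situation in which $G(\Sigma )\neq 1$ can actually occur has $u=2$, since Theorem \ref{ASPQP}(2) forces $\left\vert \Delta \right\vert =2^{d}$ with $G_{x}^{\Delta }\cong SL_{d}(2)$ or $A_{7}$ --- so the intended contradiction ``$G(\Sigma )$ is a $2$-group but $u$ is odd'' evaporates precisely in the case it is supposed to handle. The paper instead settles $G(\Sigma )\neq 1$ \emph{first}: there $G/L$ is an extension of a $2$-group by a subgroup of $\mathrm{Out}(L^{\Sigma })$, hence solvable (note that your blanket claim that the index of $L_{\Delta }^{\Delta }$ divides $\left\vert \mathrm{Out}(L)\right\vert$ is only available when $G(\Sigma )=1$; in general one only gets solvability of the quotient), so $G_{\Delta }^{\Delta }/L_{\Delta }^{\Delta }$ is solvable, and since $G_{\Delta }^{\Delta }\cong E_{2^{d}}:SL_{d}(2)$ or $E_{2^{d}}:A_{7}$ admits no proper normal subgroup with solvable quotient, $L_{\Delta }^{\Delta }=G_{\Delta }^{\Delta }$ and one lands directly in case (2). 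In particular $G_{\Delta }^{\Delta }\not\leq A\Gamma L_{1}(u^{h})$ when $G(\Sigma )\neq 1$, which is the real reason $G(\Sigma )=1$ in case (1). Your final paragraph contains the right ingredient (the perfect normal subgroups in Kantor's List (B) are already $2$-transitive), but it is never assembled into this argument.
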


\begin{proof}
Since $L_{\Delta }\trianglelefteq G_{\Delta }$ and $G_{\Delta
}^{\Delta }$ acts primitively on $\Delta $, either $L_{\Delta }=L(\Delta )$
or $Soc(G_{\Delta }^{\Delta })\trianglelefteq L_{\Delta }^{\Delta }$ by \cite[Theorem 4.3B]{DM}.
Moreover, $G=G_{\Delta }L$ since $L^{\Sigma}=\mathrm{Soc}(G^{\Sigma})$ and $G^{\Sigma}$ acts primitively on $\Sigma$ by Theorem \ref{ASPQP}, and hence $G/L=G_{\Delta
}L/L\cong G_{\Delta }/L_{\Delta }$. Also, it results that $G/G(\Sigma)L\cong G^{\Sigma
}/L^{\Sigma }\leq \mathrm{Out}(L^{\Sigma })$.
 
Assume that $G(\Sigma) \neq 1$. Since $(G/L)/(G(\Sigma)L/L) \cong G/G(\Sigma)L$ is solvable and since $G(\Sigma)L/L \cong G(\Sigma)/L(\Sigma)$ is a $2$-group by Theorem \ref{ASPQP}(2), it follows that $G/L$ is solvable. Therefore, $G_{\Delta }/L_{\Delta}$, and hence $G^{\Delta}_{\Delta }/L^{\Delta}_{\Delta } \cong G_{\Delta }/G(\Delta)L_{\Delta }$ is solvable. Then $L_{\Delta }^{\Delta }=G_{\Delta }^{\Delta }$ since   $G_{\Delta }^{\Delta } \cong E_{2^{d}}:G_{x}^{\Delta }$, where $G_{x}^{\Delta }$ is either $SL_{d}(2)$, or $G_{x}^{\Delta }\cong A_{7}$ for $d=4$ by Theorem \ref{ASPQP}(2.b), and the assertion (2) follows in this case.

Assume that $G(\Sigma) = 1$. Then $G=G^{\Sigma}$ and $L=L^{\Sigma}$, and hence $G_{\Delta }/L_{\Delta }$ is isomorphic to a subgroup of $\mathrm{Out}(L)$. 

If $L_{\Delta }=L(\Delta )$ then $G_{\Delta }^{\Delta }=G_{\Delta }/G(\Delta )\cong \left(
G_{\Delta }/L_{\Delta }\right) /\left( G(\Delta )/L_{\Delta }\right) $.
Therefore $G_{\Delta }^{\Delta }$ is isomorphic to a quotient group of $%
G_{\Delta }/L_{\Delta }$, and hence $\mathrm{Out}(L)$ contains a subgroup with a quotient isomorphic to $G_{\Delta }^{\Delta }$. Then $\left\vert \Delta
\right\vert \left( \left\vert \Delta \right\vert -1\right) \mid \left\vert 
\mathrm{Out}(L)\right\vert $ since $G_{\Delta }^{\Delta }$ acts $%
2 $-transitively on $\Delta $. Therefore 
\begin{equation}
P(L)\leq \left\vert L:L_{\Delta }\right\vert =\left\vert \Sigma \right\vert \leq 2
\left\vert \Delta \right\vert \left( \left\vert \Delta \right\vert -1\right)
\leq 2\left\vert \mathrm{Out}(L)\right\vert \text{,}  \label{vise}
\end{equation}%
where $P(L)$ is the minimal primitive permutation representation
of $L$. Then $L \cong PSL_{2}(q)$ by Lemma \ref{MPPRG}. If $q>11$ then $q+1\leq
4(2,q-1)\log q$ which has no solutions. So $q=4,5,7,8,9,11$, which are ruled out since their corresponding value of $\left\vert \Delta
\right\vert$ does not fulfill $\left\vert \Delta
\right\vert \left( \left\vert \Delta \right\vert -1\right) \mid \left\vert 
\mathrm{Out}(L)\right\vert $. Thus $L_{\Delta }=L(\Delta )$ is ruled out. Therefore $Soc(G_{\Delta }^{\Delta })\trianglelefteq L_{\Delta }^{\Delta }$, and hence $L$ acts transitively  on $\Delta$. Then $L$ acts point-transitively on $\mathcal{D}$ since $L$ acts transitively on $\Sigma$. Thus, $G=G_{x}L$ where $x$ is any point of $\mathcal{D}$. 

Since the assertion (2) immediately follows if $Soc(G_{\Delta }^{\Delta })$ is non-abelian simple, we may assume that $Soc(G_{\Delta }^{\Delta })$ is an elementary abelian $u$-group for some prime $u$. Suppose that $L_{\Delta }^{\Delta } = Soc(G_{\Delta }^{\Delta })$. Since $%
G/L=G_{\Delta }L/L\cong G_{\Delta }/L_{\Delta }$, $G/L\leq \mathrm{Out}(L)$ and $G_{\Delta }/G(\Delta
)L_{\Delta }\cong G_{\Delta }^{\Delta }/L_{\Delta }^{\Delta }$, it follows
that $G_{\Delta }^{\Delta }/L_{\Delta }^{\Delta }$ is isomorphic to a
quotient subgroup of $\mathrm{Out}(L)$. Thus $\left\vert \Delta \right\vert
-1\mid \left\vert \mathrm{Out}(L)\right\vert $. Moreover, 
\begin{equation}
P(L)\leq \left\vert L:L_{\Delta} \right\vert = \left\vert \Sigma \right\vert <2\left\vert \Delta \right\vert \left(
\left\vert \Delta \right\vert -1\right) \leq 2\left( \left\vert \mathrm{Out}(L)\right\vert +1\right) \left\vert \mathrm{Out}(L)\right\vert  \label{najvise}
\end{equation}

Then either $L\cong PSL_{2}(q)$, or $L$ is as in Lemma \ref%
{MPPRG}. In the latter case $L$ is neither isomorphic to $A_{\ell
} $ with $7\leq \ell \leq 11$, nor to $M_{11}$, $PSp_{4}(3)$ or $PSU_{3}(5)$ since $\left\vert \Delta \right\vert
-1 \leq \left\vert \mathrm{Out}(L)\right\vert $ (we use this weaker constraint rather than $\left\vert \Delta \right\vert
-1\mid \left\vert \mathrm{Out}(L)\right\vert $ in order to apply the argument here to Theorem \ref{Large} as well), where $\left\vert \Delta \right\vert$ is $\lambda +2$ or $\lambda /2+1$ according to whether $\mathcal{D}$ is of type 1 or 2 respectively, and since $\lambda >10$
by our assumption. The unique admissible group among the remaining ones listed in Lemma \ref%
{MPPRG} is $L\cong PSU_{4}(3)$ and $(\left\vert \Delta \right \vert,\left\vert \Sigma \right \vert)=(20,18^{2})$ or $(38,36^{2})$ by \cite{At} and \cite[Tables 8.3--8.4]{BHRD} since $\left\vert \Sigma \right \vert$ is not of the form as in Theorem \ref{PZM}(V). However, both exceptional cases are ruled out since $\left\vert \Delta \right\vert
> \left\vert \mathrm{Out}(PSU_{4}(3))\right\vert $. Thus $L\cong PSL_{2}(q)$ and hence $%
\left\vert \Delta \right\vert \leq (2,q-1)f$. Then $%
f\geq 3$ for $q$ odd and $f\geq 6$ for $q$ even since $\lambda>10$ and since $\left\vert \Delta \right\vert$ is $\lambda +2$ or $%
\lambda /2+1$ according to whether $\mathcal{D}$ is of type 1 or 2 respectively. Thus $P(L)=q+1$, hence (\ref{najvise}) becomes $p^{f}+1\leq 2(2,q-1)f((2,q-1)f+1)\leq
4f(2f+1)$ which yields $q=2^{6},3^{3},3^{4}$ and $\mathcal{D}$ is of type 2
with $\lambda =12,12,16$ respectively. Thus $\left\vert \Sigma \right\vert
=61,61,113$ respectively, but $L\cong PSL_{2}(q)$ with $%
q=2^{6},3^{3},3^{4}$ has no such transitive permutation degrees. Thus $Soc(G_{\Delta }^{\Delta })<L_{\Delta }^{\Delta }$ and $\left \vert G_{\Delta }^{\Delta }: L_{\Delta }^{\Delta } \right \vert  \mid \left \vert \mathrm{Out}(L) \right \vert $.

If $G_{\Delta }^{\Delta } \not \leq A \Gamma L_{1}(u^{h})$, then $G_{\Delta }^{\Delta }$ contains as a normal subgroup one of the groups $SL_{n}(u^{h/n})$, $Sp_{n}(u^{h/n})$, $G_{2}^{\prime}(2^{h/6})$ with $h \equiv 0 \pmod{6}$, $A_{6}$ or $A_{7}$ for $(u,h)=(2,4)$, or $SL_{2}(13)$ for $(u,h)=(3,6)$ by \cite[List(B)]{Ka} since $u^{h} \neq 3^{2},5^{2},7^{2},11^{2}$, $19^{2},23^{2},29^{2},59^{2}$ and $3^{4}$ by Lemma \ref{njami}, and $L_{\Delta }^{\Delta }$ acts $2$-transitively on $\Delta$ in these cases. This completes the proof.
\end{proof}

\bigskip

\begin{corollary}
\label{QuotL} If $G(\Sigma )\neq 1$, then $\lambda =2(2^{d-1}-1)$, $\left\vert \Sigma
\right\vert =\lambda ^{2}$ and a quotient group of $L_{\Delta }^{\Sigma }$
is isomorphic either to $SL_{d}(2)$ for $d\geq 4$, or to$A_{7}$ for $d=4$.
\end{corollary}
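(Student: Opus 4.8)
The plan is to transfer to $L$ the structural description of $G$ already available in the imprimitive case, via the $L$-analogue of the isomorphism (\ref{salvaMAOL}). First, $G(\Sigma)\neq 1$ forces $\mathcal{D}$ to be of type 1 with $v$ even: a type-1 design with $v$ odd is point-quasiprimitive by Theorem \ref{D1PqP}, and every type-2 design is point-quasiprimitive by Theorem \ref{GSigma}, so in both cases $G(\Sigma)=1$; equivalently, this is part of the content of Theorem \ref{AQP}(2). Thus Lemma \ref{T1even} applies and yields $\left\vert\Delta\right\vert=2^{d}$, $\lambda=2(2^{d-1}-1)$, $\left\vert\Sigma\right\vert=2^{2}(2^{d-1}-1)^{2}=\lambda^{2}$ and $d\geq 4$, which is the first part of the statement; moreover, by Lemma \ref{T1even}(2) and its proof, $G(\Sigma)$ is elementary abelian, $G(\Sigma)^{\Delta}=\mathrm{Soc}(G_{\Delta}^{\Delta})$, and $G_{\Delta}^{\Delta}\cong E_{2^{d}}:G_{x}^{\Delta}$ with $E_{2^{d}}=\mathrm{Soc}(G_{\Delta}^{\Delta})$ and $G_{x}^{\Delta}\cong SL_{d}(2)$, or $G_{x}^{\Delta}\cong A_{7}$ and $d=4$.

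Next I would identify the relevant kernels for $L$. Since $L$ is the preimage in $G$ of $\mathrm{Soc}(G^{\Sigma})$, we have $G(\Sigma)\leq L$; hence $L(\Sigma)=L\cap G(\Sigma)=G(\Sigma)$, so that $L(\Sigma)^{\Delta}=G(\Sigma)^{\Delta}=\mathrm{Soc}(G_{\Delta}^{\Delta})$, and also $G(\Sigma)=G(\Sigma)_{\Delta}\leq L_{\Delta}$, whence $\mathrm{Soc}(G_{\Delta}^{\Delta})\leq L_{\Delta}^{\Delta}\leq G_{\Delta}^{\Delta}$. By Proposition \ref{unapred} — whose proof, in the case $G(\Sigma)\neq 1$, shows that $G_{\Delta}^{\Delta}/L_{\Delta}^{\Delta}$ is a solvable quotient of the simple group $G_{\Delta}^{\Delta}/\mathrm{Soc}(G_{\Delta}^{\Delta})\cong G_{x}^{\Delta}$, hence trivial — we have $L_{\Delta}^{\Delta}=G_{\Delta}^{\Delta}$.

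Finally, since $L(\Sigma)\trianglelefteq L_{\Delta}$ and $L(\Delta)\trianglelefteq L_{x}$, the formal manipulation that produced (\ref{salvaMAOL}) applies verbatim with $L$ in place of $G$ and gives
\begin{equation*}
\frac{L_{\Delta}^{\Sigma}}{L(\Delta)^{\Sigma}}\cong\frac{L_{\Delta}}{L(\Delta)L(\Sigma)}\cong\frac{L_{\Delta}^{\Delta}}{L(\Sigma)^{\Delta}}=\frac{G_{\Delta}^{\Delta}}{\mathrm{Soc}(G_{\Delta}^{\Delta})}\cong G_{x}^{\Delta}.
\end{equation*}
As $L(\Delta)^{\Sigma}\trianglelefteq L_{\Delta}^{\Sigma}$, this exhibits $G_{x}^{\Delta}$ as a quotient of $L_{\Delta}^{\Sigma}$, and by Lemma \ref{T1even}(2) that quotient is $SL_{d}(2)$ with $d\geq 4$, or $A_{7}$ with $d=4$ — the remaining assertion. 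I do not expect a genuine obstacle here: the only points requiring care are the bookkeeping of the kernels $L(\Sigma)=G(\Sigma)$, $L(\Delta)=L\cap G(\Delta)$ and $G(\Sigma)\cap G(\Delta)$, and checking that the derivation of (\ref{salvaMAOL}) is purely formal and hence transfers to $L$ unchanged.
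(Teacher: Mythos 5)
Your argument is correct and follows essentially the same route as the paper's: both rest on the structure supplied by Theorem \ref{ASPQP}/Lemma \ref{T1even} together with the solvability of $G/L$ coming from Proposition \ref{unapred} (Schreier), which forces $L$ to retain the full non-solvable part of the stabilizer. The only difference is cosmetic --- the paper realizes the quotient as $L_{\Delta }^{\Sigma }/\bigl(L_{\Delta }^{\Sigma }\cap G(\Delta )^{\Sigma }\bigr)$ by showing $L_{x}^{\Delta }=G_{x}^{\Delta }$ and hence $L_{\Delta }^{\Sigma }G(\Delta )^{\Sigma }=G_{\Delta }^{\Sigma }$, whereas you realize it as $L_{\Delta }^{\Sigma }/L(\Delta )^{\Sigma }$ via the $L$-analogue of (\ref{salvaMAOL}); both kernels do the job.
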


\begin{proof}
If $G(\Sigma )\neq 1$, then $G_{\Delta }^{\Sigma }/G(\Delta )^{\Sigma }\cong
G_{x}^{\Delta }$ where either $G_{x}^{\Delta }\cong SL_{d}(2)$ for $d\geq 4$, or $%
G_{x}^{\Delta }\cong A_{7}$ for $d=4$ by Theorem \ref{ASPQP}. On the other hand, $G_{x}/L_{x} \cong G_{x}L/L =G/L$ is isomorphic to a subgroup of $\mathrm{Out}(L)$ as a consequence of Proposition \ref{unapred}. Therefore $%
G_{x}/L_{x}$, and hence $G_{x}/G(\Delta )L_{x}$ is solvable. Since $G_{x}/G(\Delta )L_{x}\cong G_{x}^{\Delta }/L_{x}^{\Delta }$ and  $G_{x}^{\Delta }$ is non-abelian simple, it follows that $%
L_{x}^{\Delta }=G_{x}^{\Delta }$. Thus $L_{\Delta }^{\Sigma } \not \leq G(\Delta )^{\Sigma}$, and hence $L_{\Delta }^{\Sigma } / \left(L_{\Delta }^{\Sigma }  \cap G(\Delta )^{\Sigma}\right) \cong G_{\Delta }^{\Sigma } / G(\Delta )^{\Sigma} \cong G_{x}^{\Delta }$, which is the assertion.
\end{proof}

\bigskip

\begin{theorem}
\label{Large} Let $\Delta \in \Sigma$ then $L_{\Delta }^{\Sigma }$ is a
large subgroup of $L^{\Sigma }$. Moreover one of the following holds:
\begin{enumerate}
\item $\left\vert L^{\Sigma }\right\vert \leq \left\vert L_{\Delta
}^{\Sigma }\right\vert ^{2}$.
\item $G(\Sigma)=1$, $Soc(G_{\Delta }^{\Delta })< L_{\Delta }^{\Delta } \leq G_{\Delta }^{\Delta } \leq A \Gamma L_{1}(u^{h})$, where $u^{h}=\left\vert \Delta \right\vert$. Furthermore, the following holds:
\begin{enumerate}
\item $L_{\Delta}$ does not act $2$-transitively on $\Delta$;
\item $\left\vert L\right\vert \leq 4\left\vert L_{\Delta }^{\Delta
}\right\vert^{2} \left\vert \mathrm{Out}(L)\right\vert^{2}$;
\item $ \left\vert L(\Delta) \right\vert <2 \left\vert \mathrm{Out}(L) \right\vert < \left\vert L_{\Delta} \right\vert$.
\end{enumerate}
\end{enumerate}
\end{theorem}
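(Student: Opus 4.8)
The plan is to establish the ``large subgroup'' conclusion first, and then to split according to whether $L_{\Delta}^{\Delta}$ acts $2$-transitively on $\Delta$, invoking Proposition \ref{unapred}. To show $L_{\Delta}^{\Sigma}$ is large in $L^{\Sigma}$, I would start from the flag-transitivity bound $|G^{\Sigma}:G_{\Delta}^{\Sigma}|=|\Sigma|$ together with $\lambda^{2}=|\Sigma|$ (type 1) or $(\lambda^{2}-2\lambda+2)/2=|\Sigma|$ (type 2) from Theorem \ref{PZM}(V), and the fact that $k=r$ divides $|G_{x}|$, so that roughly $|G_{\Delta}|^{2}\gtrsim |G|$ up to bounded factors. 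More precisely, since $G=G_{x}L$ by Proposition \ref{unapred} and $G_{x}L_{\Delta}=G$ would force $|L:L_{\Delta}|\mid |G_{x}|$-type divisibility, I would combine $|L:L_{\Delta}^{\Sigma}|=|\Sigma|$ with the parameter relation $k(k-1)=(v-1)\lambda$ and $k\mid |G_{x}|$ to get $|L^{\Sigma}|\leq |L_{\Delta}^{\Sigma}|^{3}\cdot(\text{bounded})$, which makes $L_{\Delta}^{\Sigma}$ large in the sense of \cite{AB}, \cite{LS}. The cleanest route is: $|\Sigma|=|L:L_{\Delta}|$ and, by Lemma \ref{PP} applied inside $L$, the constant $\ell=2$ forces $(\lambda+1)\mid |y^{L_{x}}|$ for suitable $y$, giving $\lambda+1\lesssim |L_{\Delta}|$; squaring (since $|\Sigma|\approx\lambda^{2}$) yields $|L:L_{\Delta}|\leq|L_{\Delta}|^{2}$ up to a small constant, i.e.\ largeness.

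Next I would dichotomize using Proposition \ref{unapred}. If $L_{\Delta}^{\Delta}$ is $2$-transitive on $\Delta$ (case (2) of Proposition \ref{unapred}, which also covers the $G(\Sigma)\neq 1$ situation via Corollary \ref{QuotL}), then $|\Delta|(|\Delta|-1)\mid|L_{\Delta}^{\Delta}|$, and since $|L:L_{\Delta}|=|\Sigma|\leq 2|\Delta|(|\Delta|-1)\leq 2|L_{\Delta}^{\Delta}|$ while $L_{\Delta}^{\Delta}$ is a section of $L_{\Delta}$, one deduces $|L^{\Sigma}|=|L:L_{\Delta}|\cdot|L_{\Delta}|\leq 2|L_{\Delta}|^{2}\leq|L_{\Delta}^{\Sigma}|^{2}$ after absorbing the factor $2$ (or handling the finitely many tiny exceptions where the strict inequality fails directly, as is done in the proof of Proposition \ref{unapred}); this gives conclusion (1). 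Otherwise we are in the remaining alternative of Proposition \ref{unapred}(1): $G(\Sigma)=1$ and $\mathrm{Soc}(G_{\Delta}^{\Delta})<L_{\Delta}^{\Delta}\leq G_{\Delta}^{\Delta}\leq A\Gamma L_{1}(u^{h})$, so $L_{\Delta}^{\Delta}$ is not $2$-transitive, which is (2a). For (2b), since $L_{\Delta}^{\Delta}\leq A\Gamma L_{1}(u^{h})$ we have $|L_{\Delta}^{\Delta}|\leq (u^{h}-1)h=(|\Delta|-1)h$, and $|\Sigma|=|L:L_{\Delta}|<2|\Delta|(|\Delta|-1)$; using $|\Delta|-1<|L_{\Delta}^{\Delta}|$ and $|\Delta|<|L_{\Delta}^{\Delta}|\,|\mathrm{Out}(L)|$-type bounds coming from the embedding into the $1$-dimensional semilinear group together with $|\Delta|-1\mid|\mathrm{Out}(L)|$ from Proposition \ref{unapred}, I would chain $|L|=|L:L_{\Delta}|\cdot|L_{\Delta}|\cdot$ (and $L_{\Delta}=L(\Delta)\rtimes L_{\Delta}^{\Delta}$) to reach $|L|\leq 4|L_{\Delta}^{\Delta}|^{2}|\mathrm{Out}(L)|^{2}$.

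Finally, (2c) — the bound $|L(\Delta)|<2|\mathrm{Out}(L)|<|L_{\Delta}|$ — should follow from analyzing $L(\Delta)$: since $G(\Sigma)=1$ and $\mathrm{Soc}(G_{\Delta}^{\Delta})$ is cyclic of prime order $u$ acting regularly on $\Delta$, the group $L_{\Delta}/L(\Delta)\cong L_{\Delta}^{\Delta}$ is a subgroup of $A\Gamma L_{1}(u^{h})$ containing the translations, and via \eqref{salvaMAOL} and $G_{\Delta}^{\Delta}/L_{\Delta}^{\Delta}$ embedding in a quotient of $\mathrm{Out}(L)$, one controls $|L(\Delta)|$; combining with Corollary \ref{CFixT} (prime divisors of $|G(\Delta)|$, hence of $|L(\Delta)|$, divide $\lambda(\lambda-1)$ in type 1 or $\lambda(\lambda-1)(\lambda-2)(\lambda-3)$ in type 2) and the bound $|\mathrm{Out}(L)|\geq(|\Delta|-1)/\gcd$ should pin down the two inequalities. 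I expect the main obstacle to be the bookkeeping in (2b)–(2c): correctly tracking the bounded multiplicative constants (the $2$'s and $4$'s) that arise from the partition possibly being coarser than a point-orbit structure and from $|\Sigma|\leq 2|\Delta|(|\Delta|-1)$ rather than an equality, and in particular verifying that $|L(\Delta)|$ really is strictly smaller than both $2|\mathrm{Out}(L)|$ and $|L_{\Delta}|$ in every residual configuration — this is where a careful but not conceptually difficult case check over the $A\Gamma L_{1}$-subgroup lattice will be needed.
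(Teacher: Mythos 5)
Your overall architecture (dichotomize via Proposition \ref{unapred}, use $2$-transitivity of $L_{\Delta }^{\Delta }$ to bound $\left\vert \Sigma \right\vert$ by $\left\vert L_{\Delta }^{\Delta }\right\vert$) matches the paper, but there are two genuine gaps. First, in the $2$-transitive case your bound only gives $\left\vert L\right\vert \leq 2\left\vert L_{\Delta }\right\vert ^{2}$ for designs of type 2, and the factor $2$ cannot be ``absorbed'': for type 2 one has $\left\vert \Sigma \right\vert =(\lambda ^{2}-2\lambda +2)/2$ while $\left\vert \Delta \right\vert \left( \left\vert \Delta \right\vert -1\right) =(\lambda /2+1)(\lambda /2)\approx \lambda ^{2}/4$, so the discrepancy is systematic, not a finite list of exceptions. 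The paper closes this by a separate argument: flag-transitivity gives $\lambda \mid \left\vert L_{x,y}\right\vert \left\vert \mathrm{Out}(L)\right\vert$, the case $\lambda \mid \left\vert \mathrm{Out}(L)\right\vert$ is eliminated using the minimal-degree bound $P(L)<\left\vert \mathrm{Out}(L)\right\vert ^{2}$ together with Lemma \ref{MPPRG}, and the surviving case forces $\left\vert L_{x,y}\right\vert \geq 2$, whence $\left\vert L_{\Delta }\right\vert \geq (\lambda /2+1)\lambda >\left\vert \Sigma \right\vert$. Your proposal contains no substitute for this step.

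Second, your case split is structurally wrong for conclusion (2): the theorem's alternative (2) is not simply ``$L_{\Delta }^{\Delta }$ is not $2$-transitive.'' In the paper, the non-$2$-transitive situation is further split on whether $\left\vert L(\Delta )\right\vert \geq 2\left\vert \mathrm{Out}(L)\right\vert$ (in which case $2\left\vert L_{\Delta }^{\Delta }\right\vert \left\vert \mathrm{Out}(L)\right\vert \leq \left\vert L_{\Delta }\right\vert$ and conclusion (1) still holds) or $\left\vert L(\Delta )\right\vert <2\left\vert \mathrm{Out}(L)\right\vert$, which is precisely the hypothesis under which (2b) and the first inequality of (2c) are obtained. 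You instead try to prove $\left\vert L(\Delta )\right\vert <2\left\vert \mathrm{Out}(L)\right\vert$ unconditionally from Corollary \ref{CFixT} and the $A\Gamma L_{1}$ structure; no such proof is available (and none is needed, since that inequality is a case hypothesis, not a consequence). Relatedly, you invoke ``$\left\vert \Delta \right\vert -1\mid \left\vert \mathrm{Out}(L)\right\vert$ from Proposition \ref{unapred},'' but that divisibility only appears in a sub-case of that proposition's proof which is ultimately ruled out; the surviving conclusion is $\left\vert G_{\Delta }^{\Delta }:L_{\Delta }^{\Delta }\right\vert \mid \left\vert \mathrm{Out}(L)\right\vert$, which is what the paper actually uses to get $\left\vert L:L_{\Delta }\right\vert \leq 2\left\vert L_{\Delta }^{\Delta }\right\vert \left\vert \mathrm{Out}(L)\right\vert$. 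The remaining inequality $2\left\vert \mathrm{Out}(L)\right\vert <\left\vert L_{\Delta }\right\vert$ is then proved by contradiction via $P(L)\leq 2\left\vert \mathrm{Out}(L)\right\vert (\left\vert \mathrm{Out}(L)\right\vert -1)$ and Lemma \ref{MPPRG}, another appeal to minimal permutation degrees that your sketch does not anticipate.
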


\begin{proof}
Suppose that $G(\Sigma)=1$. Then $G=G^{\Sigma }$ and $%
L=L^{\Sigma }$. Assume that $L_{\Delta }^{\Delta }$ acts $2$-transitively on $\Delta $. If $\mathcal{D}$ is of type 1 then $\left\vert L:L_{\Delta
}\right\vert =\left\vert \Sigma \right\vert \leq \left\vert \Delta
\right\vert \left( \left\vert \Delta \right\vert -1\right) \leq \left\vert
L_{\Delta }^{\Delta }\right\vert $, and hence $\left\vert L\right\vert \leq \left\vert L_{\Delta
}\right\vert ^{2}$, which is the assertion (1) in this case.

If $\mathcal{D}$ is of type 2, since $G$ acts flag-transitively on $\mathcal{D}$, it follows that $\lambda ^{2}/2\mid \left\vert G_{x}\right\vert $ and
hence $\lambda ^{2}/2\mid \left\vert L_{x}\right\vert \left\vert \mathrm{Out}%
(L)\right\vert $. On the other hand, $\left\vert L_{\Delta }\right\vert
=\left( \frac{\lambda }{2}+1\right) \frac{\lambda }{2}\left\vert
L_{x,y}\right\vert $ since $L_{\Delta }$ induces a $2$-transitive group on $%
\left\vert \Delta \right\vert $. In particular, 
$\left\vert L_{x}:L_{x,y}\right\vert =\frac{\lambda }{2}$ and so $\lambda
\mid \left\vert L_{x,y}\right\vert \left\vert \mathrm{Out}(L)\right\vert $.
If $\lambda \mid \left\vert \mathrm{Out}(L)\right\vert $ then $P(L)\leq
\left\vert L:L_{\Delta }\right\vert =\frac{\lambda ^{2}-2\lambda +2}{2}%
<\left\vert \mathrm{Out}(L)\right\vert ^{2}$. Then $L\cong PSL_{2}(q)$ or $L$
is one of the groups listed in Lemma \ref{MPPRG}. Actually, in the latter
case only $L\cong PSL_{3}(q)$ with $(q,\lambda)=(4,12),(16,12),(16,24)$ are admissible since $\lambda \mid \left\vert \mathrm{Out}%
(L)\right\vert $ and $\lambda >10$. Then $\left\vert \Sigma \right\vert =61$
or $265$ respectively, but none of these divides the order of
the corresponding $L$. So these cases are excluded, and hence $L\cong
PSL_{2}(q)$ with $q=p^{f}$ and $f\geq 6$, as $\lambda \mid \left\vert \mathrm{%
Out}(L)\right\vert $ and $\lambda >10$ and $\left\vert \mathrm{Out}%
(L)\right\vert =(2,p^{f}-1)f$. However, $p^{f}+1\leq \left\vert L:L_{\Delta
}\right\vert \leq 2f^{2}-2f+1$ has no admissible solutions for $f\geq 6$.
Then $\left( \left\vert \mathrm{Out}(L)\right\vert ,\lambda \right) <\lambda 
$ and hence $\left\vert L_{x,y}\right\vert \geq 2$ since $\lambda \mid
\left\vert L_{x,y}\right\vert \left\vert \mathrm{Out}(L)\right\vert $. Then $%
\left\vert L_{\Delta }\right\vert \geq \left( \frac{\lambda }{2}+1\right)
\lambda >\left\vert \Sigma \right\vert =\left\vert L:L_{\Delta }\right\vert $
and we obtain the assertion (1) also in this case.

Assume that $L_{\Delta }^{\Delta }$ does not act $2$-transitively on $\Delta $. Then $Soc(G_{\Delta }^{\Delta })< L_{\Delta }^{\Delta }$ and  $G_{\Delta }^{\Delta }$ is a $2$-transitive subgroup of the semilinear $1$-dimensional group by Proposition \ref{unapred}. Note that $\left\vert L:L_{\Delta }\right\vert =\left\vert G:G_{\Delta
}\right\vert \leq 2\left\vert G_{\Delta }^{\Delta }\right\vert $. Also, it results that $G_{\Delta }/L_{\Delta } \cong G_{\Delta }L/L=G/L\leq \mathrm{Out}(L)$ and $G_{\Delta }/G(\Delta
)L_{\Delta }\cong G_{\Delta }^{\Delta }/L_{\Delta }^{\Delta }$. Hence, $G_{\Delta }^{\Delta }/L_{\Delta }^{\Delta }$ is isomorphic to a quotient group of a subgroup of $\mathrm{Out}(L)$. Therefore, $%
\left\vert L:L_{\Delta }\right\vert \leq 2\left\vert L_{\Delta }^{\Delta
}\right\vert \left\vert \mathrm{Out}(L)\right\vert$.

Assume that $\left \vert L(\Delta) \right \vert \geq 2\left\vert \mathrm{Out}(L)\right\vert$. Then $2\left\vert L_{\Delta }^{\Delta
}\right\vert \left\vert \mathrm{Out}(L)\right\vert \leq \left \vert L_{\Delta} \right \vert$, and we still obtain (1).

Assume that $\left \vert L(\Delta) \right \vert < 2\left\vert \mathrm{Out}(L)\right\vert$. Then $\left\vert L\right\vert \leq 4\left\vert L_{\Delta }^{\Delta
}\right\vert^{2} \left\vert \mathrm{Out}(L)\right\vert^{2}$. Suppose that $\left\vert
L_{\Delta }\right\vert \leq \left\vert \mathrm{Out}(L)\right\vert $. Then $%
2\left\vert \Delta \right\vert \leq \left\vert L_{\Delta }\right\vert \leq
2\left\vert \mathrm{Out}(L)\right\vert $ since $\mathrm{Soc}(G_{\Delta
}^{\Delta })<L_{\Delta }^{\Delta }$. Then $\left\vert \Delta \right\vert
\leq \left\vert \mathrm{Out}(L)\right\vert $, and hence $P(L) \leq \left\vert
L:L_{\Delta }\right\vert \leq 2\left\vert \mathrm{Out}(L)\right\vert
(\left\vert \mathrm{Out}(L)\right\vert -1)$. Then $L_{\Delta }$ is isomorphic either to $PSL_{2}(q)$ or to one of the groups listed in Lemma \ref{MPPRG}. However, the same argument used in Proposition \ref{unapred} can be also used here to rule out all these groups since $\left\vert \Delta \right\vert
\leq \left\vert \mathrm{Out}(L)\right\vert $. Thus $\left\vert L_{\Delta }\right\vert >2\left\vert \mathrm{Out}(L)\right\vert $ and hence $%
\left\vert L:L_{\Delta }\right\vert <\left\vert L_{\Delta }\right\vert ^{2}$, which means that $L_{\Delta }$ is a large subgroup of $L$, and we obtain assertions (2a)--(2c).

Suppose that $G(\Sigma) \neq 1$. Then $\lambda =2(2^{d-1}-1)$, $\left\vert \Sigma
\right\vert =\lambda ^{2}$ and  a quotient group of $L_{\Delta }^{\Sigma}$
is isomorphic either to $SL_{d}(2)$ for $d\geq 4$, or to $A_{7}$ for $d=4$ by Corollary \ref{QuotL}. In either case one has $\left\vert \Delta
\right\vert \left( \left\vert \Delta \right\vert -1\right) \leq \left\vert
L_{\Delta }^{\Sigma }\right\vert $ since $\left\vert \Delta \right\vert
=2^{d}$. Since $G(\Sigma )\trianglelefteq L_{\Delta }<L$ and since $%
G^{\Sigma }$ acts primitively on $\Sigma $, it follows that $\left\vert
L^{\Sigma }:L_{\Delta }^{\Sigma }\right\vert =\left\vert L:L_{\Delta
}\right\vert =\left\vert \Sigma \right\vert \leq \left\vert \Delta
\right\vert \left( \left\vert \Delta \right\vert -1\right) \leq \left\vert
L_{\Delta }^{\Sigma }\right\vert $ and the assertion (1) follows in this
case.

\end{proof}

\bigskip

\section{Classification of the $2$-designs of type 1}

In this section we mainly use the constraints for $L^{\Sigma}$ provided in Proposition \ref{unapred}, Corollary \ref{QuotL}  and Theorem \ref{Large} to prove Theorem \ref{T1} stated below. It is worth noting that, when $L^{\Sigma}$ is a Lie type simple group we show that $L^{\Sigma}_{\Delta}$ is a large subgroup of $L^{\Sigma}$ of order divisible by a suitable primitive prime divisor of $p^{\zeta}-1$, where $\zeta$ is determined in \cite[Proposition 5.2.16]{KL}. We combine this constraints on $L^{\Sigma}_{\Delta}$ to show that a small number of groups listed in \cite{AB} are admissible. These groups are then ruled out by exploiting the $2$-transitivity of $G_{\Delta}^{\Delta}$ on $\Delta$.  

\begin{theorem}
\label{T1}If $\mathcal{D}$ is a symmetric $2$-$\left( \left( \lambda
+2\right) \lambda ^{2},(\lambda +1)\lambda ,\lambda \right) $ design admitting a flag-transitive and
point-imprimitive automorphism group, then $\lambda \leq 10$.
\end{theorem}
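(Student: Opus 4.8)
The plan is to exploit the structural information already assembled. By Theorem \ref{ASPQP}, $L^{\Sigma}=\mathrm{Soc}(G^{\Sigma})$ is non-abelian simple and primitive on $\Sigma$; by Proposition \ref{unapred}, Corollary \ref{QuotL} and Theorem \ref{Large}, $L_{\Delta}^{\Sigma}$ is a \emph{large} subgroup of $L^{\Sigma}$ of index $|\Sigma|=\lambda^{2}$, and either (a) $L_{\Delta}^{\Delta}$ acts $2$-transitively on $\Delta$ with $|\Delta|=\lambda+2$, or (b) $G(\Sigma)=1$, $\mathrm{Soc}(G_{\Delta}^{\Delta})<L_{\Delta}^{\Delta}\leq A\Gamma L_{1}(\lambda+2)$ and the order bounds of Theorem \ref{Large}(2) hold. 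Assuming $\lambda>10$ I want a contradiction. Since $L_{\Delta}^{\Sigma}$ is large it lies in a large \emph{maximal} subgroup $M$ of $L^{\Sigma}$, so $M$ occurs in the tables of \cite{AB} (and \cite{LS}) and $|L^{\Sigma}:M|$ divides $\lambda^{2}$. The recurring obstruction is this: by Corollary \ref{CFixT}(1) every prime divisor of $|G(\Delta)|$ divides $\lambda(\lambda-1)$, so by \eqref{salvaMAOL} the $2$-transitive group $L_{\Delta}^{\Delta}$ of degree $\lambda+2$ is, up to a $\{2\}$- or $\{p:p\mid\lambda(\lambda-1)\}$-part, a quotient of $L_{\Delta}^{\Sigma}$, hence far too small and too rigid to occur as a section of an index-$\lambda^{2}$ subgroup of a non-abelian simple group once $\lambda>10$.

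First I would settle $G(\Sigma)\neq1$: by Corollary \ref{QuotL} one has $\lambda=2(2^{d-1}-1)$ with $d\geq4$, a quotient of $L_{\Delta}^{\Sigma}$ is $SL_{d}(2)$ (or $A_{7}$ when $d=4$), and $|L^{\Sigma}|\leq|L_{\Delta}^{\Sigma}|^{2}$ by Theorem \ref{Large}(1). Thus $|L_{\Delta}^{\Sigma}|\geq|SL_{d}(2)|$, so the index $4(2^{d-1}-1)^{2}$ is far below $|L^{\Sigma}|^{1/2}$; scanning the large maximal subgroups in \cite{AB} that admit a non-abelian simple section in characteristic $2$ in their quotient leaves only $L^{\Sigma}$ of Lie type over a field of characteristic $2$ with $L_{\Delta}^{\Sigma}$ inside a maximal parabolic, and then a primitive-prime-divisor count on $|L^{\Sigma}|$ — via \cite[Proposition 5.2.16]{KL} and \cite[Theorem II.6.2]{Lu} — is incompatible with the index being $4(2^{d-1}-1)^{2}$.

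For $G(\Sigma)=1$ we have $G=G^{\Sigma}$, $L=L^{\Sigma}$, and $L_{\Delta}$ large of index $\lambda^{2}$ with a $2$-transitive (or affine semilinear) action on $\Delta$ of degree $\lambda+2$. I would run through the four families of $L$. For $L$ alternating, largeness forces $L_{\Delta}$ intransitive or imprimitive, giving index equations $\lambda^{2}=\binom{n}{m}$ or $\lambda^{2}=n!/((n/b)!^{b}b!)$, which tested against $\lambda+2=|\Delta|$ and the required $2$-transitive section leave only a handful of small $n$, discarded directly. For $L$ sporadic, \cite{At} shows no primitive index equals $(|\Delta|-2)^{2}$ with the needed section. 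For $L$ exceptional of Lie type, the minimal-degree data of \cite{Va1,Va2,Va3} and the scarcity of large maximals settle everything. The bulk of the work is $L$ classical: as flagged in the section preamble, one first shows $|L_{\Delta}|$ is divisible by a primitive prime divisor $r$ of $p^{\zeta}-1$ with $\zeta$ as in \cite[Proposition 5.2.16]{KL} (otherwise $r$ would divide $|\Sigma|=\lambda^{2}$, contrary to $r\nmid\lambda$), which confines $L_{\Delta}$ to a short list of parabolic and subspace-type large maximals from \cite{AB}; then, since $r\nmid|G(\Delta)|$ by Corollary \ref{CFixT}(1), $r$ divides $|L_{\Delta}^{\Delta}|$, i.e.\ the order of a $2$-transitive group of degree $\lambda+2$, which is impossible because $\lambda+2$ is bounded in terms of $|L_{\Delta}|$ and hence of $\sqrt{|L|}$, far too small to be divisible by such an $r$.

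The main obstacle is the bookkeeping in the classical case: the specific index of each large maximal subgroup of each classical $L$ must be forced to equal the perfect square $\lambda^{2}$ while $\lambda+2$ is simultaneously constrained to be the degree of a $2$-transitive group with controlled socle order, and the corrections from $|\mathrm{Out}(L)|$ and from $G(\Delta)\neq1$ must be tracked through \eqref{salvaMAOL}. The parity and square conditions on $\lambda^{2}$ together with the primitive-prime-divisor arithmetic do most of the pruning, but a few genuinely small residual configurations are expected to require a direct check with \textsf{GAP}.
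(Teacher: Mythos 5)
Your skeleton coincides with the paper's: reduce to $L^{\Sigma}$ almost simple (Theorem \ref{ASPQP}), use Proposition \ref{unapred}, Corollary \ref{QuotL} and Theorem \ref{Large} to make $L_{\Delta}^{\Sigma}$ a large subgroup of index $\lambda^{2}$ carrying a $2$-transitive (or $A\Gamma L_{1}$) section of degree $\lambda+2$, then sweep the four families of simple groups using \cite{AB}, \cite{LS} and, for groups of Lie type, divisibility of $\left\vert L_{\Delta}^{\Sigma}\right\vert$ by $\Phi_{\zeta}^{\ast}(p)$ (this is exactly Lemma \ref{Phi0} and Proposition \ref{Phi}). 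However, the step you delegate to a single uniform mechanism is precisely where the proof lives, and the mechanism you propose does not work. You claim that in the classical case the contradiction is that the primitive prime divisor $r$ of $p^{\zeta}-1$ must divide $\left\vert L_{\Delta}^{\Delta}\right\vert$, ``the order of a $2$-transitive group of degree $\lambda+2$, \ldots far too small to be divisible by such an $r$.'' This fails twice. First, to push $r$ from $\left\vert L_{\Delta}\right\vert$ into $\left\vert L_{\Delta}^{\Delta}\right\vert$ via Corollary \ref{CFixT}(1) you would need $r\nmid\lambda(\lambda-1)$, which is not established (your parenthetical ``contrary to $r\nmid\lambda$'' assumes what it should prove; the paper's Lemma \ref{Phi0} instead derives a self-contradiction from $r\mid\lambda$, it never asserts $r\nmid\lambda$ a priori). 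Second, a $2$-transitive group of degree $\lambda+2$ has order divisible by $(\lambda+2)(\lambda+1)$, and in the configurations that actually survive the filters (e.g.\ $L\cong P\Omega_{n}^{+}(2)$ with $L_{\Delta}\cong Sp_{n-2}(2)$ in Lemma \ref{Cla1}, or the $\mathcal{C}_{3}$-subgroups in Lemma \ref{PSL1}) the relevant primitive prime divisor does divide such an order. The paper kills these survivors by different, case-specific arguments: the index $\left\vert L:L_{\Delta}\right\vert$ failing to be a perfect square, Sylow $p$-subgroup containment forced when $(\lambda,p)=1$, and explicit degree comparisons against \cite[Propositions 4.1.x and 4.3.x]{KL}. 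None of this is replaceable by the one-line bound you state.

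Two smaller points. Your plan to dispose of $G(\Sigma)\neq1$ up front by ``scanning the large maximal subgroups in \cite{AB} that admit a non-abelian simple section in characteristic $2$'' is not carried out and is not how the paper proceeds: the constraint of Corollary \ref{QuotL} (a quotient of $L_{\Delta}^{\Sigma}$ isomorphic to $SL_{d}(2)$ or $A_{7}$) is applied inside each family, usually just to conclude $G(\Sigma)=1$ before the real elimination starts. And your expectation that ``a few residual configurations require a direct check with \textsf{GAP}'' is a signal that the eliminations are incomplete; the paper's proof of Theorem \ref{T1} uses no computer check (\textsf{GAP} appears only in Theorem \ref{MMS}). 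As written, the proposal is a correct outline with the decisive arithmetic left undone and its proposed closing argument invalid.
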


\bigskip

We analyze the cases where $L^{\Sigma }$ is sporadic, alternating, a Lie type simple classical or exceptional group separately.  
\bigskip

\begin{lemma}
\label{spor1}$L^{\Sigma }$ is not sporadic.
\end{lemma}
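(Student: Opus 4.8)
The plan is to suppose, for contradiction, that $\mathcal{D}$ is of type 1 with $\lambda>10$ and that $L^{\Sigma}=\mathrm{Soc}(G^{\Sigma})$ is one of the $26$ sporadic simple groups. By Theorem \ref{ASPQP} the group $G^{\Sigma}$ is almost simple with this socle, and by Theorem \ref{Large} the point stabilizer $L_{\Delta}^{\Sigma}$ is a large subgroup of $L^{\Sigma}$, that is $\left\vert L^{\Sigma}:L_{\Delta}^{\Sigma}\right\vert<\left\vert L_{\Delta}^{\Sigma}\right\vert^{2}$, and moreover either $\left\vert L^{\Sigma}\right\vert\leq\left\vert L_{\Delta}^{\Sigma}\right\vert^{2}$, or we are in case (2) of Theorem \ref{Large} where $G(\Sigma)=1$ and $\mathrm{Soc}(G_{\Delta}^{\Delta})<L_{\Delta}^{\Delta}\leq G_{\Delta}^{\Delta}\leq A\Gamma L_{1}(u^{h})$; but the latter forces $L^{\Sigma}$ to have an outer automorphism group large enough that $\left\vert\Delta\right\vert-1\leq\left\vert\mathrm{Out}(L^{\Sigma})\right\vert$, which is absurdly restrictive for a sporadic group (where $\left\vert\mathrm{Out}\right\vert\leq 2$). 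So in all sporadic cases $\left\vert L^{\Sigma}\right\vert\leq\left\vert L_{\Delta}^{\Sigma}\right\vert^{2}$, i.e. $\left\vert\Sigma\right\vert=\left\vert L^{\Sigma}:L_{\Delta}^{\Sigma}\right\vert\leq\left\vert L_{\Delta}^{\Sigma}\right\vert$, and $L_{\Delta}^{\Sigma}$ is one of the maximal subgroups (or rather contains one, being large) listed for sporadic groups in \cite{AB}.

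Next I would exploit the rigid numerical shape of a type-1 design: $\left\vert\mathcal{P}\right\vert=\lambda^{2}(\lambda+2)$, $\left\vert\Sigma\right\vert=\lambda^{2}$ and $c=\left\vert\Delta\right\vert=\lambda+2$, with $k=\lambda(\lambda+1)$ dividing $\left\vert G_{x}\right\vert$ and $\lambda+1\mid\left\vert L^{\Sigma}:L_{\Delta}^{\Sigma}\right\vert$-type divisibility coming from Lemma \ref{PP}. Running through Wilson's list of maximal subgroups of each sporadic group via \cite{At} and \cite{AB}, for each candidate index $\left\vert L^{\Sigma}:M\right\vert$ of a large maximal subgroup $M$ I would check whether it can be written as $\lambda^{2}$ for an integer $\lambda>10$ with $\lambda+2$ also dividing the order of $M$ (since $G_{\Delta}^{\Delta}$ acts $2$-transitively on $\Delta$, so $\left\vert\Delta\right\vert(\left\vert\Delta\right\vert-1)=(\lambda+2)(\lambda+1)$ must divide $\left\vert M\right\vert\left\vert\mathrm{Out}(L^{\Sigma})\right\vert$). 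The condition that $\left\vert\Sigma\right\vert$ be a perfect square is extremely stringent; for most sporadic groups none of the large-subgroup indices are squares at all, and the handful that survive (if any) are killed by the further requirement that $G_{\Delta}^{\Delta}$ be one of the $2$-transitive groups of degree $\lambda+2$ in Kantor's lists (A), (B), which constrains $L_{\Delta}^{\Sigma}/\bigl(L_{\Delta}^{\Sigma}\cap G(\Delta)^{\Sigma}\bigr)$ and hence a section of $M$ to have a very specific structure.

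The main obstacle will be organizing the case analysis efficiently: there are $26$ sporadic groups, several with dozens of conjugacy classes of maximal subgroups, and one must verify for each large maximal subgroup that the corresponding index is not of the form $\lambda^{2}$ with the divisibility by $\lambda+1$ and $\lambda+2$ holding. I expect to handle this by first discarding, for each group, all maximal subgroups that are not large (using the order bound), then testing squareness of the remaining indices, and finally applying the $2$-transitivity constraint on $G_{\Delta}^{\Delta}$ to the at most a few residual cases; the very large sporadics (the Monster, Baby Monster, etc.) are actually easier here because their large maximal subgroups are few and their indices are manifestly non-squares or fail the $\lambda+2$ divisibility. For any genuinely borderline numerical coincidence that survives the hand analysis, I would finish it off with a direct computation in \textsf{GAP} \cite{GAP} as was done for case (3) in the proof of Theorem \ref{MMS}. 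The conclusion is that no sporadic $L^{\Sigma}$ yields a type-1 design with $\lambda>10$.
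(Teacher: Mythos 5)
Your proposal follows essentially the same route as the paper: reduce to (large) maximal subgroups of the sporadic groups, use that $\left\vert\Sigma\right\vert=\left\vert L^{\Sigma}:L_{\Delta}^{\Sigma}\right\vert=\lambda^{2}$ must be a perfect square, and kill the survivors with the divisibility/$2$-transitivity constraints on $G_{\Delta}^{\Delta}$ of degree $\lambda+2$ coming from Proposition \ref{unapred}, finishing by inspection of \cite{At}. Two small caveats. First, your parenthetical ``or rather contains one, being large'' has the containment backwards and glosses over the real issue, namely that $L_{\Delta}^{\Sigma}$ need not be maximal in $L^{\Sigma}$ even though $G_{\Delta}^{\Sigma}$ is maximal in $G^{\Sigma}$; the paper deals with this novelty case explicitly (it occurs only for $G^{\Sigma}\cong M_{12}{:}Z_{2}$ with $G_{\Delta}^{\Sigma}\cong PGL_{2}(11)$, ruled out via Proposition \ref{unapred}), and if you instead pass to a maximal overgroup $M\supseteq L_{\Delta}^{\Sigma}$ you may only conclude that $\left\vert L^{\Sigma}:M\right\vert$ divides $\lambda^{2}$, not that it equals a square. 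Second, your dismissal of case (2) of Theorem \ref{Large} via ``$\left\vert\Delta\right\vert-1\leq\left\vert\mathrm{Out}(L^{\Sigma})\right\vert$'' is not what that theorem gives (that inequality arises in a different sub-case of the proof of Proposition \ref{unapred}); fortunately this step is unnecessary, since in either alternative of Proposition \ref{unapred} one still gets $\lambda+2=u^{h}$ dividing $\left\vert L_{\Delta}^{\Sigma}\right\vert$, which is the filter the paper actually applies.
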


\begin{proof}
Either $L_{\Delta }^{\Sigma }$ is maximal in $L^{\Sigma }$, or $%
\mathrm{Out}(L)\cong Z_{2}$ and $G_{\Delta }^{\Sigma }$ is a novelty. In the
latter case, the unique admissible case is $G^{\Sigma }\cong M_{12}:Z_{2}$ and $G_{\Delta
}^{\Sigma }\cong PGL_{2}(11)$ since $\left\vert \Sigma \right\vert $ is a square. Hence, $\left\vert \Sigma \right\vert =144$
by \cite[Table 1]{Wi1} (see also \cite{Wi2} for the cases $L^{\Sigma }\cong
Fi_{22}$ or $Fi_{24}^{\prime }$). Then $\lambda +2=14$ and hence $G(\Sigma
)=1$ by Theorem \ref{ASPQP}. Since $G_{\Delta }\cong PGL_{2}(11)$ has
no transitive permutation representations of degree $14$, the case is ruled out by
Proposition \ref{unapred}. Therefore, $%
L_{\Delta }$ is maximal in $L$.

Assume that $L^{\Sigma }\cong M_{i}$, where $i=11,12,22,23$ or $24$. Since $%
\left\vert L^{\Sigma }:L_{\Delta }^{\Sigma }\right\vert =\lambda ^{2}$, it
follows from \cite[Table 5.1.C]{KL} that, $k^{2}=2^{a_{1}}3^{a_{2}}$ for
some $a_{1},a_{2}\geq 2$. Then $\lambda =12$ and either $L\cong M_{11}$ and $%
L_{\Delta }\cong F_{55}$, or $L^{\Sigma }\cong M_{12}$ and $L^{\Sigma }\cong
PSL_{2}(11)$ by \cite{At}. However, these cases are ruled out by Proposition %
\ref{unapred} since $\lambda +2$ does not divide the order of $L_{\Delta }$.

Assume that $L^{\Sigma }\cong J_{i}$, where $i=1,2,3$ or $4$. Then $\lambda
^{2}$ divides $2^{2}$, $2^{6}3^{2}5^{2}$, $2^{6}3^{4}$, or $%
2^{20}3^{2}11^{2} $, respectively, by \cite[Table 5.1.C]{KL}. Then $i=2$ and
either $\lambda =10$ and $L_{\Delta }^{\Sigma }\cong PSU_{3}(3)$, or $%
\lambda =60$ and $L_{\Delta }^{\Sigma }\cong PSL_{2}(7)$ by \cite{At}.
However these cases are ruled out, as $L_{\Delta }^{\Sigma }$ does not have
a $2$-transitive permutation representation of degree $12$ or $62$
respectively, and this contradicts Proposition \ref{unapred}.

Assume that $L^{\Sigma }$ is isomorphic to one of the groups $HS$ or $McL$.
By \cite[Table 5.1.C]{KL} $\lambda ^{2}$ divides $2^{8}3^{2}5^{2}$ or $%
2^{6}3^{6}5^{2}$ respectively. Then either $L^{\Sigma }\cong HS$, $L_{\Delta
}^{\Sigma }\cong M_{22}$ and $\lambda=10$, or $L^{\Sigma }\cong McL$, $L_{\Delta
}^{\Sigma }\cong M_{22}$ and $\lambda=45$. Both these cases are excluded since they contradict
Proposition \ref{unapred}.

It is straightforward to check that the remaining cases are ruled out
similarly, as they do not have transitive permutation representations of
degree $\lambda ^{2}$ by \cite{At} and \cite{Wi2}.
\end{proof}

\bigskip

\begin{lemma}
\label{AltLM}If $L^{\Sigma }\cong A_{\ell }$, $\ell \geq 5$, then $L^{\Sigma
}$ acts primitively on $\Sigma $.
\end{lemma}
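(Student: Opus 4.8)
The plan is to argue by contradiction: suppose $L^{\Sigma}\cong A_{\ell}$ with $\ell\geq 5$ acts imprimitively on $\Sigma$. Since $G^{\Sigma}$ is almost simple with socle $L^{\Sigma}$ and acts primitively on $\Sigma$ by Theorem \ref{ASPQP}, while $L^{\Sigma}\trianglelefteq G^{\Sigma}$, the point stabilizer $L^{\Sigma}_{\Delta}$ must be transitive on $\Sigma$ but not maximal in $L^{\Sigma}$; indeed $L^{\Sigma}_{\Delta}$ lies in a maximal subgroup $M$ of $L^{\Sigma}$ that is not $G^{\Sigma}$-invariant, and $L^{\Sigma}_{\Delta}=M\cap L^{\Sigma}$ where $G^{\Sigma}_{\Delta}$ is a novelty maximal subgroup of $G^{\Sigma}$. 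The standard source here is the $\mathcal O$'Nan--Scott analysis of primitive actions of almost simple groups with socle $A_{\ell}$ together with the classification of novelties (e.g.\ \cite{LPS} / the tables in \cite{BHRD} for small $\ell$, and for large $\ell$ the only novelties arise inside the intransitive or imprimitive maximal subgroups of $S_{\ell}$); so $G^{\Sigma}\cong S_{\ell}$ and $G^{\Sigma}_{\Delta}$ is the stabilizer in $S_{\ell}$ of a partition or a subset which, intersected with $A_{\ell}$, splits or drops in index.

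First I would record the arithmetic constraint coming from Theorem \ref{PZM}(V.1): $\left\vert L^{\Sigma}:L^{\Sigma}_{\Delta}\right\vert=\left\vert\Sigma\right\vert=\lambda^{2}$ is a perfect square, with $\lambda>10$, $\lambda$ even (as $v=\lambda^2(\lambda+2)$ is the point number and $\mathcal D$ is of type 1), and $\left\vert\Delta\right\vert=\lambda+2$. Then I would work through the short list of novelty maximal subgroups of $S_{\ell}$: these are the stabilizer of a $k$-subset with $\ell=2k$ (giving index $\binom{2k}{k}$ in $S_{\ell}$, whose intersection with $A_{\ell}$ has the \emph{same} index, so $\left\vert A_{\ell}:L^{\Sigma}_{\Delta}\right\vert=\frac12\binom{2k}{k}$ there — one must check carefully whether this is a square), and the imprimitive wreath-product type $S_{a}\wr S_{b}$ with $ab=\ell$. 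For each such $(L^{\Sigma}_{\Delta},\ell)$ the index is an explicit binomial or multinomial expression, and I would show it is never a perfect square $\lambda^{2}$ with $\lambda>10$ — appealing to Sylvester--Erdős type results on squarefree parts of binomial coefficients, or simply to the fact that $\binom{2k}{k}$ is never an odd square and has a prime factor between $k$ and $2k$ to odd multiplicity (Bertrand), which kills the subset case, and a similar prime-counting obstruction for the partition case.

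The remaining, and in my view the main, obstacle is that the above ``generic'' arguments leave finitely many small $\ell$ uncovered (roughly $\ell\leq$ a dozen or so, where Bertrand-type estimates and the primitive-prime-divisor machinery are too weak), and there one has to hunt through the explicit maximal subgroup tables for $A_{\ell}$ and $S_{\ell}$. For each surviving pair $(L^{\Sigma},L^{\Sigma}_{\Delta})$ with $\left\vert L^{\Sigma}:L^{\Sigma}_{\Delta}\right\vert=\lambda^{2}$ and $\lambda+2\mid\left\vert L^{\Sigma}_{\Delta}\right\vert$, I would then invoke Proposition \ref{unapred}: either $L_{\Delta}^{\Delta}$ is $2$-transitive of degree $\lambda+2$, or $G(\Sigma)=1$ and $G_{\Delta}^{\Delta}\leq A\Gamma L_{1}(\lambda+2)$ with $\lambda+2$ a prime power — and in either case $L^{\Sigma}_{\Delta}=L_{\Delta}/L(\Delta)$-type quotient considerations force $L^{\Sigma}_{\Delta}$ to possess a transitive (indeed $2$-transitive) permutation representation of the very specific degree $\lambda+2$, which a direct inspection of each candidate rules out. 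This last step is exactly the ``contradicts Proposition \ref{unapred}'' manoeuvre used repeatedly in Lemma \ref{spor1}, so I expect the write-up to be a finite case-check of the same flavour; the delicate part is organizing the binomial-coefficient non-square argument cleanly enough that the residual small-$\ell$ list is genuinely short.
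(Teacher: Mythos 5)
Your reduction to the case $G^{\Sigma}\cong S_{\ell}$ with $G^{\Sigma}_{\Delta}$ a novelty maximal subgroup is the right first move, but the execution as sketched has two concrete gaps. First, the list of novelties you propose to check is not correct. The stabilizer of a $k$-subset with $\ell=2k$ is $S_{k}\times S_{k}$, which is not even a maximal subgroup of $S_{\ell}$ (it has index $2$ in $S_{k}\wr S_{2}$), so the central-binomial-coefficient computation does not correspond to an actual configuration $G^{\Sigma}_{\Delta}$; and for the genuinely maximal intransitive subgroups $S_{k}\times S_{\ell-k}$ ($k<\ell/2$) and imprimitive subgroups $S_{a}\wr S_{b}$ the intersection with $A_{\ell}$ is, with known small exceptions, maximal in $A_{\ell}$, so these are typically not novelties at all. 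Conversely, \emph{primitive} maximal subgroups of $S_{\ell}$ do produce novelties (e.g.\ $PGL_{2}(7)$ is maximal in $S_{8}$ while the transitive $PSL_{2}(7)$ lies inside $AGL_{3}(2)<A_{8}$), so your claim that for large $\ell$ all novelties are intransitive or imprimitive omits cases. To make this route rigorous you would need the exact exception table of Liebeck--Praeger--Saxl and a separate argument for each entry; the Bertrand/Sylvester--Erd\H{o}s estimates you invoke only speak to binomial and multinomial indices and say nothing about the primitive novelties.

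The paper's proof is entirely different and sidesteps the subgroup classification. After excluding $\ell=6$ (so $\mathrm{Out}(L^{\Sigma})\cong Z_{2}$), it assumes $L^{\Sigma}_{\Delta}<M^{\Sigma}<L^{\Sigma}$ with $M^{\Sigma}$ maximal, puts $\theta=\left\vert M^{\Sigma}:L^{\Sigma}_{\Delta}\right\vert$ and $a=\left\vert L^{\Sigma}:M^{\Sigma}\right\vert$, and notes that $x^{M}\setminus\{x\}$, of size $\theta(\lambda+2)-1$, is a union of $L_{x}$-orbits each of length divisible by $(\lambda+1)/\eta$ with $\eta=(\lambda+1,2)$, by Lemma \ref{PP} together with $\left\vert G_{x}:L_{x}\right\vert\leq 2$. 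This forces $\theta\equiv a\equiv 1\pmod{(\lambda+1)/\eta}$, and substituting into $a\theta=\lambda^{2}$ gives $ef<\eta^{2}\leq 4$, hence $\lambda$ odd, $ef=3$ and $\lambda=15$; the single surviving case $\left\vert\Sigma\right\vert=225$ is then eliminated via Proposition \ref{unapred}. No information about maximal subgroups of $A_{\ell}$ is needed. If you wish to keep your approach, the missing ingredient is the correct novelty list together with an argument covering the primitive cases; otherwise the orbit-counting argument is both shorter and self-contained.
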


\begin{proof}
Assume that $L \cong A_{6}$. Then $A_{6} \cong PSL_{2}(9) \trianglelefteq G \leq P\Gamma L_{2}(9)$, hence $G$ does not have a primitive permutation representation of degree $\lambda^{2}$ with $\lambda>10$ by \cite{At}. Thus $\ell \neq 6$ by Theorem \ref{ASPQP}, and hence $\mathrm{Out%
}(L)\cong Z_{2}$.

Suppose the contrary of the statement. Then there is a subgroup $M$ of $L$
containing $L_{\Delta }$ such that $L_{\Delta }^{\Sigma }<M^{\Sigma
}<L^{\Sigma }$ with $M^{\Sigma }$ maximal in $L^{\Sigma }$. Let $x\in \Delta 
$, then $x^{M}$ is a union of $\theta$ elements of $\Sigma $, where $\theta = \left\vert M^{\Sigma }: L^{\Sigma }_{\Delta} \right\vert$. Therefore $\left\vert
x^{M}\right\vert =\theta (\lambda +2)$ with $\lambda ^{2}=a\theta $ for some 
$a\geq 1$. Then $x^{M}\setminus \left\{ x\right\} $ is a union of $%
L_{x} $-orbits since $L_{x}<L_{\Delta }<L$. Therefore $\frac{\lambda +1}{%
\eta }\mid \theta (\lambda +2)-1$ by Lemma \ref{PP}, where $\eta =(\lambda +1,2)$ since $%
\mathrm{Out}(L)\cong Z_{2}$. Then $\theta =f\frac{\lambda +1}{\eta }+1$ for
some $f\geq 1 $, hence 
\begin{equation}
\left( f\frac{\lambda +1}{\eta }+1\right) a-1=\lambda ^{2}-1  \label{en}
\end{equation}%
and so $a=e\frac{\lambda +1}{\eta }+1$ for some $e\geq 1$. Then (\ref{en})
becomes%
\begin{equation*}
ef\left( \frac{\lambda +1}{\eta }\right) ^{2}+(e+f)\left( \frac{\lambda +1}{%
\eta }\right) +1=\lambda ^{2}
\end{equation*}%
Then $ef<\eta ^{2}=(\lambda +1,2)^{2}$, and hence $\eta =2$ and $\lambda $ is odd. Therefore $%
ef=3$ and $\lambda =15$, $\left\vert \Delta \right \vert=17$ and $\left\vert\Sigma \right \vert=225$ since $\lambda>10$. Moreover, $G(\Sigma )=1$ by
Theorem \ref{ASPQP}. Then $L_{\Delta }\cong A_{224}$ by \cite[%
Table B.4]{DM}. However, this case cannot occur by Proposition \ref%
{unapred} since $A_{224}$ has no quotient groups with a transitive permutation
representations of degree $17$.
\end{proof}

\begin{lemma}
\label{Alt1}$L^{\Sigma }$ is not isomorphic to $A_{\ell }$, $\ell \geq 5$.
\end{lemma}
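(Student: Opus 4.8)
The plan is to combine Lemma \ref{AltLM}, which tells us that $L^{\Sigma}\cong A_{\ell}$ must act \emph{primitively} on $\Sigma$, with the classical classification of primitive actions of alternating and symmetric groups (the O'Nan–Scott analysis for $A_{\ell}$, or equivalently the tables of maximal subgroups of $A_{\ell}$ and $S_{\ell}$). Since $\left\vert L^{\Sigma}:L_{\Delta}^{\Sigma}\right\vert=\lambda^{2}$ is a perfect square, and since by Proposition \ref{unapred} the group $L_{\Delta}^{\Delta}$ acts $2$-transitively on $\Delta$ (so $L_{\Delta}^{\Sigma}$ has a quotient with a $2$-transitive action of degree $\left\vert\Delta\right\vert=\lambda+2$, forcing $(\lambda+2)(\lambda+1)\mid\left\vert L_{\Delta}^{\Sigma}\right\vert$), I would split into the three standard families of primitive actions: the natural action on $k$-subsets (or the action of intransitive maximal subgroups $(S_{k}\times S_{\ell-k})\cap A_{\ell}$), the imprimitive action on partitions into equal blocks ($(S_{a}\wr S_{b})\cap A_{\ell}$ with $\ell=ab$), and the remaining "primitive" maximal subgroups (those $M$ with $A_{m}\trianglelefteq M$ acting primitively on $\{1,\dots,\ell\}$, whose order is bounded polynomially in $\ell$ by a theorem of Praeger–Saxl / Maróti).

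The key steps, in order: (i) In the primitive-maximal-subgroup case, $\left\vert L_{\Delta}^{\Sigma}\right\vert<\ell^{c}$ for an absolute constant (e.g. Maróti's bound $4^{\ell}$, but sharper for non-standard actions), so $\lambda^{2}=\left\vert L^{\Sigma}\right\vert/\left\vert L_{\Delta}^{\Sigma}\right\vert$ is enormous compared with $\left\vert\Delta\right\vert=\lambda+2$; yet $2$-transitivity of $L_{\Delta}^{\Delta}$ forces $\left\vert L_{\Delta}^{\Sigma}\right\vert\geq(\lambda+2)(\lambda+1)$, and also $L_{\Delta}^{\Sigma}$ is a large subgroup of $L^{\Sigma}$ by Theorem \ref{Large}, i.e. $\left\vert L^{\Sigma}\right\vert\leq\left\vert L_{\Delta}^{\Sigma}\right\vert^{3}$, so $\lambda^{2}\le\left\vert L_{\Delta}^{\Sigma}\right\vert^{2}$; chasing these inequalities against $\left\vert A_{\ell}\right\vert=\ell!/2$ pins $\ell$ down to a short list, handled by hand or with \textsf{GAP}. (ii) In the subset action, $\left\vert\Sigma\right\vert=\binom{\ell}{k}=\lambda^{2}$ and $L_{\Delta}^{\Sigma}=(S_{k}\times S_{\ell-k})\cap A_{\ell}$; the point-stabilizer $L_{x}$ has index $k=\left\vert\Delta\right\vert=\lambda+2$ in $L_{\Delta}^{\Sigma}$ (up to the $G(\Sigma)$ issue, which is $1$ here by Theorem \ref{ASPQP} since $v=\lambda^{2}(\lambda+2)$ need not be even — actually one must check, but a square binomial coefficient forces the parity), and $L_{\Delta}^{\Delta}$ being $2$-transitive of degree $\lambda+2$ means $S_{k}\times S_{\ell-k}$ (intersected with $A_{\ell}$) surjects onto a $2$-transitive group of degree $\lambda+2=k$; since that product quotient acting on $\lambda+2$ points can only come from the $S_{k}$-factor acting naturally or from a tiny factor, one gets $k$ or $\ell-k$ tightly constrained, and then $\binom{\ell}{k}=\lambda^{2}$ together with $\lambda=k-2$ is a Diophantine condition with no solutions for $\lambda>10$ (one rules out $k=\ell-1,\ell-2$ directly, and larger $k$ by the $2$-transitivity obstruction). (iii) In the partition action $\ell=ab$, $\left\vert\Sigma\right\vert=\frac{\ell!}{(a!)^{b}b!}=\lambda^{2}$ and a similar analysis of which $2$-transitive quotient of $(S_{a}\wr S_{b})\cap A_{\ell}$ of degree $\lambda+2$ is possible forces $a$ or $b$ to be small, again contradicting $\lambda>10$ after the arithmetic.

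The main obstacle I expect is the subset-action case (ii): here $L_{\Delta}^{\Sigma}$ is genuinely "large" in $L^{\Sigma}$, so the crude order inequalities do not immediately terminate, and one must really exploit the $2$-transitivity of $L_{\Delta}^{\Delta}$ on $\Delta$ — that is, understand precisely which section of $(S_{k}\times S_{\ell-k})\cap A_{\ell}$ can be the $2$-transitive group $G_{\Delta}^{\Delta}$ of degree $\lambda+2$, and reconcile the resulting small value of $k$ (or $\ell-k$) with $\binom{\ell}{k}$ being the square $\lambda^{2}$. A secondary subtlety is tracking the distinction between $L_{\Delta}$ and $L_{x}$, and between $L^{\Sigma}$ and $L$, when $G(\Sigma)\ne1$ — but for type 1 designs Theorem \ref{ASPQP}(2) forces $v=2^{d+2}(2^{d-1}-1)^{2}$ in the imprimitive-quotient case, and $\left\vert\Sigma\right\vert=(2^{d-1}-1)^{2}2^{2}$ is not of the form $\binom{\ell}{k}$, $\frac{\ell!}{(a!)^{b}b!}$, or the order-ratio for a primitive maximal subgroup, except for a finite check; so in practice $G(\Sigma)=1$ here and $L=L^{\Sigma}=A_{\ell}$, which streamlines the bookkeeping. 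I would close by noting that all surviving numerical cases are eliminated either by the failure of $(\lambda+2)\mid\left\vert L_{\Delta}^{\Sigma}\right\vert$ or by a direct \textsf{GAP} computation, exactly as in Lemmas \ref{spor1} and \ref{AltLM}.
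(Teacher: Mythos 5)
Your overall architecture coincides with the paper's: reduce to a primitive action via Lemma \ref{AltLM}, split according to the three families of maximal subgroups of $A_{\ell}$ (intransitive, imprimitive, primitive), and play the square index $\left\vert \Sigma\right\vert=\lambda^{2}$ against the $2$-transitivity of the induced action on $\Delta$. The differences are in how the cases are closed. For the primitive family you propose Mar\'oti/Praeger--Saxl order bounds; the paper instead quotes Alavi--Burness \cite[Theorem 2]{AB} on \emph{large} subgroups (licensed by Theorem \ref{Large}), which eliminates that family in one stroke and leaves only cases (i) and (ii) — your route is equivalent in substance but would require its own finite check for small $\ell$. The more substantive divergence is in the subset case: the paper's decisive input is the classical fact (cited from \cite[Chapter 3]{AZ}) that $\binom{\ell}{t}$ is a perfect square only for $t\leq 2$ or $(t,\ell)=(3,50)$, which collapses the case analysis immediately; you instead propose to first identify the $2$-transitive quotient of $(S_{t}\times S_{\ell-t})\cap A_{\ell}$ of degree $\lambda+2$, forcing $t$ or $\ell-t$ to equal $\lambda+2$ (or an exceptional degree), and then solve $\binom{\ell}{t}=\lambda^{2}$ as an elementary Diophantine condition. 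That route does work — $\binom{\ell}{t}\geq\binom{2t}{t}>(\,t-2)^{2}$ kills $t=\lambda+2$, and $t\leq 4$ with $\ell-t=\lambda+2$ leaves quadratics/cubics with no admissible roots — and it avoids the number-theoretic citation, at the cost of an explicit enumeration of the $2$-transitive actions of $A_{m}$ (natural degree $m$, plus the degree-$6$, $10$ and $15$ exceptions for $m=5,6,7,8$) and of the possible affine quotients.

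Two points you gloss over need attention before the argument is complete. First, Proposition \ref{unapred} does \emph{not} assert outright that $L_{\Delta}^{\Delta}$ is $2$-transitive: its alternative (1) allows $\mathrm{Soc}(G_{\Delta}^{\Delta})<L_{\Delta}^{\Delta}\leq A\Gamma L_{1}(u^{h})$, and the paper spends a sentence in case (ii) showing that for $\ell>6$ the index bound $\left\vert G_{\Delta}^{\Delta}:L_{\Delta}^{\Delta}\right\vert\leq\left\vert\mathrm{Out}(A_{\ell})\right\vert=2$ forces $2$-transitivity anyway; you must do the same. Second, when $G(\Sigma)\neq 1$ the relevant constraint is not $2$-transitivity of degree $\lambda+2$ but the existence of a quotient of $L_{\Delta}^{\Sigma}$ isomorphic to $SL_{d}(2)$ or $A_{7}$ (Corollary \ref{QuotL}); your parenthetical that ``in practice $G(\Sigma)=1$'' is where the paper actually does a short argument in each of cases (i) and (ii), and it should not be waved away.
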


\begin{proof}
Since $L_{\Delta }^{\Sigma }$ is a large maximal subgroup of $L^{\Sigma }$
by Theorem \ref{Large} and Lemma \ref{AltLM}, and since $\left\vert \Sigma \right\vert =\lambda^{2}$ with $\lambda>10$, only the following cases are admissible by \cite[Theorem 2]{AB}:

\begin{enumerate}
\item[(i)] $L_{\Delta }^{\Sigma }\cong \left( S_{t}\times S_{\ell -t}\right)
\cap A_{\ell }$ with $1\leq t\leq \ell /2$;

\item[(ii)] $L_{\Delta }^{\Sigma }\cong \left( S_{t}\wr S_{\ell /t}\right)
\cap A_{\ell }$ with $2\leq t\leq \ell /2$.
\end{enumerate}

Suppose that (i) holds. Then $\binom{\ell }{t}=\lambda ^{2}$, and hence
either $t=1,2$, or $t=3$ and $\ell =50$ by \cite[Chapter 3]{AZ}.

Assume that $t=1$. Since $L_{\Delta }^{\Sigma }\cong A_{\lambda ^{2}-1}$ with $\lambda>10$, no quotient groups of $%
L_{\Delta }^{\Sigma }$ are isomorphic to $SL_{d}(2)$ for any $d \geq 4$. Also, the minimal transitive permutation representation of $%
A_{\lambda ^{2}-1}$ is strictly greater than $\lambda+2$. Thus, this case cannot occur by Proposition \ref{unapred} and Corollary \ref{QuotL}.

Assume that $t=2$. Then $L_{\Delta }^{\Sigma }\cong \left( S_{2}\times
S_{\ell -2}\right) \cap A_{\ell }$. Suppose that $G(\Sigma )\neq 1$. Then a
quotient group of $G_{\Delta }^{\Sigma }$ is isomorphic to $SL_{d}(2)$ with $d>4$ by
Theorem \ref{ASPQP} since $\binom{\ell }{2}\neq 14 ^{2}$, but this is clearly impossible. Thus $G(\Sigma
)=1$ and hence a quotient group of $L_{\Delta }\cong \left( S_{2}\times
S_{\ell -2}\right) \cap A_{\ell }$ must have a $2$-transitive permutation
representation of degree $\ell-2=\lambda +2$ by Proposition \ref{unapred} since $\lambda >10$. However, $\binom{\lambda +4 }{2}=\lambda ^{2}$ has no integer solutions. Thus, this case is
excluded.

Assume $t=3$ and $\ell =50$. Then $\lambda +2=142$, as $\lambda>10$, and hence $G(\Sigma )=1$
by Theorem \ref{ASPQP}. Then $\lambda +2\mid \left\vert G_{\Delta
}\right\vert $, whereas $G_{\Delta }=\left(
S_{3}\times S_{47}\right) \cap G$, which is a contradiction.

Suppose that (ii) holds. Hence, $L_{\Delta }^{\Sigma }\cong \left( S_{\ell /t}\wr
S_{t}\right) \cap A_{\ell }$ where $s/t,t>1$. Then $\left\vert \Sigma
\right\vert =\frac{\ell !}{((\ell /t)!)^{t}t!}$ and $A_{\ell /t}\wr
A_{t}\leq L_{\Delta }^{\Sigma }\leq \left( S_{\ell /t}\wr S_{t}\right) \cap
L $. Easy computations show that $\ell >25$ since $\left \vert \Sigma \right \vert =\lambda^{2}$. Moreover, $\left( A_{\ell /t}\right) ^{t}\trianglelefteq L_{\Delta
}^{\Sigma }$ and $A_{t} \leq L_{\Delta }^{\Sigma }/\left( A_{\ell /t}\right) ^{t}\leq
(Z_{2})^{t}:S_{t}$, where $(Z_{2})^{t}$ is a permutation module for $A_{t}$. Thus, by \cite[Lemma 5.3.4]{KL}, $L_{\Delta }^{\Sigma
}/\left( A_{\ell /t}\right) ^{t}$ is isomorphic to one of the following groups:
\begin{enumerate}
\item $A_{t},S_{t}$;
\item $A_{t}\times Z_{2},S_{t} \times Z_{2}$;
\item $(Z_{2})^{t-1}:A_{t},(Z_{2})^{t-1}:S_{t}$;
\item $(Z_{2})^{t}:A_{t},(Z_{2})^{t}:S_{t}$.
\end{enumerate}

Suppose that $G(\Sigma )\neq 1$. Then a quotient group of $L_{\Delta }^{\Sigma }$
is isomorphic either to $SL_{d}(2)$ for $d\geq 4$, or to $A_{7}$ for $d=4$ by Corollary \ref{ASPQP}. Matching such information with (1)--(4) one obtains $d=4$, $t=7,8$ and $\lambda =14$. Then $((\ell /t)!)^{t-1}<\frac{\ell !}{((\ell
/t)!)^{t}t!}=196$ as shown in \cite[(3.4)]{MF} and hence $\ell =14,16$. However, $\left\vert \Sigma \right\vert$ is not a square for such values of $t$ and $\ell$.

Suppose that $G(\Sigma )=1$. Then $L_{\Delta }/\left( A_{\ell /t}\right) ^{t}$ is one of the groups in (1)--(4). Assume that $L_{\Delta }^{\Delta }$ does not act $2$-transitively on $\Delta $. Then  $\mathrm{Soc}(G_{\Delta }^{\Delta })< L_{\Delta }^{\Delta } \leq G_{\Delta }^{\Delta } \leq A \Gamma L_{1}(2^{h})$, where $\lambda+2=2^{h}$ and $\left \vert G_{\Delta }^{\Delta }: L_{\Delta }^{\Delta } \right \vert \leq 2$ by Proposition \ref{unapred}, as $\ell>6$, which force $L_{\Delta }^{\Delta }$ to act $2$-transitively on $\Delta $, and we reach a contradiction. Therefore, $L_{\Delta }^{\Delta }$ acts $2$-transitively on $\Delta $.

If $L_{\Delta }^{\Delta }$ is of affine type then $\lambda+2 =2^{i}$ with $i \leq t$. Therefore $((\ell /t)!)^{t-1}<\frac{\ell !}{((\ell/t)!)^{t}t!}<2^{2t}$ and hence $\ell/t=2$, and $t>12$ since $\ell>25$. However, this is impossible by \cite[List (B)]{Ka} since $A_{t} \leq L_{\Delta }/\left( A_{\ell /t}\right) ^{t}$. Thus $L_{\Delta }^{\Delta }$ is almost simple and hence $%
A_{t}\trianglelefteq L_{\Delta }^{\Delta }\leq S_{t}$ with $t \geq 5$ by (1)--(4). Therefore, $t=\lambda
+2>12$ and $2^{t-1} \leq ((\ell
/t)!)^{t-1}<\frac{\ell !}{((\ell /t)!)^{t}t!}<t^{2}$, which is a contradiction. This completes the proof.
\end{proof}

\bigskip

\begin{lemma}
\label{nMag2}$L^{\Sigma }\ncong PSL_{2}(q)$.
\end{lemma}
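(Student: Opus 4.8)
The plan is to show that if $L^{\Sigma}\cong PSL_2(q)$, where $q=p^f$, then no admissible configuration survives. First I would record the basic numerical constraints: $|L^{\Sigma}:L_{\Delta}^{\Sigma}|=|\Sigma|=\lambda^2$ with $\lambda>10$, and by Theorem \ref{Large} the subgroup $L_{\Delta}^{\Sigma}$ is large in $L^{\Sigma}$, so either $|L^{\Sigma}|\leq|L_{\Delta}^{\Sigma}|^2$, which here forces $|PSL_2(q)|\leq\lambda^4$, or $G(\Sigma)=1$ and $\mathrm{Soc}(G^{\Delta}_{\Delta})<L_{\Delta}^{\Delta}\leq G_{\Delta}^{\Delta}\leq A\Gamma L_1(u^h)$ with the bounds in Theorem \ref{Large}(2). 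In the first regime I would invoke the classification of maximal (or large) subgroups of $PSL_2(q)$ (Dickson's list, cf.\ \cite{At} and \cite[Tables 8.1--8.2]{BHRD}): the candidates for $L_{\Delta}^{\Sigma}$ are a Borel subgroup $E_q{:}Z_{(q-1)/(2,q-1)}$ of index $q+1$, a dihedral subgroup of index $q(q\pm1)/(2,q-1)/2$, $A_4$, $S_4$, $A_5$, or a subfield subgroup $PSL_2(q_0)$ or $PGL_2(q_0)$. Since $|\Sigma|=\lambda^2$ must be a perfect square, I would go through each family and solve the resulting Diophantine condition.

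The second step handles the bounded cases. If $L_{\Delta}^{\Sigma}\cong A_4,S_4,A_5$ then $\lambda^2=|\Sigma|$ is one of finitely many small values, all with $\lambda\leq10$, contradiction. If $L_{\Delta}^{\Sigma}$ is dihedral then $\lambda^2=q(q\mp1)/(4,2(q-1))$ roughly, and I would show this is never a square for the relevant $q$ (a routine argument: $q$ and $q\pm1$ are coprime, so both factors must be squares up to the small denominator, which is quickly seen to be impossible, or forces $q$ tiny). The subfield case $PSL_2(q_0)\leq PSL_2(q)$ with $q=q_0^e$ gives $|\Sigma|$ roughly $q_0^{3(e-1)}\cdot(\text{unit})$, and again I would extract a square condition and eliminate it. The genuinely delicate case is the Borel subgroup: here $|\Sigma|=q+1=\lambda^2$, so $q=\lambda^2-1=(\lambda-1)(\lambda+1)$, which in general \emph{can} be a prime power. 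For this I would use the structure more heavily: $L_{\Delta}^{\Sigma}$ is a parabolic, so $L^{\Delta}_{\Delta}$ is a quotient of (a subgroup of) this solvable group, and by Proposition \ref{unapred} either $L_{\Delta}^{\Delta}$ is $2$-transitive on $\Delta$ — forcing $|\Delta|(|\Delta|-1)$ to divide a small order — or $G(\Sigma)=1$ with $G_{\Delta}^{\Delta}\leq A\Gamma L_1(u^h)$ and the chain of inequalities in Theorem \ref{Large}(2c), $|L(\Delta)|<2|\mathrm{Out}(L)|<|L_{\Delta}|$; since $\mathrm{Out}(PSL_2(q))$ has order $(2,q-1)f$, this pins $\lambda+2$ (or $\lambda/2+1$) to be at most roughly $2(2,q-1)f$, while $q=p^f$ is exponentially large in $f$, yielding a contradiction for $f$ beyond a tiny bound, and the remaining small $f$ are checked directly as in the proof of Proposition \ref{unapred}.

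The third step ties in the case $G(\Sigma)\neq1$. By Corollary \ref{QuotL} a quotient of $L_{\Delta}^{\Sigma}$ is isomorphic to $SL_d(2)$ for $d\geq4$ or to $A_7$ for $d=4$; these are non-solvable, so $L_{\Delta}^{\Sigma}$ cannot be a Borel, dihedral, $A_4$ or $S_4$ subgroup of $PSL_2(q)$, and among the large subgroups only $A_5$ (giving $A_5\twoheadrightarrow$ nothing of the required form) or a subfield $PSL_2(q_0)/PGL_2(q_0)$ remain; but $SL_d(2)$ with $d\geq4$ (order $\geq|SL_4(2)|=20160$) or $A_7$ being a quotient of $PSL_2(q_0)$ forces $PSL_2(q_0)\cong SL_4(2)\cong A_8$? — no, $A_8\not\cong PSL_2(q_0)$ — or $A_7$, again impossible since $A_7\not\cong PSL_2(q_0)$. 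So this subcase is vacuous, leaving only $G(\Sigma)=1$.

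The main obstacle is the Borel/parabolic case $q+1=\lambda^2$, since the square condition alone does not kill it; the resolution must combine the large-subgroup inequality of Theorem \ref{Large}(2) with the $2$-transitive (or $1$-dimensional affine) action of $G^{\Delta}_{\Delta}$ and the exponential gap between $|\Delta|$ and $|\mathrm{Out}(PSL_2(q))|$, exactly mirroring the endgame already used in Proposition \ref{unapred}. Everything else reduces to showing certain expressions in $q$ are not perfect squares, which is elementary number theory using coprimality of $q$ with $q\pm1$.
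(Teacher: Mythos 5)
Your overall skeleton --- run through Dickson's list of maximal subgroups of $PSL_{2}(q)$, dispose of $G(\Sigma)\neq 1$ via Corollary \ref{QuotL}, then eliminate each family --- matches the paper, and your treatment of the $G(\Sigma)\neq 1$ branch and of $A_{4},S_{4},A_{5}$ is essentially the paper's. But there is a genuine gap in the dihedral case: the condition that $\left\vert \Sigma\right\vert=\left\vert PSL_{2}(q):D_{q\pm 1}\right\vert=q(q\mp 1)/(2,q-1)$ be a perfect square is \emph{not} ``quickly seen to be impossible''. Writing $q=a^{2}$ and $(q\pm 1)/2=b^{2}$ gives a Pell equation $a^{2}-2b^{2}=\mp 1$ with infinitely many solutions; concretely $q=49$ gives $\left\vert PSL_{2}(49):D_{48}\right\vert=1225=35^{2}$, so $\lambda=35>10$ survives your test (and $q=289$ gives $204^{2}$, etc.). What kills these configurations is the ingredient you never invoke here: by Proposition \ref{unapred} the group $L_{\Delta}$ is transitive on $\Delta$, so $\lambda+2=\left\vert\Delta\right\vert$ divides $\left\vert L_{\Delta}\right\vert\approx q\pm 1$, and comparing with $\lambda^{2}=\left\vert\Sigma\right\vert\approx q^{2}/2$ yields the contradiction (for $q=49$: $37\nmid 48$). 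The same omission weakens your subfield case, where the paper again uses the ($2$-)transitivity of $L_{\Delta}^{\Delta}$ to get $\lambda=q^{1/m}-1$ coprime to $p$ and then a Sylow $p$-argument, rather than a bare square condition.

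Conversely, you misdiagnose the Borel case as the delicate one: $q+1=\lambda^{2}$ means $p^{f}=(\lambda-1)(\lambda+1)$, and since $\gcd(\lambda-1,\lambda+1)$ divides $2$ both factors are powers of $p$, forcing $\lambda\leq 3$; this is the paper's one-line disposal via \cite[A5.1]{Rib} and needs nothing else. Your substitute argument is also not sound as written: Theorem \ref{Large} is a disjunction, and since the Borel subgroup satisfies $\left\vert L\right\vert\leq\left\vert L_{\Delta}\right\vert^{2}$ you may well be in alternative (1), so you cannot invoke the chain $\left\vert L(\Delta)\right\vert<2\left\vert\mathrm{Out}(L)\right\vert<\left\vert L_{\Delta}\right\vert$ of (2c) merely because $L_{\Delta}^{\Delta}$ fails to be $2$-transitive.
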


\begin{proof}
Assume that $L^{\Sigma }\cong PSL_{2}(q)$, $q=p^f$, $p$ prime and $f \geq 1$. Then $q\neq 7$ since $PSL_{2}(7)$
does not have transitive permutation representations of square degree, and $q\neq 9$ by Lemma \ref{Alt1} since $PSL_{2}(9)\cong A_{6}$.
Moreover, if $q=p$, then $p\mid \left\vert L_{\Delta }^{\Sigma }\right\vert $
since $\left\vert L^{\Sigma }:L_{\Delta }^{\Sigma }\right\vert =\lambda ^{2}$%
. Thus no novelties occur  by \cite[Table 8.1%
]{BHRD}, hence $L_{\Delta }^{\Sigma }$ is maximal in $L^{\Sigma }$. Moreover, no quotients groups of $L_{\Delta }^{\Sigma }$ are isomorphic to $SL_{d}(2)$ with $d\geq 4$ or to $A_{7}$
and $d=4$ for $G(\Sigma ) \neq1$, hence $G(\Sigma )=1$ by Corollary \ref{QuotL}. Thus, $L_{\Delta }$ is maximal in $L$. 

Assume that $L_{\Delta }$ is isomorphic to any of the groups $A_{4},S_{4}$ or $A_{5}$. Then $\lambda+2 \mid \left\vert L_{\Delta} \right 	\vert$, hence $\lambda+2$ is not a power of a prime since $\lambda >10$. Thus $L_{\Delta }$ is non-solvable and acts $2$-transitively on $\Delta$ by Proposition \ref{unapred}, hence $L_{\Delta } \cong A_{5}$ and $\lambda+2=6$, whereas $\lambda>10$. Thus, these groups are ruled out.

Assume that $L_{\Delta }$ is isomorphic to $D_{\frac{q \pm 1}{(2,q-1)}}$. Then $\lambda \leq (q-1)/(2,q-1)$ since $\lambda+2 \mid \left\vert L_{\Delta} \right \vert$, and hence $q(q\mp 1)= \left\vert \Sigma \right \vert \leq \frac{(q-1)^{2}}{(2,q-1)^{2}}$, which has no admissible solutions.

Assume that $PSL_{2}(q^{1/m})\trianglelefteq
L_{\Delta }\leq PGL_{2}(q^{1/m})$. Then $L_{\Delta }$ acts $2$-transitively on $\Delta$ by Proposition \ref{unapred}. Moreover, $\lambda +2=q^{1/m}+1$, since $q^{1/m}> 11$ being $\lambda>10$. Thus $\lambda= q^{1/m}-1$, and hence $L_{\Delta }$ must contain a Sylow $p$-subgroup of $L$, which is a contradiction.

Finally, assume that $L_{\Delta }\cong E_{q}:Z_{\frac{q-1}{(q-1,2)}}$. Then $\lambda
^{2}=q+1$, which has no solutions by \cite[A5.1]{Rib} since $\lambda >10$.
\end{proof}

\bigskip

\begin{lemma}
\label{ciciona}$L^{\Sigma }$ is not isomorphic to one of the groups $%
PSL_{3}(4)$, $PSU_{4}(2)$, $PSL_{6}(2)$, $PSp_{6}(2)$, $P\Omega _{8}^{+}(2)$, $G_{2}(2)^{\prime }$, $^{2}G_{2}(3)^{\prime }$ or $^{2}F_{4}(2)^{\prime }$.
\end{lemma}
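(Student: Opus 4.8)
The plan is to rule out, one by one, each of the small exceptional simple groups listed, using exactly the same combinatorial machinery that has driven the preceding lemmas: the facts that $\left\vert L^{\Sigma}:L_{\Delta}^{\Sigma}\right\vert=\left\vert\Sigma\right\vert=\lambda^{2}$ with $\lambda>10$, that $L_{\Delta}^{\Sigma}$ is a large subgroup of $L^{\Sigma}$ by Theorem \ref{Large}, that $G_{\Delta}^{\Delta}$ acts $2$-transitively on $\Delta$ with $\left\vert\Delta\right\vert=\lambda+2$ and $L_{\Delta}^{\Delta}\trianglerighteq\mathrm{Soc}(G_{\Delta}^{\Delta})$ by Proposition \ref{unapred}, and that a quotient of $L_{\Delta}^{\Sigma}$ is isomorphic to $SL_{d}(2)$ ($d\ge 4$) or $A_{7}$ ($d=4$) whenever $G(\Sigma)\neq 1$ by Corollary \ref{QuotL}. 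First I would note that each of these groups $T=L^{\Sigma}$ has $\left\vert\mathrm{Out}(T)\right\vert$ small (at most $12$ for $P\Omega_{8}^{+}(2)$, at most $2$ or $6$ for the rest), so the quasiprimitive subgroup $L_{\Delta}^{\Sigma}$ is either maximal in $L^{\Sigma}$ or a novelty lying just below a maximal subgroup; in the novelty case the index must still be a perfect square $\lambda^{2}$ with $\lambda>10$, which together with the explicit maximal-subgroup data (from \cite{At}, \cite[Tables 8.x]{BHRD} for the classical groups, and the relevant ATLAS pages) cuts the list of candidate pairs $(L^{\Sigma},L_{\Delta}^{\Sigma})$ down to a handful.

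For each surviving candidate I would extract the value $\lambda$ from $\left\vert\Sigma\right\vert=\lambda^{2}$, hence $\left\vert\Delta\right\vert=\lambda+2$, and then derive a contradiction by one of three routine tests. The first test: $\left\vert\Delta\right\vert\left(\left\vert\Delta\right\vert-1\right)=(\lambda+2)(\lambda+1)$ must divide $\left\vert L_{\Delta}^{\Sigma}\right\vert$ (since $G_{\Delta}^{\Delta}$ is $2$-transitive and $\left\vert G_{\Delta}^{\Sigma}:L_{\Delta}^{\Sigma}\right\vert$ divides $\left\vert\mathrm{Out}(L)\right\vert$, which is coprime to large prime factors of $\lambda+1$ or $\lambda+2$); most candidates fail this divisibility immediately, exactly as in Lemmas \ref{spor1} and \ref{nMag2}. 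The second test: $L_{\Delta}^{\Sigma}$ (or a quotient of it by $G(\Delta)^{\Sigma}$) must possess a $2$-transitive permutation representation of degree $\lambda+2$; one checks against Kantor's Lists (A),(B) in \cite{Ka} that no such representation exists. The third test, used when $G(\Sigma)\neq 1$: $\lambda$ must equal $2(2^{d-1}-1)$ for some $d\ge 4$ and $L_{\Delta}^{\Sigma}$ must have $SL_{d}(2)$ or $A_{7}$ as a quotient, which is incompatible with the structure of the candidate subgroups. Since all the groups in the list have very small order, these checks are short finite verifications.

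I would organize the write-up by splitting along the isomorphism type of $L^{\Sigma}$. For $PSL_{3}(4)$ one has $\left\vert L\right\vert=2^{6}3^{2}5\cdot 7$, so $\lambda^{2}\mid 2^{6}3^{2}5\cdot 7$ forces $\lambda\in\{12,24\}$ (equivalently $\lambda^{2}\in\{\binom{7}{?}\dots\}$); checking the maximal subgroups $PSL_{2}(7)$, $A_{6}$, $2^{4}{:}A_{5}$, $3^{2}{:}Q_{8}$ against index $\lambda^{2}$ and then against the $2$-transitivity requirement on $\Delta$ (no quotient acts $2$-transitively on $14$ or $26$ points) kills it. The same template applies to $PSU_{4}(2)\cong PSp_{4}(3)$, $G_{2}(2)'\cong PSU_{3}(3)$, $^{2}G_{2}(3)'\cong PSL_{2}(8)$, and $^{2}F_{4}(2)'$ (Tits group), using their maximal-subgroup lists from \cite{At}. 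For the larger three — $PSL_{6}(2)\cong SL_{6}(2)$, $PSp_{6}(2)$, $P\Omega_{8}^{+}(2)$ — I would invoke \cite[Theorem 2]{AB} together with Theorem \ref{Large} to say $L_{\Delta}^{\Sigma}$ is large, list the large maximal subgroups, impose $\left\vert L^{\Sigma}:L_{\Delta}^{\Sigma}\right\vert$ a square, and in each case note that the resulting $\left\vert\Delta\right\vert=\lambda+2$ is either not realized as a $2$-transitive degree of $L_{\Delta}^{\Delta}$ or fails the $(\lambda+2)(\lambda+1)$-divisibility; for $SL_{6}(2)$ in particular the parabolic subgroups give indices $\frac{2^{6}-1}{2-1}=63$ and $\binom{6}{2}_{2}$ etc., none of which is a square $>100$.

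The main obstacle I anticipate is the bookkeeping for $P\Omega_{8}^{+}(2)$, whose outer automorphism group $S_{3}$ and triality symmetry produce a relatively rich family of maximal subgroups (including three classes of $A_{9}$, three of $Sp_{6}(2)$, and several parabolic types), so one must be careful to list all large ones and to check the square-index condition and the $2$-transitivity obstruction for each; this is the one place where a small table of candidate pairs $(L_{\Delta}^{\Sigma},\left\vert\Sigma\right\vert)$ would be worth writing out explicitly rather than dismissing in prose. Everything else reduces to short order-arithmetic that closely parallels Lemmas \ref{spor1}, \ref{Alt1}, and \ref{nMag2}.
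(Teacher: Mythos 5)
Your proposal is correct and follows essentially the same route as the paper: for each listed group one uses the fact that $\left\vert L^{\Sigma}:L_{\Delta}^{\Sigma}\right\vert=\lambda^{2}$ with $\lambda>10$, the largeness/maximality of $L_{\Delta}^{\Sigma}$, the ATLAS subgroup data, and then either a divisibility obstruction (primes of $\lambda+1$ or of $\left\vert L\right\vert$ occurring to odd multiplicity) or the failure of $L_{\Delta}^{\Delta}$ to admit a $2$-transitive action of degree $\lambda+2$. The paper's individual eliminations are just specific instances of the three tests you describe (e.g.\ $35\mid\left\vert L_{\Delta}^{\Sigma}\right\vert$ for $PSL_{3}(4)$, $41\nmid\left\vert G\right\vert$ for $^{2}F_{4}(2)^{\prime}$, and the $15\mid\lambda$ plus flag-transitivity count for $P\Omega_{8}^{+}(2)$, which is indeed the only case requiring more than a one-line check).
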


\begin{proof}
Assume that $L^{\Sigma }\cong PSL_{3}(4)$. Then $35\mid \left\vert L_{\Delta
}^{\Sigma }\right\vert $ since $\left\vert L^{\Sigma }:L_{\Delta }^{\Sigma
}\right\vert =\lambda ^{2}$, but $L^{\Sigma }$ does not contain such a group
by \cite{At}.

Assume that $L^{\Sigma }\cong PSU_{4}(2)$. Then $5\mid \left\vert L_{\Delta
}^{\Sigma }\right\vert $ since $\left\vert L^{\Sigma }:L_{\Delta }^{\Sigma
}\right\vert =\lambda ^{2}$, and hence $L_{\Delta }^{\Sigma }\cong S_{6}$
and $\lambda =6$ by \cite{At}, whereas $\lambda >10$ by our assumptions.

Assume that $L^{\Sigma }\cong PSL_{6}(2)$. Thus $2\cdot 5\cdot 31\mid
\left\vert L_{\Delta }^{\Sigma }\right\vert $ since $\left\vert L^{\Sigma
}:L_{\Delta }^{\Sigma }\right\vert =\lambda ^{2}$, and hence either $%
L_{\Delta }^{\Sigma }\cong SL_{5}(2)$ or $L_{\Delta }^{\Sigma }\cong
E_{2^{5}}:SL_{5}(2)$ by \cite[Tables 8.24 and 8.25]{BHRD}. However, both cases are ruled out since $7$ divides $\left\vert L^{\Sigma
}:L_{\Delta }^{\Sigma }\right\vert$ but $7^{2}$ does not.
 
Assume that $L^{\Sigma }\cong PSp_{6}(2)$. Then $\mathrm{Out}(L^{\Sigma })=1$, $L_{\Delta }^{\Sigma }\cong S_{8}$
and $\lambda =6$ by \cite{At}, whereas $\lambda>10$.

Assume that $L^{\Sigma }\cong P\Omega _{8}^{+}(2)$. Therefore $21\mid
\left\vert L_{\Delta
}^{\Sigma }\right\vert $ since $\left\vert L^{\Sigma }:L_{\Delta
}^{\Sigma }\right\vert =\lambda ^{2}$, and hence $\lambda ^{2}\mid
2^{12}\cdot 3^{4}\cdot 5^{2}$. Then $\left\vert L^{\Sigma }:L_{\Delta
}^{\Sigma }\right\vert $ must be divisible by one among its primitive degrees $120$, $%
135$ or $960$ by \cite{At}. Thus $15^{2}$ divides $\left\vert L^{\Sigma
}:L_{\Delta }^{\Sigma }\right\vert $ in each case, and hence $\lambda=15j$ for some $j \geq 1$. If $G(\Sigma) \neq 1$ then $\lambda=15j=2^{d}-2$ with $d \geq 4$ by Theorem \ref{ASPQP}. Easy computations show that no admissible cases occur. Therefore $G(\Sigma)=1$, and hence $G$ is a subgroup of $P\Omega _{8}^{+}(2).S_{3}$. Moreover, the order of $G$ is divisible by $(15j)^{3}(15j+2)(15j+1)$ since $G_{\Delta}^{\Delta}$ acts $2$-transitively on $\Delta$, which has size $15j+2$, and since $G$ acts flag-transitively on $\mathcal{D}$ and $k=\lambda(\lambda+1)$. Since no groups occur, this case is excluded.     

The case $L^{\Sigma }\cong G_{2}(2)^{\prime }$ is ruled out in Lemma \ref%
{nMag2} since $G_{2}(2)^{\prime }\cong PSL_{2}(8)$. Also, if $L^{\Sigma }\cong $ $%
^{2}G_{2}(3)^{\prime }\cong PSU_{3}(3)$ then $L_{\Delta }^{\Sigma }\cong
PSL_{2}(7)$ and $\left\vert \Sigma \right\vert =36$ by \cite{At}. So $%
\lambda =6$, whereas $\lambda >10$.

Finally, if $L^{\Sigma }\cong $ $%
^{2}F_{4}(2)^{\prime }$ then $L_{\Delta }^{\Sigma }\cong PSL_{3}(3):Z_{2}$
and $\left\vert \Sigma \right\vert =1600$ by \cite{At}. Then $\lambda
=40 $ and hence $\lambda +1=41$ must divide the order of $G$ by Lemma \ref{PP}. However, this is impossible since $G^{\Sigma }\leq $%
$^{2}F_{4}(2)$ and the order of $G(\Sigma)$ is either $1$ or a power of $2$ by Theorem \ref{ASPQP}. \end{proof}

\bigskip

A divisor $s$ of $q^{e}-1$ that is coprime to each $q^{i}-1$ for $i<e$ is
said to be a \emph{primitive divisor}, and we call the largest primitive
divisor $\Phi _{e}^{\ast }(q)$ of $q^{e}-1$ the \emph{primitive part} of $%
q^{e}-1$. One should note that $\Phi _{e}^{\ast }(q)$ is strongly related to
cyclotomy in that it is equal to the quotient of the cyclotomic number $\Phi
_{e}(q)$ and $(n,\Phi _{e}(q))$ when $e>2$. Also $\Phi _{e}^{\ast }(q)>1$
for $e>2$ and $(q,e)\neq (2,6)$ by Zsigmondy's Theorem (for instance, see 
\cite[P1.7]{Rib}).

\bigskip

Let $p$ be a prime, $w$ a prime distinct from $p$, and $m$ an integer which
is not a power of $p$. Also let $\Gamma $ be a group which is not a $p$%
-group. Then we define%
\begin{eqnarray*}
\zeta _{p}(w) &=&\min \left\{ z:z\geq 1\text{ and }p^{z}\equiv 1\pmod{w}%
\right\} \\
\zeta _{p}(m) &=&\max \left\{ \zeta _{p}(w):w\text{ prime, }w\neq p\text{
and }w\mid m\right\} \\
\zeta _{p}(X) &=&\zeta _{p}(\left\vert X\right\vert )\text{.}
\end{eqnarray*}

If $L^{\Sigma }$ is isomorphic to a simple group of Lie type over $GF(q)$, $%
q=p^{f}$, then $\zeta _{p}(L^{\Sigma })$ is listed in \cite[Proposition 5.2.16 and Table
5.2.C]{KL}. In the sequel we will denote $\zeta _{p}(L^{\Sigma })$ simply by $\zeta $. It is worth noting that $\Phi _{\zeta }^{\ast }(p)>1$ by Lemmas \ref{nMag2} and \ref{ciciona}.

\bigskip

\begin{lemma}
\label{Phi0}$L_{\Delta}^{\Sigma }$ is a large subgroup of $L^{\Sigma }$ such that
$\left( \Phi _{\zeta }^{\ast }(p), \left\vert L_{\Delta
}^{\Sigma }\right\vert \right) >1$.
\end{lemma}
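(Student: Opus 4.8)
The statement that $L_{\Delta}^{\Sigma}$ is a large subgroup of $L^{\Sigma}$ is already part of Theorem \ref{Large}, so the plan is devoted entirely to the claim $\left(\Phi_{\zeta}^{\ast}(p),\left\vert L_{\Delta}^{\Sigma}\right\vert\right)>1$. At this stage $L^{\Sigma}$ is a simple group of Lie type in characteristic $p$ with $L^{\Sigma}\not\cong PSL_{2}(q)$ and $L^{\Sigma}$ not among the groups of Lemma \ref{ciciona}, so $\Phi_{\zeta}^{\ast}(p)>1$ for $\zeta=\zeta_{p}(L^{\Sigma})$ as read off from \cite[Proposition 5.2.16 and Table 5.2.C]{KL}, and in particular $\zeta\geq 3$. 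First I would fix a prime $w\neq p$ with $w\mid\left\vert L^{\Sigma}\right\vert$ and $\zeta_{p}(w)=\zeta$; such a $w$ exists by the very definition of $\zeta_{p}(L^{\Sigma})$, it is a primitive prime divisor of $p^{\zeta}-1$ and hence $w\mid\Phi_{\zeta}^{\ast}(p)$, and from $w\equiv 1\pmod{\zeta}$ with $\zeta\geq 3$ one gets that $w\geq 5$ is odd and, crucially, $w\nmid\left\vert\mathrm{Out}(L^{\Sigma})\right\vert$. It then suffices to prove that this $w$ divides $\left\vert L_{\Delta}^{\Sigma}\right\vert$.

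The heart of the argument is a short chain of divisibilities. Since $\mathcal{D}$ is of type 1 we have $\left\vert\Sigma\right\vert=\lambda^{2}$ and $k=(\lambda+1)\lambda$ by Theorem \ref{PZM}(V.1), and since $L^{\Sigma}=\mathrm{Soc}(G^{\Sigma})$ acts transitively on $\Sigma$ we obtain $\left\vert L^{\Sigma}\right\vert=\lambda^{2}\left\vert L_{\Delta}^{\Sigma}\right\vert$. Suppose, for a contradiction, that $w\nmid\left\vert L_{\Delta}^{\Sigma}\right\vert$; then $w\mid\lambda^{2}$, hence $w\mid\lambda$, hence $w\mid k$. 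Because $G$ is flag-transitive, for $x\in\Delta$ the stabiliser $G_{x}$ is transitive on the $k$ blocks incident with $x$, so $k\mid\left\vert G_{x}\right\vert$; as $G_{x}\leq G_{\Delta}$ this yields $w\mid\left\vert G_{\Delta}\right\vert$. Now $\left\vert G_{\Delta}\right\vert=\left\vert G_{\Delta}^{\Sigma}\right\vert\cdot\left\vert G(\Sigma)\right\vert$ with $\left\vert G(\Sigma)\right\vert$ equal to $1$ or a power of $2$ by Theorem \ref{ASPQP}, so $w$ odd forces $w\mid\left\vert G_{\Delta}^{\Sigma}\right\vert$. Finally $\left\vert G_{\Delta}^{\Sigma}:L_{\Delta}^{\Sigma}\right\vert=\left\vert G^{\Sigma}:L^{\Sigma}\right\vert$ divides $\left\vert\mathrm{Out}(L^{\Sigma})\right\vert$ (using $G=G_{\Delta}L$ from Proposition \ref{unapred} and $G^{\Sigma}/L^{\Sigma}\leq\mathrm{Out}(L^{\Sigma})$), and $w\nmid\left\vert\mathrm{Out}(L^{\Sigma})\right\vert$, so $w\mid\left\vert L_{\Delta}^{\Sigma}\right\vert$ — contradicting the assumption. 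Hence $w\mid\left\vert L_{\Delta}^{\Sigma}\right\vert$, and therefore $\left(\Phi_{\zeta}^{\ast}(p),\left\vert L_{\Delta}^{\Sigma}\right\vert\right)\geq w>1$.

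The one step that will really call for care — and hence the main obstacle — is the verification that $w\nmid\left\vert\mathrm{Out}(L^{\Sigma})\right\vert$, which has to be done uniformly over the classical and exceptional families: one needs to know that the value $\zeta=\zeta_{p}(L^{\Sigma})$ in \cite[Table 5.2.C]{KL} is always large enough, relative to the Lie rank and the defining-field degree, to force a prime $w\equiv 1\pmod{\zeta}$ to exceed every prime-power factor coming from the diagonal, field and graph contributions to $\left\vert\mathrm{Out}(L^{\Sigma})\right\vert$. This is precisely the point at which the exclusions of Lemmas \ref{nMag2} and \ref{ciciona} — which secure $\Phi_{\zeta}^{\ast}(p)>1$ and $\zeta\geq 3$ — are needed. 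Everything else is elementary index bookkeeping, and, unlike the later lemmas of this section, no case analysis on the subgroup $L_{\Delta}^{\Sigma}$ itself is required here.
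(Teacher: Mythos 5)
Your proposal is correct and follows essentially the same route as the paper: assume the gcd is trivial, deduce that a primitive prime divisor of $p^{\zeta}-1$ must divide $\lambda^{2}=\left\vert \Sigma\right\vert$, then use flag-transitivity ($k\mid\left\vert G_{x}\right\vert$), the fact that $G(\Sigma)$ is trivial or a $2$-group, and the coprimality of $\Phi_{\zeta}^{\ast}(p)$ with $\left\vert \mathrm{Out}(L^{\Sigma})\right\vert$ to push the prime back into $\left\vert L_{\Delta}^{\Sigma}\right\vert$ for a contradiction. The only cosmetic difference is that you work with a single primitive prime divisor $w$ and verify $w\nmid\left\vert \mathrm{Out}(L^{\Sigma})\right\vert$ by hand, whereas the paper invokes \cite[Proposition 5.2.15(ii)]{KL} for the whole primitive part, which disposes of the uniform verification you flag as the delicate step.
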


\begin{proof}
Suppose $\left( \Phi _{\zeta}^{\ast }(p), \left\vert
L_{\Delta }^{\Sigma }\right\vert \right) =1$. Note that $\left \vert G_{\Delta }^{\Sigma }: L_{\Delta }^{\Sigma } \right \vert \mid \left \vert \mathrm{Out}(L^{\Sigma
}) \right \vert$ since $\left\vert L^{\Sigma }:L_{\Delta }^{\Sigma }\right\vert= \left\vert G^{\Sigma }:G_{\Delta }^{\Sigma }\right\vert$, as $G^{\Sigma}$ acts primitively on $\Sigma$. Thus $\left( \Phi _{\zeta }^{\ast }(p), \left\vert G_{\Delta}^{\Sigma }\right\vert \right) =1$ since $\left( \Phi _{\zeta }^{\ast }(p), \left\vert \mathrm{Out}(L^{\Sigma
})\right\vert \right) =1$ by \cite[Proposition 5.2.15(ii)]{KL}. Therefore $\Phi _{\zeta }^{\ast
}(p)\mid \left\vert G^{\Sigma }:G_{\Delta }^{\Sigma }\right\vert $, and hence $\Phi
_{\zeta }^{\ast }(p)\mid \lambda ^{2}$ since $\left\vert G^{\Sigma
}:G_{\Delta }^{\Sigma }\right\vert =\lambda ^{2}$. Then $(\Phi
_{\zeta }^{\ast }(p),\left\vert G_{x}\right\vert )>1$, where $x$ is any
point of $\mathcal{D}$, since $\lambda (\lambda +1)\mid \left\vert
G_{x}\right\vert $ being $G$ flag-transitive on $\mathcal{D}$. Therefore $%
(\Phi _{\zeta }^{\ast }(p),\left\vert G_{\Delta }\right\vert )>1$, where $%
\Delta $ is the element of $\Sigma $ containing $x$, and hence $(\Phi
_{\zeta }^{\ast }(p),\left\vert G_{\Delta }^{\Sigma }\right\vert )>1$ which is a contradiction. Thus $\left( \Phi _{\zeta}^{\ast }(p), \left\vert L_{\Delta }^{\Sigma }\right\vert \right) >1$, and $L_{\Delta }^{\Sigma }$ is large by Theorem \ref{Large}.
\end{proof}

\begin{lemma}
\label{Exc1}$L^{\Sigma }$ is not isomorphic to an exceptional simple group
of Lie type.
\end{lemma}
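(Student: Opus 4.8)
The plan is to argue by contradiction: assume $L^{\Sigma}$ is an exceptional simple group of Lie type over $GF(q)$ with $q=p^{f}$, and exploit the four constraints already available. First, by Theorem \ref{Large} and Lemma \ref{Phi0}, $M:=L_{\Delta}^{\Sigma}$ is a large subgroup of $L^{\Sigma}$ with $\Phi_{\zeta}^{\ast}(p)\mid|M|$. Second, since $L^{\Sigma}\trianglelefteq G^{\Sigma}$ with $G^{\Sigma}$ primitive on $\Sigma$, one has $|L^{\Sigma}:M|=|\Sigma|=\lambda^{2}$, a perfect square with $\lambda>10$. Third, by Proposition \ref{unapred}, either $L_{\Delta}^{\Delta}$ acts $2$-transitively on $\Delta$ with $|\Delta|=\lambda+2$, or $G_{\Delta}^{\Delta}\leq A\Gamma L_{1}(u^{h})$ with $\lambda+2=u^{h}$ --- an alternative I would show is vacuous for $L^{\Sigma}$ exceptional, since no large maximal subgroup of an exceptional group has the structure $A\Gamma L_{1}(u^{h})$. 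Fourth, flag-transitivity gives $\lambda(\lambda+1)=k\mid|G_{x}|$, so $\lambda+1$ divides $|G|\leq|L^{\Sigma}|\,|\mathrm{Out}(L^{\Sigma})|\,|G(\Sigma)|$, and when $G(\Sigma)\neq1$ Corollary \ref{QuotL} forces $\lambda=2(2^{d-1}-1)$ with a quotient of $M$ isomorphic to $SL_{d}(2)$ or $A_{7}$ ($d=4$).

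First I would clear the small cases: the exceptional groups $G_{2}(2)^{\prime}$, $^{2}G_{2}(3)^{\prime}$, $^{2}F_{4}(2)^{\prime}$ are excluded in Lemma \ref{ciciona}, and for every remaining exceptional $L^{\Sigma}$ one has $\Phi_{\zeta}^{\ast}(p)>1$ by Zsigmondy's theorem (cf. \cite[Table 5.2.C]{KL}, \cite[P1.7]{Rib}), so Lemma \ref{Phi0} has content. Since a large subgroup always lies in a large maximal subgroup, I would then reduce to the case $M$ maximal in $L^{\Sigma}$: a novelty $M<M_{0}$ with $M_{0}$ maximal still satisfies $\Phi_{\zeta}^{\ast}(p)\mid|M_{0}|$ and $|L^{\Sigma}:M_{0}|\cdot|M_{0}:M|=\lambda^{2}$, leaving only finitely many configurations to check by hand. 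At this point I would invoke the classification of large maximal subgroups of exceptional groups in \cite{AB}.

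The divisibility $\Phi_{\zeta}^{\ast}(p)\mid|M|$ already eliminates every proper parabolic subgroup and every subfield subgroup $L^{\Sigma}(q_{0})$, since by inspection of the orders of Levi and subfield subgroups (cf. \cite[Proposition 5.2.15]{KL}) such a subgroup is divisible only by primitive prime divisors of $p^{e}-1$ with $e<\zeta$. What remains is the short list of non-parabolic, non-subfield reductive maximal subgroups of exceptional groups --- for instance $Spin_{9}(q)$, $\Omega_{8}^{+}(q){:}S_{3}$, $^{3}D_{4}(q){:}3$ and $Sp_{8}(q)$ inside $F_{4}(q)$, $F_{4}(q)$ and the maximal-rank subgroups inside $E_{6}(q)$-type groups, the maximal-rank subgroups of $G_{2}(q)$ and $^{3}D_{4}(q)$, and the analogues in $E_{7}(q)$, $E_{8}(q)$, $^{2}E_{6}(q)$, $^{2}F_{4}(q)$, $^{2}B_{2}(q)$, $^{2}G_{2}(q)$. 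For each candidate I would write $|L^{\Sigma}:M|$ as a monomial in $q$ times a product of cyclotomic values $\Phi_{e}(q)$ and show it is never a perfect square in the admissible range: either some factor occurs with odd multiplicity --- e.g. $|F_{4}(q):{}^{3}D_{4}(q){:}3|=q^{12}(q^{4}-1)^{2}(q^{4}+1)/3$ has odd $3$-adic valuation --- or the index lies strictly between consecutive squares --- e.g. $|F_{4}(q):Sp_{8}(q)|=q^{8}(q^{8}+q^{4}+1)$ with $q^{8}<q^{8}+q^{4}+1<(q^{4}+1)^{2}$; the twisted families and $^{2}B_{2}(q),{}^{2}G_{2}(q)$ are handled in the same way. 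The handful of numerically small pairs $(L^{\Sigma},M)$ surviving this test would then be killed by Proposition \ref{unapred}, Corollary \ref{QuotL} and Lemma \ref{PP}: either $M=L_{\Delta}$ has no section with a $2$-transitive action of degree $\lambda+2$ (nor, when $G(\Sigma)\neq1$, a quotient $SL_{d}(2)$ or $A_{7}$ with $\lambda=2(2^{d-1}-1)$), or $\lambda+1\nmid|L^{\Sigma}|\,|\mathrm{Out}(L^{\Sigma})|\,|G(\Sigma)|$, the residual finite data being read off from \cite{At} and \cite{BHRD}.

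The hard part is the bookkeeping: the non-square check on $|L^{\Sigma}:M|$ must be carried out for every exceptional family and every large reductive maximal subgroup, and the novelties and the small-field exceptions must be cleared separately --- this accounts for essentially all the labour, even though each individual step (a cyclotomic factorisation, a parity-of-valuation argument, a consecutive-square sandwich, occasionally a character-table lookup) is routine. A secondary subtlety is ensuring, for the few small pairs that escape the square test, that the $2$-transitivity of $L_{\Delta}^{\Delta}$ on the $\lambda+2$ points of $\Delta$ together with the equalities $v=(\lambda+2)\lambda^{2}$ and $k=(\lambda+1)\lambda$ really are incompatible with the embedding $M<L^{\Sigma}$; these are finite verifications.
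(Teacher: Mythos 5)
Your overall architecture matches the paper's: pass to a large maximal overgroup $M^{\Sigma}\supseteq L_{\Delta}^{\Sigma}$, use Lemma \ref{Phi0} to force a common factor with $\Phi_{\zeta}^{\ast}(p)$, kill the parabolic (and subfield) candidates by inspecting Levi factors, and reduce via \cite{AB} to a short list of reductive maximal subgroups. However, there are two genuine gaps in how you finish. First, your central tool for the surviving reductive subgroups --- that $\left\vert L^{\Sigma}:M\right\vert=\lambda^{2}$ is never a perfect square, checked by parity of valuations or sandwiching between consecutive squares --- does not reduce matters to ``a handful of numerically small pairs.'' For the infinite family $L^{\Sigma}\cong G_{2}(q)$, $M\cong SU_{3}(q){:}2$, the index is $q^{3}(q^{3}-1)/2$, and deciding whether this is a square leads (for $q$ an odd square) to the Mordell-type equation $q^{3}-1=2y^{2}$, which is neither a routine parity argument nor a consecutive-square sandwich. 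The paper never attempts this: instead it observes that $L_{\Delta}^{\Delta}$ must be $2$-transitive of degree $\lambda+2$, which forces $\lambda+2=q^{3}+1$, hence $\left\vert\Sigma\right\vert=(q^{3}-1)^{2}$ is coprime to $p$, so $L_{\Delta}$ would contain a Sylow $p$-subgroup and lie in a parabolic --- contradicting non-parabolicity. That argument (or the direct comparison $(q^{3}-1)^{2}\neq q^{3}(q^{3}-1)/2$) is what actually disposes of this family uniformly in $q$; you relegate it to a ``secondary subtlety'' for finitely many small cases, which is a misallocation of where the real work lies.

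Second, your dismissal of the semilinear alternative of Proposition \ref{unapred} rests on a misreading. That alternative asserts $\mathrm{Soc}(G_{\Delta}^{\Delta})<L_{\Delta}^{\Delta}\leq G_{\Delta}^{\Delta}\leq A\Gamma L_{1}(u^{h})$, i.e.\ it constrains the \emph{quotient} $L_{\Delta}^{\Delta}=L_{\Delta}/L(\Delta)$, not the subgroup $L_{\Delta}^{\Sigma}$ itself; a large subgroup $M$ with an enormous kernel $L(\Delta)$ and a small solvable induced action on $\Delta$ is not excluded by observing that no large maximal subgroup of an exceptional group ``has the structure $A\Gamma L_{1}(u^{h})$.'' The paper handles this branch with Theorem \ref{Large}(2b)--(2c): the bounds $\left\vert L\right\vert\leq 4\left\vert L_{\Delta}^{\Delta}\right\vert^{2}\left\vert\mathrm{Out}(L)\right\vert^{2}$ and $\left\vert L(\Delta)\right\vert<2\left\vert\mathrm{Out}(L)\right\vert$ force the last term $M^{(\infty)}$ of the derived series of $M$ to have a primitive permutation representation of degree less than $2\left\vert\mathrm{Out}(L)\right\vert$, which contradicts Lemma \ref{MPPRG}. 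Some substitute for this kernel-bounding argument is needed; as written, the solvable case is simply not covered by your proposal.
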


\begin{proof}
Let $M$ be a subgroup of $L$ such that $M^{\Sigma }$ is a maximal subgroup
of $L^{\Sigma }$ containing $L_{\Delta }^{\Sigma }$. Also $M^{\Sigma }$ is
large since $L_{\Delta }^{\Sigma }$ is so by Lemma \ref{Large}. Therefore $%
M^{\Sigma }$ is one of the groups classified in \cite[Theorem 5]{AB}.
Moreover, $\left( \Phi _{\zeta }^{\ast }(p), \left\vert M^{\Sigma
}\right\vert \right) >1$ by Lemma \ref{Phi0}.

Assume that $M^{\Sigma }$ is parabolic. If $L^{\Sigma }$ is untwisted then $%
M^{\Sigma }$ can be obtained by deleting the $i$-th node in the Dynkin
diagram of $L^{\Sigma }$, and we see that none of these groups is of order
divisible by a prime factor of $\Phi _{\zeta }^{\ast }(p)$. Indeed, for instance consider the Levi
factors of the  maximal parabolic subgroups of $L^{\Sigma }\cong F_{4}(q)$, $%
q=p^{f}$ are of type $B_{3}(p^{f})$, $C_{3}(p^{f})$ or $A_{1}(p^{f})\times
A_{2}(p^{f})$ and none of these has order divisible by a prime factor of $\Phi
_{12f}^{\ast }(p)$.
If $L^{\Sigma }$ is twisted, that is $L^{\Sigma }$ is
centralized by an automorphism $\gamma $ of the corresponding untwisted
group and $\gamma$ induces a non-trivial symmetry $\rho $ on the Dynkin diagram. In
this case the $M^{\Sigma }$ exist only when deleting the resulting subset
obtained by deleting the $i$-th node in the Dynkin diagram of corresponding
untwisted group is $\rho $-invariant. The Levi factor of $M^{\Sigma }$ is
obtained by taking the fixed points of the automorphism $\gamma $ on the
Levi factor of the corresponding untwisted subgroup. Also in the twisted case the order of any maximal subgroups of $L^{\Sigma }$ is not divisible by a prime factor of $\Phi _{\zeta }^{\ast
}(p)$. Indeed, for instance, the Levi factors of the maximal parabolic
subgroups of $^{2}E_{6}(q)$, $q=p^{f}$, are of types $^{2}A_{5}(q)$, $%
^{2}D_{4}(q)$, $A_{1}(q)\times A_{2}(q^{2})$ and $A_{1}(q^{2})\times
A_{2}(q) $, and none of these has order divisible by a prime factor of $\Phi
_{18f}^{\ast }(p)$.

Assume that $M^{\Sigma }$ is not parabolic. Then $(L^{\Sigma },M^{\Sigma })$
is listed in \cite[Table 2]{AB}. Since $%
\left( \Phi _{\zeta }^{\ast }(p), \left\vert M^{\Sigma }\right\vert
\right) >1$, only the groups contained in Table \ref{tav2} are admissible by \cite{LSS}. 
\begin{table}[h!]
\caption{\small {Admissible $(L^{\Sigma },M^{\Sigma })$}}\label{tav2}
\centering
\begin{tabular}{|l|l|l|}
\hline
$L^{\Sigma }$ & $M^{\Sigma }$ & Conditions \\ \hline \hline
$E_{7}(q)$ & $ (3,q+1).(^{2}E_{6}(q) \times \frac{q+1}{(3,q+1)}).(3,q+1).{2}$ &  \\ \hline
$E_{6}(q)$ & $F_{4}(q)$ &  \\ 
& $(q^{2}+q+1 \times$ $^{3}D_{4}(q)).3$ &  \\ \hline
$F_{4}(q)$ & $^{3}D_{4}(q).3$ &  \\ 
& $^{2}F_{4}(q)$ &  \\ \hline
$G_{2}(q)$ & $SU_{3}(q):2$ &  \\ 
& $^{2}G_{2}(q)$ & $q=3^{2e+1}>1$ \\ 
& $G_{2}(2)$ & $q=5$ \\ 
& $PSL_{2}(13)$ & $q=3,4$ \\ 
& $2^{3}.SL_{3}(2)$ & $q=3$ \\ \hline
$^{2}B_{2}(q)$ & $13:4$ & $q=8$ \\ \hline
$^{3}D_{4}(q)$ & $G_{2}(q)$ &  \\ \hline
\end{tabular}%
\end{table}

$L^{\Sigma }$ is not isomorphic to any groups $G_{2}(3), G_{2}(5)$ and $^{2}B_{2}(8)$ since these do not have a transitive permutation representation of square degree by \cite{At}. Also, if $L^{\Sigma } \cong G_{2}(4)$ then $L_{\Delta}^{\Sigma } \cong PSL_{2}(13)$ again by \cite{At} and hence $\lambda=480$. Then $G(\Sigma)=1$ by Corollary \ref{QuotL}, and hence $PSL_{2}(13) \trianglelefteq G_{\Delta}^{\Delta} \leq PGL_{2}(13)$. However, $\left\vert \Delta \right \vert = \lambda+2=482$ does not divide the order of $G_{\Delta}^{\Delta}$, and we reach a contradiction.  

Suppose that $ \left\vert M^{\Sigma }\right\vert
^{2} < \left\vert L^{\Sigma }\right\vert$. Then $G(\Sigma)=1$, $L_{\Delta }^{\Delta }$ is solvable, $\left\vert
L\right\vert \leq 4 \left\vert \mathrm{Out}(L) \right \vert ^{2} \left\vert L_{\Delta }^{\Delta}\right\vert ^{2}$ and $ \left\vert L(\Delta) \right\vert <2 \left\vert \mathrm{Out}(L) \right \vert$ by Theorem \ref{Large}. Thus $\left\vert M\right\vert^{2} < \left\vert L \right\vert \leq 4 \left\vert \mathrm{Out}(L) \right \vert ^{2} \left\vert L_{\Delta }\right\vert ^{2}$, and hence $\left\vert M:L_{\Delta}\right\vert < 2 \left\vert \mathrm{Out}(L) \right \vert$. In the remaining admissible groups of Table \ref{tav2} the last term of the derived series $M^{(\infty)}$ of $M$ is non-abelian simple. Let $P(M^{(\infty)})$ be the minimal primitive permutation representation of $M^{(\infty)}$. If $M^{(\infty)} \not \leq L_{\Delta}$, then
$$P(M^{(\infty)}) \leq \left\vert M^{(\infty)}:L_{\Delta} \cap M^{(\infty)} \right\vert \leq \left\vert M:L_{\Delta}\right\vert < 2 \left\vert \mathrm{Out}(L) \right \vert,$$
and we reach a contradiction by Lemma \ref{MPPRG}
Therefore $M^{(\infty)} \leq L_{\Delta}$, and hence $M^{(\infty)} \leq L(\Delta)$ since $L_{\Delta }^{\Delta }$ is solvable. Then $P(M^{(\infty)}) \leq \left\vert L(\Delta) \right \vert < 2 \left\vert \mathrm{Out}(L) \right \vert$ and we again reach a contradiction by Lemma \ref{MPPRG}.

Suppose that $\left\vert L^{\Sigma }\right\vert \leq \left\vert M^{\Sigma }\right\vert ^{2}$. Then one of the following holds by \cite{LSS}:

\begin{enumerate}
\item $L^{\Sigma }\cong E_{7}(q)$ and $L_{\Delta }^{\Sigma }=M^{\Sigma
}\cong (3,q+1).(^{2}E_{6}(q)\times (q-1)/(3,q+1)).(3,q+1).2$;

\item $L^{\Sigma }\cong E_{6}(q)$ and $L_{\Delta }^{\Sigma }=M^{\Sigma
}\cong F_{4}(q)$;

\item $L^{\Sigma }\cong F_{4}(q)$ and $L_{\Delta }^{\Sigma }=M^{\Sigma
}\cong $ $^{3}D_{4}(q).Z_{3}$;

\item $L^{\Sigma }\cong G_{2}(q)$ and $L_{\Delta }^{\Sigma }=M^{\Sigma
}\cong SU_{3}(q):Z_{2}$.
\end{enumerate}

Then $G(\Sigma)=1$ by Corollary \ref{QuotL}. If $L_{\Delta }^{\Delta }$ does not act $2$-transitively on $\Delta$, then $L_{\Delta }^{\Delta }$ is solvable by Proposition \ref{unapred}, hence $ \left\vert L(\Delta) \right\vert <2 \left\vert \mathrm{Out}(L) \right \vert$ by Theorem \ref{Large}. However, this is impossible in cases (1)--(4). Then $L_{\Delta }^{\Delta }$ acts $2$-transitively on $\Delta$, and hence only (4) occurs with $\lambda+2
=q^{3}+1$. Then $\left\vert \Sigma \right\vert =(q^{3}-1)^{2}$, whereas $%
L_{\Delta }^{\Sigma }$ is a maximal non-parabolic subgroup of $L^{\Sigma }$.
So this case is excluded, and the proof is completed.
\end{proof}

\bigskip

Now, it remains to analyze the case where $L^{\Sigma}$ is a simple classical group.

\bigskip

\begin{proposition}
\label{Phi}$L_{\Delta}^{\Sigma }$ is a large maximal geometric subgroup of $L^{\Sigma }$. Moreover, it results that
$\left( \Phi _{\zeta }^{\ast }(p), \left\vert L_{\Delta
}^{\Sigma }\right\vert \right) >1$.
\end{proposition}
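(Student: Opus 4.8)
The plan is to obtain the two quantitative assertions, namely that $L_{\Delta}^{\Sigma}$ is large and that $\bigl(\Phi_{\zeta}^{\ast}(p),|L_{\Delta}^{\Sigma}|\bigr)>1$, directly from Theorem~\ref{Large} and Lemma~\ref{Phi0} (these apply here since $\Phi_{\zeta}^{\ast}(p)>1$ for the classical groups that survive Lemmas~\ref{nMag2} and~\ref{ciciona}); the substance is therefore to prove that $L_{\Delta}^{\Sigma}$ is a \emph{maximal geometric} subgroup of $L^{\Sigma}$. I would fix a maximal subgroup $M^{\Sigma}$ of $L^{\Sigma}$ with $L_{\Delta}^{\Sigma}\leq M^{\Sigma}$; since $|L^{\Sigma}|<|L_{\Delta}^{\Sigma}|^{3}\leq|M^{\Sigma}|^{3}$ the subgroup $M^{\Sigma}$ is itself large, and $\bigl(\Phi_{\zeta}^{\ast}(p),|M^{\Sigma}|\bigr)>1$ because $L_{\Delta}^{\Sigma}\leq M^{\Sigma}$. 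It then suffices to show that $M^{\Sigma}$ is geometric and that $L_{\Delta}^{\Sigma}=M^{\Sigma}$.

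To see that $M^{\Sigma}$ is geometric I would apply Aschbacher's theorem: either $M^{\Sigma}$ lies in one of the geometric classes $\mathcal{C}_{1},\dots,\mathcal{C}_{8}$, or $M^{\Sigma}$ belongs to the class $\mathcal{S}$. In the latter case, being a large maximal subgroup, $M^{\Sigma}$ appears in the explicit list of large $\mathcal{S}$-type maximal subgroups of classical groups in \cite{AB}. I would then go through that short list and discard each entry by one of three devices: either $\Phi_{\zeta}^{\ast}(p)\nmid|M^{\Sigma}|$, contradicting the previous paragraph (here one uses the values of $\zeta=\zeta_{p}(L^{\Sigma})$ from \cite[Proposition~5.2.16 and Table~5.2.C]{KL}); or $|L^{\Sigma}:M^{\Sigma}|$ cannot be a perfect square $\lambda^{2}$ with $\lambda>10$; or, when neither applies, the $2$-transitivity of $G_{\Delta}^{\Delta}$ on $\Delta$ given by Proposition~\ref{unapred} forces $|\Delta|\bigl(|\Delta|-1\bigr)\mid|G_{\Delta}|$ with $|\Delta|=\lambda+2$, which is incompatible with the order of $M^{\Sigma}$, while if $G(\Sigma)\neq1$ one uses instead that a quotient of $L_{\Delta}^{\Sigma}$ must be $SL_{d}(2)$ or $A_{7}$ with $d=4$ by Corollary~\ref{QuotL}. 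Hence $M^{\Sigma}\in\mathcal{C}_{1}\cup\cdots\cup\mathcal{C}_{8}$.

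To see that $L_{\Delta}^{\Sigma}=M^{\Sigma}$ I would rule out a novelty as in Lemma~\ref{AltLM}. Suppose $\theta:=|M^{\Sigma}:L_{\Delta}^{\Sigma}|>1$; for $x\in\Delta$ the orbit $x^{M}$ is then a union of $\theta$ blocks of $\Sigma$, so $|x^{M}|=\theta(\lambda+2)$ and $\lambda^{2}=a\theta$ for some $a\geq1$. Since $L_{x}<L_{\Delta}<L$, the set $x^{M}\setminus\{x\}$ is a union of $L_{x}$-orbits; by Lemma~\ref{PP} every $G_{x}$-orbit on $\mathcal{P}\setminus\{x\}$ has length divisible by $\lambda+1$, and each such orbit is a union of at most $|\mathrm{Out}(L^{\Sigma})|$ many $L_{x}$-orbits, so with $\eta:=\bigl(\lambda+1,|\mathrm{Out}(L^{\Sigma})|\bigr)$ every $L_{x}$-orbit in $x^{M}\setminus\{x\}$ has length divisible by $\tfrac{\lambda+1}{\eta}$, whence $\tfrac{\lambda+1}{\eta}\mid\theta(\lambda+2)-1$. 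Reducing modulo $\tfrac{\lambda+1}{\eta}$ gives $\theta\equiv a\equiv1$, so $\theta=1+f\tfrac{\lambda+1}{\eta}$ and $a=1+e\tfrac{\lambda+1}{\eta}$ with $e,f\geq1$; substituting into $a\theta=\lambda^{2}$ and using $\lambda^{2}-1=(\lambda-1)(\lambda+1)$ yields $ef<\eta^{2}$, which is incompatible with $\lambda>10$ once one notes that $|\mathrm{Out}(L^{\Sigma})|$ is small relative to $\lambda$ (and, when $G(\Sigma)\neq1$, once one invokes Corollary~\ref{QuotL}). Thus $\theta=1$, and $L_{\Delta}^{\Sigma}=M^{\Sigma}$ is a large maximal geometric subgroup of $L^{\Sigma}$.

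The hard part will be the elimination of the $\mathcal{S}$-type candidates in the second step: this demands a careful case analysis of the divisibility $\Phi_{\zeta}^{\ast}(p)\mid|M^{\Sigma}|$ against the tabulated $\zeta$-values for each classical type, followed, for the handful of surviving pairs, by a delicate appeal to the $2$-transitive action of $G_{\Delta}^{\Delta}$ on $\Delta$ (through Proposition~\ref{unapred} and Corollary~\ref{QuotL}) to force a contradiction. By contrast, the novelty step is comparatively routine once the congruence machinery of Lemma~\ref{AltLM} is in place.
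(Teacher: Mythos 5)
Your first two steps track the paper closely: the largeness of $L_{\Delta}^{\Sigma}$ and the divisibility $\bigl(\Phi_{\zeta}^{\ast}(p),\lvert L_{\Delta}^{\Sigma}\rvert\bigr)>1$ are indeed taken verbatim from Lemma \ref{Phi0} and Theorem \ref{Large}, and the elimination of $\mathcal{S}$-type maximal overgroups via the Alavi--Burness list of large maximal subgroups, with the survivors killed by $\Phi_{\zeta}^{\ast}(p)$-divisibility, the square-index condition and the $2$-transitivity of $G_{\Delta}^{\Delta}$, is exactly the paper's strategy (the paper outsources the generic part to the argument of \cite[Theorem 7.1]{Mo1} and then treats the two surviving pairs $\left(P\Omega_{8}^{+}(q),G_{2}(q)\right)$ and $\left(PSU_{4}(3),A_{7}\right)$ by hand, just as you anticipate).

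The gap is in your final step, where you prove $L_{\Delta}^{\Sigma}=M^{\Sigma}$ by the orbit-counting congruence of Lemma \ref{AltLM}. The congruences themselves are sound and do yield $\theta=1+f\tfrac{\lambda+1}{\eta}$, $a=1+e\tfrac{\lambda+1}{\eta}$ and $ef<\eta^{2}$ with $\eta=\bigl(\lambda+1,\lvert\mathrm{Out}(L^{\Sigma})\rvert\bigr)$. But the concluding claim that this is ``incompatible with $\lambda>10$'' is asserted, not proved, and it is not automatic. In Lemma \ref{AltLM} the whole point is that $\eta\leq 2$, so $ef\leq 3$, and even there the case $ef=3$ produces the genuine solution $\lambda=15$ which has to be excluded by a separate group-theoretic argument. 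For classical $L^{\Sigma}$ the quantity $\lvert\mathrm{Out}(L^{\Sigma})\rvert$, hence $\eta$, can be of order $nf$, and the identity $ef\left(\tfrac{\lambda+1}{\eta}\right)^{2}+(e+f)\tfrac{\lambda+1}{\eta}+1=\lambda^{2}$ then only gives $\lambda=O(\eta^{3})$: one is left with an open-ended family of quadratics in $\lambda$, many of which can have integer roots exceeding $10$, each needing individual elimination against the actual subgroup structure. Moreover the resulting bound $P(L^{\Sigma})\leq\lambda^{2}=O\bigl(\lvert\mathrm{Out}(L^{\Sigma})\rvert^{6}\bigr)$ is far weaker than the hypothesis $P(\Gamma)<2\left(\lvert\mathrm{Out}(\Gamma)\rvert+1\right)\lvert\mathrm{Out}(\Gamma)\rvert$ of Lemma \ref{MPPRG}, so that lemma cannot be invoked to finish. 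The paper avoids all of this by observing that $L_{\Delta}^{\Sigma}<M^{\Sigma}$ forces $G_{\Delta}^{\Sigma}$ to be a novelty, and then reading off the (short) lists of novelties in \cite[Tables 3.5.H--I]{KL} and \cite{BHRD}, which are eliminated by largeness and the $\Phi_{\zeta}^{\ast}(p)$-condition; you should either adopt that route or supply the full Diophantine-plus-group-theoretic case analysis that your congruence approach actually requires.
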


\begin{proof}
Recall that $L_{\Delta}^{\Sigma }$ is a large subgroup of $L^{\Sigma }$ such that
$\left( \Phi _{\zeta }^{\ast }(p), \left\vert L_{\Delta
}^{\Sigma }\right\vert \right) >1$ by Lemma \ref{Phi0}. Let $M$ be a subgroup of $L$ such that $M^{\Sigma }$ is a maximal subgroup of $L^{\Sigma }$ containing $L_{\Delta }^{\Sigma }$, then $M^{\Sigma }$ is large and $\left( \Phi _{nf}^{\ast }(p), \left\vert
M^{\Sigma }\right\vert \right) >1$. If $\left(L^{\Sigma}, L_{\Delta}^{\Sigma} \right)$ is not $\left(P \Omega^{+}_{8}(q),G_{2}(q) \right)$ and $\left(PSU_{4}(3) , A_{7}\right)$ then we may use the same argument of \cite[Theorem 7.1]{Mo1}, with $L^{\Sigma }, L^{\Sigma }_{\Delta}$ and $M^{\Sigma }$ in the role of $X,X_{x}$ and $Y$ respectively, to prove that $M^{\Sigma }$ is a geometric subgroup of $L^{\Sigma }$.

Assume that $L^{\Sigma} \cong P \Omega^{+}_{8}(q)$ and $L_{\Delta}^{\Sigma} \cong G_{2}(q)$. Then $G(\Sigma )=1$ by Corollary \ref{QuotL}. If $L_{\Delta }^{\Delta }$ does not act $2$-transitively on $\Delta $, then $\mathrm{Soc}(G_{\Delta }^{\Delta })<L_{\Delta }^{\Delta }\leq G_{\Delta}^{\Delta }\leq A\Gamma L_{1}(u^{h})$, where $u^{h}=\lambda +2$. Then $\lambda <f$, where $q=p^{f}$, and hence $p^{2f} \leq \left\vert L: L_{\Delta}\right \vert< f^{2}$ a contradiction. Thus $L_{\Delta }^{\Delta }$ acts $2$-transitively on $\Delta $, and hence $q=2$. However, this is impossible by Lemma \ref{ciciona}.

Assume that $L^{\Sigma} \cong PSU_{4}(3)$ and $L_{\Delta}^{\Sigma} \cong A_{7}$. Then $\lambda=36$, and hence $G(\Sigma )=1$ by Corollary \ref{QuotL}. Therefore, one obtains $G \leq P \Gamma U_{4}(3)$. However, this is impossible by Lemma \ref{PP} since $\lambda+1=37$ does not divide the order of $G$. Thus, we have proven that $M^{\Sigma }$ is a geometric subgroup of $L^{\Sigma }$ containing $L_{\Delta}^{\Sigma}$.

If $L_{\Delta}^{\Sigma } \neq M^{\Sigma }$, then $G_{\Delta}^{\Sigma }$ is a novelty, and hence $L_{\Delta}^{\Sigma }$ is listed in \cite[Tables
3.5.H--I]{KL} for $n\geq 13$ and in \cite{BHRD} for $3 \le n\leq 12$. Now, the candidates for $L_{\Delta}^{\Sigma }$ must be large subgroups of $L^{\Sigma }$ and their order must have a factor in common with $\Phi _{\zeta}^{\ast }(p)$. For instance,  if $L^{\Sigma } \cong PSL_{n}(q)$, $q=p^{f}$, then $\zeta=nf$ by \cite[Proposition 5.2.16]{KL}, and the unique admissible case is when $L_{\Delta}^{\Sigma }$ lies in a maximal member of $\mathcal{C}_{1}(L^{\Sigma})$. However, this is impossible by \cite[Theorem 3.5(iv)]{He}. Therefore, no novelties occur when $L^{\Sigma } \cong PSL_{n}(q)$. The remaining simple classical groups are analyzed similarly and it is straightforward to check that there are no novelties which are compatible with the constraints on $L_{\Delta}^{\Sigma}$. Thus $L_{\Delta}^{\Sigma } = M^{\Sigma }$, which is the assertion.
\end{proof}

\bigskip

\begin{lemma}
\label{PSL1}$L^{\Sigma }$ is not isomorphic to $PSL_{n}(q)$.
\end{lemma}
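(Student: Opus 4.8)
The plan is to argue by contradiction, assuming $L^{\Sigma}\cong PSL_{n}(q)$ with $q=p^{f}$, and to exploit the very tight structural information accumulated so far: by Proposition~\ref{Phi}, $L_{\Delta}^{\Sigma}$ is a \emph{large maximal geometric} subgroup of $L^{\Sigma}$ whose order is divisible by a prime factor of the primitive part $\Phi_{\zeta}^{\ast}(p)$, where $\zeta=\zeta_{p}(PSL_{n}(q))$ is read off from \cite[Proposition~5.2.16]{KL}; for $PSL_{n}(q)$ one has $\zeta=nf$ in the generic situation (with the small well-known exceptions). First I would invoke \cite[Theorem~3.5(iv)]{He}: a subgroup of $GL_{n}(q)$ whose order is divisible by a primitive prime divisor of $p^{nf}-1$ must act irreducibly, and in fact $\Phi_{nf}^{\ast}(p)$ cannot divide the order of a proper parabolic or a reducible $\mathcal{C}_{1}$-subgroup. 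This immediately eliminates all the reducible geometric classes, so $L_{\Delta}^{\Sigma}$ must lie in one of the \emph{irreducible} Aschbacher classes $\mathcal{C}_{2},\dots,\mathcal{C}_{8}$ (imprimitive, field-extension, subfield, symplectic/orthogonal/unitary-type, tensor, and the classical-in-classical class), cross-referenced against the list of \emph{large} maximal subgroups in \cite[Theorem~4]{AB}.

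Having reduced to a short explicit list of pairs $(L^{\Sigma},L_{\Delta}^{\Sigma})$, I would run each candidate through the three filters that have proved decisive throughout the paper. The first filter is numerical: $\left\vert L^{\Sigma}:L_{\Delta}^{\Sigma}\right\vert=\left\vert\Sigma\right\vert=\lambda^{2}$ must be a perfect square of the shape forced by Theorem~\ref{PZM}(V.1); this kills most field-extension and subfield candidates since their indices are not squares, or not squares of a number $\lambda$ with $\lambda+2$ of the correct form. The second filter is Corollary~\ref{QuotL}: if $G(\Sigma)\ne1$ then a quotient of $L_{\Delta}^{\Sigma}$ must be $SL_{d}(2)$ ($d\ge4$) or $A_{7}$ ($d=4$) and $\lambda=2(2^{d-1}-1)$, which is incompatible with the structure of a geometric maximal subgroup of $PSL_{n}(q)$ except in a tiny handful of cases to be checked by hand. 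The third filter, applicable when $G(\Sigma)=1$, is Proposition~\ref{unapred}: $L_{\Delta}$ acts $2$-transitively on $\Delta$ of size $\lambda+2$, or $G_{\Delta}^{\Delta}\le A\Gamma L_{1}(u^{h})$; in the former case $\lambda+2$ divides $\left\vert L_{\Delta}^{\Sigma}\right\vert$ and $L_{\Delta}^{\Sigma}$ has a $2$-transitive quotient of that degree — an extremely restrictive condition for $\mathcal{C}_{2}$- and $\mathcal{C}_{4}$-type subgroups — while in the latter case $\lambda+2=u^{h}$ with $\lambda<(u-1)h\le$ a small multiple of $f$, forcing $p^{2f}\le\lambda^{2}<f^{2}$-type inequalities that have no solutions. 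In parallel, for the surviving large candidates with $\left\vert L^{\Sigma}\right\vert\le\left\vert L_{\Delta}^{\Sigma}\right\vert^{2}$ I would use Lemma~\ref{PP}: $\lambda+1$ divides $\left\vert G_{x}\right\vert$, and for the typical $\mathcal{C}_{2}$ case $L_{\Delta}^{\Sigma}\cong(GL_{n/2}(q)\wr S_{2})$-type the value $\lambda+2\sim q^{n/2}$ makes $\lambda+1$ coprime to $\left\vert G\right\vert$, a contradiction.

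The main obstacle I expect is the $\mathcal{C}_{2}$ imprimitive subgroup of type $GL_{m}(q)\wr S_{t}$ with $n=mt$ (and, relatedly, the $\mathcal{C}_{4}$/$\mathcal{C}_{7}$ tensor subgroups), because these are precisely the large maximal subgroups whose indices $\binom{n}{m}_{q}$-style $q$-binomial expressions can plausibly be perfect squares, and whose quotients by the derived subgroup are symmetric-group-like and hence \emph{can} have $2$-transitive actions. For those I would combine the exact index formula with the divisibility $\lambda+2\mid\left\vert L_{\Delta}^{\Sigma}\right\vert$ and the condition that $\lambda+2$ be a power of $p$ (when $L_{\Delta}^{\Delta}$ is of affine type) or one more than a prime power; a short case analysis on $(n,m,t,q)$, aided by Zsigmondy's theorem to pin down which primitive prime divisors must appear, should leave no survivors. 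The remaining classes are comparatively routine: $\mathcal{C}_{3}$ (field extension $GL_{n/s}(q^{s}).s$) fails the square-index test or the $\Phi_{\zeta}^{\ast}(p)$-divisibility by minimality of the field; $\mathcal{C}_{5}$ (subfield) gives indices that are not squares; $\mathcal{C}_{6}$ (extraspecial normalizer) is too small to be large for $n>2$; $\mathcal{C}_{8}$ (symplectic, unitary or orthogonal subgroup of $SL_{n}$) is handled by noting $\left\vert \Sigma\right\vert$ is then not of the form in Theorem~\ref{PZM}(V.1), or by a direct appeal to $\left\vert At\right\vert$ \cite{At} for the small ranks $n\le12$ via \cite{BHRD}. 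Assembling these contradictions completes the proof that $L^{\Sigma}\ncong PSL_{n}(q)$.
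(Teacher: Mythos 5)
Your overall architecture coincides with the paper's: start from Proposition \ref{Phi} (so $L_{\Delta}^{\Sigma}$ is a large \emph{maximal geometric} subgroup with $\bigl(\Phi_{nf}^{\ast}(p),\lvert L_{\Delta}^{\Sigma}\rvert\bigr)>1$), use \cite[Theorem 3.5(iv)]{He} to kill the reducible case, intersect with the large-subgroup list of \cite[Proposition 4.7]{AB}, and finish with the square-index condition, Corollary \ref{QuotL}, Proposition \ref{unapred} and Sylow $p$-subgroup arguments. The one substantive divergence is that you stop applying the primitive-prime-divisor condition after excluding $\mathcal{C}_{1}$, and consequently treat the $\mathcal{C}_{2}$ subgroups of type $GL_{n/t}(q)\wr S_{t}$ (and the tensor classes) as the ``main obstacle'', to be handled by an ad hoc analysis of $q$-binomial indices and $2$-transitive quotients. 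This is a phantom difficulty: a primitive prime divisor $z$ of $q^{n}-1$ satisfies $z\equiv 1\pmod{n}$, hence $z>n\geq t$, so $z$ divides neither $\lvert GL_{n/t}(q)\rvert^{t}$ nor $t!$, and the condition $\bigl(\Phi_{nf}^{\ast}(p),\lvert L_{\Delta}^{\Sigma}\rvert\bigr)>1$ already eliminates $\mathcal{C}_{2}$, $\mathcal{C}_{4}$--$\mathcal{C}_{7}$ and the $\mathcal{C}_{3}$ subgroups of non-maximal field-extension degree outright. That is precisely why the paper is left with only two cases — the $\mathcal{C}_{8}$ subgroups ($PSp_{n}(q)'$, $PSU_{n}(q^{1/2})$ with $n$ odd, $P\Omega_{n}^{-}(q)$) and the $\mathcal{C}_{3}$ subgroups of type $GL_{n/t}(q^{t})$ with $t=2,3$ — which it then dispatches with the same filters you list (in particular the observation that $\lambda+2=\frac{q^{n}-1}{q^{t}-1}$ forces $(\lambda,p)=1$ and hence would force $L_{\Delta}^{\Sigma}$ to contain a Sylow $p$-subgroup of $L^{\Sigma}$). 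Your plan would likely close eventually, but pushing your own first tool through all the Aschbacher classes is what turns a long case analysis into a short one.
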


\begin{proof}
Assume that $L^{\Sigma }\cong PSL_{n}(q)$. Then $n\geq 3$ by Lemma \ref%
{nMag2}, and $L_{\Delta}^{\Sigma }$ is a large maximal geometric subgroup of $L^{\Sigma }$ such that
$\left( \Phi _{nf }^{\ast }(p), \left\vert L_{\Delta
}^{\Sigma }\right\vert \right) >1$ by Proposition \ref{Phi} and by \cite[Proposition 5.2.16]{KL}. Then $L_{\Delta}^{\Sigma } \notin \mathcal{C}_{1}(L^{\Sigma})$ by \cite[Theorem 3.5(iv)]{He}, and hence
one of the following holds by \cite[Propositions 4.7]{AB}:

\begin{enumerate}
\item[(i)] $\mathrm{Soc}(L_{\Delta }^{\Sigma })$ is one of the groups $%
PSp_{n}^{\prime }(q)$, $PSU_{n}(q^{1/2})$ and $n$ odd, or $P\Omega
_{n}^{-}(q)$;

\item[(ii)] $L_{\Delta }^{\Sigma }$ is a $\mathcal{C}_{3}$-group of type $%
GL_{n/t}(q^{t})$, where $t=2$, or $t=3$ and either $q=2,3$ or $q=5$ and $n$
is odd.
\end{enumerate}

Assume that (i) holds. Then $G(\Sigma )=1$ by Corollary \ref{QuotL}. If $%
L_{\Delta }^{\Delta }$ does not act $2$-transitively on $\Delta $, then $%
\mathrm{Soc}(G_{\Delta }^{\Delta })<L_{\Delta }^{\Delta }\leq G_{\Delta
}^{\Delta }\leq A\Gamma L_{1}(u^{h})$, where $u^{h}=\lambda +2$ by
Proposition \ref{unapred}. If $\mathrm{Soc}(L_{\Delta }^{\Sigma })\cong
PSU_{n}(q^{1/2})$ then $\lambda +2\leq q-1$ by \cite[Proposition 4.8.5(II)]%
{KL}, and hence $q^{2}+q+1\leq P(L)\leq \left\vert \Sigma \right\vert \leq
(q-3)^{2}$ since $n\geq 3$, which is a contradiction. Thus, $L_{\Delta
}^{\Delta }$ acts $2$-transitively on $\Delta $ in this case. The previous
argument together with \cite[Propositions 4.8.3--4.8.4]{KL} can be used to
show that $L_{\Delta }^{\Delta }$ acts $2$-transitively on $\Delta $ also in
the remaining cases. Furthermore, $(n,q)\neq (4,2),(6,2)$ by Lemmas \ref%
{Alt1} and \ref{Phi} respectively, and $n\geq 3$. Then one of the following
holds by \cite[Propositions 4.8.3--4.8.5]{KL}:

\begin{enumerate}
\item $n > 6$, $q=2$, $L_{\Delta } \cong Sp_{n}(2)$ and $\lambda=2^{2n-1}\pm
2^{n}-2$;

\item $n=3$, $\mathrm{Soc}(L_{\Delta }) \cong PSU_{3}(q^{1/2})$ and $%
\lambda=q^{3/2}-1$;

\item $n=4$, $\mathrm{Soc}(L_{\Delta }) \cong P\Omega _{4}^{-}(q)\cong
PSL_{2}(q^{2})$ and $\lambda=q^{2}-1$.
\end{enumerate}

Cases (2) and (3) are immediately ruled out since $(\lambda,p)=1$ but $%
L_{\Delta}$ does not contain a Sylow $p$-subgroup of $L$. In case (1) $%
2^{n(n-1)/2-2}$ must divide the order of a Sylow $2$-subgroup of $L_{\Delta}$
which is $2^{n^{2}/4}$, and we reach a contradiction since $n>6$.

Assume that (ii) holds. Then $L_{\Delta }^{\Sigma }\cong
Z_{a}.PSL_{n/t}(q^{t}).Z_{e}.Z_{t}$, where $a=\frac{(q-1,n/t)(q^{t}-1)}{%
(q-1)(q-1,n)}$ and $e=\frac{(q^{t}-1,n/t)}{(q-1,n/t)}$ by \cite[Proposition
4.3.6.(II)]{KL}, and $t=2,3$. Then $G(\Sigma )=1$ by Corollary \ref{QuotL}.
Recall that $\mathrm{Soc}(G_{\Delta }^{\Delta })\trianglelefteq L_{\Delta
}^{\Delta }$ by Proposition \ref{unapred}.

If $G_{\Delta }^{\Delta }$ is of affine type, then $L_{\Delta }^{\Delta
}\leq Z_{e}.Z_{t}$ for $n>t$. Thus $\lambda +2\leq e\leq n/t$, and hence 
\begin{equation}
\frac{q^{n}-1}{q-1}=P(L)\leq \left\vert L:L_{\Delta }\right\vert =\left\vert
\Sigma \right\vert \leq n^{2}/2  \label{rain}
\end{equation}%
by \cite[Theorem 5.2.2]{KL} since $n\geq 3$ and $(n,q)\neq (4,2)$. However, (%
\ref{rain}) has no admissible solutions. Thus $n=t=3$ since $t=2,3$ and $%
n\geq 3$. Moreover, $L_{\Delta }\cong Z_{\frac{q^{2}+q+1}{(3,q-1)}}.Z_{3}$
and hence $\left\vert \Sigma \right\vert =\frac{1}{3}q^{3}(q+1)(q-1)^{2}$.
On the other hand, since $\lambda +2$ is a power of prime and divides the
order of $L_{\Delta }$, it follows that $\lambda \leq q^{2}+q-1$ and $%
\left\vert \Sigma \right\vert \leq (q^{2}+q-1)^{2}$, and we reach a
contradiction.

If $G_{\Delta }^{\Delta }$ is almost simple, then $t<n$ and $L_{\Delta
}^{\Delta }$ acts $2$-transitively on $\Delta $ by Proposition \ref{unapred}%
. Thus $L_{\Delta }^{\Delta }\cong PSL_{n/t}(q^{t}).Z_{e}.Z_{t}$ and hence
either $(n/t,q^{t})=(2,9)$ and $\lambda +2=6$, or $\lambda +2=\frac{q^{n}-1}{%
q^{t}-1}$ since $t=2,3$. The former contradicts $\lambda >10$, the latter
yields $\lambda =\frac{q^{n}-2q^{t}+1}{q^{t}-1}$ and hence $\left( \lambda
,p\right) =1$. Then $L_{\Delta }^{\Sigma }$ must contain a Sylow $p$%
-subgroup of $L^{\Sigma }$, which is a contradiction.
\end{proof}

\bigskip

\begin{lemma}
\label{PSU1}$L^{\Sigma }$ is not isomorphic to $PSU_{n}(q)$.
\end{lemma}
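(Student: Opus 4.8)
The plan is to mirror the structure of the $PSL_n(q)$ case treated in Lemma~\ref{PSL1}, but now for the unitary groups. Suppose $L^{\Sigma }\cong PSU_{n}(q)$ with $q=p^{f}$. First I would record $n\geq 3$ (since $PSU_{2}(q)\cong PSL_{2}(q)$ is excluded by Lemma~\ref{nMag2}) and invoke Proposition~\ref{Phi}: $L_{\Delta }^{\Sigma }$ is a large \emph{maximal geometric} subgroup of $L^{\Sigma }$ with $\left( \Phi _{\zeta }^{\ast }(p),\left\vert L_{\Delta }^{\Sigma }\right\vert \right) >1$, where $\zeta =\zeta _{p}(L^{\Sigma })$ is read off from \cite[Proposition 5.2.16, Table 5.2.C]{KL} — for $PSU_{n}(q)$ this is $2nf$ if $n$ is odd and $(n-1)f$ (with the appropriate parity corrections) otherwise. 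By \cite[Theorem 3.5(iv)]{He} applied to the parabolic members, $L_{\Delta }^{\Sigma }\notin \mathcal{C}_{1}(L^{\Sigma })$ unless it is a non-parabolic $\mathcal{C}_1$ subgroup stabilising a non-degenerate subspace; in any case the constraint that $\Phi _{\zeta }^{\ast }(p)$ divides $\left\vert L_{\Delta }^{\Sigma }\right\vert$ drastically restricts which geometric class $\mathcal{C}_i$ can contain $L_{\Delta }^{\Sigma }$. I would then consult \cite[Proposition 4.10]{AB} (the unitary analogue of \cite[Proposition 4.7]{AB}) together with \cite[Chapter 4]{KL} and \cite[Tables 8.*]{BHRD} to produce the short list of candidate pairs $(L^{\Sigma },L_{\Delta }^{\Sigma })$ that survive both the ``large'' condition $\left\vert L^{\Sigma }\right\vert \leq \left\vert L_{\Delta }^{\Sigma }\right\vert^{2}$ (or the weaker bound of Theorem~\ref{Large}(2)) and the primitive-prime-divisor condition.

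Next, for each surviving candidate I would split according to whether $G(\Sigma )=1$. If $G(\Sigma )\neq 1$, then by Corollary~\ref{QuotL} a quotient of $L_{\Delta }^{\Sigma }$ is $SL_{d}(2)$ or $A_{7}$ with $\left\vert \Delta \right\vert =2^{d}$ and $\lambda =2(2^{d-1}-1)$; inspecting the composition factors of the geometric subgroups on the list (which are essentially products of smaller unitary/linear/symplectic groups over $GF(q)$ or $GF(q^{t})$) shows this quotient structure is incompatible, so $G(\Sigma )=1$ throughout. Then $G=G^{\Sigma }$, $L=L^{\Sigma }$, and by Proposition~\ref{unapred} either $L_{\Delta }^{\Delta }$ acts $2$-transitively on $\Delta $ or $\mathrm{Soc}(G_{\Delta }^{\Delta })<L_{\Delta }^{\Delta }\leq G_{\Delta }^{\Delta }\leq A\Gamma L_{1}(u^{h})$ with $u^{h}=\lambda +2$. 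In the affine-semilinear sub-case, the bound $\lambda +2\leq h\leq \log _{u}(\lambda+2)$-type estimate forces $\left\vert \Sigma \right\vert =\left\vert L:L_{\Delta }\right\vert$ to be smaller than $(\lambda+2)^2/\text{(something)}$, while $P(L)$ — the minimal primitive degree of $PSU_{n}(q)$, given by \cite[Theorem 5.2.2]{KL} — is much larger; this contradiction eliminates the semilinear case exactly as in \eqref{rain} of Lemma~\ref{PSL1}. So $L_{\Delta }^{\Delta }$ is $2$-transitive on $\Delta $, and by \cite[List (A)]{Ka} one reads off the precise possibilities for $\mathrm{Soc}(L_{\Delta }^{\Delta })$ and the corresponding value of $\left\vert \Delta \right\vert =\lambda +2$.

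The decisive step, again following Lemma~\ref{PSL1}, is the coprimality argument: in almost every surviving case one computes that $\lambda +2$ is a prime power $u^{h}$ dividing $\left\vert L_{\Delta }\right\vert$, and simultaneously that $\gcd(\lambda ,p)=1$ (because $\lambda$ turns out to be one less than a power of $q$, or a similar expression), so $\lambda(\lambda+1)\mid \left\vert L_{x}\right\vert$ would force $L_{\Delta }$ — hence $L_{\Delta }^{\Sigma }$ — to contain a full Sylow $p$-subgroup of $L^{\Sigma }$, which is impossible for a proper geometric subgroup by order considerations (the index $\left\vert \Sigma \right\vert =\lambda^2$ is divisible by $p$). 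The few remaining sporadic-looking numerical coincidences (small $n$ and small $q$) I would rule out by hand using \cite{At} and \cite{BHRD}: either $\lambda +1$ fails to divide $\left\vert G\right\vert$ (Lemma~\ref{PP}), or $\left\vert \Sigma \right\vert$ is not a perfect square, or $\lambda \leq 10$ against our standing hypothesis. The main obstacle I anticipate is bookkeeping: the unitary geometric subgroups come in more flavours than the linear ones (the forms-type classes $\mathcal{C}_{2},\mathcal{C}_{5},\mathcal{C}_{8}$ and the extraspecial/$\mathcal{C}_{6}$ class all behave slightly differently with respect to $\Phi _{\zeta }^{\ast }(p)$), so the case list from \cite{AB} is longer and each branch needs its own — though routine — order estimate and $2$-transitivity check; keeping track of the twisted $\zeta$-values and the parity split $n$ odd versus $n$ even is where an error is most likely to creep in.
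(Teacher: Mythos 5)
Your plan is correct and follows essentially the same route as the paper's proof: Proposition~\ref{Phi} plus the primitive-prime-divisor condition and \cite[Proposition 4.17]{AB} cut the candidates down to just two families (the stabilizer of a non-isotropic point and a $\mathcal{C}_{3}$-subgroup of type $GU_{n/3}(3^{3})$), after which Corollary~\ref{QuotL} gives $G(\Sigma)=1$, Proposition~\ref{unapred} forces $2$-transitivity of $L_{\Delta}^{\Delta}$, and the cases die by exactly the arithmetic contradictions you list (index not a square, index not equal to $\lambda^{2}$, Sylow $p$-subgroup/coprimality). The surviving list is in fact much shorter than the bookkeeping you anticipate, and your only slips are cosmetic (the citation should be \cite[Proposition 4.17]{AB}, and the paper takes $\zeta$ to be $nf$ or $(n-1)f$ according to the parity of $n$).
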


\begin{proof}
Assume that $L^{\Sigma }\cong PSU_{n}(q)$. Then $n\geq 3$ by Lemma \ref%
{nMag2}. Moreover, $L_{\Delta}^{\Sigma }$ is a large maximal geometric subgroup of $L^{\Sigma }$ such that
$\left( \Phi _{\zeta }^{\ast }(p), \left\vert L_{\Delta
}^{\Sigma }\right\vert \right) >1$, where $\zeta$ is either $nf$ or $(n-1)f$ according to whether $n$ is even or odd respectively, by Proposition \ref{Phi} and \cite[Proposition 5.2.16]{KL}. Then $L_{\Delta}^{\Sigma } \notin \mathcal{C}_{1}(L^{\Sigma})$ by \cite[Theorem 3.5(iv)]{He}, and hence
one of the following holds by \cite[Propositions 4.17]{AB}:

\begin{enumerate}
\item[(i)] $L_{\Delta }^{\Sigma }$ is a $\mathcal{C}_{1}$-subgroup of $%
L^{\Sigma }$;

\item[(ii)] $L_{\Delta }^{\Sigma }$ is a $\mathcal{C}_{3}$-subgroup of $%
L^{\Sigma }$ of type $GU_{n/3}(3^{3})$ with $n$ odd.

\end{enumerate}

Assume that (i) holds. Then $n$ is even by \cite[Theorem 3.5(iv)]{He}, and hence $L_{\Delta
}^{\Sigma } \cong Z_{\frac{q+1}{(q+1,n)}}.PSU_{n-1}(q).Z_{(q+1,n-1)}$ is the stabilizer of a non-isotropic point of $PG_{n-1}(q^{2})$
by \cite[Propositions 4.1.4.(II)-4.1.18.(II)]{KL}. Also, $G(\Sigma)=1$ by Corollary \ref{QuotL}. If $L_{\Delta }^{\Delta }$ does not act $2$-transitively on $\Delta $, then $%
\mathrm{Soc}(G_{\Delta }^{\Delta })<L_{\Delta }^{\Delta }\leq G_{\Delta
}^{\Delta }\leq A\Gamma L_{1}(u^{h})$ with $u^{h}=\lambda +2$ by
Proposition \ref{unapred}. Then $\lambda \leq (q+1,n-1) -2$ by \cite[Propositions 4.1.4.(II)]{KL}, and hence $q^{n-1} \leq \left\vert L:L_{\Delta} \right \vert <q$, which is impossible for $n \geq 4$. Then $L_{\Delta }^{\Delta }$ acts $2$-transitively on $\Delta $, and hence $n=4$ and $\lambda^{2}=q^{3}(q-1)$. So $q-1$ is a square, which is impossible by \cite[B1.1]{Rib}. 

Assume that (ii) holds. Then $L_{\Delta}^{\Sigma}\cong
Z_{7}.PSU_{n/3}(3^{3}).Z_{(n/3,7)}.Z_{3}$ by \cite[Proposition 4.3.6(II)]{KL}. Hence, $G(\Sigma)=1$ by Corollary \ref{QuotL}. Also, $L_{\Delta}^{\Delta}$ is forced to act $2$-transitively on $\Delta$ by Proposition \ref{unapred} since $\lambda>10$. Therefore, $n=9$ and $\lambda= 3^{9}-2$. However, $\left\vert L:L_{\Delta }\right\vert \neq \lambda ^{2}$ in this case, which is then ruled out.
\end{proof}

\bigskip

\begin{lemma}
\label{PSp1}$L^{\Sigma }$ is not isomorphic to $PSp_{n}(q)^{\prime}$.
\end{lemma}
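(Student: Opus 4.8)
The plan is to follow the template of Lemmas~\ref{PSL1} and~\ref{PSU1}. Assume $L^{\Sigma}\cong PSp_{n}(q)^{\prime}$ with $q=p^{f}$ and $n=2m$. Since $PSp_{2}(q)=PSL_{2}(q)$ was excluded in Lemma~\ref{nMag2}, and the exceptions $(n,q)=(4,2),(4,3),(6,2)$ were excluded in Lemmas~\ref{Alt1} and~\ref{ciciona} (recall $PSp_{4}(2)^{\prime}\cong A_{6}$ and $PSp_{4}(3)\cong PSU_{4}(2)$), we may take $n\geq4$, and then $\zeta=\zeta_{p}(L^{\Sigma})=2mf$ with $\Phi_{2mf}^{\ast}(p)>1$ by \cite[Proposition 5.2.16]{KL}. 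By Proposition~\ref{Phi}, $L_{\Delta}^{\Sigma}$ is a large maximal geometric subgroup of $L^{\Sigma}$ with $(\Phi_{2mf}^{\ast}(p),|L_{\Delta}^{\Sigma}|)>1$; thus $L_{\Delta}^{\Sigma}$ has an element of order a primitive prime divisor $r$ of $q^{2m}-1$. Such an $r$ cannot divide the order of a parabolic subgroup by \cite[Theorem 3.5(iv)]{He}, and a direct check of the Aschbacher classes shows that $r$ divides the order of a reducible, imprimitive or subfield geometric subgroup of $PSp_{2m}(q)$ only in cases which are not large; comparing with the list of large subgroups in \cite{AB}, either $L_{\Delta}^{\Sigma}$ is a $\mathcal{C}_{3}$-subgroup of type $Sp_{2m/s}(q^{s})$ with $s$ a prime, or of type $GU_{m}(q).2$ with $m$ odd, or, for $q$ even, $L_{\Delta}^{\Sigma}$ is a $\mathcal{C}_{8}$-subgroup of type $O_{2m}^{-}(q)$.

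In each of these cases $\mathrm{Soc}(L_{\Delta}^{\Sigma})$ is a non-abelian simple classical group (symplectic, unitary, or orthogonal of minus type) with $L_{\Delta}^{\Sigma}/\mathrm{Soc}(L_{\Delta}^{\Sigma})$ solvable, so no quotient of $L_{\Delta}^{\Sigma}$ is isomorphic to $SL_{d}(2)$ with $d\geq4$ or to $A_{7}$; hence $G(\Sigma)=1$ by Corollary~\ref{QuotL}, and $G=G^{\Sigma}$, $L=L^{\Sigma}$. By Proposition~\ref{unapred}, either $L_{\Delta}^{\Delta}$ acts $2$-transitively on $\Delta$, or $\mathrm{Soc}(G_{\Delta}^{\Delta})<L_{\Delta}^{\Delta}\leq G_{\Delta}^{\Delta}\leq A\Gamma L_{1}(u^{h})$ with $u^{h}=|\Delta|$. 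In the latter case $L_{\Delta}^{\Delta}$ is solvable; since it is a quotient of $L_{\Delta}^{\Sigma}$ whose non-abelian composition factor is large in $L_{\Delta}^{\Sigma}$, a solvable quotient forces $|\Delta|$ to be small — bounded by $2s$ in the symplectic $\mathcal{C}_{3}$-case, by $2$ in the $\mathcal{C}_{8}$-case, and by roughly $2(q+1)$ in the unitary case — whence $P(L^{\Sigma})\leq|L^{\Sigma}:L_{\Delta}^{\Sigma}|=|\Sigma|$, which is $\lambda^{2}$ or $(\lambda^{2}-2\lambda+2)/2$ according to the type of $\mathcal{D}$, fails, because $P(PSp_{2m}(q))\geq q^{2m-1}$ by \cite[Theorem 5.2.2]{KL} while $|\Sigma|$ is bounded by a polynomial in $q$ of lower degree. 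Hence $L_{\Delta}^{\Delta}$ acts $2$-transitively on $\Delta$.

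For the $2$-transitive case I would invoke the classification of finite $2$-transitive groups \cite{Ka} together with \cite[Propositions 4.8.3--4.8.5]{KL}: a quotient of $L_{\Delta}^{\Sigma}$ acting $2$-transitively of degree $|\Delta|>12$ must have socle $SL_{2}(q^{s})$ (forcing $m=s$ and $|\Delta|=q^{s}+1$), or $PSU_{3}(q)$, or $PSU_{4}(q)\cong P\Omega_{6}^{-}(q)$ (forcing $2m=6$ and $|\Delta|=q^{3}+1$), since $q^{s}\geq4$ rules out the $2$-transitive actions of $Sp_{2k}(2)$-type. In every surviving case $\lambda=|\Delta|-2=q^{e}-1$ for $\mathcal{D}$ of type~$1$ (with $e=s$ or $3$), so $\lambda$, and hence $|\Sigma|=\lambda^{2}$, is coprime to $p$; for $\mathcal{D}$ of type~$2$, $\lambda/2=|\Delta|-1=q^{e}$, so $|\Sigma|=(\lambda^{2}-2\lambda+2)/2=2q^{2e}-2q^{e}+1\equiv1\pmod{q}$, again coprime to $p$. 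On the other hand $L_{\Delta}^{\Sigma}$ does not contain a Sylow $p$-subgroup of $L^{\Sigma}$, so $p$ divides $|L^{\Sigma}:L_{\Delta}^{\Sigma}|=|\Sigma|$; this contradiction completes the argument.

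The step I expect to be the main obstacle is the reduction to the $\mathcal{C}_{3}$ and $\mathcal{C}_{8}$ families in the first paragraph: one has to go class by class through the Aschbacher subgroup structure of $PSp_{2m}(q)$ and through the list of large subgroups in \cite{AB}, checking in each case whether a primitive prime divisor of $q^{2m}-1$ can divide the subgroup order. This is exactly where the small-field exceptions enter — the $\mathcal{C}_{3}$-subgroups with $s=3$, and the low-dimensional isomorphisms $P\Omega_{4}^{-}(q)\cong PSL_{2}(q^{2})$, $P\Omega_{6}^{-}(q)\cong PSU_{4}(q)$, and $GU_{m}(q)$ versus $Sp_{m}(q^{2})$ — and they must be tracked carefully so that the $2$-transitive and Sylow-$p$ arguments of the last two paragraphs apply uniformly.
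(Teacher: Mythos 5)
Your overall strategy is the same as the paper's: use Proposition \ref{Phi} to force $L_{\Delta}^{\Sigma}$ to be a large maximal geometric subgroup whose order is divisible by a primitive prime divisor of $q^{2m}-1$, reduce to a short list via \cite{AB}, then split according to whether $L_{\Delta}^{\Delta}$ is $2$-transitive or semilinear-solvable and finish with the Sylow-$p$ (coprimality of $\lambda$ and $q$) and order-bound contradictions. The endgame paragraphs are sound. However, there is a concrete gap in your first-paragraph reduction. You assert that the only large maximal geometric subgroups surviving the primitive-prime-divisor test are the $\mathcal{C}_{3}$-subgroups of type $Sp_{2m/s}(q^{s})$ and $GU_{m}(q)$ and the $\mathcal{C}_{8}$-subgroups $O_{2m}^{-}(q)$, but \cite[Proposition 4.22]{AB} also yields the exceptional $\mathcal{C}_{6}$-type pairs $\left(PSp_{4}(7),\,2^{4}.O_{4}^{-}(2)\right)$ and $\left(PSp_{4}(3),\,2^{4}.\Omega_{4}^{-}(2)\right)$. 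The second is already dead because $PSp_{4}(3)\cong PSU_{4}(2)$ is excluded by Lemma \ref{ciciona}, but the first passes every filter you apply: $2^{4}.O_{4}^{-}(2)$ is large in the sense of \cite{AB} ($1920^{3}>|PSp_{4}(7)|$), and the primitive prime divisor of $7^{4}-1$ is $5$, which does divide $|2^{4}.O_{4}^{-}(2)|=2^{7}\cdot3\cdot5$. Your phrase \emph{``reducible, imprimitive or subfield''} silently skips class $\mathcal{C}_{6}$, which is exactly where this case lives. The paper lists it as a separate case and kills it in one line because $|PSp_{4}(7):2^{4}.O_{4}^{-}(2)|=2\cdot3\cdot5\cdot7^{4}$ is not a square, so it cannot equal $\lambda^{2}$; you need to add this.

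Two smaller remarks. First, your claim that $P\Omega_{6}^{-}(q)\cong PSU_{4}(q)$ contributes a $2$-transitive action of degree $q^{3}+1$ is incorrect ($PSU_{4}(q)$ has no $2$-transitive permutation representation; you are conflating it with $PSU_{3}(q)$ on isotropic points); this only adds a spurious case that your Sylow argument would kill anyway, so it is harmless. Second, your treatment of the $\mathcal{C}_{8}$ case via the $2$-transitive/solvable dichotomy does work (the only $2$-transitive possibility is $\Omega_{4}^{-}(q)\cong PSL_{2}(q^{2})$, and then $\lambda=q^{2}-1$ is coprime to $p$ while $|\Sigma|=q^{m}(q^{m}-1)/2$ is not), but the paper's route is shorter: for $q$ even, $\lambda^{2}=\tfrac{1}{2}q^{n/2}(q^{n/2}+\varepsilon)$ forces $q^{n/2}+\varepsilon$ to be a square, which by \cite[B1.1]{Rib} leaves only $PSp_{6}(2)$, already excluded.
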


\begin{proof}
Assume that $L^{\Sigma }\cong PSp_{n}(q)^{\prime}$. Then $n\geq 4$ by Lemma \ref%
{nMag2} since $n$ is even. Also $(n,q) \neq (4,2)$ by Lemma \ref{ASt1} since $PSp_{4}(2)^{\prime} \cong A_{6}$. Thus $L^{\Sigma }\cong PSp_{n}(q)$. Moreover, $L_{\Delta}^{\Sigma }$ is a large maximal geometric subgroup of $L^{\Sigma }$ such that
$\left( \Phi _{nf }^{\ast }(p), \left\vert L_{\Delta
}^{\Sigma }\right\vert \right) >1$ by Proposition \ref{Phi} and \cite[Proposition 5.2.16]{KL}. Then $L_{\Delta}^{\Sigma } \notin \mathcal{C}_{1}(L^{\Sigma})$ by \cite[Theorem 3.5(iv)]{He}, and hence
one of the following holds by \cite[Propositions 4.22]{AB}:
\begin{enumerate}
\item $L_{\Delta}^{\Sigma}$ is a $\mathcal{C}_{8}$-subgroup of $L^{\Sigma}$;

\item $L_{\Delta}^{\Sigma}$ is a $\mathcal{C}_{3}$-subgroup of $L^{\Sigma}$ of type $Sp_{n/2}(q^{2})$, $%
Sp_{n/3}(q^{3})$ or $GU_{n/2}(q)$;

\item $\left( L^{\Sigma}, L_{\Delta}^{\Sigma}\right)$ is either $(PSp_{4}(7),2^{4}.O_{4}^{-}(2))$ or $(PSp_{4}(3),2^{4}.\Omega_{4}^{-}(2))$.

\end{enumerate}

Assume that Case (i) holds. Then $L_{\Delta}^{\Sigma}\cong O_{n}^{\varepsilon}(q)$, with $\varepsilon= \pm$ and $q$ even, by \cite%
[Proposition 4.8.6.(II)]{KL}. Then $\lambda^{2}=\frac{q^{n/2}}{2}(q^{n/2}+ \varepsilon)$ since $\lambda^{2}=\left\vert L^{\Sigma}:L_{\Delta}^{\Sigma}\right\vert $, and hence $q^{n/2}+ \varepsilon$ is a square. Then $(n,q)=(6,2)$ by \cite[B1.1]{Rib} since $n\geq 4$. However, $L^{\Sigma} \cong PSp_{6}(2)$ cannot occur by Lemma \ref{ciciona}.

Assume the Case (ii) holds. Then either $L_{\Delta}^{\Sigma}$ is isomorphic either to $PSp_{n/t}(q^{t}).Z_{t}$ with $t=2,3$ or $Z_{(q+1)/2}.PGU_{n/2}(q).Z_{2}$ with $q$ odd by \cite[Propositions 4.3.7(II) and 4.3.10.(II)]{KL}. Also, in both cases it results $G(\Sigma)=1$ by Corollary \ref{QuotL}.

If $L_{\Delta}^{\Delta}$ does not act $2$-transitively on $\Sigma$, then $Z_{(q+1)/2}.PSU_{n/2}(q) \leq L(\Delta)$ and $\left \vert L_{\Delta}^{\Delta}\right \vert \mid 4f(n/2,q+1)$ since $\lambda>10$. By \cite[Corollary 4.3(ii)--(iii)]{AB} we obtain $$p^{f \frac{n^{2}+2n}{4}}  \leq\left\vert L^{\Sigma}:L_{\Delta}^{\Sigma}\right\vert \leq 16f^{2}(n/2,q+1)^{2},$$ which has no solutions for $q$ odd and $n \geq 4$.

If $L_{\Delta}^{\Delta}$ acts $2$-transitively on $\Sigma$. Then $L_{\Delta}^{\Sigma}$ is isomorphic either to $PSp_{2}(q^{t}).Z_{t}$ with $t=2,3$ or $Z_{(q+1)/2}.PGU_{3}(q).Z_{2}$ with $q$ odd. Then either $\lambda =q^{t}-1$ with $t=2,3$ or $q^{3}-1$ respectively. In each case $(\lambda,q)=1$, and hence $L_{\Delta }^{\Sigma }$ must contain a Sylow $p$%
-subgroup of $L^{\Sigma }$, which is a contradiction.

Finally, Case (iii) cannot occur since $\left\vert L^{\Sigma}:L_{\Delta}^{\Sigma}\right\vert $ is a non-square. 
\end{proof}

\bigskip

\begin{lemma}
\label{Cla1}$L^{\Sigma }$ is not isomorphic to simple classical group.
\end{lemma}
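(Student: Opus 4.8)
The plan is to follow the blueprint of Lemmas \ref{PSL1}, \ref{PSU1} and \ref{PSp1}. By those three lemmas together with Lemma \ref{nMag2}, the only simple classical group that could still occur as $L^{\Sigma}$ is $P\Omega_{n}^{\varepsilon}(q)$ with $q=p^{f}$. Invoking the exceptional isomorphisms $\Omega_{3}(q)\cong PSL_{2}(q)$, $P\Omega_{4}^{+}(q)$ not simple, $P\Omega_{4}^{-}(q)\cong PSL_{2}(q^{2})$, $P\Omega_{5}(q)\cong PSp_{4}(q)$, $P\Omega_{6}^{+}(q)\cong PSL_{4}(q)$ and $P\Omega_{6}^{-}(q)\cong PSU_{4}(q)$, all of which are ruled out above, we may assume $n\geq 7$; in particular $q$ is odd when $n$ is odd, and $P\Omega_{8}^{+}(2)$ is excluded by Lemma \ref{ciciona}. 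By Proposition \ref{Phi}, $L_{\Delta}^{\Sigma}$ is a large maximal geometric subgroup of $L^{\Sigma}$ with $\bigl(\Phi_{\zeta}^{\ast}(p),\lvert L_{\Delta}^{\Sigma}\rvert\bigr)>1$, where $\zeta=\zeta_{p}(L^{\Sigma})$ is the value recorded in \cite[Proposition 5.2.16 and Table 5.2.C]{KL}.

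The next step is to extract a short list of candidates for $L_{\Delta}^{\Sigma}$. By \cite[Theorem 3.5(iv)]{He}, $L_{\Delta}^{\Sigma}$ does not lie in a maximal parabolic subgroup of $L^{\Sigma}$; combining the classification of large maximal subgroups of orthogonal groups in the relevant propositions of \cite{AB} with the divisibility constraint $\bigl(\Phi_{\zeta}^{\ast}(p),\lvert L_{\Delta}^{\Sigma}\rvert\bigr)>1$ (which in particular eliminates all $\mathcal{C}_{2}$-members), and setting aside the triality subgroup $G_{2}(q)<P\Omega_{8}^{+}(q)$ already treated in Proposition \ref{Phi}, one is left with: the stabilizers of non-degenerate subspaces of small dimension or codimension ($\mathcal{C}_{1}$-members, including in characteristic $2$ the symplectic-type subgroups $Sp_{n-2}(q)$), certain $\mathcal{C}_{3}$ field-extension subgroups of type $O_{n/t}^{\varepsilon'}(q^{t})$ with $t=2,3$, and a handful of exceptional pairs from \cite[Table 2]{AB} (such as $\Omega_{7}(q)<P\Omega_{8}^{+}(q)$ and $Sp_{6}(q)<P\Omega_{8}^{+}(q)$), together with finitely many further small-field cases handled directly as in Lemma \ref{ciciona}. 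None of these $L_{\Delta}^{\Sigma}$ has a quotient isomorphic to $SL_{d}(2)$ with $d\geq 4$ or to $A_{7}$, so Corollary \ref{QuotL} yields $G(\Sigma)=1$, whence $G=G^{\Sigma}$, $L=L^{\Sigma}$, and Proposition \ref{unapred} applies.

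The endgame then bifurcates exactly as in Lemmas \ref{PSL1}--\ref{PSp1}. If $L_{\Delta}^{\Delta}$ is not $2$-transitive on $\Delta$, then $\mathrm{Soc}(G_{\Delta}^{\Delta})<L_{\Delta}^{\Delta}\leq G_{\Delta}^{\Delta}\leq A\Gamma L_{1}(u^{h})$ with $u^{h}=\lambda+2$; reading off the structure of each candidate from the relevant propositions in \cite[Chapter 4]{KL} forces $L_{\Delta}^{\Delta}$ into the scalar-and-field part of that affine group, so $\lambda+2$ is bounded by a linear function of $f$, whence $P(L)\leq\lvert\Sigma\rvert=\lambda^{2}$ contradicts the minimal primitive permutation degree of $P\Omega_{n}^{\varepsilon}(q)$ from \cite[Theorem 5.2.2]{KL} for $n\geq 7$. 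If $L_{\Delta}^{\Delta}$ is $2$-transitive on $\Delta$, then $\lvert\Delta\rvert=\lambda+2$ is pinned down by the structure of $L_{\Delta}^{\Sigma}$ (a prime power $q^{a}$, or $(q^{a}-1)/(q-1)$, or $q^{a}+1$, according to type), and in every case $(\lambda,q)=1$; since none of the surviving geometric subgroups contains a Sylow $p$-subgroup of $L^{\Sigma}$, this is a contradiction. The residual cases in which $\lvert L^{\Sigma}:L_{\Delta}^{\Sigma}\rvert$ is not of the shape $\lambda^{2}$, or where $q$ is tiny, are cleared up on the spot using $\lambda+1\mid\lvert G\rvert$ (Lemma \ref{PP}), \cite[B1.1]{Rib}, and \cite{At}.

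The main obstacle will be the case-analysis in the second paragraph: the orthogonal groups carry the richest family of geometric subgroups, the behaviour in characteristic $2$ (where symplectic-type subgroups intervene and several small orthogonal groups coincide with groups handled in earlier lemmas) is delicate, and the triality of $P\Omega_{8}^{+}(q)$ produces extra maximal subgroups requiring individual attention. Once the candidate list has been correctly assembled and checked against the primitive-prime-divisor constraint, the three-way finish (affine type, $2$-transitive, incompatible index) is routine and parallels the earlier classical lemmas.
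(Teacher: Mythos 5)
Your overall architecture matches the paper's proof exactly: reduce to $P\Omega_{n}^{\varepsilon}(q)$ with $n\geq 7$ via the exceptional isomorphisms and Lemmas \ref{nMag2}--\ref{PSp1}, invoke Proposition \ref{Phi} to get a large maximal geometric subgroup with $(\Phi_{\zeta}^{\ast}(p),|L_{\Delta}^{\Sigma}|)>1$, cut the candidate list down via \cite[Proposition 4.23]{AB}, and finish by the trichotomy (semilinear $1$-dimensional, $2$-transitive, incompatible index). However, two of your concrete claims are false and would break the endgame. First, the primitive-prime-divisor constraint does \emph{not} eliminate all $\mathcal{C}_{2}$-members: for instance $\Phi_{6}^{\ast}(3)=7$ divides $|O_{1}(3)\wr S_{7}|$, so the subgroups of type $O_{1}(q)\wr S_{n}$ for $(n,q)=(7,3),(7,5)$ and $(n,q,\varepsilon)=(8,3,+)$ survive your filter; the paper retains them as a separate case and kills them because $|L^{\Sigma}:L_{\Delta}^{\Sigma}|$ is a non-square, not by the divisibility constraint.

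Second, and more seriously, your assertion that in the $2$-transitive branch ``in every case $(\lambda,q)=1$'' fails precisely in characteristic $2$. When $\varepsilon=+$, $q=2$ and $L_{\Delta}^{\Sigma}\cong Sp_{n-2}(2)$, the $2$-transitive degree is $|\Delta|=2^{2(n/2-1)}\pm 2^{n/2-2}$, so $\lambda=|\Delta|-2$ is even and the Sylow-$p$ containment argument gives nothing; the paper instead derives the contradiction from the explicit equation $\lambda^{2}=|L^{\Sigma}:L_{\Delta}^{\Sigma}|=2^{n/2-1}(2^{n/2}-1)$. The same happens for the surviving $\mathcal{C}_{3}$-subgroup $P\Omega_{4}^{-}(q).Z_{4}\cong PSL_{2}(q^{4}).Z_{4}$ of $P\Omega_{8}^{-}(q)$ with $q$ even, where $\lambda=q^{4}-2$ is even and one must compare $|\Sigma|=q^{12}(q^{6}-1)(q^{2}-1)$ with $(q^{4}-2)^{2}$ directly. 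You do list a fallback (``residual cases where the index is not of the shape $\lambda^{2}$''), but you present the Sylow argument as the main mechanism, and without explicitly routing the characteristic-$2$ subcases through the index computation your proof has a hole exactly where the orthogonal groups are most delicate.
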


\begin{proof}
In order to prove the assertion we only need to tackle the case $L^{\Sigma
}\cong P\Omega _{n}^{\varepsilon }(q)$, where $\varepsilon \in \left\{ \circ
,\pm \right\} $ since the remaining classical groups are ruled out in
Lemmas \ref{PSL1}, \ref{PSU1} and \ref{PSp1}. Hence, assume that $L^{\Sigma
}\cong P\Omega _{n}^{\varepsilon }(q)$, where $\varepsilon \in \left\{ \circ
,\pm \right\} $. Then $n\neq 4$ by Lemma \ref{nMag2}. Moreover $n\neq 5,6$
by Lemmas \ref{PSL1}, \ref{PSU1} and \ref{PSp1}, since $P\Omega _{5}^{\circ
}(q)\cong PSp_{4}(q)$ with $q$ odd, $P\Omega _{6}^{+}(q)\cong PSL_{4}(q)$
and $P\Omega _{6}^{-}(q)\cong PSL_{6}(q)$. Thus $n\geq 7$. Moreover $%
(n,q,\varepsilon )\neq (8,2,+)$ by Lemma \ref{ciciona}. Finally, $L_{\Delta}^{\Sigma }$ is a large maximal geometric subgroup of $L^{\Sigma }$ such that $\left( \Phi _{\zeta }^{\ast }(p), \left\vert L_{\Delta
}^{\Sigma }\right\vert \right) >1$ by Proposition \ref{Phi}, where $\zeta$ is either $nf$, $(n-1)f$ or $(n-2)f$ by \cite[Proposition 5.2.16]{KL} according to whether $\varepsilon$ is $-,\circ$ or $+$ respectively. Thus, one of the following holds by \cite[Proposition 4.23]{AB}:

\begin{enumerate}
\item[(i)] $L_{\Delta }^{\Sigma }$ is $\mathcal{C}_{1}$-subgroup
of $L^{\Sigma }$;

\item[(ii)] Either $(n,q)=(7,3),(7,5)$ or $(n,q,\varepsilon)=(8,3,+)$ and $L_{\Delta }^{\Sigma }$ is of type $O_{1}(q) \wr S_{n}$;

\item[(iii)]  $L_{\Delta }^{\Sigma }$ is a $\mathcal{C}_{3}$-subgroup
of $L^{\Sigma }$. Moreover, its type is either $O_{n/2}^{\varepsilon^{\prime}}(q^{2})$ with $(\varepsilon, \varepsilon^{\prime})=(-,-)$ and $n/2$ even or $(\varepsilon, \varepsilon^{\prime})=(+,\circ)$ and $n/2$ odd, or $GU_{n/2}(q)$ with $\varepsilon=-$ and $n/2$ odd or $\varepsilon=+$ and $n/2$ even;
  
\item[(iv)] $L^{\Sigma } \cong P\Omega^{+}_{8}(3)$ and $L_{\Delta }^{\Sigma
}\cong 2^{6}.\Omega_{6}^{+}(2)$.
\end{enumerate}

Assume that (i) holds. Then one of the following cases occurs by \cite[Propositions
4.1.6(II), 4.1.7(II) and 4.1.20(II)]{KL}:

\begin{enumerate}
\item $L_{\Delta }^{\Sigma }$ is the stabilizer in $L^{\Sigma }$ of a
non-singular point of $PG_{n-1}(q)$:

\begin{enumerate}
\item $\varepsilon =\circ $ and $L_{\Delta }^{\Sigma }\cong \Omega
_{n-1}^{-}(q).Z_{2}$

\item $\varepsilon =+$ and $L_{\Delta }^{\Sigma }\cong \Omega _{n-1}(q)$
with $q\equiv 1\pmod 4$, or $q\equiv 3\pmod 4$ and $n/2$ even;

\item $\varepsilon =+$ and $L_{\Delta }^{\Sigma }\cong \Omega _{n-1}(q).Z_{2}
$ with $q\equiv 3\pmod 4$ and $n/2$ odd;

\item $\varepsilon =+$ and $L_{\Delta }^{\Sigma }\cong Sp_{n-2}(q)$ with $q$
even.
\end{enumerate}

\item $\varepsilon =+$ and $L_{\Delta }^{\Sigma }$ is the stabilizer in $%
L^{\Sigma }$ of a non-singular line of type \textquotedblleft $-$" of $%
PG_{n-1}(q)$:

\begin{enumerate}
\item $L_{\Delta }^{\Sigma }\cong \left( Z_{\frac{q+1}{(q+1,2)}}\times
\Omega _{n-2}^{-}(q)\right) .Z_{2}$ with $q$ even or $q\equiv 1\pmod4$;

\item $L_{\Delta }^{\Sigma }\cong \left( Z_{\frac{q+1}{2}}\times \Omega
_{n-2}^{-}(q)\right) .[4]$ with $q\equiv 3\pmod 4$ and $n/2$ odd;

\item $L_{\Delta }^{\Sigma }\cong Z_{2}.\left( Z_{\frac{q+1}{4}}\times
P\Omega _{n-2}^{-}(q)\right) .[4]$ with $q\equiv 3\pmod 4$ and $n/2$ even.
\end{enumerate}
\end{enumerate}

In each case $G(\Sigma )=1$ by Corollary \ref{QuotL} since $n\geq 7$. Hence, 
$L=L^{\Sigma }$ and $G=G^{\Sigma }$. Moreover, $L_{\Delta }^{\Delta }$ acts $%
2$-transitively on $\Delta $ by Proposition \ref{unapred} since $\lambda >10$%
. Then $\varepsilon=+$, $q=2$, $L_{\Delta }^{\Sigma }\cong Sp_{n-2}(2)$ and $\lambda =2^{2(n/2-1)}\pm 2^{n/2-2}-2$. On the
other hand, 
\begin{equation*}
\left\vert L^{\Sigma }:L_{\Delta }^{\Sigma }\right\vert =\lambda
^{2}=2^{n/2-1}\left( 2^{n/2}-1\right)
\end{equation*}%
and hence $2^{2}(2^{2(n/2-1)-1}\pm 2^{n/2-1}-1)^{2}=2^{n/2-1}\left(
2^{n/2}-1\right) $, which has no admissible integer solutions for $n\geq 8$.

Assume that (iii) holds. The possibilities for $L_{\Delta }^{\Sigma }$ are
provided in \cite[Propositions 4.3.16(II), 4.3.18(II) and 4.3.20(II)]{KL}.
Thus $G(\Sigma )=1$ by Corollary \ref{QuotL}. If $%
L_{\Delta }^{\Delta }$ does not act $2$-transitively on $\Delta $, then $%
\mathrm{Soc}(G_{\Delta }^{\Delta })<L_{\Delta }^{\Delta }\leq G_{\Delta
}^{\Delta }\leq A\Gamma L_{1}(u^{h})$ with $u^{h}=\lambda +2$ by
Proposition \ref{unapred}. Thus $L_{\Delta }$ is forced to be
of type $GU_{n/2}(q)$ with $\left\vert L_{\Delta }^{\Delta }\right\vert
\mid (n/2,2,q)(q+1,n/2)$ since $\lambda >10$. Then $\lambda \leq n-2$ and so 
$q^{\frac{1}{8}n\left( n+2\right) }\leq \left\vert L:L_{\Delta }\right\vert
\leq (n-2)^{2}$, which has no solutions for $n\geq 8$. Thus $L_{\Delta
}^{\Delta }$ acts $2$-transitively on $\Delta $, and hence $L \cong P\Omega^{-}_{8}(q)$ and $L_{\Delta } \cong P\Omega^{-}_{4}(q).Z_{4}\cong PSL_{2}(q^{4}).Z_{4}$ since $n \geq 8$. Therefore $\lambda=q^{4}-2$. If $q$ is odd then $L_{\Delta}$ must contain a Sylow $p$-subgroup of $L$ since $\left\vert L:L_{\Delta} \right\vert=\lambda^{2}$, which is not the case. So $q$ is even and $\left\vert \Sigma \right \vert = q^{12}(q^{6}-1)(q^{2}-1)$ which is different from $(q^{4}-2)^{2}$. 

% and hence $n/2$ is either $3,4$
%or $3$ according to whether $L_{\Delta }^{\Sigma }$ is fo type $%
%O_{n/2}^{\varepsilon ^{\prime }}(q^{2})$ or $GU_{n/2}(q)$ respectively,
%whereas $n\geq 8$.

Finally, it is easy to check that $\left\vert L^{\Sigma }:L_{\Delta
}^{\Sigma }\right\vert $ is a non-square in (ii) and (iv), hence these are
ruled out. This completes the proof.
\end{proof}

\bigskip

\begin{proof}[Proof of Theorem \protect\ref{T1}]
Since $L^{\Sigma }$ is almost simple by Theorem \ref{ASPQP}, the assertion
follows from Lemmas \ref{spor1}, \ref{Alt1}, \ref{Exc1} or \ref{Cla1}
respectively.
\end{proof}

\section{Classification of the $2$-designs of type 2}

In this section we assume that $\mathcal{D}$ is of type $2$. Recall that $G(\Sigma )=1$ and $G$ is an almost simple
group acting point-quasiprimitively on $\mathcal{D}$ by Theorem \ref{ASPQP}.
Thus, $G=G^{\Sigma }$ and $L=L^{\Sigma }$ where $L=\mathrm{Soc}(G)$.
Further constraints for $L$ are provided in Proposition \ref{unapred} and Theorem \ref{Large} which are then combined with the results
contained in \cite{AB, LS}. An important restriction is provided in Lemma \ref{Sevdah} where it is proven that, if $L$ is Lie type simple group, either $L_{\Delta }$ lies in a maximal parabolic subgroup of $L$ or $L_{\Delta }^{\Delta }$ is a non-solvable group acting $2$-transitively on $\Delta $. We use all these information to prove the
following result.

\bigskip

\begin{theorem}
\label{T2}If $\mathcal{D}$ is a symmetric $2$-$\left( \left( \frac{\lambda +2%
}{2}\right) \left( \frac{\lambda ^{2}-2\lambda +2}{2}\right) ,\frac{\lambda
^{2}}{2},\lambda \right) $ design admitting a flag-transitive and
point-imprimitive automorphism group, then $\lambda \leq 10$.
\end{theorem}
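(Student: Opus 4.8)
The plan is to mirror the proof of Theorem~\ref{T1} carried out in the previous section, now with $\left\vert \Sigma \right\vert = \left\vert L:L_{\Delta}\right\vert = (\lambda^{2}-2\lambda+2)/2$ and $\left\vert \Delta \right\vert = \lambda/2+1$ in place of $\lambda^{2}$ and $\lambda+2$, and under the standing assumption $\lambda>10$ (so that both regimes $\lambda \equiv 0 \pmod{4}$ and $\lambda = 2w^{2}$, with $w$ odd, $w \geq 3$ and $2(w^{2}-1)$ a square, of Theorem~\ref{PZM}(V.2) must be carried along). By Theorems~\ref{GSigma} and~\ref{ASPQP} we may assume $G = G^{\Sigma}$ is almost simple, $G(\Sigma)=1$, and $L = \mathrm{Soc}(G)$ acts primitively on $\Sigma$; by Theorem~\ref{Large} the subgroup $L_{\Delta}$ is large in $L$, and by Proposition~\ref{unapred} either $L_{\Delta}^{\Delta}$ acts $2$-transitively on $\Delta$ or $\mathrm{Soc}(G_{\Delta}^{\Delta}) < L_{\Delta}^{\Delta} \leq G_{\Delta}^{\Delta} \leq A\Gamma L_{1}(u^{h})$ with $u^{h} = \left\vert \Delta \right\vert$. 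The recurring arithmetic to exploit is $k = \lambda^{2}/2 \mid \left\vert G_{x}\right\vert$, $\left\vert \Delta \right\vert \mid \left\vert G_{\Delta}\right\vert$, $\lambda+1 \mid \left\vert w^{G_{x}}\right\vert$ for $w \neq x$ (Lemma~\ref{PP}), and $\left\vert L:L_{\Delta}\right\vert \leq \left\vert L_{\Delta}\right\vert^{2}$; together these already give tight Diophantine conditions on the candidate pairs $(L,L_{\Delta})$.

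For $L$ sporadic I would proceed as in Lemma~\ref{spor1}: the large (maximal, or novelty) subgroups $M$ of $L$ are tabulated, and one checks that $\left\vert L:M\right\vert = (\lambda^{2}-2\lambda+2)/2$ together with the divisibility constraints above has no solution with $\lambda>10$. For $L \cong A_{\ell}$ I would first establish, as in Lemma~\ref{AltLM}, that $L$ acts primitively on $\Sigma$ — a chain $L_{\Delta} < M < L$ leads, through $\theta = \left\vert M:L_{\Delta}\right\vert$ and Lemma~\ref{PP}, to a contradiction — and then run through the large primitive actions of $A_{\ell}$ given in \cite{AB}, namely the intersection subgroups $(S_{t} \times S_{\ell-t}) \cap A_{\ell}$ and the imprimitive subgroups $(S_{t} \wr S_{\ell/t}) \cap A_{\ell}$, eliminating each by means of the binomial/multinomial equation $\left\vert \Sigma \right\vert = (\lambda^{2}-2\lambda+2)/2$ together with Proposition~\ref{unapred} and Corollary~\ref{QuotL}, exactly along the lines of Lemma~\ref{Alt1}.

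The Lie type case is the bulk of the work. Here the primitive-part machinery of Lemma~\ref{Phi0}, decisive for type~1, is \emph{not available}: one still has $\Phi_{\zeta}^{\ast}(p) \mid \left\vert \Sigma \right\vert = (\lambda^{2}-2\lambda+2)/2$, but $(\lambda^{2}-2\lambda+2)/2$ is coprime to $k = \lambda^{2}/2$, so the primes dividing $\Phi_{\zeta}^{\ast}(p)$ cannot be routed into $\left\vert G_{x}\right\vert$ through $k$, and one can no longer conclude that $\left\vert L_{\Delta}\right\vert$ is divisible by $\Phi_{\zeta}^{\ast}(p)$. Its replacement is Lemma~\ref{Sevdah}: if $L$ is of Lie type over $\mathbb{F}_{q}$, $q=p^{f}$, then \emph{either} $L_{\Delta}$ lies in a maximal parabolic subgroup of $L$, \emph{or} $L_{\Delta}^{\Delta}$ is a non-solvable $2$-transitive group of degree $\left\vert \Delta \right\vert = \lambda/2+1$; this I would prove by combining Proposition~\ref{unapred} (which forces $L_{\Delta}^{\Delta}$ to be $1$-dimensional semilinear if it is not $2$-transitive), \cite[Theorem~3.5]{He} on irreducibility, and the inequality $\left\vert \Delta \right\vert < f$, hence $\left\vert \Sigma \right\vert < f^{2}$, that the semilinear alternative forces, which clashes with $P(L) \leq \left\vert L:L_{\Delta}\right\vert$ unless $L_{\Delta}$ is parabolic. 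With Lemma~\ref{Sevdah} in hand, if $L_{\Delta}$ is not parabolic then $L_{\Delta}$ is a large maximal subgroup in the sense of \cite{AB} and \cite{LS}, so only a short list of geometric (plus a few non-geometric) subgroups survives and is eliminated class by class — $PSL_{n}(q)$, $PSU_{n}(q)$, $PSp_{n}(q)$, $P\Omega_{n}^{\varepsilon}(q)$, and the exceptional types — exactly as in Lemmas~\ref{PSL1}--\ref{Cla1} and Lemma~\ref{Exc1}, the recurring contradiction being that $(\lambda,p)=1$ in the $2$-transitive alternative forces $L_{\Delta}$ to contain a Sylow $p$-subgroup of $L$, or that $\left\vert \Sigma \right\vert$ has the wrong $2$-adic valuation or shape to equal $(\lambda^{2}-2\lambda+2)/2$. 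If instead $L_{\Delta}$ lies in a maximal parabolic $P$, then $\left\vert \Sigma \right\vert$ is a product of Gaussian-binomial-type factors coprime to $p$, the Levi factor acts on $\Delta$ through a small quotient, and one uses that $\left\vert \Delta \right\vert = \lambda/2+1$ must be either a prime power on which the Levi quotient induces a $2$-transitive group (again forcing $(\lambda,p)=1$ and the Sylow-$p$ contradiction) or else not a point count of the admissible form, against $\left\vert \Sigma \right\vert = (\lambda^{2}-2\lambda+2)/2$ and $k = \lambda^{2}/2 \mid \left\vert G_{x}\right\vert$. Assembling the cases leaves no admissible $(L,L_{\Delta})$ with $\lambda>10$, which is Theorem~\ref{T2}.

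The main obstacle will be precisely this parabolic case. For type~1 the maximal parabolics were dispatched at once because their orders are not divisible by the relevant $\Phi_{\zeta}^{\ast}(p)$, whereas here $\left\vert \Sigma \right\vert = (\lambda^{2}-2\lambda+2)/2$ can genuinely take Gaussian-binomial values, so the elimination must be extracted from the finer interplay between the structure of the Levi factor restricted to $\Delta$, the $2$-transitive-versus-semilinear dichotomy of Lemma~\ref{Sevdah}, and the divisibilities $k = \lambda^{2}/2 \mid \left\vert G_{x}\right\vert$ and $\lambda+1 \mid \left\vert G_{x}:G_{x,y}\right\vert$ — all while keeping the two arithmetic regimes $\lambda \equiv 0 \pmod{4}$ and $\lambda = 2w^{2}$ simultaneously under control.
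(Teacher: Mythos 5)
Your overall architecture --- reduce to $L$ almost simple via Theorems \ref{GSigma} and \ref{ASPQP}, split by socle type, use the large-subgroup condition together with \cite{AB} and \cite{LS}, and replace the primitive-prime-divisor tool of the type-1 analysis by a parabolic-versus-$2$-transitive dichotomy --- is exactly the paper's, and your diagnosis of why the Lemma \ref{Phi0} machinery is unavailable here is correct. But your proposed proof of the crucial dichotomy (the paper's Lemma \ref{Sevdah}) does not work: membership of $G_{\Delta}^{\Delta}$ in $A\Gamma L_{1}(u^{h})$ with $u^{h}=\left\vert \Delta \right\vert$ gives no bound of the form $\left\vert \Delta \right\vert < f$. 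The solvable residue $L_{\Delta}^{\Delta}$ can a priori be as large as a torus quotient of $L_{\Delta}$, of order comparable to $q\pm 1$, so $\left\vert \Sigma \right\vert < f^{2}$ does not follow and the clash with $P(L)$ never materializes. The paper's actual argument is considerably more delicate: it uses Seitz \cite{Se} to get $p\mid\left\vert \Sigma \right\vert$ in the non-parabolic case, then the arithmetic of Lemma \ref{Jedan} to force $p\neq 2,3$, then Corollary \ref{CFixT}(2) to see that $(p,\left\vert L(\Delta)\right\vert)=1$, so that $\left\vert L_{\Delta}\right\vert_{p}=\left\vert L_{\Delta}^{\Delta}\right\vert_{p}$ divides $h$ and the Sylow $p$-subgroups of $L_{\Delta}$ are cyclic --- which contradicts the explicit list of odd-degree point stabilizers in \cite{LS}.

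The second omission is precisely what powers that list: you never extract the arithmetic of $\left\vert \Sigma \right\vert=(\lambda^{2}-2\lambda+2)/2$, namely that it is odd, that $2\left\vert \Sigma \right\vert-1=(\lambda-1)^{2}$ is a perfect square, and consequently that every prime divisor of $\left\vert \Sigma \right\vert$ is congruent to $1$ modulo $4$ (Lemma \ref{Jedan}). Oddness is what legitimizes the appeal to the Liebeck--Saxl classification of primitive groups of odd degree (Lemma \ref{Filt}), which is the true type-2 substitute for the $\Phi_{\zeta}^{\ast}(p)$ filter; the perfect-square test is what kills essentially all sporadic candidates and most specific numerical degrees; and the congruence is used repeatedly to exclude small characteristic. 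Finally, a sign error in your parabolic branch: a parabolic $L_{\Delta}$ automatically contains a Sylow $p$-subgroup of $L$, so the ``Sylow-$p$ contradiction'' is not available there; the paper instead eliminates the parabolic cases by playing the Gaussian-binomial lower bounds for $\left\vert \Sigma \right\vert$ against the upper bound $\left\vert \Sigma \right\vert<2\left\vert \Delta \right\vert^{2}$ forced by the admissible $2$-transitive degrees of the Levi quotient. As written, the plan would stall at Lemma \ref{Sevdah} and at every step that needs Lemma \ref{Jedan}.
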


\bigskip
We analyze the cases where $L^{\Sigma }$ is sporadic, alternating, classical or exceptional of Lie type separately. 

\bigskip
Recall that, when $\mathcal{D}$ is of type (2) either $\lambda \equiv 0 \pmod{4}$ and hence $\left\vert \Delta \right \vert =\lambda/2+1$ is odd, or $\lambda =2w^{2}$, where $w$ is odd, $w\geq 3$, $2(w^{2}-1)$ is a square and $\left\vert \Delta \right \vert=w^{2}+1$. In both cases it results that $\left\vert \Delta \right \vert \not \equiv 0 \pmod{4}$.

\medskip

A preliminary filter in the study of the the $2$-designs of type 2 is the following lemma.

\bigskip

\begin{lemma}
\label{Jedan}If $\mathcal{D}$ is of type 2, then the following hold:

\begin{enumerate}

\item $\left\vert \Sigma \right\vert $ is odd and $2\left\vert \Sigma \right\vert -1$ is a square.

\item If $u$ is any prime divisor of $\left\vert \Sigma \right\vert $, then $%
u\equiv 1\pmod{4}$.

\item If $\lambda=2w^{2}$, $w$ odd, $w \geq 3$ and such that $2(w^{2}-1)$ is a square, then $\mathrm{Soc}(G_{\Delta}^{\Delta})$ is isomorphic to one of the groups $ A_{w^{2}+1}$, $PSL_{2}(w^{2})$ or $PSU_{3}(w^{2/3})$.  
\end{enumerate}
\end{lemma}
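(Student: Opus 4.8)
The plan is to establish the three parts by direct arithmetic manipulation of the design parameters together with the already-established structural results. For a $2$-design of type $2$ we have $v=\left(\frac{\lambda+2}{2}\right)\left(\frac{\lambda^2-2\lambda+2}{2}\right)$, $c=\frac{\lambda+2}{2}$ and $d=\left\vert\Sigma\right\vert=\frac{\lambda^2-2\lambda+2}{2}$. Since $\mathcal{D}$ is of type $2$, Theorem \ref{PZM}(V.2) tells us $\lambda\equiv 0\pmod 4$ or $\lambda=2w^2$ with $w$ odd, $w\geq 3$ and $2(w^2-1)$ a square; in either case $\left\vert\Delta\right\vert=\frac{\lambda}{2}+1$ (respectively $w^2+1$) is odd, hence $c$ is odd. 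First I would observe that $2\left\vert\Sigma\right\vert-1=\lambda^2-2\lambda+1=(\lambda-1)^2$, which is a perfect square; and since $\lambda$ is even, $\left\vert\Sigma\right\vert=\frac{(\lambda-1)^2+1}{2}$ is the sum of a square of an odd number and $1$, divided by $2$, so it is odd. This gives part (1) immediately.

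For part (2), the key point is that $\left\vert\Sigma\right\vert$ is a sum of two coprime squares: $2\left\vert\Sigma\right\vert=(\lambda-1)^2+1$ gives $\left\vert\Sigma\right\vert=a^2+b^2$ with $a=\frac{\lambda-2}{2}$ (or similar) — more cleanly, writing $\lambda-1=2m+1$ one gets $\left\vert\Sigma\right\vert=2m^2+2m+1=m^2+(m+1)^2$, a sum of two consecutive (hence coprime) squares. A classical result from elementary number theory (Fermat's two-squares theorem, the sum-of-two-squares characterization — cf. the references to \cite{Rib} already used in the excerpt) then forces every prime divisor $u$ of $\left\vert\Sigma\right\vert$ to satisfy $u\equiv 1\pmod 4$, since $\left\vert\Sigma\right\vert$ is odd and expressible as a sum of two coprime squares, so it has no prime divisor congruent to $3\pmod 4$. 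I would cite the appropriate item of \cite{Rib} for this, in the same style as the other number-theoretic citations in the paper.

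Part (3) is essentially a restatement of the classification of the socle of a $2$-transitive group of the relevant degree, and it has already been carried out inside the proof of Lemma \ref{firstpart}. I would simply recall that argument: when $\lambda=2w^2$ with $w$ odd, $w\geq 3$, and $2(w^2-1)$ a square, we have $\left\vert\Delta\right\vert=w^2+1$, and by Theorem \ref{PZM}(III) $G_{\Delta}^{\Delta}$ is either $2$-transitive on $\Delta$ (if $\ell=2$) — which it is in the type-2 setting once one excludes the non-flag-transitive subcase — so $\mathrm{Soc}(G_\Delta^\Delta)$ is either elementary abelian of order $w^2+1$, impossible since $w^2+1=2^s$ has no solutions for $w\geq 3$ by \cite[B1.1]{Rib}, or non-abelian simple; then \cite[List (A)]{Ka} restricts $\mathrm{Soc}(G_\Delta^\Delta)$ to the groups $A_{w^2+1}$, $PSL_d(s)$ with $w^2+1=\frac{s^d-1}{s-1}$, or $PSU_3(w^{2/3})$, and the diophantine analysis already given in Lemma \ref{firstpart} (using \cite[A7.1, A8.1, B1.1]{Rib}) collapses the linear case to $d=2$, giving $PSL_2(w^2)$ (note $PSL_2(s)$ with $s^2=w^2$ so $s=w$... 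I would double-check this reduction yields $PSL_2(w^2)$ as stated, tracing through $w^2=s\frac{s^{d-1}-1}{s-1}$ with $d=2$, which gives $w^2=s$, hence $PSL_2(w^2)$).

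The main obstacle I anticipate is not conceptual but bookkeeping: making sure the two-squares representation of $\left\vert\Sigma\right\vert$ is set up with the correct parities so that coprimality of the two squares is genuine (this is what rules out the prime $2$ and forces all odd prime divisors into the $1\pmod 4$ class rather than merely forbidding a single isolated factor of $3$), and, for part (3), being careful that the reduction of the $PSL_d(s)$ case in Lemma \ref{firstpart} does land exactly on $PSL_2(w^2)$ and not some conjugate-looking variant like $PSL_2(w)$; I would present part (3) by an explicit back-reference to the displayed computation in the proof of Lemma \ref{firstpart} rather than repeating it.
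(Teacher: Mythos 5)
Your proof is correct and follows essentially the same route as the paper: parts (1) and (2) rest on the identity $2\left\vert \Sigma \right\vert -1=(\lambda -1)^{2}$ together with the classical fact that $-1$ is a quadratic residue modulo every odd prime divisor of a sum of two coprime squares (the paper phrases this directly as $y^{2}\equiv -1\pmod{u}$), and part (3) is, exactly as you propose, a back-reference to the first half of the proof of Lemma \ref{firstpart}, where your tracing of the $d=2$ case to $PSL_{2}(w^{2})$ is the correct reading. One immaterial slip: when $\lambda =2w^{2}$ with $w$ odd, $\left\vert \Delta \right\vert =w^{2}+1$ is even, not odd, but you never use that remark, so nothing in the argument is affected.
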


\begin{proof}
$\left\vert \Sigma \right\vert $ is clearly odd. If $\lambda =2w^{2}$, where $w$ is odd, $w\geq 3$, and $2(w^{2}-1)=x^{2}$ then $2w^{4}-2w^{2}t+\left( 1-\left\vert \Sigma \right\vert
\right) =0$, whereas if $\lambda =4t$ for some $t\geq 1$, then $16t^{2}-8t+\left( 2-2\left\vert \Sigma \right\vert
\right) =0$. In both cases $y^{2}=2\left\vert \Sigma \right\vert -1$ for some
positive integer $y$. Therefore, $\left\vert \Sigma \right\vert =\frac{y^{2}+1%
}{2}$ is odd. Moreover, if $u$ is any prime divisor of $\left\vert \Sigma
\right\vert $ then $y^{2}\equiv -1\pmod{u}$ and hence $u\equiv 1\pmod{4}$.
Thus, we obtain (1) and (2).

Finally, (3) follows from the first part of the proof of Lemma \ref{firstpart}.
\end{proof}  
\bigskip

\begin{lemma}
\label{spor2}$L$ is not isomorphic to a sporadic group.
\end{lemma}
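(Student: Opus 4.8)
Looking at Lemma \ref{spor2}, which asserts that $L = \mathrm{Soc}(G)$ is not a sporadic simple group when $\mathcal{D}$ is of type 2, I'll plan a proof following the pattern of Lemma \ref{spor1} (the type 1 analogue) but now exploiting the sharp arithmetic restrictions on $\left\vert \Sigma \right\vert$ from Lemma \ref{Jedan}.

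\bigskip

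The plan is as follows. First I would recall that, by Theorem \ref{ASPQP}, $G = G^{\Sigma}$ is almost simple with non-abelian simple socle $L$, acting primitively on $\Sigma$, and that $G(\Sigma) = 1$; moreover $\mathrm{Soc}(G_{\Delta}^{\Delta}) \trianglelefteq L_{\Delta}^{\Delta}$ with $L_{\Delta}^{\Delta}$ either $2$-transitive on $\Delta$ or a proper overgroup of $\mathrm{Soc}(G_{\Delta}^{\Delta})$ inside $A\Gamma L_{1}(u^{h})$ by Proposition \ref{unapred}. By Lemma \ref{Jedan}(1)--(2), $\left\vert \Sigma \right\vert = (\lambda^2 - 2\lambda + 2)/2$ is odd, $2\left\vert \Sigma \right\vert - 1$ is a perfect square, and every prime divisor of $\left\vert \Sigma \right\vert$ is congruent to $1 \pmod 4$. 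Since $L_{\Delta}^{\Sigma}$ is a large subgroup of $L^{\Sigma}$ by Theorem \ref{Large}, I would first reduce to the case where $L_{\Delta}^{\Sigma}$ is maximal in $L^{\Sigma}$ (or a novelty with $\left\vert \mathrm{Out}(L) \right\vert$ small), using the lists of large maximal subgroups of sporadic groups in \cite[Theorem 7]{AB} together with the index-list data in \cite{At} and \cite{Wi1, Wi2}.

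\bigskip

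Next I would run through the sporadic groups case by case. For each candidate $L$, the index $\left\vert L^{\Sigma} : L_{\Delta}^{\Sigma} \right\vert$ must equal $\left\vert \Sigma \right\vert$, so the main filter is: this index must be odd (ruling out any $L_{\Delta}^{\Sigma}$ not containing a Sylow $2$-subgroup of $L$), must have all prime divisors $\equiv 1 \pmod 4$ (a very restrictive congruence condition — in particular $3 \nmid \left\vert \Sigma \right\vert$, which kills most candidates since the indices of large maximal subgroups of sporadic groups are almost always divisible by $3$), and $2\left\vert \Sigma \right\vert - 1$ must be a square. For the handful of survivors, I would then invoke the constraint from Proposition \ref{unapred} that a quotient of $L_{\Delta}^{\Delta}$ must act $2$-transitively on $\Delta$, where $\left\vert \Delta \right\vert = \lambda/2 + 1$ (recalling $\left\vert \Delta \right\vert \not\equiv 0 \pmod 4$), or alternatively from Lemma \ref{Jedan}(3) that $\mathrm{Soc}(G_{\Delta}^{\Delta})$ is among $A_{w^2+1}$, $PSL_2(w^2)$, $PSU_3(w^{2/3})$ in the exceptional $\lambda = 2w^2$ subcase; since $\lambda > 10$ forces $\left\vert \Delta \right\vert$ reasonably large, checking whether $L_{\Delta}$ has a section with a transitive action of degree $\left\vert \Delta \right\vert$ (using \cite{At}) eliminates the remaining cases. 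Any stubborn small cases can be settled with \textsf{GAP} \cite{GAP}.

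\bigskip

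The main obstacle I anticipate is bookkeeping rather than conceptual: the Mathieu groups, $J_2$, $HS$, $McL$, and a few others have indices of large maximal subgroups that are powers of $2$ times small primes, so one must carefully check which of these are odd with all prime factors $\equiv 1 \pmod 4$ and yield $2\left\vert \Sigma \right\vert - 1$ a square. I expect that the square condition $2\left\vert \Sigma \right\vert - 1 = \square$ combined with the $\equiv 1 \pmod 4$ condition is stringent enough that only a very short list of numerical candidates survives, each then dispatched by the $2$-transitivity requirement on $\Delta$ or a direct computation — paralleling how Lemma \ref{spor1} was handled, but with the type-2 arithmetic doing most of the work. A secondary subtlety is handling potential novelties (when $\left\vert \mathrm{Out}(L) \right\vert = 2$), but these are few and the same filters apply.
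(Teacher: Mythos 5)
Your plan is essentially the paper's argument: since $\left\vert \Sigma \right\vert$ is odd by Lemma \ref{Jedan}(1), the paper reads off the candidate degrees from Aschbacher's classification \cite{As} of odd-index subgroups of sporadic groups and then checks that $2\left\vert \Sigma \right\vert - 1$ is never a square for any of them, which already finishes the proof. Your additional filters (the congruence $u \equiv 1 \pmod 4$, the $2$-transitivity on $\Delta$, largeness via \cite{AB}, GAP) are sound but turn out to be unnecessary.
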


\begin{proof}
Assume that $L$ is sporadic. Then the possibilities for $G$ and $G_{x}$, where 
$x$ is any point of $\mathcal{D}$, and hence for $\left\vert \Sigma \right\vert
=\left\vert G:G_{x}\right\vert $ are provided in \cite{As}. It is easy to
see that $2\left\vert \Sigma \right\vert -1$ is never square if $\left\vert \Sigma \right\vert$ is any
of such degrees. Thus $L$ cannot be a sporadic simple group by Lemma \ref%
{Jedan}(2).
\end{proof}

\bigskip

\begin{lemma}
\label{Alt2}If $L$ is not isomorphic to $A_{s}$ with $s \geq 5$.
\end{lemma}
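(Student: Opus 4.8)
The plan is to mirror the proof strategy already used for the alternating case in the type~1 analysis (Lemma~\ref{Alt1}), adapting the numerical constraints to the type~2 parameters. First I would recall from Theorem~\ref{ASPQP} that $G(\Sigma)=1$ for type~2 designs, so $G=G^{\Sigma}$ and $L=L^{\Sigma}=A_s$. By Theorem~\ref{Large}, $L_{\Delta}^{\Sigma}=L_{\Delta}$ is a large subgroup of $L=A_s$, and by Proposition~\ref{unapred} either $L_{\Delta}$ acts $2$-transitively on $\Delta$ or $Soc(G_{\Delta}^{\Delta})<L_{\Delta}^{\Delta}\leq G_{\Delta}^{\Delta}\leq A\Gamma L_1(u^h)$. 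I would also need the analogue of Lemma~\ref{AltLM} showing that $L$ acts primitively on $\Sigma$, which follows from the same orbit-counting argument using Lemma~\ref{PP} (here $\lambda+1\mid|w^{G_x}|$) together with $\mathrm{Out}(A_s)\cong Z_2$ for $s\neq 6$ (and $s\neq 6$ because $A_6\cong PSL_2(9)$ has no primitive representation of degree $|\Sigma|=(\lambda^2-2\lambda+2)/2$ with $\lambda>10$).

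Then, with $L_{\Delta}$ a large maximal subgroup of $A_s$ and $|\Sigma|=(\lambda^2-2\lambda+2)/2$, Aschbacher--Scott's classification (\cite[Theorem~2]{AB}) restricts $L_{\Delta}$ to an intransitive type $(S_t\times S_{s-t})\cap A_s$ with $1\leq t\leq s/2$, or an imprimitive type $(S_t\wr S_{s/t})\cap A_s$ with $2\leq t\leq s/2$. For the intransitive case $\binom{s}{t}=(\lambda^2-2\lambda+2)/2=|\Sigma|$; by Lemma~\ref{Jedan}(1), $2|\Sigma|-1=(\lambda-1)^2$ must be a square, which is automatic, but the stronger leverage is Lemma~\ref{Jedan}(2): every prime divisor of $|\Sigma|$ is $\equiv 1\pmod 4$. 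This immediately kills $t=1$ (since $|\Sigma|=s$, and also $A_{s-1}$ has minimal transitive degree exceeding $|\Delta|=\lambda/2+1$, contradicting Proposition~\ref{unapred}), and for $t=2$ one gets $\binom{s}{2}=s(s-1)/2=|\Sigma|$, forcing $\lambda/2+1\mid|G_{\Delta}|$ with $G_{\Delta}=(S_2\times S_{s-2})\cap G$ and a contradiction via $2$-transitivity as in Lemma~\ref{Alt1}. For $t\geq 3$ one uses $\binom{s}{t}=|\Sigma|$ together with the fact that $\binom{s}{t}$ is rarely of the form $(y^2+1)/2$ (appeal to bounds on prime factors of binomial coefficients, e.g. \cite[Chapter~3]{AZ} or Sylvester--Erdős-type results on $\binom{s}{t}$, combined with the $\equiv 1\pmod 4$ restriction on prime divisors).

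For the imprimitive case $L_{\Delta}\cong(S_t\wr S_{s/t})\cap A_s$, I would follow Lemma~\ref{Alt1} essentially verbatim: $(A_{s/t})^t\trianglelefteq L_{\Delta}$ and the quotient $L_{\Delta}/(A_{s/t})^t$ is one of the four families listed there, while $|\Sigma|=s!/((s/t)!)^t t!$. Since $G(\Sigma)=1$ here, I would use Proposition~\ref{unapred}: either $L_{\Delta}^{\Delta}$ is $2$-transitive on $\Delta$, in which case $Soc(L_{\Delta}^{\Delta})$ is either of affine type (forcing $\lambda/2+1=2^i$ with $i\leq t$, hence $(s/t)!^{t-1}<s!/((s/t)!^t t!)<2^{2t}$, so $s/t=2$ and $A_t\leq L_{\Delta}/(A_{s/t})^t$ cannot be a subgroup of $A\Gamma L_1(2^i)$ for $t$ large, a contradiction), or it is almost simple with $A_t\trianglelefteq L_{\Delta}^{\Delta}\leq S_t$ giving $t=\lambda/2+1>6$ and $2^{t-1}\leq(s/t)!^{t-1}<s!/((s/t)!^t t!)<t^2$, a contradiction; or $L_{\Delta}^{\Delta}$ is not $2$-transitive, so $Soc(G_{\Delta}^{\Delta})<L_{\Delta}^{\Delta}\leq A\Gamma L_1(u^h)$ with $u^h=\lambda/2+1$, which by Proposition~\ref{unapred} (since $s>6$) forces $L_{\Delta}^{\Delta}$ to be $2$-transitive after all, a contradiction. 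The main obstacle is the intransitive subcase $t=3$ (and possibly $t=4,5$): ruling out all integer solutions of $\binom{s}{t}=(\lambda^2-2\lambda+2)/2$ requires both the $\equiv 1\pmod 4$ prime restriction from Lemma~\ref{Jedan} and a careful number-theoretic argument on the shape of binomial coefficients, and this is where the bulk of the (routine but delicate) computation lives; I expect this to parallel the handling in Lemma~\ref{Alt1} but with slightly more casework because the type~2 value of $|\Sigma|$ is not a perfect square.
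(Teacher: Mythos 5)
Your overall shape (reduce to intransitive/imprimitive subgroups of $A_{s}$, then kill each family numerically) matches the paper, but there are two substantive issues. First, the route to the subgroup list: the paper does not go through \cite[Theorem 2]{AB} plus an analogue of Lemma \ref{AltLM}. It observes (Lemma \ref{Jedan}(1)) that $\left\vert \Sigma \right\vert$ is odd and invokes Liebeck--Saxl \cite{LS} on primitive groups of odd degree, which immediately yields $G_{\Delta}\cong (S_{t}\times S_{s-t})\cap G$, $(S_{s/t}\wr S_{t})\cap G$, or the exceptional $A_{7}$ on $15$ points (the last dispatched because $2\cdot 15-1$ is not a square). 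Your route would additionally require you to establish maximality of $L_{\Delta}$ in $L$ and to dispose of the primitive large maximal subgroups in \cite[Theorem 2]{AB}, neither of which you actually do; and you discard Lemma \ref{Jedan}(1) as ``automatic'' when in fact it is the paper's main numerical filter (given a candidate $\left\vert \Sigma \right\vert$ from the group theory, one must check that $2\left\vert \Sigma \right\vert -1$ is a perfect square, which fails for $15$, $945$, $135135$, etc.).

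Second, and more seriously, your treatment of the intransitive case for $t\geq 3$ is a genuine gap. You propose to rule out $\binom{s}{t}=(\lambda^{2}-2\lambda+2)/2$ by ``bounds on prime factors of binomial coefficients'' together with the $\equiv 1\pmod 4$ restriction; no such argument is supplied, and it is not routine -- this is an open-ended Diophantine problem as stated. The paper avoids it entirely: for $t\geq 5$ Proposition \ref{unapred} forces $L_{\Delta}^{\Delta}$ to be $A_{t}$ or $A_{s-t}$ acting $2$-transitively on $\Delta$, which pins down $\left\vert \Delta \right\vert$ (equal to $t$ up to three exceptional degrees), hence $\lambda=2(t-1)$ and $\left\vert \Sigma \right\vert = 2t^{2}-6t+5$; the elementary bound $2^{t}<(s/t)^{t}\leq \binom{s}{t}$ then gives an immediate contradiction. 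The cases $t\leq 4$ are handled by solvability of $L_{\Delta}^{\Delta}$ forcing $\left\vert \Delta \right\vert=3$, i.e.\ $\lambda=4$. Relatedly, your affine subcase in the imprimitive analysis asserts $\lambda/2+1=2^{i}$, which is a transplant from type 1: for type 2, $\left\vert \Delta \right\vert$ is odd when $\lambda\equiv 0 \pmod 4$, and when $\left\vert \Delta \right\vert = w^{2}+1$ is even the socle of $G_{\Delta}^{\Delta}$ is non-abelian simple by Lemma \ref{Jedan}(3), so the affine $2$-power case never arises.
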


\begin{proof}
Assume that $L\cong A_{s}$, where $s\geq 5$. Then one of the following holds by \cite{LS}:
\begin{enumerate}
\item $G_{\Delta }\cong \left(S_{t}\times S_{s-t} \right)\cap G$, $1\leq t<s/2$;
\item $G_{\Delta }\cong (S_{s/t}\wr S_{t})\cap G$, $s/t,t>1$;
\item $G_{\Delta }\cong A_{7}$ and $\left\vert \Sigma \right\vert =15$.
\end{enumerate}

Assume that (i) holds. Then $\left\vert \Sigma
\right\vert =\binom{s}{t}$ and $A_{t}\times A_{s-t}\trianglelefteq L_{\Delta
}\leq \left( S_{t}\times S_{s-t}\right) \cap L$. Suppose that $t\geq 5$.
Then $s-t>t\geq 5$. Hence, both $A_{t}$ and $A_{s-t}$ are simple groups. Moreover, $%
L_{\Delta }^{\Delta }$ is either $A_{t}$ or $A_{s-t}$ by Proposition \ref%
{unapred} since $\lambda>10$. If $%
L_{\Delta }^{\Delta } \cong A_{t}$ then either $\left\vert \Delta \right\vert =t$, or $t=6$ and $\left\vert \Delta \right\vert =10$, or $t=7,8$ and $\left\vert \Delta \right\vert =15$ since $t\geq 5$. Actually, $t=6$ and $\left\vert \Delta \right\vert=10$ imply $\lambda=2w^{2}=18$ and $\left\vert \Sigma \right \vert=145$, which is not of the form $\binom{s}{6}$ and hence it cannot occur. Also, if $t=7,8$ and $\left\vert \Delta
\right\vert =15$ then $\binom{s}{t}=\left\vert \Sigma \right\vert
=365$, and we reach a contradiction. Thus, $\lambda =2(t-1)$, and hence $\left\vert \Sigma \right\vert =\allowbreak 2t^{2}-6t+5$. Therefore, we have 
\begin{equation*}
2^{t}<\left( \frac{s}{t}\right) ^{t}\leq \binom{s}{t}=2t^{2}-6t+5\text{,}
\end{equation*}%
which is impossible for $t\geq 5$. We reach the same contradiction for $L_{\Delta }^{\Delta }\cong A_{s-t}$.

Assume that $1\leq t\leq 4$. If $L_{\Delta }^{\Delta }$ is non-solvable,
then $s-t\geq 5$ and $L_{\Delta }^{\Delta }\cong A_{s-t}$ and the previous
argument rules out this case. Thus $L_{\Delta }^{\Delta }$ is solvable, and
hence $\left\vert \Delta \right\vert =3$ by Proposition \ref{unapred} since $L_{\Delta }\leq \left(
S_{t}\times S_{s-t}\right) \cap L$ and $\left\vert \Delta \right\vert $
is odd by Lemma \ref{Jedan}(3). Then $\lambda =4$, whereas $\lambda >10$ by our assumptions.

Assume that (ii) holds. Then 
$\left\vert \Sigma \right\vert =\frac{s!}{((s/t)!)^{t}(t!)}$ and $A_{s/t}\wr
A_{t}\leq L_{\Delta }\leq \left( S_{s/t}\wr S_{t}\right) \cap L$. Moreover, $%
\left( A_{s/t}\right) ^{t}\trianglelefteq L_{\Delta }$ and $A_{t}\leq
L_{\Delta }/\left( A_{s/t}\right) ^{t}\leq (Z_{2})^{t}:S_{t}$, where the action
of $A_{t}$ on $(Z_{2})^{t}$ and on its permutation module are equivalent. Thus $%
L_{\Delta }/\left( A_{s/t}\right) ^{t}$ is isomorphic to one of the groups $A_{t}$, $S_{t}$, $Z_{2}\times
A_{t}$, $Z_{2}\times S_{t}$, $(Z_{2})^{t-1}(2):A_{t}$, $(Z_{2})^{t-1}(2):S_{t}$, $%
(Z_{2})^{t}:A_{t}$ or $(Z_{2})^{t}:S_{t}$ by \cite[Lemma 5.3.4]{KL}.
If $t \leq 4$ then $L_{\Delta}^{\Delta}$ is solvable, and hence $\lambda/2+1=\left\vert \Delta \right\vert =3$ by Proposition \ref{unapred} since $\left\vert \Delta \right\vert $
is odd by Lemma \ref{Jedan}(3). However, this is impossible since $\lambda>10$. Thus $t\geq 5$. Also, $L_{\Delta}^{\Delta}$ is non-solvable otherwise we reach a contradiction as above. Then $L_{\Delta}^{\Delta}$ acts $2$-transitively on $\Delta$ by Proposition \ref{unapred}. Therefore $L_{\Delta }^{\Delta }\cong A_{t}$, and hence either $\left\vert \Delta
\right\vert =t$ for $t\geq 5$, or $t=6$ and $\left\vert \Delta \right\vert
=10$, or $t=7,8$ and $\left\vert \Delta \right\vert
=15$. On the other hand, $2^{t-1}\leq ((s/t)!)^{t-1}<\frac{s!}{((s/t)!)^{t}(t!)%
}<2t^{2}$ as shown in \cite[(34)]{MF}. Thus $(t,s)=(3,6),(5,10),(7,14)$ and
hence $\left\vert \Sigma \right\vert =15$, $945$, $135135$,
which are ruled out since they violate Lemma \ref{Jedan}(1). Then either $t=6$, $\left\vert \Delta \right\vert =15$ and $\left\vert \Sigma \right\vert =41$, or $t=7,8$, $%
\left\vert \Delta \right\vert =15$ and $\left\vert \Sigma \right\vert =365$. However both cases cannot occur since $\frac{(2t)!}{(2)^{t}(t!)}>
\left\vert \Sigma \right\vert$.

Finally, (iii) is excluded by Lemma \ref{Jedan}(1), as $2\left\vert \Sigma \right\vert -1$ is
not a square.
\end{proof}

\bigskip

\begin{lemma}
\label{Sevdah}The following hold:

\begin{enumerate}
\item If either $p\mid \lambda -\mu $ for some $\mu \in \left\{
0,1,2\right\} $, or $p\mid \lambda -3$ and $p\neq 5$, then $L_{\Delta }$
lies in a maximal parabolic subgroup of $L$.

\item If $L_{\Delta }$ does not lie in a maximal parabolic subgroup of $L$
then $L_{\Delta }^{\Delta }$ is a non-solvable group acting $2$-transitively
on $\Delta $.
\end{enumerate}
\end{lemma}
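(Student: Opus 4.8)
The plan is to reduce both parts to a single fact about finite simple groups of Lie type: a subgroup of index prime to the defining characteristic $p$ is contained in a maximal parabolic subgroup. By Lemmas \ref{spor2} and \ref{Alt2} the group $L$ is a simple group of Lie type, so this makes sense; and it applies here because $L_\Delta$ lies inside a maximal subgroup $M$ of $L$ with $|L:M|$ dividing $|L:L_\Delta|=|\Sigma|$, while by the classification of maximal subgroups of groups of Lie type (\cite{KL} for classical groups, together with the analogous results for the exceptional groups, and \cite{AB}) the maximal subgroups of $L$ of $p'$-index are exactly the parabolic ones. So in each part it suffices to prove $p\nmid|\Sigma|$.

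First I would do part (1). Writing $|\Sigma|=(\lambda^2-2\lambda+2)/2=((\lambda-1)^2+1)/2$, one computes $\lambda^2-2\lambda+2\equiv 2,\,1,\,2,\,5\pmod p$ according as $p$ divides $\lambda,\,\lambda-1,\,\lambda-2,\,\lambda-3$. Since $\lambda$ is even for a design of type $2$, in the first three cases $p$ is odd unless $p=2$, and in the fourth case $p$ is odd and $p\neq 5$ by hypothesis; hence $p\nmid\lambda^2-2\lambda+2$. As $|\Sigma|$ is odd by Lemma \ref{Jedan}(1), this yields $p\nmid|\Sigma|$ also when $p=2$, and part (1) follows from the fact recalled above.

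For part (2) I would show that if $L_\Delta^\Delta$ is solvable then $L_\Delta$ lies in a maximal parabolic subgroup of $L$; the case where $L_\Delta^\Delta$ is not $2$-transitive is subsumed, since by Proposition \ref{unapred} it forces $\mathrm{Soc}(G_\Delta^\Delta)<L_\Delta^\Delta\leq A\Gamma L_1(u^h)$. If $L_\Delta^\Delta$ is solvable and $2$-transitive, the classification of solvable $2$-transitive groups places it in $A\Gamma L_1(u^h)$ with $u^h=|\Delta|$, except for the degrees $|\Delta|\in\{3^2,5^2,7^2,11^2,19^2,23^2,29^2,59^2,3^4\}$, which are excluded by Lemma \ref{njami}. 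So in all cases $\mathrm{Soc}(G_\Delta^\Delta)\cong(Z_u)^h$ is regular on $\Delta$ with $|\Delta|=u^h$; as this socle is abelian, Lemma \ref{Jedan}(3) forces $\lambda\equiv 0\pmod 4$, whence $u^h=(\lambda+2)/2$, $\lambda=2(u^h-1)$ and $|\Sigma|=2u^{2h}-6u^h+5$. Now the bounds of Theorem \ref{Large}, together with the restriction on the primes dividing $|L(\Delta)|$ supplied by Corollary \ref{CFixT}(2), bound $|L|$, $|L_\Delta|$ and $|\Sigma|$ polynomially in $u^h=(\lambda+2)/2$; comparing these with $|\Sigma|\geq P(L)$ and with the order of $L$ leaves only a short list of candidates $(L,u,h)$. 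For each of these, using the congruence $|\Sigma|\equiv 5\pmod u$ when $u=p$ together with $k=\lambda^2/2\mid|G_x|$, I would check that either $p\nmid|\Sigma|$, so that $L_\Delta$ lies in a maximal parabolic as above, or no such design $\mathcal{D}$ exists.

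The hard part will be precisely this last step of (2): showing that the semilinear alternative $L_\Delta^\Delta\leq A\Gamma L_1(u^h)$ pushes $L_\Delta$ into a parabolic. When $u=p$ this is immediate, since $|\Sigma|\equiv 5\pmod p$ gives $p\nmid|\Sigma|$ unless $p=5$; the genuine work is in disposing of the cases $u\neq p$ and $p=5$, i.e. in proving that a simple group of Lie type of the very small order allowed by Theorem \ref{Large} cannot simultaneously possess a section $(Z_u)^h$ of order $u^h=(\lambda+2)/2$ and a subgroup of index exactly $2u^{2h}-6u^h+5$. A secondary, but routine, point is the book-keeping required to invoke cleanly the statement that subgroups of $p'$-index in groups of Lie type are parabolic.
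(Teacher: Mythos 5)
Your part (1) is correct and is essentially the paper's argument: the residues $2,1,2,5$ of $\lambda^{2}-2\lambda+2$ modulo $p$, together with the oddness of $\left\vert \Sigma \right\vert$ from Lemma \ref{Jedan}(1), give $p\nmid\left\vert \Sigma\right\vert$, so $L_{\Delta}$ contains a Sylow $p$-subgroup of $L$ and lies in a maximal parabolic (the paper cites \cite[Theorem 1.6]{Se} for exactly this).

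Part (2) has a genuine gap. Your reduction to ``$L_{\Delta}^{\Delta}$ solvable $\Rightarrow L_{\Delta}$ parabolic'', and the passage to $\mathrm{Soc}(G_{\Delta}^{\Delta})\cong (Z_{u})^{h}$, $\lambda=2(u^{h}-1)$, $\left\vert\Sigma\right\vert=2u^{2h}-6u^{h}+5$ via Proposition \ref{unapred}, Huppert's theorem and Lemmas \ref{njami} and \ref{Jedan}(3), are fine. But the claimed polynomial bound on $\left\vert L\right\vert$ in terms of $u^{h}$ does not follow from the results you cite: Theorem \ref{Large} yields $\left\vert L\right\vert\leq 4\left\vert L_{\Delta}^{\Delta}\right\vert^{2}\left\vert\mathrm{Out}(L)\right\vert^{2}$ and $\left\vert L(\Delta)\right\vert<2\left\vert\mathrm{Out}(L)\right\vert$ only in its alternative (2), which presupposes that $L_{\Delta}^{\Delta}$ is \emph{not} $2$-transitive. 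When $L_{\Delta}^{\Delta}$ is a solvable $2$-transitive subgroup of $A\Gamma L_{1}(u^{h})$ you only get alternative (1), namely $\left\vert L\right\vert\leq\left\vert L_{\Delta}\right\vert^{2}$, and Corollary \ref{CFixT}(2) restricts only the prime divisors of $\left\vert L(\Delta)\right\vert$, not its order, so $\left\vert L_{\Delta}\right\vert$ and $\left\vert L\right\vert$ remain unbounded. Even where your bounds do hold, ``$P(L)\leq\left\vert\Sigma\right\vert$ plus a polynomial bound on $\left\vert L\right\vert$'' does not cut the candidates $(L,u,h)$ down to a finite list. The paper's proof avoids all of this by working $p$-locally: after the case split inherited from part (1), either $p=u=5$ and $5$ divides $\left\vert\Sigma\right\vert$ exactly once (so a Sylow $5$-subgroup of $L_{\Delta}$ has index $5$ in one of $L$), or $p$ divides none of $\lambda,\lambda-1,\lambda-2,\lambda-3$, whence $p\nmid\left\vert L(\Delta)\right\vert$ by Corollary \ref{CFixT}(2) and $\left\vert L_{\Delta}\right\vert_{p}=\left\vert L_{\Delta}^{\Delta}\right\vert_{p}$ divides $h$ with cyclic Sylow $p$-subgroups. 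Either conclusion is then checked against the non-parabolic maximal subgroups occurring in the Liebeck--Saxl list \cite{LS} of primitive groups of odd degree. This $p$-local mechanism --- bounding only the $p$-part of $L_{\Delta}$ rather than its order --- is the idea your proposal is missing, and it is precisely the step you yourself flag as unresolved.
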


\begin{proof}
Since $\left\vert \Sigma \right\vert =\frac{\lambda ^{2}-2\lambda +2}{2}$ it
is immediate to see that $\left( \left\vert \Sigma \right\vert ,\lambda -\mu
\right) =1$ for either $\mu =0,1,2$, or $\mu =3$ and $p\neq 5$. In these
cases $L_{\Delta }$ contains a Sylow $p$-subgroup of $L$, and hence $%
L_{\Delta }$ lies in a maximal parabolic subgroup of \cite[Theorem 1.6]{Se}
since $L$ is a non-abelian simple group acting transitively on $\Sigma $,
and (1) holds.

Suppose to the contrary that $L_{\Delta }$ does not lie in a maximal
parabolic subgroup of $L$ and that $L_{\Delta }^{\Delta }$ is solvable. Then 
$p\mid \left\vert \Sigma \right\vert $ by \cite[Theorem 1.6]{Se}. Also $%
p\neq 2,3$ by Lemma \ref{Jedan}(1)--(2), and $\mathrm{Soc}(G_{\Delta }^{\Delta
})<L_{\Delta }^{\Delta }<G_{\Delta }^{\Delta }\leq A\Gamma L_{1}(u^{h})$,
where $\frac{\lambda }{2}+1=\left\vert \Delta \right\vert =u^{h}$ for some
prime $u$ by Proposition \ref{unapred}. Also, $u$ is odd since either $\lambda \equiv 0 \pmod{4}$, or $\lambda=2w^{2}$ with $w$ odd and $w\ge 3$ by Theorem \ref{PZM}. Then, $\lambda =2(u^{h}-1)$
with $h>1$ since $\lambda >10$. 

If $p=5$ divides $\lambda -3$, then $u=p$ since $\left( \left\vert \Sigma
\right\vert ,\left\vert \Delta \right\vert \right) =\left( \left\vert \Sigma
\right\vert ,\lambda -3\right) =\left( \left\vert \Delta \right\vert
,\lambda -3\right) \mid 5$. Thus $\left\vert \Sigma \right\vert =\allowbreak
2\cdot 5^{2h}-6\cdot 5^{h}+5$, and $5^{2}\nmid \left\vert \Sigma \right\vert 
$ since $h>1$. Hence, $L_{\Delta }$ contains a subgroup of index $p$ of a
Sylow $p$-subgroup of $L$ with $p=5$. It is a straightforward check that
none of the groups listed in \cite{LS} fulfills the previous constraint. So
this case is excluded. 

If $p\nmid \lambda -\mu $ for some $\mu \in \left\{ 0,1,2,3\right\} $. Then $%
\left( p,\left\vert L(\Delta )\right\vert \right) =1$ by Corollary \ref%
{CFixT}(2) since $p\neq 2,3$. Thus $\left\vert L_{\Delta }\right\vert
_{p}=\left\vert L_{\Delta }^{\Delta }\right\vert _{p}$, and hence $p\mid 
\frac{\lambda }{2}\left( \frac{\lambda }{2}+1\right) h$ since $L_{\Delta
}^{\Delta }<G_{\Delta }^{\Delta }\leq A\Gamma L_{1}(u^{h})$. Actually, $%
p\nmid \frac{\lambda }{2}$ by our assumption. If $p\mid \frac{\lambda }{2}+1$%
, then $p=5$ divides $\lambda -3$ since $\left( \left\vert \Sigma
\right\vert ,\left\vert \Delta \right\vert \right) =\left( \left\vert \Sigma
\right\vert ,\lambda -3\right) =\left( \left\vert \Delta \right\vert
,\lambda -3\right) \mid 5$ which we saw being impossible. Therefore, $%
\left\vert L_{\Delta }\right\vert _{p}\mid h$ and any Sylow $p$-subgroup of $%
L_{\Delta }$ is cyclic since $L_{\Delta }^{\Delta }<G_{\Delta }^{\Delta
}\leq A\Gamma L_{1}(u^{h})$. However, this is impossible by \cite{LS}. Thus $%
L_{\Delta }^{\Delta }$ is non-solvable, and hence $L_{\Delta }^{\Delta }$
acts $2$-transitively on $\Delta $ by Proposition \ref{unapred}.
\end{proof}

\bigskip

\begin{lemma}
\label{Exc2}$L$ is not a simple exceptional group of Lie type.
\end{lemma}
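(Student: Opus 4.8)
The plan is to run the same general machinery used to rule out exceptional groups in the type-1 case (Lemma \ref{Exc1}), but now adapted to the type-2 constraints. First I would record what we already know: by Theorem \ref{ASPQP} we have $G(\Sigma)=1$, so $G=G^{\Sigma}$, $L=L^{\Sigma}=\mathrm{Soc}(G)$, and by Theorem \ref{Large}(1) $L_{\Delta}$ is a large subgroup of $L$ (the alternative (2) of Theorem \ref{Large} requires $L_{\Delta}^{\Delta}\le A\Gamma L_1(u^h)$, but then Lemma \ref{Sevdah}(2) would force $L_{\Delta}$ to lie in a maximal parabolic, and one checks separately that a parabolic of an exceptional group never has the right index; I will handle this case explicitly). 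Let $M$ be a maximal subgroup of $L$ containing $L_{\Delta}$; then $M$ is also large, so $M$ is one of the subgroups classified in \cite[Theorem 5]{AB}, i.e. either a maximal parabolic or one of the finitely many families in \cite[Table 2]{AB}.

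Next I would split according to whether $L_{\Delta}$ lies in a maximal parabolic subgroup of $L$ or not. If it does, then $\left\vert L:L_{\Delta}\right\vert=\left\vert\Sigma\right\vert=\frac{\lambda^2-2\lambda+2}{2}$ must be the index of a subgroup containing a parabolic; comparing the known values of parabolic indices for each exceptional type $^2B_2(q)$, $^2G_2(q)$, $G_2(q)$, $^3D_4(q)$, $^2F_4(q)$, $F_4(q)$, $E_6(q)$, $^2E_6(q)$, $E_7(q)$, $E_8(q)$ with the two arithmetic constraints of Lemma \ref{Jedan} — namely $\left\vert\Sigma\right\vert$ odd with $2\left\vert\Sigma\right\vert-1$ a square, and every prime divisor of $\left\vert\Sigma\right\vert$ congruent to $1\pmod 4$ — should eliminate all of them, since the $q$-part of a parabolic index is always a nontrivial power of $p$ and hence $\left\vert\Sigma\right\vert$ is even unless $p=2$, while for $p=2$ the square condition $2\left\vert\Sigma\right\vert-1=y^2$ combined with divisibility by the relevant cyclotomic factors gives no solutions. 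If $L_{\Delta}$ does not lie in a parabolic, then by Lemma \ref{Sevdah}(2) $L_{\Delta}^{\Delta}$ is non-solvable and acts $2$-transitively on $\Delta$, so $\left\vert\Delta\right\vert(\left\vert\Delta\right\vert-1)\mid\left\vert L_{\Delta}\right\vert$ with $\left\vert\Delta\right\vert=\lambda/2+1$; moreover $(\lambda,p)$ constraints via Corollary \ref{CFixT}(2) restrict which primes can divide $\left\vert L(\Delta)\right\vert$.

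In this non-parabolic branch $M$ is drawn from \cite[Table 2]{AB}; as in Lemma \ref{Exc1} I would invoke \cite{LSS} to cut down to the handful of pairs $(L,M)$ with $\left\vert L\right\vert\le\left\vert M\right\vert^2$ (otherwise Theorem \ref{Large}(2) applies, $L_{\Delta}^{\Delta}$ is solvable, and one derives a contradiction via the minimal-degree estimate $P(M^{(\infty)})<2\left\vert\mathrm{Out}(L)\right\vert$ and Lemma \ref{MPPRG}, exactly as before). For each of the surviving pairs — essentially $(E_7(q),{}^2E_6(q)\text{-type})$, $(E_6(q),F_4(q))$, $(F_4(q),{}^3D_4(q).3)$, $(G_2(q),SU_3(q){:}2)$ and the sporadic small-$q$ cases in Table \ref{tav2} — the $2$-transitivity of $L_{\Delta}^{\Delta}$ on $\Delta$ forces $\left\vert\Delta\right\vert=\lambda/2+1$ to equal a specific value (e.g. $q^3+1$ for $G_2(q)$), so $\lambda=2q^3$ or similar, giving $\left\vert\Sigma\right\vert=\frac{\lambda^2-2\lambda+2}{2}$ which typically fails either the square test of Lemma \ref{Jedan}(1) or the requirement that $L_{\Delta}$ have that exact index; the small-$q$ exceptional groups $G_2(3),G_2(4),G_2(5),{}^2B_2(8)$ etc. are dispatched directly using \cite{At}. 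I expect the main obstacle to be the bookkeeping in the parabolic case: one must verify, type by type, that no parabolic index of an exceptional group can have the very rigid shape $\frac{y^2+1}{2}$ with all prime divisors $\equiv1\pmod 4$ — this is where a careful Zsigmondy/cyclotomic argument (tracking $\Phi_\zeta^{\ast}(p)$ as in Lemma \ref{Phi0}) is needed, since a sloppy estimate would leave gaps.
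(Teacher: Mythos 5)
There is a genuine gap in your treatment of the parabolic branch. You eliminate it by asserting that ``the $q$-part of a parabolic index is always a nontrivial power of $p$ and hence $\left\vert\Sigma\right\vert$ is even unless $p=2$.'' This is backwards: a parabolic subgroup contains a Borel subgroup and hence a full Sylow $p$-subgroup, so its index is a polynomial in $q$ with constant term $1$, congruent to $1\pmod p$ and in particular \emph{coprime} to $p$. Many parabolic indices are odd for $q$ odd, and one of them actually survives every parity and congruence test you list: the $D_5$-parabolic of $E_6(q)$ has index $\frac{q^9-1}{q-1}\,(q^8+q^4+1)$, which is odd for all odd $q$. The paper's proof does not dispose of this case by arithmetic on $\left\vert\Sigma\right\vert$ at all; it handles it by analysing $L_{\Delta}^{\Delta}$ directly (if $L_{\Delta}^{\Delta}$ is solvable its order divides $(q-1)/e$, forcing $\left\vert\Sigma\right\vert\leq 2(q-1)^2/e^2$ against $\left\vert\Sigma\right\vert=\frac{q^9-1}{q-1}(q^8+q^4+1)$; if non-solvable it must be $2$-transitive on $\Delta$ by Proposition \ref{unapred}, which the Levi factor $P\Omega_{10}^{+}(q)$ cannot provide). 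Your proposal, as written, would declare the parabolic case empty and miss this surviving configuration entirely. Relying on Lemma \ref{Jedan}(1) instead would leave you with the Diophantine question of whether $2\frac{(q^9-1)(q^8+q^4+1)}{q-1}-1$ is ever a square, which is not obviously tractable.

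A secondary difference worth noting: because $\left\vert\Sigma\right\vert$ is odd by Lemma \ref{Jedan}(1), the paper does not re-run the large-subgroup machinery of \cite{AB} and \cite{LSS} as in Lemma \ref{Exc1}; it instead reads the possible pairs $(L,L_{\Delta})$ straight from the Liebeck--Saxl classification of primitive groups of odd degree \cite[Table 1]{LS}, which for exceptional types is a very short list. In the non-parabolic branch the decisive step is then that $\left\vert\Delta\right\vert\in\{q+1,\,q^2+1,\,q^2+q+1\}$ forces $q\mid\lambda$, so $\left(p,\left\vert\Sigma\right\vert\right)=1$ and $L_{\Delta}$ would have to contain a Sylow $p$-subgroup, contradicting non-parabolicity via \cite[Theorem 1.6]{Se}. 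Your sketch of that branch is close in spirit, but you should adopt the odd-degree reduction and repair the parabolic case before the argument is complete.
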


\begin{proof}
Assume that $L_{\Delta}$ is parabolic. Then $L \cong E_{6}(q)$ and $L_{\Delta} \cong [q^{16}].d.(P\Omega_{10}^{+}(q) \times (q-1)/ed).d$, where $d=(2,q-1)$ e $e=(3,q-1)$ by \cite[Table 1]{LS}. If $L_{\Delta}^{\Delta}$ is solvable then the order of $L_{\Delta}^{\Delta}$ must divide $(q-1)/e$. So does $\left\vert \Delta \right \vert$, and hence
$$\frac{q^{9}-1}{q-1}\cdot (q^{8}+q^{4}+1)=\left\vert \Sigma \right \vert \leq 2\left\vert \Delta \right \vert^{2} \leq 2\frac{(q-1)^{2}}{e^{2}},$$
which is clearly impossible. Thus $L_{\Delta}^{\Delta}$ is non-solvable acting $2$-transitively on $\Delta$ by Proposition \ref{unapred}, and this is impossible too.

Assume that $L_{\Delta}$ is not parabolic. Then $L_{\Delta}^{\Delta}$ is non-solvable acting $2$-transitively on $\Delta$ by Lemma \ref{Sevdah} with $\left\vert \Delta \right \vert$ odd. Also, $\left\vert L\right\vert \leq \left\vert L_{\Delta}\right\vert^{2}$ by Theorem \ref{Large}. All these constraints together with \cite[Table 1]{LS} lead to the following admissible cases:
\begin{enumerate}
\item $L \cong E_{7}$ and $\mathrm{Soc}(L_{\Delta}^{\Delta}) \cong PSL_{2}(q)$;
\item $L \cong$ $^{3}D_{4}(q)$ and $\mathrm{Soc}(L_{\Delta}^{\Delta})$ is isomorphic to one of the groups $ PSL_{2}(q^{j})$ with $j=1$ or $3$, $PSL_{3}(q)$ or $PSU_{3}(q)$;
\item $L \cong$ $^{2}G_{2}(q)$, $q=3^{2m+1}$, $m \geq 1$, and $\mathrm{Soc}(L_{\Delta}^{\Delta}) \cong PSL_{2}(q)$. 
\end{enumerate}   
Then possible values for $\left\vert \Delta \right \vert$ are $q+1,q^{2}+1,q^{2}+q+1,q^{2}+1$, hence $q \mid \lambda$ since $\left\vert \Delta \right \vert =\lambda/2+1$. Therefore $q$ is coprime to $\left\vert \Sigma \right \vert= \frac{\lambda^{2}-2\lambda+2}{2}$, and hence $L_{\Delta}$ must contain a Sylow $p$-subgroup of $L$, which is not the case. This completes the proof.   
\end{proof}

\bigskip

\begin{lemma}
\label{Filt}The following cases are admissible:

\begin{enumerate}
\item $q$ is even and $L_{\Delta }$ lies in a maximal parabolic subgroup of $%
L$;

\item $q$ is odd and one of the following holds:

\begin{enumerate}
\item $L_{\Delta }$ lies in maximal member of $\mathcal{C}_{1}(L)\cup \mathcal{C}_{2}(L)$, with $%
L_{\Delta }$ lying in a maximal parabolic subgroup of $L$ of type either $P_{i}$ or $P_{m,m-i}$ only for $%
L\cong PSL_{n}(q)$.

\item $L\cong PSL_{2}(q)$ and $L_{\Delta }$ is isomorphic to one of the
groups $D_{q\pm 1}$, $A_{4}$, $S_{4}$, $A_{5}$ or $PGL_{2}(q^{1/2})$.
\end{enumerate}
\end{enumerate}
\end{lemma}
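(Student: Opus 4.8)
The plan is to combine the Liebeck--Saxl classification of transitive permutation groups of odd degree with the large‑subgroup machinery of \cite{AB} and the structural dichotomy of Lemma~\ref{Sevdah}. Since simple exceptional groups of Lie type are excluded by Lemma~\ref{Exc2}, I may assume throughout that $L$ is a classical group over $\mathbb{F}_q$, $q=p^f$. By Lemma~\ref{Jedan}(1) the index $|\Sigma|=|L:L_\Delta|=2|\Delta|^2-6|\Delta|+5$ is odd, so any maximal subgroup $M$ of $L$ with $L_\Delta\le M$ has odd index and is therefore one of the groups in \cite[Table~1]{LS}: a parabolic subgroup, a member of $\mathcal{C}_1\cup\mathcal{C}_2$, or one of a short list of low‑rank exceptions. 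Moreover, by Theorem~\ref{Large}, either $|L|\le|L_\Delta|^2$ — so $M$ is a large maximal subgroup and appears in the lists of \cite{AB} — or $L_\Delta^\Delta$ is not $2$‑transitive on $\Delta$, in which case Lemma~\ref{Sevdah}(2) forces $L_\Delta$ into a maximal parabolic subgroup. Since parabolic subgroups of classical groups are themselves large, in every case $L_\Delta$ lies in a large maximal subgroup of odd index, and the first step is to intersect the two classifications.

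Having isolated the finitely many families of candidate $M$, I would next discard every candidate that is not a subspace ($\mathcal{C}_1$) or imprimitive ($\mathcal{C}_2$) subgroup. For $\mathcal{C}_i$‑subgroups with $i\ge 3$, the non‑subspace $\mathcal{C}_2$‑subgroups, and the sporadic exceptions, I would argue as follows: by Lemma~\ref{Sevdah}, if $L_\Delta$ is not parabolic then $L_\Delta^\Delta$ is a non‑solvable $2$‑transitive group of degree $|\Delta|=\lambda/2+1$, and Proposition~\ref{unapred} gives $\mathrm{Soc}(G_\Delta^\Delta)\trianglelefteq L_\Delta^\Delta$; cross‑referencing Kantor's classification \cite{Ka}, the divisibility $|\Delta|\mid|L_\Delta|$, the coprimality constraint on $|L(\Delta)|$ from Corollary~\ref{CFixT}(2), the large bound $|L|\le|L_\Delta|^2$, and the rigid identity $|L:L_\Delta|=2|\Delta|^2-6|\Delta|+5$, one obtains a numerical contradiction for every such $M$. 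When $q$ is even this argument eliminates all non‑parabolic candidates — in particular the $\mathcal{C}_8$‑type orthogonal subgroups of $PSp_n(q)$ and the analogous subgroups of the orthogonal groups cannot carry a $2$‑transitive action on a block of size $\lambda/2+1$ and simultaneously have the prescribed odd index — which gives conclusion~(1). For $q$ odd the same filtering leaves exactly the $\mathcal{C}_1$‑ and $\mathcal{C}_2$‑members, with the odd‑degree restriction from \cite{LS} that a parabolic can only be of type $P_i$ or $P_{m,m-i}$, and only when $L\cong PSL_n(q)$; this is conclusion~(2a).

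Finally, the case $L\cong PSL_2(q)$ must be handled separately and directly from the explicit list of maximal subgroups: combining the odd‑index condition with the large bound $|L|\le|L_\Delta|^2$ and with Proposition~\ref{unapred} leaves precisely the dihedral subgroups $D_{q\pm1}$, the subfield subgroup $PGL_2(q^{1/2})$, and $A_4,S_4,A_5$, which is conclusion~(2b). The main obstacle, and where essentially all the effort goes, is the uniform case analysis over the classical families in odd characteristic: for each family one has to reconcile the odd‑degree list of \cite{LS}, the large‑subgroup tables of \cite{AB}, and the detailed subgroup structure recorded in \cite{KL} and \cite{BHRD}, and then eliminate every $\mathcal{C}_3$‑type and exceptional candidate by playing the $2$‑transitivity of $L_\Delta^\Delta$ on a block of odd size against the arithmetic tying together $|\Delta|$, $\lambda$, $|\Sigma|$ and $q$; the quasiprimitive novelties and the bookkeeping are the delicate parts.
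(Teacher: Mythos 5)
Your overall strategy --- invoke the Liebeck--Saxl classification of odd-degree primitive actions, note that $L$ is classical by the preceding lemmas, and then filter the candidates using largeness (Theorem \ref{Large} and \cite{AB}), the dichotomy of Lemma \ref{Sevdah}, Kantor's list of $2$-transitive groups, and the arithmetic fact that $2\left\vert \Sigma \right\vert -1=(\lambda-1)^{2}$ must be a square --- is exactly the paper's, and the genuinely low-rank exceptional entries of \cite{LS} (namely $\Omega_{7}(2)<\Omega_{7}(q)$, $\Omega_{8}^{+}(2)$ and $2^{3}\cdot 2^{6}\cdot PSL_{3}(2)$ inside $P\Omega_{8}^{+}(q)$, $P\Sigma L_{2}(9)<PSU_{3}(5)$, and the $PSL_{2}(q)$ list) are indeed dispatched by precisely the numerical criterion you name.

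The gap is in your paraphrase of \cite{LS}. For a classical socle the odd-index maximal subgroups are not only ``parabolics, members of $\mathcal{C}_{1}\cup\mathcal{C}_{2}$, or a short list of low-rank exceptions'': they also include the subfield subgroups $N_{G}(X(q_{0}))$ with $q=q_{0}^{c}$ and $q,c$ odd, an infinite family present in every rank, and excluding these is the main content of the paper's proof. Your second paragraph gestures at ``$\mathcal{C}_{i}$-subgroups with $i\geq 3$'', but since your own reading of the classification omits this family, the plan as written never actually confronts it; and it is not a routine numerical check. The paper first uses largeness together with \cite[Propositions 4.7, 4.17, 4.22, 4.23]{AB} to force $c=3$ and $L\cong PSL_{n}^{\pm}(q)$, identifies $L_{\Delta}$ via \cite[Proposition 4.5.3]{KL}, then uses Proposition \ref{unapred} to force $L_{\Delta}^{\Delta}$ to act $2$-transitively on $\Delta$ (whence $n=2,3$ and $\lambda$ is determined), observes that $\left(\left\vert\Sigma\right\vert ,q\right)=1$, and reaches a contradiction via Seitz's theorem because $L_{\Delta}$ would then have to be parabolic. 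A smaller point: for $q$ even no elimination is needed at all, since an odd-index subgroup contains a Sylow $2$-subgroup and Borel--Tits already places it in a parabolic; the $\mathcal{C}_{8}$-type orthogonal subgroups of $PSp_{n}(q)$ that you propose to eliminate do not have odd index in the first place, so that part of your plan is attacking a vacuous case.
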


\begin{proof}
Since $G$ acts primitively on $\Sigma$ and the size of this one is odd, then $G$ is one of the groups classified by \cite{LS}. Actually, $L\cong X(q)$, where $X(q)$ denotes any simple classical group as a consequence of Lemmas \ref{spor2}, \ref{Alt2} and \ref{Exc2}.

Assume that $G_{\Delta }=N_{G}(X(q_{0}))$ with $q=q_{0}^{c}$ and $q,c$ odd by \cite{LS}. Then $L_{\Delta }=X(q_{0})$ is maximal in $L$ by \cite[Tables H--I]{KL} for $n \geq 13$ and \cite[Section 8.2]{BHRD} for $2\leq n \leq 12$. Then $c=3$ and $L\cong PSL_{m}^{\epsilon }(q)$, where $\epsilon =\pm $ by \cite[Propositions 4.7, 4.17, 4.22 and 4.23]{AB} since $L_{\Delta }$ is a large subgroup of $L$ by Theorem \ref{Large}. Then $L_{\Delta }\cong \frac{j}{%
(q-\epsilon ,n)}.PGL_{n}^{\epsilon }(q^{1/3})$, where $j=\frac{q-\epsilon }{%
(q^{1/3}-\epsilon ,\frac{q-\epsilon }{(q-\epsilon ,n)})}$, by \cite[%
Proposition 4.5.3(II)]{KL}.

Assume that $L_{\Delta }^{\Delta}$ does not act $2$-transitively on $\Delta$. Then $L_{\Delta }^{\Delta}$ is solvable and $\lambda/2+1 \leq (n,q^{1/3}- \varepsilon) $ by Proposition \ref{unapred}. Moreover, it results that $\lambda \equiv 0 \pmod{4}$ by Lemma \ref{Jedan}(3). Therefore, $ q^{n(8n+3)/18} \leq \left\vert L: L_{\Delta}\right\vert <2(n,q^{1/3}- \varepsilon)^2 $, which is impossible for $n \geq 2$. Thus $L_{\Delta }^{\Delta}$ acts $2$-transitively on $\Delta$ and hence $n=2,3$. Then $\lambda=2q^{1/3}$, $2(q^{2/3}+q^{1/3})$ or $2q$ according to whether $n=2$, $(n,\varepsilon)=(3,+)$ or $(n,\varepsilon)=(3,-)$ respectively. Therefore, $\left\vert \Sigma \right \vert$ is coprime to $q$ and we reach a contradiction by \cite[Theorem 1.6]{Se} since $L^{\Delta}$ is a non-parabolic subgroup of $L$.

Assume that $L\cong \Omega _{7}(q)$ and $L_{\Delta }\cong \Omega _{7}(2)$. Then $q=3,5$ since $L_{\Delta}$ is a Large subgroup of $L$, and hence $\left\vert \Sigma \right\vert =3159$ or $157421875$ respectively. However, both contradict Lemma \ref{Jedan}(1), as $2\left\vert \Sigma \right\vert -1$ is
not a square.

Assume that $L\cong P\Omega _{8}^{+}(q)$, where $q$ is a prime and $q\equiv
\pm 3\pmod{8}$, and either $L_{\Delta }\cong \Omega _{8}^{+}(2)$ or $%
L_{\Delta }\cong 2^{3}\cdot 2^{6}\cdot PSL_{3}(2)$. In the former case $q=3,5$ since $L_{\Delta}$ is a Large subgroup of $L$, and hence $%
\left\vert \Sigma \right\vert =28431$ or $51162109375$. However, both contradict Lemma \ref{Jedan}(1). Then $L_{\Delta }\cong 2^{3}\cdot 2^{6}\cdot PSL_{3}(2)$, and hence $\left\vert \Sigma \right\vert =57572775$, but $2\left\vert \Sigma \right\vert -1$ is
not a square.     

Finally, the case $L\cong PSU_{3}(5)$ and $L_{\Delta }\cong P\Sigma L_{2}(9)$
implies $\left\vert \Sigma \right\vert =175$, and we again reach a
contradiction by Lemma \ref{Jedan}(1).
\end{proof}

\bigskip

\begin{lemma}
\label{m=2}$L$ is not isomorphic to $PSL_{2}(q)$.
\end{lemma}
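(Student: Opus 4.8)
The plan is to assume $L\cong PSL_{2}(q)$ with $q=p^{f}$ and to run through the short list of admissible pairs $(L,L_{\Delta})$ supplied by Lemma~\ref{Filt}: either $L_{\Delta}$ lies in a maximal parabolic subgroup of $L$ (for any $q$), or $q$ is odd and $L_{\Delta}$ is isomorphic to one of $D_{q-1}$, $D_{q+1}$, $A_{4}$, $S_{4}$, $A_{5}$ or $PGL_{2}(q^{1/2})$. Throughout I would use that $L$ is transitive on $\Sigma$ by Theorem~\ref{ASPQP}, so that $\left\vert \Sigma\right\vert =\left\vert L:L_{\Delta}\right\vert =(\lambda^{2}-2\lambda+2)/2$, together with $\left\vert \Delta\right\vert =\lambda/2+1$ and the constraint $\mathrm{Soc}(G_{\Delta}^{\Delta})\trianglelefteq L_{\Delta}^{\Delta}$ from Proposition~\ref{unapred}.

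First I would dispose of the non-parabolic possibilities. If $L_{\Delta}$ is one of $D_{q\pm 1}$, $A_{4}$ or $S_{4}$, then $L_{\Delta}$ is solvable and does not lie in a maximal parabolic subgroup, so $L_{\Delta}^{\Delta}$ is a solvable quotient of $L_{\Delta}$, contradicting Lemma~\ref{Sevdah}(2). If $L_{\Delta}\cong A_{5}$, then $L_{\Delta}^{\Delta}$ is a $2$-transitive quotient of $A_{5}$, hence $A_{5}$ acting on $5$ or $6$ points, so $\lambda =2(\left\vert \Delta\right\vert -1)\in\{8,10\}$, against $\lambda>10$. If $L_{\Delta}\cong PGL_{2}(q^{1/2})$ (so $q$ is an even power of the odd prime $p$), then $L_{\Delta}^{\Delta}$ is a non-solvable $2$-transitive group with socle $PSL_{2}(q^{1/2})$; for $q^{1/2}$ large this forces $\left\vert \Delta\right\vert =q^{1/2}+1$, whence $\lambda=2q^{1/2}$ and $p\nmid \lambda^{2}-2\lambda+2=2\left\vert \Sigma\right\vert$, so that $L_{\Delta}$ would contain a full Sylow $p$-subgroup of $L$, which is impossible since $\left\vert L\right\vert _{p}=q>q^{1/2}=\left\vert L_{\Delta}\right\vert _{p}$. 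The finitely many small values of $q^{1/2}$, for which $PGL_{2}(q^{1/2})$ has an exceptional $2$-transitive action (and where one must also check the parity restriction of Lemma~\ref{Jedan}), are eliminated directly by comparing $\left\vert L:L_{\Delta}\right\vert$ with $(\lambda^{2}-2\lambda+2)/2$ via \cite{At}.

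Finally, in the parabolic case $L_{\Delta}$ is the Borel subgroup $E_{q}:Z_{(q-1)/(2,q-1)}$, since $PSL_{2}(q)$ admits no parabolic novelty by \cite{BHRD}; hence $\left\vert \Sigma\right\vert =q+1$, and the equation $q+1=(\lambda^{2}-2\lambda+2)/2$ yields $\lambda=1+\sqrt{1+2q}$. For $q$ odd one has $1+2q\equiv 3\pmod 4$, which is never a square, while for $q$ even the only square value of $1+2q$ occurs at $q=4$, giving $\lambda=4<10$; both outcomes contradict $\lambda>10$, and the lemma follows. The most delicate step is the $PGL_{2}(q^{1/2})$ analysis: one must be careful to list all $2$-transitive actions of $PGL_{2}(q^{1/2})$ for the small cases $q^{1/2}\le 11$ and confirm their incompatibility with the design parameters, whereas the rest of the argument is elementary divisibility and squareness bookkeeping.
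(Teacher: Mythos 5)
Your proposal is correct, and its overall skeleton (run through the admissible $L_{\Delta}$ from Lemma~\ref{Filt} and kill each one) matches the paper's, but several individual cases are dispatched by genuinely different means. The paper handles $D_{q\pm 1}$ by computing $\left\vert \Sigma\right\vert$ and showing $2\left\vert \Sigma\right\vert -1$ cannot be a square (Lemma~\ref{Jedan}(1) plus Ribenboim), handles $A_{4}$ and $S_{4}$ by forcing $\left\vert \Delta\right\vert =3$, and handles $A_{5}$ by reducing to $\lambda=18$ and checking $\left\vert \Sigma\right\vert =145\neq q(q^{2}-1)/120$; you instead fold all the solvable candidates $D_{q\pm 1},A_{4},S_{4}$ into one stroke via Lemma~\ref{Sevdah}(2), and dispose of $A_{5}$ by observing that $L_{\Delta}^{\Delta}\cong A_{5}$ must be $2$-transitive of degree $5$ or $6$, forcing $\lambda\in\{8,10\}$. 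Both shortcuts are legitimate and arguably cleaner, though you should record the (easy) verification that none of $D_{q\pm 1}$, $A_{4}$, $S_{4}$, $A_{5}$ is contained in a Borel subgroup $E_{q}:Z_{(q-1)/2}$ of $PSL_{2}(q)$ for $q$ odd, since Lemma~\ref{Sevdah}(2) and Proposition~\ref{unapred}(2) are only applicable under that hypothesis. Your $PGL_{2}(q^{1/2})$ analysis coincides with the paper's (Sylow $p$-subgroup argument after forcing $\left\vert \Delta\right\vert =q^{1/2}+1$). Finally, you explicitly treat the parabolic case, which Lemma~\ref{Filt}(1) makes unavoidable for $q$ even, via the squareness of $2q+1$; the paper's own proof of this lemma silently omits that case (for $q$ odd it cannot arise since $\left\vert L:B\right\vert =q+1$ is even, but for $q$ even it is the unique odd-degree primitive action and does need the argument you give), so your version is in fact the more complete one.
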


\begin{proof}
Assume that $L\cong PSL_{2}(q)$ and $L_{\Delta }$ is isomorphic to one of
the groups $D_{q\pm 1}$, $A_{4}$, $S_{4}$, $A_{5}$ or $PGL_{2}(q^{1/2})$. If 
$L_{\Delta }\cong D_{q\pm 1}$ then $\left\vert \Sigma \right\vert =\frac{%
q(q\mp 1)}{2}$ and hence $2\left\vert \Sigma \right\vert -1=q^{2}+q+1$ or $%
(q-1)^{2}+(q-1)+1$ must be square by Lemma \ref{Jedan}(1). However this is
impossible by \cite[A7.1]{Rib}.

If $L_{\Delta }\cong PGL_{2}(q^{1/2})$ then $\left\vert \Sigma \right\vert
=q^{1/2}(q+1)/2$, and hence $2\left\vert \Sigma \right\vert
-1=q^{3/2}+q^{1/2}-1$. Moreover, $L_{\Delta }^{\Delta}$ acts $2$-transitively on $\Delta$. If it is not so, then $\lambda/2+1 \leq (2,q^{1/2}-1)$ as a consequence of Proposition \ref{unapred}, whereas $\lambda>10$. Thus, either $\left\vert \Delta
\right\vert =q^{1/2}+1$ or $\left\vert \Delta \right\vert =q^{1/2}$ and $q^{1/2}=7,11$ since either $\lambda \equiv 0 \pmod{4}$ and hence $\left\vert \Delta \right \vert =\lambda/2+1$ is odd, or $\lambda =2w^{2}$, where $w$ is odd, $w\geq 3$, $2(w^{2}-1)$ is a square and $\left\vert \Delta \right \vert=w^{2}+1$. The two numerical cases are ruled out since they
violate Lemma \ref{Jedan}(1), whereas the former yields $\lambda =2q^{1/2}$ with $q$ odd. Then $\left \vert \Sigma \right \vert$ is coprime to $q$ and hence $L_{\Delta}$  must contain a Sylow $p$-subgroup of $L$, which is not the case.

Finally, assume that $L_{\Delta }\cong A_{4}$, $S_{4}$ or $A_{5}$. In the first two cases $\lambda$ must be divisible by $4$ by Lemma \ref{Jedan}(3), hence $\lambda /2 +1=\left\vert \Delta \right\vert =3$ since $\left\vert \Delta \right\vert$ is odd,
and so $\lambda =4$, whereas $\lambda >10$. Thus $L_{\Delta } \cong A_{5}$. The previous argument can be applied to exclude the case $\lambda \equiv 0 \pmod{4}$. Therefore $\lambda=18$, $\left\vert \Delta \right\vert =10$ and $145=\left\vert \Delta \right\vert =\frac{q(q^2-1)}{120}$ which has no integer solutions. 
\end{proof}

\bigskip

\begin{lemma}
\label{PSL2}$L$ is not isomorphic to $PSL_{n}(q)$, $n \geq 2$.
\end{lemma}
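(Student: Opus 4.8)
The plan is to assume $L\cong PSL_{n}(q)$ with $q=p^{f}$ and to reach a contradiction. By Lemma~\ref{m=2} we have $n\geq 3$, so Lemma~\ref{Filt} applies: either $q$ is even and $L_{\Delta}$ lies in a maximal parabolic subgroup of $L$, or $q$ is odd and $L_{\Delta}$ lies in a maximal member of $\mathcal{C}_{1}(L)\cup\mathcal{C}_{2}(L)$, lying in a maximal parabolic of type $P_{i}$ or $P_{i,n-i}$ in the $\mathcal{C}_{1}$ case. In every case $\mathrm{Soc}(G_{\Delta}^{\Delta})\trianglelefteq L_{\Delta}^{\Delta}$ by Proposition~\ref{unapred}, and either $L_{\Delta}^{\Delta}$ acts $2$-transitively on $\Delta$, or $\mathrm{Soc}(G_{\Delta}^{\Delta})<L_{\Delta}^{\Delta}\leq G_{\Delta}^{\Delta}\leq A\Gamma L_{1}(u^{h})$ with $u^{h}=\left\vert\Delta\right\vert$; by $\lambda>10$ and Lemma~\ref{Sevdah} the second alternative occurs only inside a parabolic, and will be handled there. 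I shall use repeatedly that $\lambda=2(\left\vert\Delta\right\vert-1)$, $\left\vert\Sigma\right\vert=2\left\vert\Delta\right\vert^{2}-6\left\vert\Delta\right\vert+5=\left\vert L:L_{\Delta}\right\vert$, and, by Lemma~\ref{Jedan}(1), that $2\left\vert L:L_{\Delta}\right\vert-1$ is a perfect square.

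First I would dispose of the $\mathcal{C}_{2}$-case ($q$ odd): here $L_{\Delta}\leq(GL_{n/t}(q)\wr S_{t})\cap L$ with $n=(n/t)t$, $t\geq 2$, so $L_{\Delta}^{\Delta}$ is a $2$-transitive section of $GL_{n/t}(q)^{t}.S_{t}$. As $\lambda>10$, the semilinear $1$-dimensional and small alternating possibilities are excluded, so $L_{\Delta}^{\Delta}$ is the projective action of $PSL_{n/t}(q)$ on $(q^{n/t}-1)/(q-1)$ points, or the natural action of $A_{t}$ on $t$ points. In the latter $\left\vert\Delta\right\vert=t$, while $\left\vert L:L_{\Delta}\right\vert$ exceeds $2t^{2}$, contradicting $\left\vert\Sigma\right\vert=2\left\vert\Delta\right\vert^{2}-6\left\vert\Delta\right\vert+5$. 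In the former $\left\vert\Delta\right\vert=(q^{n/t}-1)/(q-1)$, so $\lambda=2q(q^{n/t-1}-1)/(q-1)$ is divisible by $p$, whence $\left\vert\Sigma\right\vert=(\lambda^{2}-2\lambda+2)/2\equiv 1\pmod{p}$; thus $p\nmid\left\vert L:L_{\Delta}\right\vert$, so $L_{\Delta}$ contains a Sylow $p$-subgroup of $L$ by \cite[Theorem~1.6]{Se}, which is impossible since $\left\vert L\right\vert_{p}=q^{\binom{n}{2}}$ does not divide $\left\vert GL_{n/t}(q)\wr S_{t}\right\vert$ when $t\geq 2$.

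Next I would treat the parabolic case (covering also $q$ even). Let $P$ be a maximal parabolic of $L$ containing $L_{\Delta}$, with Levi complement of type $GL_{i}(q)\times GL_{n-i}(q)$. Examining the $2$-transitive quotient $L_{\Delta}^{\Delta}$ and discarding again the semilinear and small options, two shapes remain: (a) $L_{\Delta}^{\Delta}$ is a projective group with socle $PSL_{j}(q)$ on $(q^{j}-1)/(q-1)$ points for some $j\in\{i,n-i\}$, $j\geq 2$, so $L_{\Delta}$ lies between $q^{j(n-j)}{:}SL_{j}(q)$ and $P_{n-j}$ and $\left\vert\Delta\right\vert=(q^{j}-1)/(q-1)$; or (b) $i\in\{1,n-1\}$, the unipotent radical $U$ of $P$ is elementary abelian of order $q^{n-1}$, $L_{\Delta}=U{:}H$ with $H\leq GL_{n-1}(q)$, and $L_{\Delta}^{\Delta}$ is an affine $2$-transitive group, forcing $q$ odd (else $\left\vert\Delta\right\vert\equiv 0\pmod 4$, against type $2$). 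In case (a) one has $\left\vert L:L_{\Delta}\right\vert\geq\left\vert L:P_{n-j}\right\vert=\qbin{n}{j}$, while $\left\vert\Sigma\right\vert=2((q^{j}-1)/(q-1))^{2}-6(q^{j}-1)/(q-1)+5$; comparing $q$-degrees shows that $\left\vert\Sigma\right\vert\geq\qbin{n}{j}$ can hold only for $(n,j)=(3,2)$ (up to a couple of small $(n,j,q)$ checked directly), and then the divisibility $\left\vert L:L_{\Delta}\right\vert\mid q^{3}-1$ with $\left\vert L:L_{\Delta}\right\vert=2q^{2}-2q+1$ forces $q=3$, i.e. $\lambda=6$, against $\lambda>10$. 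In case (b), the Sylow $p$-subgroup argument (valid here for $p\neq 5$, since $\left\vert\Sigma\right\vert\equiv 5\pmod p$) together with Hering's classification of transitive linear groups forces $SL_{n-1}(q)\trianglelefteq H$, hence $\left\vert\Delta\right\vert=q^{n-1}$ and $\left\vert L:L_{\Delta}\right\vert\leq q^{n}-1$; but $\left\vert\Sigma\right\vert=2q^{2n-2}-6q^{n-1}+5>q^{n}-1$ for all $n\geq 3$, a contradiction. The residual possibilities ($p=5$ in case (b), and degenerate degrees $\left\vert\Delta\right\vert=q^{d}$ with $d<n-1$) are cleared using that $5$ exactly divides $\left\vert\Sigma\right\vert$ and the square condition of Lemma~\ref{Jedan}(1); and when $L_{\Delta}$ is a proper novelty inside $P$ or inside a $\mathcal{C}_{2}$-member, $\left\vert L:L_{\Delta}\right\vert$ only grows, reinforcing every inequality. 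This completes the proof.

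The main obstacle is the parabolic case. There the shortcut ``$p\nmid\left\vert\Sigma\right\vert\Rightarrow L_{\Delta}$ contains a Sylow $p$-subgroup of $L\Rightarrow$ contradiction'' fails outright, because parabolic subgroups \emph{do} contain Sylow $p$-subgroups of $L$; so one must instead locate $L_{\Delta}$ precisely inside $P$ from the structure of $L_{\Delta}^{\Delta}$ — in particular telling apart the projective action (a) from the affine action (b) attached to an end node of the Dynkin diagram — and then win by the exact arithmetic of $\left\vert\Sigma\right\vert=2\left\vert\Delta\right\vert^{2}-6\left\vert\Delta\right\vert+5$ against $\left\vert L:L_{\Delta}\right\vert$, using $\lambda>10$ and Lemma~\ref{Jedan}(1) to eliminate the handful of small configurations that survive the degree estimates.
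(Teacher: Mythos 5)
Your overall architecture matches the paper's: reduce to $n\geq 3$ via Lemma \ref{m=2}, split by Lemma \ref{Filt} into the parabolic and $\mathcal{C}_{2}$ cases, pin down $L_{\Delta}^{\Delta}$ via Proposition \ref{unapred} and Lemma \ref{Sevdah}, and finish with degree comparisons and Lemma \ref{Jedan}. Your $\mathcal{C}_{2}$ argument (here $p\mid\lambda$, so $\left\vert\Sigma\right\vert\equiv 1\pmod p$ and $L_{\Delta}$ would have to contain a Sylow $p$-subgroup of $L$, which $GL_{n/t}(q)\wr S_{t}$ cannot accommodate) is clean and in fact shorter than the paper's route, which instead bounds $\left\vert L\right\vert<\left\vert L_{\Delta}\right\vert^{2}$ and compares with $\left\vert\Sigma\right\vert\leq 2\left\vert\Delta\right\vert^{2}$.

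The genuine gap is in the parabolic case. You correctly note at the outset that the alternative $\mathrm{Soc}(G_{\Delta}^{\Delta})<L_{\Delta}^{\Delta}\leq G_{\Delta}^{\Delta}\leq A\Gamma L_{1}(u^{h})$ of Proposition \ref{unapred} ``occurs only inside a parabolic, and will be handled there,'' but when you get there you simply ``discard the semilinear and small options'' and keep only shapes (a) and (b), both of which presuppose that $L_{\Delta}^{\Delta}$ is itself $2$-transitive. The dropped case is exactly the one the paper has to work hardest on: when $L_{\Delta}^{\Delta}$ is a solvable quotient of $P_{h}$ (resp.\ $P_{h,n-h}$), its order divides $q-1$ (resp.\ $(q-1)^{2}$) and $\left\vert\Delta\right\vert=u^{m}$ divides that order, so the crude bound $\tfrac{1}{2}q^{h(n-h)}\leq\left\vert\Sigma\right\vert<2\left\vert\Delta\right\vert^{2}$ does \emph{not} kill everything --- it leaves $n=3$, where $\left\vert\Sigma\right\vert=q^{2}+q+1$ (resp.\ $(q^{2}+q+1)(q+1)$), and the paper must then solve $X^{2}=q^{2}+(q+1)^{2}$ via primitive Pythagorean triples, resp.\ factor $2q^{3}=(X-2q-1)(X+2q+1)$ to force $q=2^{t}$ and derive a contradiction. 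Nothing in your write-up addresses these surviving configurations. A secondary imprecision: in case (a) the degree comparison $q^{j(n-j)}\lesssim 2q^{2j-2}$ does not isolate $(n,j)=(3,2)$; it also leaves, e.g., $(n,j,q)=(5,2,2)$ and $(5,2,3)$ with $\left\vert\Sigma\right\vert=155$ and $1210$, which are excluded by the square condition of Lemma \ref{Jedan}(1) rather than by your divisibility argument. (On the credit side, your case (b) --- an affine $2$-transitive quotient of $P_{1}$ of degree $q^{n-1}$, necessarily with $q$ odd --- is a legitimate possibility that the paper passes over silently, and your index estimate $\left\vert L:L_{\Delta}\right\vert\leq q^{n}-1<\left\vert\Sigma\right\vert$ for it is essentially sound.)
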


\begin{proof}
Assume that $L\cong PSL_{n}(q)$. Then $n\geq 3$ by Lemma \ref{m=2}, and hence 
$M\in \mathcal{C}_{1}(L)\cup \mathcal{C}_{2}(L)$ by Lemma \ref{Filt}.

Assume that $L_{\Delta }$ lies in a maximal parabolic subgroup $M$ of $L$. If $L_{\Delta }$ is not of type $%
P_{h,n-h}$ then $L_{\Delta }=M$ by \cite[Table 3.5.H--I]{KL} for $n \geq 13$ and by \cite[Section 8.2]{BHRD} for $3\leq n \leq 12$. Also, $L_{\Delta }$ is as in \cite[Proposition 4.1.17.(II)]{KL} and $%
\left\vert \Sigma \right\vert ={n\brack h}_{q}$, where $h\leq n/2$.

Assume that $L_{\Delta }^{\Delta }$ is solvable. Then $ \left \vert L_{\Delta }^{\Delta }\right\vert \mid q-1$ by \cite[Proposition 4.17.(II)]{KL}, and hence 
$$\frac{1}{2} q^{h(n-h)} \leq {n\brack h}_{q}=\left\vert \Sigma \right\vert < 2q^{2}.$$ 
Then either $n =3$ and $h=1$, or $(n,h,q)=(4,1,2),(4,2,2)$ since $\left\vert \Sigma \right\vert$. The numerical cases are immediately ruled by  Lemma \ref{Jedan}(1). Therefore $n=3$, $\left\vert \Sigma \right\vert=q^{2}+q+1$ and hence $X^{2}=q^{2}+(q+1)^{2}$, where $X^{2}=2\left\vert \Sigma \right\vert-1$ again by Lemma \ref{Jedan}(1). Thus $(q,q+1,X)$ is primitive solution of the Pythagorean equation and hence it is of the form as in \cite[P3.1]{Rib}. Easy computations show that $q=3$. Then $\left\vert \Sigma \right\vert=13$ and hence $\lambda=6$, whereas $\lambda>10$.
 
Assume that $L_{\Delta}^{\Delta}$ is non-solvable. Then $L_{\Delta }^{\Delta }$ acts $2$-transitively on $\Delta$ by Proposition \ref{unapred}. Then $\mathrm{Soc}(L_{\Delta }^{\Delta })$ is isomorphic to $PSL_{x}(q)$, where $x\in
\left\{ h,n-h\right\} $ and $x\geq 2$, by \cite[List (B)]{Ka} and by \cite[Proposition 4.17.(II)]{KL}. Note that $(x,q)\neq (2,5),(2,9)$ since $\lambda>10$. Moreover, $(x,q)\neq (2,7)$. It is not so, then $\lambda=12$ and hence $\left\vert \Sigma \right \vert =121$. However, this is impossible by \cite[Table B.4]{DM} since $L \cong PSL_{n}(7)$. Thus $\left\vert \Delta
\right\vert =\frac{q^{x}-1}{q-1}$, $\lambda =2q\frac{q^{x-1}-1}{q-1}$ and
hence%
\begin{equation}
8q^{2x-2}\geq 2q^{2}\left( \frac{q^{x-1}-1}{q-1}\right) ^{2}-2q\frac{%
q^{x-1}-1}{q-1}+1=\left\vert \Sigma \right\vert ={n\brack x}_{q}\geq \frac{%
\allowbreak q^{\frac{1}{2}x\left( 2n-x+1\right) }}{2q^{\frac{1}{2}x\left(
x+1\right) }}=\frac{1}{2}q^{x\left( n-x\right) }  \label{ULB}
\end{equation}%
and so $2^{x\left( n-x\right) -2x+2}\leq q^{h\left( n-h\right) -2x+2}\leq 16$%
. Then $x(n-2-x) \leq 2$, and hence $x=h=2$ and $n=5$ since $x \geq 2$ and $h \leq n/2$. Also, $q=2,3$, and hence $\left\vert \Sigma \right\vert=155$ or $1210$ respectively, but both values of $\left\vert\Sigma\right\vert$ contradict Lemma \ref{Jedan}(1).

Assume that $L_{\Delta }$ is of type $P_{h,n-h}$, where $h<n/2$. If $L_{\Delta }^{\Delta }$ is solvable, then $ \left \vert L_{\Delta }^{\Delta }\right\vert \mid (q-1)^{2}$ by \cite[Proposition 4.1.22.(II)]{KL}. Also, $\left\vert \Sigma \right\vert \geq {n\brack h}_{q}$ since $L_{\Delta }$ lies in a maximal parabolic subgroup of type $P_{h}$. Hence,   
$$\frac{1}{2} q^{h(n-h)} \leq {n\brack h}_{q}\leq\left\vert \Sigma \right\vert < 2(q-1)^{4}.$$
Then $n \leq 5$ and $h=1$ since $h<n/2$. We actually obtain $n=3$ since $\left\vert \Sigma
\right\vert =\frac{(q^{n}-1)(q^{n-1}-1)}{(q-1)^{2}}$. Hence, $%
2q^{3}+(2q+1)^{2}=2\left\vert \Sigma \right\vert -1=X^{2}$ for some positive odd integer $X$ by Lemma \ref{Jedan}(1). Then $%
2q^{3}=(X-2q-1)(X+2q+1)$ and hence $q=2^{t}$, $t\geq 1$. Then $2^{s}+2^{t+1}+1=X=2^{3t+1-s}-2^{t+1}-1$ for some integer $s$ such
that $0\leq s\leq 3t$. Thus $2^{3t+1-s}=2^{s}+2^{2(t+1)}+2$. If $s>1$ then $%
s=3t$, which is clearly impossible. Then $s=1$ and hence $2^{3t}=2^{2(t+1)}+4$%
, which has no integer solutions for $t\geq 1$. Thus, $L_{\Delta }^{\Delta }$ is a non-solvable group acting $2$-transitively on $\Delta$ by Proposition \ref{unapred}. Then $L_{\Delta }^{\Delta
} $ is isomorphic to $PSL_{x}(q)$, where $x\in \left\{ h,m-h\right\} $ and $%
x\geq 2$, by \cite[Proposition 4.1.22.II]{KL} and \cite[List (B)]{Ka}. Then the same conclusion of (\ref{ULB}) holds
since $\left\vert \Sigma \right\vert \geq {m\brack x}_{q}$. Thus $n=5$, $x=2$ and $q=2,3$. Then $\left\vert \Sigma \right\vert=1085$ or $7865$ respectively, but both values of $\left\vert\Sigma\right\vert$ contradict Lemma \ref{Jedan}(1).

Assume that $L_{\Delta}$ lies in a maximal member of $\mathcal{C}_{2}(L)$. Then $q$ is odd by \cite{LS}, and $L_{\Delta}$
is of type $GL_{n/t}(q)\wr S_{t}$, where either $t=2$, or $t=3$ and either $%
q\in \left\{ 5,9\right\} $ and $n$ odd, or $(n,q)=(3,11)$ by \cite[Proposition 4.7]{AB}. Moreover, 
\begin{equation}\label{hocudaumrem}
L_{\Delta}\cong \left[ \frac{(q-1)^{t-1}(q-1,n/t)}{(q-1,n)}\right] .PSL_{n/t}(q)^{t}.%
\left[ (q-1,n/t)^{t-1}\right] .S_{t}
\end{equation}%
by \cite[Proposition 4.2.9.(II)]{KL}. In addition, $L_{\Delta}^{\Delta}$ is non-solvable  and acts $2$-transitively on $\Delta$ by Lemma \ref{Sevdah}. Then $t<n$ by (\ref{hocudaumrem}) and $\left\vert L\right\vert
<\left\vert L_{\Delta }\right\vert ^{2}$ by Theorem \ref{Large}. Hence,
\begin{equation}\label{izbaciti}
q^{n^{2}-2}<\frac{(q-1)^{2t-2}(q-1,n/t)^{2t}}{(q-1,n)^{2}} \cdot (q^{2n^{2}/t-2t})\cdot (t!)^{2}
\end{equation}
by (\ref{hocudaumrem}) and \cite[Corollary 4.1.(i)]{AB}. If $t=3$ then $q^{n^{2}-2}<36q^{2n^{2}/3+2}$ and hence $q^{n^{2}/3-4}<36$, which has no solutions for $q \geq 3$ and $n \geq 6$. Thus $t=2$, and hence $\mathrm{Soc}(L_{\Delta }^{\Delta })\cong PSL_{n/2}(q)$ since $L_{\Delta}^{\Delta}$ is non-solvable and acts $2$-transitively on $\Delta$. Then either $%
\left\vert \Delta \right\vert =5$ and $(n,q)=(4,9)$, or $\left\vert \Delta
\right\vert =q$ and $(n,q)=(4,5),(4,7),(4,11)$, or $\left\vert \Delta
\right\vert =\frac{q^{n/2}-1}{q-1}$. In each case one has $q^{5n}\leq
q^{n(3n-2)/2}\leq \left\vert L:L_{\Delta }\right\vert =\left\vert \Sigma
\right\vert \leq 2\left\vert \Delta \right\vert ^{2}<2q^{n}$, which is a
contradiction. This completes the proof.
\end{proof}

\begin{lemma}
\label{PSU2}$L$ is not isomorphic to $PSU_{n}(q)$.
\end{lemma}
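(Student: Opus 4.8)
The plan is to mirror the arguments of Lemmas \ref{PSL1}, \ref{PSU1} and \ref{PSL2}. Assume $L \cong PSU_n(q)$ with $q = p^f$. Since $PSU_2(q) \cong PSL_2(q)$, Lemma \ref{m=2} gives $n \geq 3$. By Lemma \ref{Filt}, because $L$ is neither $PSL_n(q)$ nor $PSL_2(q)$, one of two situations occurs: either $q$ is even and $L_\Delta$ lies in a maximal parabolic subgroup of $L$; or $q$ is odd and $L_\Delta$ lies in a maximal member of $\mathcal{C}_1(L) \cup \mathcal{C}_2(L)$ which, as parabolic subgroups are admitted there only for $PSL_n(q)$, must be the stabiliser of a non-degenerate subspace (class $\mathcal{C}_1$) or an imprimitive-type subgroup (class $\mathcal{C}_2$). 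Throughout, $|L:L_\Delta| = |\Sigma| = \tfrac{\lambda^2 - 2\lambda + 2}{2}$, $|\Delta| = \tfrac{\lambda}{2} + 1$, the subgroup $L_\Delta$ is large in $L$ by Theorem \ref{Large}, and the arithmetic restrictions of Lemma \ref{Jedan} are available: $|\Sigma|$ is odd, $2|\Sigma| - 1$ is a perfect square, every prime divisor of $|\Sigma|$ is $\equiv 1 \pmod 4$, and $\mathrm{Soc}(G_\Delta^\Delta) \in \{A_{w^2+1},\, PSL_2(w^2),\, PSU_3(w^{2/3})\}$ whenever $\lambda = 2w^2$. I will repeatedly use the elementary remark that $\lambda^2 - 2\lambda + 2 \equiv 2 \pmod{\lambda}$ together with $|\Sigma|$ odd gives $\gcd(|\Sigma|, \lambda) = 1$; hence whenever a prime $r$ divides $\lambda$, the subgroup $L_\Delta$ must contain a full Sylow $r$-subgroup of $L$.

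Consider first $q$ odd. Then $L_\Delta$ does not lie in a parabolic, so by Lemma \ref{Sevdah}(2) $L_\Delta^\Delta$ is non-solvable and acts $2$-transitively on $\Delta$, and its socle is one of the groups in \cite[List (A)]{Ka}. If $L_\Delta$ is the stabiliser of a non-degenerate $h$-space ($\mathcal{C}_1$), then $\mathrm{Soc}(L_\Delta^\Delta)$ is a section of $SU_h(q) \times SU_{n-h}(q)$, hence, modulo the small coincidences with alternating and linear groups which are disposed of directly, isomorphic to $PSU_x(q)$ with $x \in \{h, n-h\}$; since the only $2$-transitive unitary group is $PSU_3(q)$ (on $q^3+1$ points), either $x = 3$, giving $|\Delta| = q^3 + 1$ and $\lambda = 2q^3$, or $x = 2$ (so $PSU_2(q) \cong PSL_2(q)$, $|\Delta| = q+1$, $\lambda = 2q$); in both cases $p \mid \lambda$, while the $p$-part $q^{h(h-1)/2 + (n-h)(n-h-1)/2}$ of $SU_h(q) \times SU_{n-h}(q)$ is strictly smaller than the $p$-part $q^{n(n-1)/2}$ of $L$, contradicting the Sylow remark. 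The case $L_\Delta \in \mathcal{C}_2$ is handled exactly as the $\mathcal{C}_2$ part of Lemma \ref{PSL2}: the relevant types are $GU_{n/t}(q) \wr S_t$ (including $t = n$), and $GL_{n/2}(q^2).2$; combining non-solvability and $2$-transitivity of $L_\Delta^\Delta$ with the structure recorded in \cite{KL} and the bound $|L| \leq |L_\Delta|^2$ of Theorem \ref{Large} forces $\mathrm{Soc}(L_\Delta^\Delta)$ to be $PSU_3(q)$, $PSL_2(q)$, $PSL_{n/2}(q^2)$ or $A_t$, and in each of these $|\Delta|$ is explicit: whenever $|\Delta|$ has the shape $q^j + 1$ or $\tfrac{q^j - 1}{q^2 - 1}$ one gets $p \mid \lambda$ and the Sylow remark applies again (the relevant $\mathcal{C}_2$ subgroup has too small a $p$-part), while $|\Delta| = t$ forces the polynomial $|\Sigma| = 2t^2 - 6t + 5$ to equal the enormous index $|L:L_\Delta|$, which is impossible for $t \geq 5$.

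Now consider $q$ even, so that $L_\Delta$ lies in a maximal parabolic $P_m$ of $PSU_n(q)$, the stabiliser of a totally isotropic $m$-subspace ($1 \leq m \leq \lfloor n/2 \rfloor$), with Levi factor of shape $GL_m(q^2) \times SU_{n-2m}(q)$. First, the $\lambda = 2w^2$ alternative cannot occur here, since none of $A_{w^2+1}$, $PSL_2(w^2)$, $PSU_3(w^{2/3})$ is isomorphic to a composition factor of $P_m$ when $w$ is odd, $w \geq 3$, and $q$ is even; hence $|\Delta| = \tfrac{\lambda}{2} + 1$ is odd. If $L_\Delta^\Delta$ is solvable, then by Proposition \ref{unapred} $|\Delta| = u^h$ is an odd prime power with $L_\Delta^\Delta \leq A\Gamma L_1(u^h)$ (or one of the finitely many exceptions of Lemma \ref{njami}), and comparing $|\Sigma| = |L:L_\Delta| \geq |L:P_m|$ — the number of totally isotropic $m$-subspaces, a polynomial in $q$ of degree $m(2n-3m)$ — with $|\Sigma| < 2(u^h - 1)^2$ confines $(n,m,q)$ to a short list that is cleared by Lemma \ref{Jedan}(1). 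If $L_\Delta^\Delta$ is non-solvable it acts $2$-transitively on $\Delta$ (Proposition \ref{unapred}), and its socle, being a $2$-transitive section of $P_m$, is $PSL_m(q^2)$ acting on $PG_{m-1}(q^2)$ (so $|\Delta| = \tfrac{q^{2m}-1}{q^2-1}$) or $PSU_3(q)$ with $n = 2m+3$ (so $|\Delta| = q^3+1$), up to alternating-group coincidences. In the first case $\lambda = \tfrac{2q^2(q^{2m-2}-1)}{q^2-1}$, so $|\Sigma| = \tfrac{\lambda^2 - 2\lambda + 2}{2}$ has $q$-degree $4m-4$, whereas $|L:P_m|$ has $q$-degree $m(2n-3m)$; since $n \geq 2m$ and $|\Sigma| = |L:P_m|$, the inequality $m(2n-3m) \leq 4m-4$ leaves only $(n,m) = (4,2)$ and $(6,3)$, and a direct comparison of the two polynomial formulas for $|\Sigma|$ (together with $\lambda > 10$) rules both out. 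The case $|\Delta| = q^3+1$, $\lambda = 2q^3$ is similar: $|\Sigma|$ has $q$-degree $6$ while $|L:P_m|$ has degree $m(m+6) \geq 7$, forcing $m = 1$, $n = 5$, and then $|\Sigma| = 2q^6 - 2q^3 + 1 \neq (q^5+1)(q^2+1) = |L:P_1|$.

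I expect the main obstacle to be precisely the $q$ even parabolic case. There the Sylow-subgroup trick is unavailable (a parabolic always contains a Sylow $p$-subgroup), so one must extract from the structure of $P_m$ the precise possibilities for $L_\Delta^\Delta$ and then show that the two polynomial expressions for $|\Sigma|$ — one coming from $|\Delta| = \tfrac{\lambda}{2}+1$ and $|\Sigma| = \tfrac{\lambda^2 - 2\lambda + 2}{2}$, the other the count of totally isotropic $m$-subspaces of a unitary $GF(q^2)$-space — never agree for $\lambda > 10$; obtaining clean enough degree estimates on the latter count so as to cut $n$ and $m$ down to a finite list, and then disposing of that list, is where the real work lies. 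The solvable sub-case of the parabolic analysis, in which $|\Delta|$ is not obviously bounded in terms of $q$, is the most delicate point, and may require the finer information on $L(\Delta)$ furnished by Corollary \ref{CFixT}(2).
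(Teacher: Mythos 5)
Your overall architecture is the paper's: Lemma \ref{Filt} splits the problem into the $q$ even parabolic case and the $q$ odd $\mathcal{C}_1\cup\mathcal{C}_2$ cases; for $q$ odd, Lemma \ref{Sevdah}(2) forces $L_{\Delta}^{\Delta}$ to be non-solvable and $2$-transitive, and the coprimality of $\left\vert\Sigma\right\vert$ and $\lambda$ together with the fact that a non-parabolic $L_{\Delta}$ cannot contain a Sylow $p$-subgroup finishes those cases; for $q$ even one compares the two expressions for $\left\vert\Sigma\right\vert$. Those parts go through essentially as in the paper (which, in the $\mathcal{C}_2$ case, leans on the largeness bound $\left\vert L\right\vert<\left\vert L_{\Delta}\right\vert^{2}$ rather than a full identification of $\mathrm{Soc}(L_{\Delta}^{\Delta})$, but either route works). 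Two minor slips: your $\mathcal{C}_2$ list for $q$ odd includes type $GL_{n/2}(q^{2}).2$, which does not survive the odd-degree and largeness filters of \cite{LS} and \cite[Proposition 4.17]{AB}; and in the $q$ even non-solvable parabolic case the Levi factor $SU_{n-2m}(q)$ with $n-2m=2$ also contributes a $2$-transitive candidate $PSL_{2}(q)$ of degree $q+1$, though it dies by the same degree count.

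The subcase you yourself flag as delicate --- $q$ even, $L_{\Delta}$ parabolic, $L_{\Delta}^{\Delta}$ solvable --- is a genuine gap as written. The comparison you propose, $\left\vert L:P_{m}\right\vert\leq\left\vert\Sigma\right\vert<2(u^{h}-1)^{2}$, yields only a \emph{lower} bound on $u^{h}=\left\vert\Delta\right\vert$ in terms of $q,n,m$; it cannot confine $(n,m,q)$ to a finite list because nothing yet bounds $\left\vert\Delta\right\vert$ from above in terms of $q$. The missing ingredient is structural: since $L_{\Delta}$ equals the full maximal parabolic (no novelties arise), any solvable quotient of it kills the unipotent radical and the quasi-simple Levi factors $SL_{m}(q^{2})$ and $SU_{n-2m}(q)$, so $\left\vert L_{\Delta}^{\Delta}\right\vert$ divides $q^{2}-1$ (this is extracted from \cite[Proposition 4.1.18(II)]{KL} in the paper). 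With that bound in hand the paper invokes Theorem \ref{Large}(2b) to get $(q-1)q^{n^{2}-3}<\left\vert L\right\vert<4\left\vert\mathrm{Out}(L)\right\vert^{2}\left\vert L_{\Delta}^{\Delta}\right\vert^{2}=16f^{2}(n,q+1)^{2}(q^{2}-1)^{2}$, forcing $n=3$ and $q\in\{4,8\}$, hence $\left\vert\Sigma\right\vert\in\{65,513\}$, which Lemma \ref{Jedan}(1) eliminates. Corollary \ref{CFixT}(2), which you suggest as a possible substitute, only restricts the primes dividing $\left\vert G(\Delta)\right\vert$ and does not by itself bound $\left\vert\Delta\right\vert$, so it cannot close this step.
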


\begin{proof}
Assume that $L\cong PSU_{n}(q)$. Then $n\geq 3$ by Lemma \ref{m=2} since $%
PSU_{2}(q)\cong PSL_{2}(q)$. Moreover, one of the following holds by Lemma %
\ref{Filt} and by \cite[Proposition 4.17]{AB}:

\begin{enumerate}
\item[(i)] $q$ is even and $L_{\Delta }$ lies in a maximal parabolic
subgroup of $L$.

\item[(ii)] $q$ is odd and $L_{\Delta }$ is a maximal $\mathcal{C}_{1}$%
-subgroup of $L$ of type $GU_{t}(q)\perp GU_{n-t}(q)$;

\item[(iii)] $q$ is odd, $L_{\Delta }$ is a maximal $\mathcal{C}_{2}$%
-subgroup of $L$ of type $GU_{n/t}(q)\wr S_{t}$ and one of the following
holds:

\begin{enumerate}
\item $t=2$.

\item $t=3$ and $(q,d)=(5,3),(13,1)$, where $d=(n,q+1)$.

\item $t=n=4$ and $q=5$.
\end{enumerate}
\end{enumerate}

Suppose that (i) holds. Then $L_{\Delta }$ is a maximal parabolic subgroup
of $L$ by \cite[Table 3.5.H--I]{KL} for $n\geq 13$ and \cite[Section 8.2]%
{BHRD} for $3\leq n\leq 12$. If $L_{\Delta }^{\Delta }$ does not act $2$-transitively on $\Delta$, then  $L_{\Delta }^{\Delta }$ is solvable by Proposition \ref{unapred}. Thus $\left\vert L_{\Delta }^{\Delta }\right\vert \mid
q^{2}-1$ by \cite[Proposition 4.18.(II)]{KL}, and hence  
\[
(q-1)q^{n^{2}-3}<\left\vert L\right\vert <4\left\vert Out(L)\right\vert
^{2}\left\vert L_{\Delta }^{\Delta }\right\vert
^{2}=16f^{2}(n,q+1)^{2}(q^{2}-1)^{2}
\]%
by Theorem \ref{Large}(2b). Thus $q^{n^{2}-6}<16n^{2}(q+1)$, and hence $n=3$
and $q=4,8$ since $q$ is even. So, $\left\vert \Sigma \right\vert =65$ and $%
513$ respectively. However, both these contradict Lemma \ref{Jedan}(1). Thus $%
L_{\Delta }^{\Delta }$ acts $2$-transitively on $\Delta $ by Proposition \ref%
{unapred}. 

Assume that $L_{\Delta }^{\Delta }$ is non-solvable. Then either $t\geq 2$, $PSL_{t}(q^{2})%
\trianglelefteq L_{\Delta }^{\Delta }$ and $\left\vert \Delta \right\vert =%
\frac{q^{2t}-1}{q-1}$, or $n=4$, $t=1$, $PSL_{2}(q)\trianglelefteq L_{\Delta
}^{\Delta }$ and $\left\vert \Delta \right\vert =q+1$, or $n-2t=3$, $%
PSU_{3}(q)\trianglelefteq L_{\Delta }^{\Delta }$ and $\left\vert \Delta
\right\vert =q^{3}+1$ by \cite[Proposition 4.1.18(II)]{KL} and by \cite[List
(B)]{Ka} since $q$ is even. On the other hand, by \cite[Proposition 4.1.18.(II)%
]{KL} and \cite[Lemma 4.1]{AB}, one obtains 
\begin{equation}\label{smrt}
\left\vert \Sigma \right\vert =\frac{\prod_{i=4}^{2t+3}\left(
q^{i}-(-1)^{i}\right) }{\prod_{j=1}^{t}\left( q^{2j}-1\right) }>\frac{q^{(2t+3)(t+2)-3}}{q^{3}+1} \cdot \frac{1}{(q^{2}-1)(q^{4}-1)q^{t(t+1)-6}} >q^{t^{2}+6t}.
\end{equation}
If $\left\vert \Delta \right\vert =\frac{q^{2t}-1}{q-1}$ then 
\begin{equation}
2q^{4t-2}>2\left( q\frac{q^{2t-1}-1}{q-1}\right) ^{2}-2\left( q\frac{%
q^{2t-1}-1}{q-1}\right) +1=\left\vert \Sigma \right\vert >q^{t^{2}+6t}
\label{polud}
\end{equation}%
and we reach a contradiction.

In the remaining cases, we have $\lambda =2q^{i}$ and $\left\vert \Sigma
\right\vert =2q^{2i}-2q^{i}+1$ with $i=1,2$. Both these lead to $%
q^{t^{2}+6t}<\left\vert \Sigma \right\vert <8q^{4}$ and hence to a
contradiction. 

Assume that $L_{\Delta }^{\Delta }$ is solvable. As above $\left\vert L_{\Delta }^{\Delta }\right\vert \mid
q^{2}-1$, hence $\left\vert \Delta \right \vert \mid q^{2}-1$ by Proposition \ref{unapred}. Then $q^{t^{2}+6t} \leq \left\vert \Sigma \right \vert \leq 2 \left\vert \Delta \right \vert^{2} \leq 2(q^{2}-1)^{2}$ by (\ref{smrt}), which is clearly impossible for $t\leq 2$. 

Note that, $L_{\Delta }$ is clearly non-parabolic in the remaining cases.
Thus $p\mid \left\vert \Sigma \right\vert $, and hence $q\geq 5$ by Lemma %
\ref{Jedan}(3) since $q$ is odd.

Suppose that (ii) holds. Then 
$L_{\Delta }^{\Delta }$ is non-solvable and acts $2$-transitively on $\Delta $ by Lemma \ref{Sevdah}(2). Then either $t=3$ or $n-t=3$ and in both cases $L_{\Delta }\cong
PSU_{3}(q)$ by \cite[Proposition 4.1.4.(II)]{KL}. Then $\lambda /2+1=q^{3}+1$
and so $\lambda =2q^{3}$. Then $\left\vert \Sigma \right\vert
=2q^{6}-2q^{3}+1$, whereas $p\mid \left\vert \Sigma \right\vert $.

Suppose that (iii) holds. Then 
\begin{equation}
L_{\Delta }\cong \left[ \frac{(q+1)^{t-1}(q+1,n/t)}{(q+1,n)}\right]
.PSU_{n/t}(q)^{t}.\left[ (q+1,n/t)^{t-1}\right] .S_{t}  \label{ciufciuf}
\end{equation}
by \cite[Proposition 4.2.9.(ii)]{KL}. Case (iii.c) implies $\left\vert \Sigma
\right\vert =5687500$ but this contradicts Lemma \ref{Jedan}(1). So, it does
not occur. Thus $t=2$ or $3$. Also $L_{\Delta }^{\Delta }$ is non-solvable and acts $2$-transitively on $\Delta$ by Lemma \ref{Sevdah}(2). Then $t<n$ by (\ref%
{ciufciuf}) since $t=2,3$, and 
\begin{equation}
(q-1)q^{n^{2}-3}<\left\vert L\right\vert <\left\vert L_{\Delta }\right\vert
^{2}=\frac{(q+1)^{2t-2}(q+1,n/t)^{2t}\left( t!\right) ^{2}}{(q+1,n)^{2}}%
q^{n^{2}/t-t}  \label{omg}
\end{equation}%
by Theorem \ref{Large} and \cite[Corollary (ii)]{AB}. If $t=3$ then $n\geq 6$%
, as $t<n$, and hence (\ref{omg})  implies $%
(q-1)q^{2n^{2}/3}<4(q+1)^{4}n^{4}/9$ which has no admissible solutions for $%
q\geq 5$. Then $t=2$, $n\geq 4$ and $q\geq 5$. Also (\ref{omg}) implies $%
(q-1)q^{n^{2}/2-1}<4(q+1)^{2}n^{2}$ and again no admissible solutions. This completes the proof.
\end{proof}

\begin{lemma}
\label{PSp2}If $L$ is not isomorphic to $PSp_{n}(q)^{\prime}$.
\end{lemma}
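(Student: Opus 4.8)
The plan is to assume $L\cong PSp_{n}(q)^{\prime}$ and reach a contradiction, in the spirit of Lemmas \ref{PSL2} and \ref{PSU2}. First I would reduce the configuration. Since $PSp_{2}(q)\cong PSL_{2}(q)$, Lemma \ref{m=2} gives $n\geq 4$ (with $n$ even), while $PSp_{4}(2)^{\prime}\cong A_{6}$ is already excluded by Lemma \ref{Alt2}; hence $(n,q)\neq(4,2)$ and $L\cong PSp_{n}(q)$. By Lemma \ref{Filt} only two cases survive: either $q$ is even and $L_{\Delta}$ lies in a maximal parabolic subgroup of $L$, or $q$ is odd and $L_{\Delta}$ lies in a maximal member of $\mathcal{C}_{1}(L)\cup\mathcal{C}_{2}(L)$ which is not of parabolic type (in Lemma \ref{Filt}, parabolic point stabilizers of types $P_{i}$ or $P_{m,m-i}$ arise only for $L\cong PSL_{n}(q)$).

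For the case $q$ even, I would check via \cite[Tables 3.5.H--I]{KL} and \cite[Section 8.2]{BHRD} that no novelty occurs, so $L_{\Delta}$ is a maximal parabolic subgroup $P_{m}$, $1\leq m\leq n/2$, with structure as in \cite{KL} and Levi factor $GL_{m}(q)\times Sp_{n-2m}(q)$ modulo scalars. If $L_{\Delta}^{\Delta}$ does not act $2$-transitively on $\Delta$, Proposition \ref{unapred} gives $\mathrm{Soc}(G_{\Delta}^{\Delta})<L_{\Delta}^{\Delta}\leq G_{\Delta}^{\Delta}\leq A\Gamma L_{1}(u^{h})$ with $u^{h}=\lambda/2+1$, so $|L_{\Delta}^{\Delta}|$ divides $(u^{h}-1)h$; then Theorem \ref{Large}(2b) together with the order of $PSp_{n}(q)$ yields a contradiction by a size estimate, just as in Lemma \ref{PSU2}. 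If $L_{\Delta}^{\Delta}$ does act $2$-transitively, then by \cite[List (B)]{Ka} applied to the Levi factor, $\mathrm{Soc}(L_{\Delta}^{\Delta})$ is either $PSL_{m}(q)$ with $|\Delta|=(q^{m}-1)/(q-1)$, whence $\lambda=2q(q^{m-1}-1)/(q-1)$ and $|\Sigma|=(\lambda^{2}-2\lambda+2)/2<2q^{2m-2}$, while the lower bound for $|L:P_{m}|$ in terms of a power of $q$ forces $n=4$, $m=2$, and then $|\Sigma|=2q^{2}-2q+1$ cannot equal the exact index $(q^{2}+1)(q+1)$; or $\mathrm{Soc}(L_{\Delta}^{\Delta})\cong Sp_{n-2m}(q)$ (possible since $q$ is even) with $|\Delta|=2^{n-2m-1}\pm 2^{(n-2m)/2-1}$, in which case $q=2$ forces $2|\Sigma|-1=2^{n+1}-3$ to be a perfect square, impossible by \cite[B1.1]{Rib}, and $q>2$ is incompatible with $|L:P_{m}|=|\Sigma|$ since the two sides then depend differently on $q$.

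For the case $q$ odd, $L_{\Delta}$ is a maximal non-parabolic member of $\mathcal{C}_{1}(L)\cup\mathcal{C}_{2}(L)$, so by Lemma \ref{Sevdah}(2) the group $L_{\Delta}^{\Delta}$ is non-solvable and acts $2$-transitively on $\Delta$, and $p\mid|\Sigma|$. The types to inspect are $Sp_{m}(q)\perp Sp_{n-m}(q)$ ($m$ even) in $\mathcal{C}_{1}$, and $Sp_{n/t}(q)\wr S_{t}$ and $GL_{n/2}(q).2$ in $\mathcal{C}_{2}$. Using \cite[List (B)]{Ka} with the structure of these subgroups from \cite{KL}, $2$-transitivity of $L_{\Delta}^{\Delta}$ forces one of: $\mathrm{Soc}(L_{\Delta}^{\Delta})\cong PSL_{2}(q)$ with $|\Delta|=q+1$; $\mathrm{Soc}(L_{\Delta}^{\Delta})\cong PSL_{n/2}(q)$ with $|\Delta|=(q^{n/2}-1)/(q-1)$; or $\mathrm{Soc}(L_{\Delta}^{\Delta})\cong A_{t}$ with $|\Delta|=t$ coming from the $S_{t}$-quotient of $Sp_{n/t}(q)\wr S_{t}$. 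The last possibility is ruled out by size, since then $n=(n/t)t$ with $\lambda/2+1=t>6$ gives $|\Sigma|<n^{2}$ while $|L:L_{\Delta}|$ is far larger; in the first two possibilities $q\mid\lambda$, so $|\Sigma|=(\lambda^{2}-2\lambda+2)/2\equiv 1\pmod{q}$, contradicting $p\mid|\Sigma|$. This completes the proof.

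I expect the main obstacle to be the $q$ even parabolic case with $L_{\Delta}^{\Delta}$ a $2$-transitive group of type $PSL_{m}(q)$: no coprimality argument is available there, because a parabolic subgroup already contains a full Sylow $p$-subgroup of $L$, so the contradiction must come from determining $|\Sigma|$ precisely from $\lambda$ and comparing it against the exact index of $P_{m}$ in $PSp_{n}(q)$; this needs the parabolic index formulas of \cite{KL} to be handled with care for every $m\leq n/2$, rather than just the leading-order estimate used elsewhere.
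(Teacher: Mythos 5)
Your overall strategy coincides with the paper's: the same initial reduction ($n\geq 4$, $(n,q)\neq(4,2)$ via Lemma \ref{Alt2}), the same case split through Lemma \ref{Filt} into even-$q$ parabolic versus odd-$q$ members of $\mathcal{C}_{1}\cup\mathcal{C}_{2}$, and the same closing tools (Proposition \ref{unapred}, Lemma \ref{Sevdah}, Kantor's list, and the Sylow/coprimality argument via \cite[Theorem 1.6]{Se} when $q\mid\lambda$). In the even parabolic case with $\mathrm{Soc}(L_{\Delta}^{\Delta})\cong PSL_{m}(q)$ you are in fact more careful than the paper, since the crude bound $q^{(n-1)m-3m(m-1)/2}/2<|\Sigma|<2q^{2m}$ does not by itself exclude $n=4$, and your exact comparison of $2q^{2}-2q+1$ with $(q^{2}+1)(q+1)$ is the right way to finish there.

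There is, however, a genuine gap in your treatment of the even-$q$ parabolic subcase where the Levi quotient gives $\mathrm{Soc}(L_{\Delta}^{\Delta})\cong Sp_{n-2m}(2)$ acting on $2^{n-2m-1}\pm 2^{(n-2m)/2-1}$ points. Note first that $2|\Sigma|-1=(\lambda-1)^{2}$ is \emph{automatically} a square for a type-2 design, so ``$2|\Sigma|-1$ is a square'' only has content once $|\Sigma|$ is identified with the exact index $|L:P_{m}|$; your formula $2^{n+1}-3$ is that index computation for $m=1$ only (where $|L:P_{1}|=2^{n}-1$), and you give no argument for $m\geq 2$ with $n-2m\geq 6$ (e.g.\ $L\cong Sp_{10}(2)$, $L_{\Delta}=P_{2}$, socle $Sp_{6}(2)$ of degree $28$ or $36$), while your fallback remark about ``$q>2$'' does not apply since here $q=2$. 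The paper kills this entire family in one line using the observation recorded at the start of the type-2 section, that $|\Delta|\not\equiv 0\pmod 4$: the degrees $2^{a-1}\pm 2^{a/2-1}$ with $a\geq 6$ are all divisible by $4$. You never invoke this congruence, and you should. A second, more minor omission: in the odd-$q$ case your list of admissible $2$-transitive socles skips the exceptional representations of $PSL_{2}(q)$ (degree $q$ for $q=5,7,11$ and degree $6$ for $q=9$); these survive your coprimality argument in a different form and need Lemma \ref{Jedan}(2) (every prime divisor of $|\Sigma|$ is $\equiv 1\pmod 4$, while $p\in\{3,7,11\}$ is $\equiv 3\pmod 4$) to be eliminated, exactly as the paper does.
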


\begin{proof}
Assume that $L\cong PSp_{n}(q)^{\prime}$. Then $n\geq 4$ by Lemma \ref{m=2} and $(n,q) \neq (4,2)$ by Lemma \ref{Alt2} since $PSp_{4}(2)^{\prime} \cong A_{6}$. Thus $L\cong PSp_{n}(q)$. By Lemma \ref{Filt} and by \cite{LS} and \cite[Proposition 4.22]{AB} one of the following holds:
\begin{enumerate}
\item[(i)] $L_{\Delta }$ lies in a maximal parabolic subgroup of $L$ and $q$ is
even.

\item[(ii)] $L_{\Delta }$ is a maximal $\mathcal{C}_{1}$-subgroup of $L$ of type $Sp_{i}(q)\perp Sp_{n-i}(q)$ with $q$
odd

\item[(iii)] $L_{\Delta }$ is a maximal $\mathcal{C}_{2}$-subgroup of $L$ of type $Sp_{n/t}(q)\wr S_{t}$, where $t=2,3$,
or $(n,t)=(8,4)$, or $(n,t)=(10,5)$ and $q=3$.
\end{enumerate}

Suppose that (i) holds. Then $L_{\Delta }$ is a maximal parabolic subgroup
of $L$ by \cite[Table 3.5.H--I]{KL} for $n \geq 13$ and by \cite[Section 8.2]{BHRD} for $4 \leq n \leq 12$. Thus
$$\left\vert \Sigma \right\vert =\prod_{i=0}^{t-1}\frac{q^{n-2i}-1}{q^{i+1}-1}>\frac{1}{2}q^{(n-1)t-3t(t-1)/2}$$

If $L_{\Delta }^{\Delta }$ is solvable then $\left\vert L_{\Delta }^{\Delta } \right \vert \mid (q-1,t)$
by \cite[Proposition 4.1.19(II)]{KL} and hence $\left\vert \Delta \right \vert \mid (q-1,t)$ by Proposition \ref{unapred}. Therefore $ q^{(n-1)t-3t(t-1)/2/2}<\left\vert \Sigma \right\vert <2(q-1,t)^{2}$, which is clearly impossible.

If $L_{\Delta }^{\Delta }$ is non-solvable then $L_{\Delta }^{\Delta }$ acts $2$-transitively on $\Delta$ by Proposition \ref{unapred}. Hence, by \cite[List (B)]{Ka}, one of the following holds:
\begin{enumerate}
\item[(I)]$\mathrm{Soc}(L_{\Delta }^{\Delta })\cong PSL_{t}(q)$, either $t\geq 2$, or $t=1$ and $n=4$, and $%
\left\vert \Delta \right\vert =\frac{q^{t}-1}{q-1}$;
\item[(II)]$\mathrm{Soc}(L_{\Delta }^{\Delta })\cong PSp_{n-t}(2)$, $n-t\geq 6$ and $\left\vert \Delta
\right\vert =2^{2(n-t)-1}\pm 2^{n-t-1}$.
\end{enumerate} 

Then (I) is ruled out since it implies $q^{(n-1)t-3t(t-1)/2/2}<\left\vert \Sigma \right\vert <2q^{2t}$, which is impossible; (II) is ruled out since $\left\vert \Delta \right\vert \not \equiv 0 \pmod{4}$.

Suppose that (ii) holds. Then $L_{\Delta }\cong Sp_{i}(q)\circ Sp_{n-i}(q)$
by \cite[Proposition 4.1.3.(II)]{KL}. Then $L_{\Delta }^{\Delta }\cong
PSp_{j}(q)$ with $j\in \left\{ i,n-i\right\} $ by Lemma \ref{Sevdah}. Hence, one of the following
holds (recall that $q$ is odd):

\begin{enumerate}
\item $j=2$ and $\left\vert \Delta \right\vert =6$ for $q=9$, $\left\vert
\Delta \right\vert =q$ and $q=5,7,11$, or $\left\vert \Delta \right\vert =q+1$;

\item $j\geq 6$ and $\left\vert \Delta \right\vert =2^{j-1}\pm 2^{j/2-1}$.
\end{enumerate}

Actually, in (1) $q=5$ and $\left\vert \Delta \right\vert =5$ cannot occur since $\lambda>10$, and $q\neq 7,9,11$ by Lemma \ref{Jedan}(2). Also (2) is ruled out since $\left\vert \Delta \right\vert \not \equiv 0 \pmod{4}$. Thus $\left\vert \Delta \right\vert =q+1$, $\lambda=2q$ and hence $\left\vert \Sigma \right\vert $ is coprime to $q$. So $L_{\Delta }$ must contain a Sylow $q$-subgroup of $L$, which is a contradiction.  

Suppose that (iii) holds. Then $q \mid \left \vert \Sigma \right \vert$ and hence $q \neq 3$ by Lemma \ref{Jedan}(2). Thus either $t=2,3$, or $(n,t)=(8,4)$. Also, $L_{\Delta }^{\Delta }$ is non-solvable and acts $2$-transitively on $\Delta$ by Lemma \ref{Sevdah}(2)

If $t=2$, then either $n=4$ and $L_{\Delta }^{\Delta }\cong PSL_{2}(q)$ and
either $\left\vert \Delta \right\vert =5$ for $q=5$, or $\left\vert \Delta
\right\vert =6$ for $q=9$, $\left\vert \Delta \right\vert =q$ for $q=7,11$,
or $\left\vert \Delta \right\vert =q+1$, or $n\geq 12$, $L_{\Delta }^{\Delta
}\cong PSL_{n/2}(2)$ and $\left\vert \Delta \right\vert =2^{n-1}\pm 2^{n/2-1}
$. However, all these cases are excluded by the same argument previously used. 

If $t=3,4$, by Theorem \ref{Large} and by \cite[Proposition 4.2.10.(II)]{KL} and \cite[Corollary 4.3(iii)]{AB}, we have 
\begin{equation}
q^{\frac{n(n+1)}{2}}/4<\left\vert L\right\vert <\left\vert
L_{\Delta }\right\vert ^{2}<2^{2(t-1)}q^{n(n/t+1)}(t!)^{2}  \label{ineq}
\end{equation}
which implies $q^{\frac{n(n+1)}{2}-\frac{n(n+t)}{t}}<2^{2t}(t!)^{2}$, and
hence $t=3$, $n=6$ and $q=5$ or $13$ since $q\equiv 1\pmod{4}$ by Lemma %
\ref{Jedan}(2). Therefore $\left\vert \Sigma \right\vert =44078125$ or $%
3929239732405$, but both contradict Lemma \ref{Jedan}(1). 
\end{proof}

\begin{lemma}
\label{Cla2}$L$ is not isomorphic to a simple classical group.
\end{lemma}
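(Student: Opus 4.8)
The plan is to complete the classification of simple classical groups $L$ by eliminating the orthogonal groups, since Lemmas~\ref{PSL2}, \ref{PSU2} and \ref{PSp2} have already disposed of $PSL_n(q)$, $PSU_n(q)$ and $PSp_n(q)'$. So I would assume $L \cong P\Omega_n^{\varepsilon}(q)$ with $\varepsilon \in \{\circ,\pm\}$ and derive a contradiction. First I would record the small-rank reductions: $n \neq 4$ by Lemma~\ref{m=2}, and $n \neq 5,6$ by Lemmas~\ref{PSL2}, \ref{PSU2}, \ref{PSp2} via the exceptional isomorphisms $P\Omega_5^{\circ}(q) \cong PSp_4(q)$ ($q$ odd), $P\Omega_6^{+}(q) \cong PSL_4(q)$, $P\Omega_6^{-}(q) \cong PSU_4(q)$; hence $n \geq 7$. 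Next, invoke Lemma~\ref{Filt}: either $q$ is even and $L_{\Delta}$ lies in a maximal parabolic subgroup of $L$, or $q$ is odd and $L_{\Delta}$ lies in a maximal member of $\mathcal{C}_1(L) \cup \mathcal{C}_2(L)$ (with the parabolic $P_{m,m-i}$ option being specific to $PSL_n(q)$, so here $L_{\Delta}$ is in a $C_1$-subgroup or a genuine parabolic $P_i$).

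The core of the argument will then be a case analysis on the geometric type of $L_{\Delta}$, following the template already used in Lemmas~\ref{PSL2}, \ref{PSU2}, \ref{PSp2}. For each candidate I would use Lemma~\ref{Sevdah}(2): unless $L_{\Delta}$ lies in a maximal parabolic, $L_{\Delta}^{\Delta}$ is a non-solvable $2$-transitive group on $\Delta$, so by \cite[List (B)]{Ka} and the detailed structure of the subgroup given in \cite[Chapter 4]{KL}, $\mathrm{Soc}(L_{\Delta}^{\Delta})$ is one of a short list of classical (or alternating) groups, forcing $|\Delta|$ into one of finitely many arithmetic forms such as $\frac{q^{x}-1}{q-1}$, $q+1$, $q^3+1$, or $2^{2m-1} \pm 2^{m-1}$. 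The last shape violates $|\Delta| \not\equiv 0 \pmod 4$ (which holds for type $2$ designs by the remark preceding Lemma~\ref{Jedan}), and is discarded immediately. For the remaining forms I would set $\lambda = 2(|\Delta| - 1)$ and compare $|\Sigma| = |L:L_{\Delta}| = \frac{\lambda^2 - 2\lambda + 2}{2}$ with the index of the subgroup computed from \cite[Chapter 4]{KL}: this yields an inequality of the type $q^{a} \lesssim |\Sigma| \lesssim q^{b}$ forcing $n$ small, and the surviving sporadic numerical cases are killed by Lemma~\ref{Jedan}(1) ($2|\Sigma|-1$ must be a perfect square) or by the observation that when $|\Delta|$ is coprime to $p$ the group $L_{\Delta}$ would have to contain a Sylow $p$-subgroup of $L$ while being non-parabolic, contradicting \cite[Theorem 1.6]{Se}. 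When $L_{\Delta}$ \emph{does} lie in a maximal parabolic and $L_{\Delta}^{\Delta}$ is solvable, Theorem~\ref{Large}(2b) gives $|L| \leq 4|\mathrm{Out}(L)|^2 |L_{\Delta}^{\Delta}|^2$, and since $|L_{\Delta}^{\Delta}|$ then divides a small polynomial in $q$ (e.g. $q^2-1$), the resulting bound $q^{c} < $ (polynomial in $n$, $q$, $f$) forces $n$ and $q$ tiny, again finished off by Lemma~\ref{Jedan} or Lemma~\ref{ciciona}.

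Concretely, the $C_1$-cases for orthogonal groups are the stabilizers of non-singular points, non-singular lines, and the non-degenerate-subspace decompositions $O_i^{\varepsilon_1}(q) \perp O_{n-i}^{\varepsilon_2}(q)$; for each, \cite[Propositions 4.1.6, 4.1.7, 4.1.20]{KL} give the structure, and the $2$-transitivity constraint typically forces $q = 2$ with $L_{\Delta}^{\Delta} \cong Sp_{n-2}(2)$ and $\lambda = 2^{n-2} \pm 2^{(n-2)/2} - 2$ — but then $|\Sigma| = \lambda^2$ would be demanded by the index formula while the type-$2$ design requires $|\Sigma| = \frac{\lambda^2 - 2\lambda + 2}{2}$, an immediate contradiction (alternatively the resulting equation in $q^n$ has no solutions). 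The $C_2$-cases of type $O_{n/t}^{\varepsilon'}(q) \wr S_t$ or $GU_{n/t}(q)$, and the $C_3$-cases of type $O_{n/2}^{\varepsilon'}(q^2)$ or $GU_{n/2}(q)$, are handled by the same $|L| < |L_{\Delta}|^2$-plus-$2$-transitivity pincer, with the special large subgroups $P\Omega_8^+(3)$-related items (and $\Omega_7(2) < \Omega_7(q)$, $\Omega_8^+(2) < P\Omega_8^+(q)$) from \cite{LS} ruled out directly by checking that $2|\Sigma|-1$ is not a square. I expect the main obstacle to be the bookkeeping in triality-related configurations for $P\Omega_8^+(q)$ and in the $C_3$-subgroups over quadratic extensions, where one must be careful that $L_{\Delta}^{\Delta}$ is genuinely $2$-transitive and that the index formulas from \cite{KL} are applied with the correct $(q^2-1)$- and $(q^4-1)$-type factors; but in every branch the arithmetic rigidity of $\lambda = 2(|\Delta|-1)$ together with $|\Sigma| = \frac{\lambda^2-2\lambda+2}{2}$ being both a Zsigmondy-laden index and subject to Lemma~\ref{Jedan} closes the case. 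This completes the proof that $L$ is not a simple classical group, and hence, combined with Lemmas~\ref{spor2}, \ref{Alt2}, \ref{Exc2}, that Theorem~\ref{T2} holds.
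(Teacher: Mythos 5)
Your proposal follows essentially the same route as the paper's proof: the same reduction to $n\geq 7$ via the exceptional isomorphisms, the same trichotomy (parabolic for $q$ even, non-degenerate subspace stabilizers and $\mathcal{C}_{2}$-subgroups for $q$ odd) coming from Liebeck--Saxl and Alavi--Burness, and the same closing tools, namely Lemma \ref{Sevdah}, Proposition \ref{unapred}, Theorem \ref{Large}, the conditions of Lemma \ref{Jedan} on $\left\vert \Sigma \right\vert$, and the Sylow-$p$/parabolic argument via Seitz. One local slip worth flagging: in the $Sp_{n-2}(2)$ sub-case the index formula does not force $\left\vert \Sigma \right\vert=\lambda^{2}$ (that is the type-1 relation, not the type-2 one), but this case is already eliminated, as you yourself observe, by $\left\vert \Delta \right\vert \not\equiv 0 \pmod{4}$, so the argument is unaffected.
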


\begin{proof}
In order to complete the proof we need to tackle the case $L\cong P\Omega
_{n}^{\varepsilon }(q)$, where $\varepsilon \in \left\{ \pm ,\circ \right\} $
since the other simple groups are analyzed in Lemmas \ref{PSL2}, \ref{PSU2} and \ref{PSp2}. Since $L$ is non-abelian
simple, $n>2$ and $(n,\varepsilon )\neq (4,+)$. Also $(n,\epsilon ,)\neq
(3,\circ )$ for $q$ odd, $(4,-),(6,+)$ by Lemma \ref{PSL2}, since in these
cases $L$ is isomorphic to $PSL_{2}(q)$, $PSL_{2}(q^{2})$ or $PSL_{4}(q)$
respectively. Finally $(n,\epsilon )\neq (4,-)$, and $(n,\epsilon )\neq
(5,\circ )$ for $q$ odd, otherwise $L$ would be isomorphic to $PSU_{4}(q)$ or $PSp_{4}(q)$ respectively, which are excluded in Lemmas \ref{PSU2} and \ref{PSp2} respectively. Thus, $n\geq 7$. By \cite{LS} and by \cite[Proposition 4.23]{AB} one of the following holds:
\begin{enumerate}
\item[(1)] Either $q$ is even and $L_{\Delta}$ lies in a  maximal parabolic subgroup, or $q$ is odd and $L_{\Delta}$ is
the stabilizer in $L$ of a non-degenerate subspace of $PG_{n-1}(q)$.

\item[(2)] $L_{\Delta}$ is a $\mathcal{C}_{2}$-subgroup of $L$ of type $%
O_{n/t}^{\varepsilon ^{\prime }}(q)\wr S_{t}$, where $q$ is odd, and either $%
t=2$, or $n=t=7$ and $q=5$, or $7\leq n=t\leq 13$ and $q=3$.

\end{enumerate}

Assume that $q$ is even and that $L_{\Delta}$ lies in a  maximal parabolic subgroup $M$ of type $%
P_{m}$. Thus $\varepsilon =\pm $ and hence $n \geq 8$. If $(\varepsilon ,m)\neq (+,n/2-1)$, then 
$L_{\Delta }=M$ by \cite[Table 3.5.H--I]{KL} for $n\geq 13$ and by \cite[Section 8.2]{BHRD} for $7\leq n\leq
12$. Nevertheless, in each case we have that $L_{\Delta }\cong \lbrack q^{a}]:GL_{m}(q)\times
\Omega _{n-2m}^{\varepsilon }(q)$, where $a=nm-\frac{m}{2}(3m-1)$, by \cite[%
Proposition 4.1.20.II]{KL}. Therefore,
\begin{equation}
\left\vert \Sigma \right\vert ={ \frac{n-1+\varepsilon }{2} \brack m}%
_{q}\prod_{i=0}^{m-1}\left( q^{\frac{n-1-\varepsilon }{2}-i}+1\right)
>q^{m\left( \frac{n-1+\varepsilon -2m}{2}+\frac{n-\varepsilon -m}{2}\right)
}=q^{\left( n-\frac{3}{2}m-\frac{1}{2}\right) m}  \label{nosfe}
\end{equation}%
by (\ref{ULB}) (see also \cite[Exercise 11.3]{Tay}).

Assume that $L_{\Delta}^{\Delta}$ is solvable. Then $\left\vert L_{\Delta}^{\Delta} \right\vert \mid q-1$, and hence $\left\vert \Delta \right\vert \mid q-1$. Then $q^{\left( n-\frac{3}{2}m-\frac{1}{2}\right) m} <\left\vert \Sigma \right\vert \leq 2 \left\vert \Delta \right\vert ^{2}=2(q-1)^{2}$, which is impossible for $n \geq 8$.

Assume that $L_{\Delta}^{\Delta}$ is non-solvable. Then $L_{\Delta}^{\Delta}$ acts $2$-transitively on $\Delta$ by Proposition \ref{unapred}, hence one of the following holds by \cite{Ka}:

\begin{enumerate}
\item[(I)] $\mathrm{Soc}(L_{\Delta }^{\Delta })$ is isomorphic to $PSL_{m}(q)$, $m \geq 2$, and either $\left\vert \Delta \right\vert =\frac{q^{m}-1}{q-1}$, or $\left\vert \Delta \right\vert =8$ for $(m,q)=(4,2)$.

\item[(II)] $\mathrm{Soc}(L_{\Delta }^{\Delta })\cong \Omega _{4}^{-}(q)\cong
PSL_{2}(q^{2})$, $\varepsilon =-$, $n=2m+4$ and $\left\vert \Delta
\right\vert =q^{2}+1$.

\item[(III)] $\mathrm{Soc}(L_{\Delta }^{\Delta })\cong PSL_{2}(q)$, $\varepsilon =+$, $%
n=2m+4$ and $\left\vert \Delta \right\vert =q+1$.

\item[(IV)]$\mathrm{Soc}(L_{\Delta }^{\Delta })\cong PSL_{4}(q)$, $\varepsilon =+$, $%
n=2m+6$ and either $\left\vert \Delta \right\vert =\frac{q^{4}-1}{q-1}$, or $\left\vert \Delta \right\vert =8$ for $%
q=2$.
\end{enumerate}

Assume that (I) or (IV) holds. Then $\left\vert \Delta \right\vert \neq 8$ since $\left\vert \Delta \right\vert$ is not divisible by $4$. Then $\left\vert \Delta
\right\vert =\frac{q^{e}-1}{q-1}$, where either $e=m$, or $e=4$ and $n=2m+6$ and $\varepsilon=+$. Furthermore, $%
m\geq 2$ in both cases since $n\geq 8$. Now, $\left\vert \Delta \right\vert= \lambda/2+1$ and $\left\vert \Delta \right\vert= (\lambda^{2}-2\lambda+2)/2$ imply
\begin{equation*}
\left\vert \Sigma \right\vert =2\left( q\frac{q^{e-1}-1}{q-1}\right)
^{2}-2\left( q\frac{q^{e-1}-1}{q-1}\right) +1\text{.}
\end{equation*}%
and so $q^{\left( n-\frac{3}{2}m-\frac{1}{2}\right) m}<\left\vert \Sigma
\right\vert <2q^{2e-2}$.

If $e=m$ and $q^{\left( n-\frac{3}{2}m-\frac{1}{2}\right) m-2m+2}<2$ then $n \leq %
10-\frac{8}{m}$, and hence $(n,m,\varepsilon)=(8,4,+)$ since $n$ is even, $n \geq 8$ and $m \leq n/2$. Then \begin{equation*}
2q^{6}+4q^{5}+6q^{4}+2q^{3}-2q+1=\left\vert \Sigma \right\vert =\allowbreak
\left( q^{2}+1\right) \left( q^{3}+1\right) \left( q+1\right)
\end{equation*}%

If $e=4$, $n=2m+6$ and $\varepsilon=+$, then $m=1$ and so%
\begin{equation*}
2q^{6}+4q^{5}+6q^{4}+2q^{3}-2q+1=\left\vert \Sigma \right\vert
=(q^{4}-1 )(q^{3}+1 )\text{,}
\end{equation*}%
which has not integer solutions.

Assume that case (II) or (III) holds. Then $n=2m+4$ and $\left\vert \Delta
\right\vert =q^{j}+1$ with $j=2,1$ respectively. Then $\left\vert \Sigma
\right\vert <8q^{2j}$. On the other hand $\left\vert \Sigma \right\vert
>q^{\left( n-\frac{3}{2}m-\frac{1}{2}\right) m}$ by (\ref{nosfe}) since $%
n\geq m/2$. Therefore $q=2$ and $n=8$ and either $m=1$ or $4$, but each of
these contradicts $n=2m+4$. This excludes case (1) for $q$
even.

In the remaining cases, namely (1) and (2) for $q$ odd, it results that $p \mid \left \vert \Sigma \right \vert $ by \cite[Theorem 1.6]{Se}. Also $p$ is odd, and $p \neq 3$ by Lemma \ref{Jedan}(2). Therefore, in the sequel we may assume that $q \geq 5$. Then, by \cite[Table 3.5.H--I]{KL} for $n\geq 13$ and by \cite[Section 8.2]{BHRD} for $7\leq n\leq 12$, either $L_{\Delta }$ is maximal in $L$, or $L \cong P\Omega_{n}^{+}(5)$, $L_{\Delta }$ is a $\mathcal{C}_{2}$-subgroup of $L$ of type $O_{2}^{+}(5) \wr S_{n/2}$ and $G_{\Delta}$ is a novelty. In the latter case $n$ is forced to be $4$ by (2), whereas $n \geq 8$. Therefore, $L_{\Delta }$ is maximal in $L$. 
  
Assume that $L_{\Delta}$ is the stabilizer in $L$ of a non-degenerate subspace of $%
PG_{n-1}(q)$, $q$ odd. Since $L_{\Delta}^{\Delta}$ is non-solvable and acts $2$-transitively on $\Delta$ by Lemma \ref{Sevdah}, one of the following holds by \cite[%
Propositions 4.1.6.(II)]{KL} and \cite{Ka} and since $q$ is odd, $q\geq 5$
and $n\geq 7$:

\begin{enumerate}
\item[(i)] $L_{\Delta }$ preserves a non-degenerate $4$-subspace
of type $+$ and $\mathrm{Soc}(L_{\Delta }^{\Delta })\cong PSL_{2}(q)$;

\item[(ii)] $L_{\Delta }$ preserves a non-degenerate $4$-subspace
of type $-$ and $\mathrm{Soc}(L_{\Delta }^{\Delta })\cong PSL_{2}(q^{2})$;

\item[(iii)] $L_{\Delta }$ preserves a non-degenerate $6$%
-subspace of type $-$ and $\mathrm{Soc}(L_{\Delta }^{\Delta })\cong PSL_{4}(q)$.
\end{enumerate}

Assume that (i) or (ii) holds. Then either $\left\vert \Delta \right\vert =q^{j}+1$,
where $j=1$ or $2$ respectively, or $j=1$ and $\left\vert \Delta \right\vert
=q$ for $q=5,7,11$ or $\left\vert \Delta \right\vert =6$ for $q=9$, or $j=2$
and $\left\vert \Delta \right\vert =6$ for $q=3$. The first case implies $%
\lambda =2q^{j}$, therefore $\left\vert \Sigma \right\vert $ is coprime to $q$, whereas $L_{\Delta }$ must contain a Sylow $q$-subgroup of $L$, which is not the case. Also $q \neq 3,7,9,11$ by Lemma \ref{Jedan}(2). Finally, $\left\vert \Delta \right\vert
=q=5$ cannot occur since $\lambda>10$.

Assume that (iii) holds. Then $\left\vert \Delta \right\vert =\frac{%
q^{4}-1}{q-1}$ since $q$ is odd. Then $\lambda =2q\frac{q^{3}-1}{q-1}$, hence $\left\vert \Sigma \right\vert $ is coprime to $q$, whereas $L_{\Delta }$ must contain a Sylow $q$-subgroup of $L$, which is not the case. This excludes (1).

A similar argument to that used to rule out (iii) in the $PSp_{n}(q)$%
-case excludes $t=2$ in (2) as well (see Lemma \ref{PSp2}). Thus, $n=t=7$ and $q=5$ since we have seen that $q \geq 5$, and hence $L_{\Delta } \cong 2^{6}.A_{7}$ by \cite[Proposition 4.2.15(II)]{KL}. So $\left\vert \Sigma \right\vert=29752734375$, which contradicts Lemma \ref{Jedan}(1) and hence it is ruled out. This completes the proof.
\end{proof}

\bigskip

\begin{proof}[Proof of Theorem \protect\ref{T2}]
Since $L^{\Sigma }$ is almost simple by Theorem \ref{ASPQP}, the assertion
follows from Lemmas \ref{spor2}, \ref{Alt2}, \ref{Exc2} and \ref{Cla2}.
\end{proof}

\bigskip

\begin{proof}[Proof of Theorem \protect\ref{main}]
Since $\lambda \leq 10$ by Theorems \ref{T1} and \ref{T2}, the assertion
follows from Theorem \ref{MMS}.
\end{proof}

\bigskip

\end{document}